\documentclass{article}
\usepackage[utf8]{inputenc}
\usepackage[english]{babel}

\newcommand{\PA}[1]{{\color{blue}{\bf PA:} #1}}

\usepackage{bbold}
\usepackage{amsmath}
\usepackage{amsfonts}
\usepackage{amssymb}
\usepackage{graphicx}
\usepackage{amsthm}
\usepackage{stmaryrd}
\usepackage{enumitem}
\usepackage{nameref}

\usepackage{bbm}
\usepackage{amsfonts}
\usepackage{verbatim}
\usepackage{mathtools}
\usepackage{mathrsfs}
\usepackage{multirow}
\usepackage{array}
\usepackage{subcaption}
\usepackage{todonotes}
\usepackage[linesnumbered,ruled,vlined]{algorithm2e}
\usepackage{cite}
\usepackage{breqn}
\usepackage{thm-restate}
\usepackage{hyperref}
\usepackage{cleveref}
\usepackage{fullpage}
\usepackage{authblk}

\usepackage{tikz}
\usetikzlibrary{decorations.pathreplacing}
\usetikzlibrary{calc}

\tikzstyle{vertex}=[circle,draw, minimum size=8pt, scale=1, inner sep=0.5pt]
\tikzstyle{arc}=[->, very thick]
\tikzstyle{edge}=[very thick]

\newcommand{\NN}{\ensuremath{\mathbb{N}}}
\newcommand{\ZZ}{\ensuremath{\mathbb{Z}}}
\newcommand{\RR}{\ensuremath{\mathbb{R}}}

\newtheorem{thm}{Theorem}[section]
\newtheorem{prop}[thm]{Proposition}

\newtheorem{lem}[thm]{Lemma}
\newtheorem{corol}[thm]{Corollary}
\newtheorem{Def}[thm]{Definition}

\newtheorem{Rq}[thm]{Remark}

\newtheorem{claim}{Claim}[thm]

\newenvironment{proofclaim}[1][]{\par\noindent {\it Proof of claim}.\ }{\hfill$\lozenge$\par\vspace{11pt}}

\newcommand{\bigcupl}{\bigcup\limits}

\newcommand{\bigsqcupl}{\bigsqcup\limits}
\newcommand{\suml}{\sum\limits}

\newcommand{\infl}{\inf\limits}

\newcommand{\minl}{\min\limits}

\newcommand{\limsupl}{\limsup\limits}
\newcommand{\liminfl}{\liminf\limits}

\newcommand{\argmin}{\text{argmin}}
\newcommand{\argminl}[1]{\underset{#1}{\argmin}\ }
\newcommand{\argmax}{\text{argmax}}
\newcommand{\argmaxl}[1]{\underset{#1}{\argmax}\ }

\newcommand{\bigfloor}[1]{\left\lfloor #1 \right\rfloor}

\newcommand{\bigceil}[1]{\left\lceil #1 \right\rceil}
\newcommand{\abs}[1]{|#1|}

\newcommand{\intZ}[1]{\llbracket #1 \rrbracket}

\newcommand{\tend}[1]{\underset{#1}{\longrightarrow}}

\newcommand{\dic}{\vec{\chi}}
\newcommand{\bid}{\overset \leftrightarrow}

\renewcommand{\epsilon}{\varepsilon}
\newcommand{\eps}{\varepsilon}

\newcommand{\m}{\mathcal}

\usepackage{tikz}
\usetikzlibrary[graphs, arrows, backgrounds, intersections, positioning, fit, petri, calc, shapes, decorations.pathmorphing]
\usetikzlibrary{arrows.meta}

\title{Various bounds on the minimum number of arcs in a $k$-dicritical digraph}
\author[1]{Pierre Aboulker}
\author[1]{Quentin Vermande}

\affil[1]{DIENS, \'Ecole normale sup\'erieure, CNRS, PSL University, Paris, France.}

\begin{document}

\maketitle

\begin{abstract}
The dichromatic number $\vec{\chi}(G)$ of a digraph $G$ is the least integer $k$ such that $G$ can be partitioned into $k$ acyclic digraphs. A digraph is $k$-dicritical if $\vec{\chi}(G) = k$ and each proper subgraph $H$ of $G$ satisfies $\vec{\chi}(H) \leq k-1$. 

We prove various bounds on the minimum number of arcs in a $k$-dicritical digraph, a structural result on $k$-dicritical digraphs and a result on list-dicolouring. 
We characterise $3$-dicritical digraphs $G$ with $(k-1)\abs{V(G)} + 1$ arcs.  
For $k \geq 4$, we characterise $k$-dicritical digraphs $G$ on at least $k+1$ vertices and with  $(k-1)\abs{V(G)} + k-3$ arcs, generalising a result of Dirac. 
We prove that, for $k \geq 5$, every $k$-dicritical digraph $G$ has at least $(k-\frac 1 2 - \frac 1 {k-1}) \abs{V(G)} - k(\frac 1 2 - \frac 1 {k-1})$ arcs, which is the best known lower bound. 
We  prove that the number of connected components induced by the vertices of degree $2(k-1)$ of a $k$-dicritical digraph is at most the number of connected components in the rest of the digraph, generalising a result of Stiebitz.  Finally, we generalise a Theorem of Thomassen on list-chromatic number of undirected graphs to list-dichromatic number of digraphs. 
\end{abstract}
\pagebreak 
\tableofcontents

\section{Introduction and results}
\label{sec:intro}

A \emph{colouring} of a directed graph (shortly digraph) $G$ is a partition of the set of vertices of $G$ into independent subsets and the \emph{chromatic number} $\chi(G)$ of $G$ is the minimum size of such a partition. This is a very natural generalisation of the notion of colouring of graphs, but not a very suitable one since it does not take into account the orientation of the arcs. Neumann-Lara introduced in 1982~\cite{Neu82} the notion of dicolouring of digraphs, which is another natural generalisation of the concept of colouring of graphs. It is more suitable than the previous one since it takes into account the orientation of the arcs. A \emph{dicolouring} of a digraph $G$ is a partition of the set of vertices of $G$ inducing acyclic digraphs, and the dichromatic number $\dic(G)$ of $G$ is the minimum size of such a partition. This is indeed a generalisation as, with the correspondence between graphs and \emph{symmetric digraphs} (that is digraphs obtained from undirected graphs by replacing each edge by a digon, where a \emph{digon} is a pair of anti-parallel arcs), 
we have, for every symmetric digraph $G$, $\chi(G) = \dic(G)$. 
\medskip 

We study minimal obstructions to dicolourability. A digraph $G$ is \emph{dicritical} if, for every proper subdigraph $H$ of $G$, $\dic(H) < \dic(G)$. 
We also say that $G$ is \emph{$k$-dicritical} when $G$ is dicritical and $\dic(G) = k$. 
Observe that any digraph $G$ contains a $\dic(G)$-dicritical subdigraph. 
This means that many problems on the dichromatic number of digraphs reduce to problem on dicritical digraphs, whose structure is more restricted. 
We are interested in their sparsity: we aim at computing the minimum number of arcs in a $k$-dicritical digraph on $n$ vertices. Lemma~\ref{lem:ord} shows that this value is well defined for $n \ge k \ge 2$.
\medskip

It is well known that every vertex in a $k$-critical (undirected) graph has degree at least $k-1$, and hence a $k$-critical graph $G$ has at least $\frac{1}{2}(k-1)\abs{V(G)}$ edges. Brooks' theorem implies a simple characterisation of graphs $G$ with exactly $\frac{1}{2}(k-1)\abs{V(G)}$ edges. 

\begin{thm}[\cite{Bro41}]
Let $G$ be a connected graph. Then $\chi(G) \le \Delta(G)+1$ and equality holds if and only if $G$ is an odd cycle or a complete graph.
\end{thm}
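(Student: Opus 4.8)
The plan is to obtain $\chi(G)\le\Delta(G)+1$ from the trivial greedy bound (colour the vertices in any order; each vertex sees at most $\Delta(G)$ already‑coloured neighbours), and then to prove the characterisation of equality. Write $\Delta:=\Delta(G)$. The ``if'' direction is immediate: $K_n$ has $\Delta=n-1$ and $\chi=n$, while an odd cycle has $\Delta=2$ and $\chi=3$. For the converse I will prove the contrapositive by induction on $|V(G)|$: if $G$ is connected, not complete, and not an odd cycle, then $\chi(G)\le\Delta$. If $\Delta\le 2$ then $G$ is a path on at least three vertices or an even cycle (every other connected graph with $\Delta\le 2$ is complete or an odd cycle), so $\chi(G)=2=\Delta$. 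If $\Delta\ge 3$ and $G$ is not $\Delta$-regular, I fix $v$ with $d(v)<\Delta$, root a spanning tree of $G$ at $v$, order $V(G)$ as $v_1,\dots,v_n=v$ by non‑increasing tree‑distance to $v$ so that every $v_i$ with $i<n$ has its parent among $v_{i+1},\dots,v_n$, and colour greedily: each such $v_i$ then sees at most $\Delta-1$ coloured neighbours and $v_n=v$ sees $d(v)\le\Delta-1$, so $\Delta$ colours suffice. If $\Delta\ge 3$ and $G$ has a cut vertex, I decompose $G$ into its blocks; each block $B$ is a connected proper subgraph with $\Delta(B)\le\Delta$, is not an odd cycle (as $\Delta\ge 3$) and not complete (a block equal to $K_{\Delta+1}$ would contain a cut vertex of $G$-degree $>\Delta$), so $\chi(B)\le\Delta$ by the induction hypothesis when $\Delta(B)=\Delta$ and by the trivial bound when $\Delta(B)<\Delta$; gluing these colourings along the block tree, permuting colours so that they agree at each shared cut vertex, gives $\chi(G)\le\Delta$.

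This reduces the problem to the core case: $G$ is $\Delta$-regular, $2$-connected, $\Delta\ge 3$, and not complete. Here I will use the lemma that there is a vertex $a$ with two neighbours $b,c$ such that $bc\notin E(G)$ and $G-\{b,c\}$ is connected. Granting it, set $v_1:=b$, $v_2:=c$, and order the remaining vertices as $v_3,\dots,v_n=a$ via a spanning tree of the connected graph $G-\{b,c\}$ rooted at $a$, again by non‑increasing distance to $a$, so that each $v_i$ with $3\le i\le n-1$ has a later neighbour inside $G-\{b,c\}$. Colour greedily with colours from $\{1,\dots,\Delta\}$: $v_1$ and $v_2$ both receive colour $1$ (legal since $bc\notin E(G)$); each $v_i$ with $3\le i\le n-1$ sees at most $\Delta-1$ coloured neighbours; and although all $\Delta$ neighbours of $v_n=a$ are already coloured, two of them, $b$ and $c$, share colour $1$, so a colour is still free for $a$. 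Hence $\chi(G)\le\Delta$, completing the induction.

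It remains to prove the lemma, and this is where I expect the only genuine difficulty. If $G$ is $3$-connected, pick non‑adjacent vertices $s,t$ (they exist as $G$ is not complete) and a shortest path from $s$ to $t$, which has at least three vertices; its first three, $b,a,c$, satisfy $b\sim a\sim c$ and $b\not\sim c$ (else the path is not shortest), and $G-\{b,c\}$ is connected by $3$-connectivity. If $G$ is $2$-connected but not $3$-connected, fix a $2$-cut $\{x,y\}$; since $G$ is $2$-connected, each of $x,y$ has a neighbour in every component of $G-\{x,y\}$. Choose a neighbour $v_1$ of $x$ in one component $C_1$ and a neighbour $v_2$ of $x$ in another component $C_2$, and, whenever $x$ has at least two neighbours in $C_i$, arrange in addition that $v_i$ is not a cut vertex separating $x$ from $y$ in $G[C_i\cup\{x,y\}]$ (possible because at most one neighbour of $x$ in $C_i$ is such a cut vertex, as all cut vertices separating $x$ from $y$ lie in a fixed order on every $x$--$y$ path). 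Then $v_1\not\sim v_2$, and I claim $G-\{v_1,v_2\}$ is connected, so the lemma holds with $a:=x$. Using the standard fact that $G[C_i\cup\{x,y\}]+xy$ is $2$-connected for each $i$ (hence stays connected after deleting one vertex), one checks that in $G-\{v_1,v_2\}$ every vertex has a path to $\{x,y\}$; the only remaining point is to connect $x$ to $y$ in $G-\{v_1,v_2\}$. This is automatic when $x\sim y$ or when $G-\{x,y\}$ has a third component; otherwise $\Delta$-regularity forces $|N(x)\cap C_1|+|N(x)\cap C_2|=\Delta\ge 3$, so $x$ has at least two neighbours in some $C_i$, whence $v_i$ was chosen not to separate $x$ from $y$ in $G[C_i\cup\{x,y\}]$ and $G[C_i\cup\{x,y\}]-v_i$ retains a path from $x$ to $y$. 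So the whole argument comes down to this bookkeeping around a $2$-cut; everything else is routine greedy colouring.
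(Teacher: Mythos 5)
The paper does not actually prove this statement: it is Brooks' classical theorem, quoted from~\cite{Bro41} purely as background for its directed analogue (Theorem~\ref{thm:brooks}), so there is no in-paper argument to compare yours against. Your proof is the standard one for Brooks' theorem (greedy colouring along an ordering that ends at a well-chosen vertex, reduction via blocks to the $2$-connected $\Delta$-regular case, and the key lemma producing a vertex $a$ with two non-adjacent neighbours $b,c$ such that $G-\{b,c\}$ is connected), and I checked it is correct, including the part you flagged: at most one neighbour of $x$ in $C_i$ can separate $x$ from $y$ in $G[C_i\cup\{x,y\}]$, because two such neighbours would both lie on any fixed $x$--$y$ path and the edge from $x$ to the later one shortcuts past the earlier one; combined with the $2$-connectivity of $G[C_i\cup\{x,y\}]+xy$ and the regularity count $\lvert N(x)\cap C_1\rvert+\lvert N(x)\cap C_2\rvert=\Delta\ge 3$, this does give $G-\{v_1,v_2\}$ connected. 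One sentence in the block-decomposition step is stated too strongly: a block of $G$ may well be an odd cycle or a small complete graph ($K_2$, $K_3$); what your argument needs, and what is true, is only that a block $B$ with $\Delta(B)=\Delta\ge 3$ is neither an odd cycle (its maximum degree would be $2$) nor $K_{\Delta+1}$ (its cut vertex would have degree exceeding $\Delta$ in $G$), all other blocks being disposed of by the trivial bound $\chi(B)\le\Delta(B)+1\le\Delta$ — so this is a slip of wording, not a gap.
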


Similarly, it is well known (see Lemma~\ref{lem:pouet}(\ref{lem:d})) that every vertex in a $k$-dicritical digraph has degree at least $2(k-1)$ and hence a $k$-dicritical digraph has at least $(k-1)\abs{V(G)}$ arcs. Brooks' theorem was generalised in~\cite{Mohar10} (see also~\cite{AA21}) to digraphs, and implies a simple characterisation of the $k$-dicritical digraphs $G$ with exactly $(k-1)\abs{V(G)}$ arcs. 
For $G$ a digraph, let $\Delta_{max}(G)$ be the maximum over the vertices of $G$ of the maximum of their in-degree and their out-degree.

\begin{thm}[Theorem~2.3 in~\cite{Mohar10}]\label{thm:brooks} Let $G$ be a connected digraph. Then $\dic(G) \le \Delta_{max}(G)+1$ and equality holds if and only if $G$ is a directed cycle, a symmetric cycle of odd length or a symmetric complete digraph on at least $4$ vertices.
\end{thm}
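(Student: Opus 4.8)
\textit{Proof sketch.} Write $\Delta := \Delta_{max}(G)$. I would prove the inequality $\dic(G)\le\Delta+1$ first; it needs no connectivity. Order $V(G)$ arbitrarily as $v_1,\dots,v_n$ and colour greedily, giving $v_i$ any colour of $\{1,\dots,\Delta+1\}$ not already used by an out-neighbour $v_j$ of $v_i$ with $j<i$; since $v_i$ has at most $\Delta$ out-neighbours, such a colour exists. This always yields a valid dicolouring: a monochromatic directed cycle would contain a vertex $v_i$ of largest index, and the successor of $v_i$ on that cycle would be an already-coloured out-neighbour of $v_i$ of the same colour, contradicting the choice of colour for $v_i$. (The point that greedy colouring respects acyclicity as long as one only inspects \emph{out}-neighbours is the substitute, here, for the greedy argument in the classical proof of Brooks' theorem.)

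For the characterisation, assume $\dic(G)=\Delta+1$, and let $H$ be a $(\Delta+1)$-dicritical subdigraph of $G$. By dicriticality (see Lemma~\ref{lem:pouet}(\ref{lem:d})) every vertex of $H$ has in-degree and out-degree at least $\Delta$ in $H$: if $v$ had, say, $d^+_H(v)<\Delta$, the colour missing from the out-neighbourhood of $v$ would extend a $\Delta$-dicolouring of $H-v$ to $H$. As these degrees are also at most $\Delta_{max}(H)\le\Delta$, $H$ is $\Delta$-diregular, so every arc of $G$ incident to $V(H)$ lies in $H$, and $G=H$ by connectivity. Thus $G$ is $(\Delta+1)$-dicritical and $\Delta$-diregular. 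I would also record two standard facts: $G$ is strongly connected (a digraph is dicoloured by dicolouring its strong components independently, so $\dic$ equals the maximum of $\dic$ over those components, and a dicritical digraph therefore has a single one), and the underlying graph of $G$ has no cut vertex (cutting at one and permuting colours so that the two sides agree on it merges two $(\dic(G)-1)$-dicolourings into one). The case $\Delta=0$ reduces $G$ to a single vertex (a degenerate exception, excluded by assuming $G$ has an arc), and $\Delta=1$ makes $G$ a $1$-diregular connected digraph, i.e.\ a single directed cycle; so assume $\Delta\ge 2$ from now on.

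For $\Delta\ge 2$ the core is a digraph analogue of Brooks' case analysis, driven by the greedy observation. Since $G$ is strongly connected, for any vertex $u$ one can order $V(G)$ as $v_1,\dots,v_n=u$ so that each $v_i$ with $i<n$ has an out-neighbour $v_j$ with $j>i$ (build the reverse order starting at $u$, at each step adding a vertex with an out-neighbour already placed; one never gets stuck, since a stuck set would be closed under out-neighbours and so could not reach $u$). Greedy $\Delta$-colouring along such an order succeeds unless the out-neighbourhood of $u$ is rainbow when $u$ is reached. Hence it suffices to find a vertex $u$ with two out-neighbours $x,y$ non-adjacent in the underlying graph such that every vertex of $G-\{x,y\}$ still reaches $u$: then colouring $x$ and $y$ with one colour and extending greedily along an ordering of $V(G)\setminus\{x,y\}$ that ends at $u$ and has the later-out-neighbour property (which exists precisely under that reachability condition) produces a $\Delta$-dicolouring, contradicting $\dic(G)=\Delta+1$. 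One then shows such a configuration exists unless $G=\overleftrightarrow{K_{\Delta+1}}$ or ($\Delta=2$ and $G$ is a symmetric cycle of odd length); as in Brooks' theorem this comes down to two obstructions, namely that every out-neighbourhood and every in-neighbourhood induces a clique of the underlying graph (a rigid situation which one argues forces $G=\overleftrightarrow{K_{\Delta+1}}$, and which for $\Delta=2$ pins down $\overleftrightarrow{K_3}$), or that the reachability cannot be secured for any admissible pair (handled, for $\Delta\ge 3$, by exploiting a smallest vertex-separator of the underlying graph exactly as in the $2$-connected case of Brooks' theorem, and being, for $\Delta=2$, exactly what characterises the symmetric odd cycles).

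The main obstacle is precisely this last analysis. As in the undirected proof it is the low-connectivity case — where no vertex admits two suitably non-adjacent out- or in-neighbours keeping the required reachability — that is delicate, and here the difficulty is compounded by having to track in- and out-neighbourhoods separately and by the appearance of a new extremal family, the directed cycles. A possibly cleaner organisation is to first prove that a $(\Delta+1)$-dicritical $\Delta$-diregular digraph with $\Delta\ge 2$ must be symmetric; then $\dic(G)$ equals the chromatic number of its underlying graph and $\Delta_{max}(G)$ its maximum degree, and the characterisation drops out of the undirected Brooks' theorem quoted above. With this organisation the symmetry claim is the crux: one would take a non-digon arc $e=(a,b)$, use a $\Delta$-dicolouring of $G-e$ (which exists by dicriticality), observe that in every such dicolouring $a$ and $b$ get the same colour and are joined by a monochromatic $b\to a$ dipath, deduce that $N^+(a)$, $N^-(a)$, $N^+(b)$, $N^-(b)$ are all rainbow, and then reach a contradiction by recolouring in the vicinity of $a$ or $b$ — making that contradiction actually materialise is the step I would expect to cost the most effort.
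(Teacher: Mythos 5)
This statement is quoted from the literature (Theorem~2.3 in~\cite{Mohar10}); the paper gives no proof of it, so I can only assess your sketch on its own merits. The parts you actually carry out are correct: the out-greedy argument gives $\dic(G)\le\Delta_{max}(G)+1$; Lemma~\ref{lem:pouet}(\ref{lem:d}) plus connectivity correctly reduces the equality case to a $(\Delta+1)$-dicritical, $\Delta$-diregular, strongly connected digraph whose underlying graph has no cut vertex; the cases $\Delta\le 1$ are disposed of properly; and the reduction "if there is a vertex $u$ with two non-adjacent out-neighbours $x,y$ such that every vertex of $G-\{x,y\}$ reaches $u$ by a directed path, then $G$ is $\Delta$-dicolourable" is sound as you set it up.

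The gap is that the entire extremal analysis for $\Delta\ge 2$ — which is the actual content of the characterisation — is asserted rather than proved. You write that the failure of the configuration "comes down to two obstructions" and that the rigid case "one argues forces $G=\bid K_{\Delta+1}$", and that the low-connectivity case is "handled \dots exactly as in the $2$-connected case of Brooks' theorem"; none of this is executed, and it is precisely here that the directed setting diverges from Brooks. The condition you need is not connectivity of $G-\{x,y\}$ but directed reachability of $u$ from every vertex of $G-\{x,y\}$, a genuinely different and stronger requirement (a strongly connected $G$ can easily lose this property after deleting two vertices even when the underlying graph of $G-\{x,y\}$ stays connected), so the undirected separator argument does not transfer verbatim; and the rigidity step (all out- and in-neighbourhoods are underlying cliques $\Rightarrow$ $G=\bid K_{\Delta+1}$) needs an argument that tracks orientations, since a priori these cliques need not be symmetric. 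Your alternative organisation makes the missing piece explicit: the claim that a $\Delta$-diregular $(\Delta+1)$-dicritical digraph with $\Delta\ge 2$ is symmetric is essentially equivalent to the theorem itself, and you concede you do not see how to make the final recolouring contradiction materialise. So what you have is a correct reduction to the hard core of the directed Brooks theorem, with that core left unproven — which is why this result has its own nontrivial proofs in the literature (\cite{Mohar10}, \cite{AA21}) rather than following from a routine adaptation of the undirected argument.
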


In 1957, Dirac went one step further and proved the following.

\begin{thm}[\cite{Dirac57}]\label{thm_nono:dirac1}
Let $k \geq 4$ and $G$ a $k$-critical graph. If $G$ is not $K_k$, then \[2\abs{E(G)} \ge (k-1)\abs{V(G)}+k-3.\]

\end{thm}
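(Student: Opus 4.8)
The plan is to argue via the total ``excess degree''. Write $n=|V(G)|$ and recall two classical facts about any $k$-critical graph: it is connected and, more strongly, $(k-1)$-edge-connected; and every vertex has degree at least $k-1$. Partition $V(G)=L\cup H$, where $L$ is the set of \emph{low} vertices (those of degree exactly $k-1$) and $H$ the set of \emph{high} vertices (degree at least $k$). Since $\deg_G(v)-(k-1)=0$ for $v\in L$,
\[
2|E(G)|-(k-1)n \;=\; \sum_{v\in V(G)}\bigl(\deg_G(v)-(k-1)\bigr) \;=\; \sum_{v\in H}\bigl(\deg_G(v)-(k-1)\bigr) \;=:\; E^{\ast},
\]
so the claim is exactly $E^{\ast}\ge k-3$.

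I would first clear the easy regimes. If $H=\emptyset$ then $G$ is $(k-1)$-regular and connected, so by Brooks' theorem $G$ is $K_k$ or an odd cycle; since $k\ge 4$ the odd cycle is impossible, so $G=K_k$, contradicting the hypothesis. Hence $H\ne\emptyset$. Each $v\in H$ contributes at least $1$ to $E^{\ast}$, so $E^{\ast}\ge|H|$ and the theorem holds as soon as $|H|\ge k-3$; in particular this settles $k=4$ entirely. Finally, using $(k-1)$-edge-connectivity: for every connected component $C$ of $G[L]$ the edge cut separating $V(C)$ from the rest of $G$ consists only of edges to $H$, hence has at least $k-1$ edges; writing $t$ for the number of components of $G[L]$ this gives $e(L,H)\ge t(k-1)$, and therefore
\[
E^{\ast} \;=\; 2\,e(G[H]) + e(L,H) - (k-1)|H| \;\ge\; 2\,e(G[H]) + (k-1)\bigl(t-|H|\bigr),
\]
which is at least $k-1$ whenever $t\ge|H|+1$. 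So we are reduced to the case $k\ge 5$, $1\le|H|\le k-4$ and $t\le|H|$.

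In this remaining regime the graph is structurally rigid, and this is where the real work is. Combining a proper colouring of $G[L]$ with a proper colouring of $G[H]$ on disjoint palettes yields a proper colouring of $G$, so $k=\chi(G)\le\chi(G[L])+|H|$, whence $\chi(G[L])\ge k-|H|\ge 4$; in particular $L\ne\emptyset$ and some component $C$ of $G[L]$ has $\chi(C)\ge 4$. Since $C$ is connected with $\Delta(C)\le k-1$, Brooks' theorem together with $\chi(C)\ge 4$ strongly constrains $C$: either $C$ is a complete graph $K_m$ with $4\le m\le k-1$ (it cannot be $K_k$, for then $C$ would be a whole component of $G$ and $G=K_k$), or else Brooks gives $\chi(C)\le\Delta(C)$; in both situations $C$ has many vertices whose degree inside $G[L]$ is strictly less than $k-1$ — vertices which, having total degree $k-1$ in $G$, must each send edges into the small set $H$. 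The finish then combines three ingredients: the $(k-1)$-edge-connectivity cuts around the at most $|H|\le k-4$ components of $G[L]$; the fact (a theorem of Gallai) that every block of $G[L]$ is a complete graph or an odd cycle, so each end-block of $C$ contributes several vertices of $G[L]$-degree below $k-1$; and a second use of criticality via the rainbow-neighbourhood property — in every $(k-1)$-colouring of $G-v$ with $v\in L$, the $k-1$ neighbours of $v$ receive all $k-1$ colours. A careful count of the edges from $G[L]$ into $H$, split into sub-cases according to $m$ and according to how these edges are distributed among the few vertices of $H$, then yields $2\,e(G[H])+(k-1)(t-|H|)\ge k-3$, which is the claim; tracking the equality cases in this count simultaneously sets up the characterisation of the extremal graphs.

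The main obstacle is exactly this last case. The two cheap bounds, $E^{\ast}\ge|H|$ and the edge-connectivity estimate, already give the theorem except when $G[L]$ has very few components and $G$ has very few high vertices; in that narrow window one must squeeze the full additive term $k-3$ out of the fine block structure of the low-vertex subgraph while re-invoking colour-criticality, and make the bookkeeping land on the sharp constant rather than on something weaker. Getting that case analysis to close — and to close with equality characterised — is the crux.
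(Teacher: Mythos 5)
Your skeleton is fine as far as it goes: the identity $2\abs{E(G)}-(k-1)\abs{V(G)}=\sum_{v\in H}\bigl(\deg(v)-(k-1)\bigr)=E^{\ast}$, the Brooks argument disposing of $H=\varnothing$, the bound $E^{\ast}\ge\abs H$ (which indeed settles $k=4$), and the edge-connectivity estimate $e(L,H)\ge t(k-1)$ handling $t>\abs H$ are all correct. But the proof stops exactly where the theorem lives. In the residual case ($k\ge 5$, $1\le\abs H\le k-4$, $t\le\abs H$) you only name ingredients — Gallai's block structure of $G[L]$, $(k-1)$-edge-connectivity, the rainbow-neighbourhood property — and assert that ``a careful count \dots yields'' the bound; that count is never carried out, and it is not bookkeeping. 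Writing $\sigma(C)$ for the number of edges from a component $C$ of $G[L]$ to $H$, what you actually have is $E^{\ast}=2e(G[H])+\sum_C\sigma(C)-(k-1)\abs H$ together with the cheap bound $\sigma(C)\ge k-1$; when $t=\abs H$ and $e(G[H])=0$ this gives only $E^{\ast}\ge 0$, a shortfall of $k-3$, and that is precisely the configuration of the extremal graphs (in $\m D_k$ one has $t=\abs H\in\{1,2\}$, $e(G[H])=0$, and the missing $k-3$ comes entirely from the low components' deficiencies exceeding $(k-1)t$). So the whole content of the theorem is to prove that in this regime $\sum_C\sigma(C)\ge(k-1)\abs H+(k-3)-2e(G[H])$ unless $G=K_k$. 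For instance with $\abs H=1$ the degree condition on the unique high vertex only gives $\sigma\ge k$, while you need $\sigma\ge 2k-4$; closing that gap requires a genuine analysis of which Gallai trees of maximum degree at most $k-1$ and chromatic number at least $k-\abs H$ can attach to at most $k-4$ high vertices, with criticality and the rainbow property used to exclude the cheap configurations. None of this appears in the proposal; ``split into sub-cases according to $m$ and how these edges are distributed'' is a plan, not an argument, and there is no reason given that the count ``lands on the sharp constant''.

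For calibration: the paper does not reprove this undirected statement (it cites Dirac) and its proof of the directed generalisation (Theorem~\ref{thm:dirac}) spends essentially all of its effort — the contraction arguments manufacturing $\bid K_k$ subdigraphs, the choice of the triple $(W,x,y)$ maximising digons, and the excess estimates on $z$, $\{y,z\}$, $W_y$ and $R_y$ — on what corresponds to the case you leave open. That is strong evidence that the step you defer to ``a careful count'' is the bulk of the work; as it stands the proposal has a genuine gap rather than a routine omission.
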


We generalise this theorem to digraphs:
\begin{thm} \label{thm_intro:directed_dirac_1} 
Let $k \geq 4$ and $G$ a $k$-dicritical digraph. If $G$ is not $\bid K_k$, then \[\abs{A(G)} \ge (k-1) \abs{V(G)} + k-3.\]
\end{thm}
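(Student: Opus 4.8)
The plan is to mimic the undirected proof of Dirac's theorem (Theorem~\ref{thm_nono:dirac1}) using the low-vertex subgraph and discharging/structural arguments, adapted to the directed setting where ``degree at least $k-1$'' becomes ``degree at least $2(k-1)$'' and where $\bid K_k$ plays the role of $K_k$.

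\medskip

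\noindent\textbf{Approach.} Let $G$ be a $k$-dicritical digraph that is not $\bid K_k$, and let $L$ be the set of vertices of degree exactly $2(k-1)$ (the ``low'' vertices). By Lemma~\ref{lem:pouet}(\ref{lem:d}) every vertex has degree $\ge 2(k-1)$, so it suffices to show that either $G$ has a vertex of degree $\ge 2(k-1) + (k-3)$, or there are several vertices of excess degree summing to at least $k-3$; in all remaining cases we derive a contradiction by showing $G = \bid K_k$. So assume for contradiction that $\sum_{v \in V(G)} (d(v) - 2(k-1)) \le k-4$, i.e.\ the total ``excess'' is at most $k-4$. Then at most $k-4$ vertices lie outside $L$, and those do so by a bounded amount. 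The heart of the matter is to understand $G[L]$, the subdigraph induced by the low vertices.

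\medskip

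\noindent\textbf{Key steps.} First I would establish that $G[L]$ is ``Gallai-like'': in the undirected Gallai theory, each block of the low-vertex subgraph of a $k$-critical graph is a clique or an odd cycle. The directed analogue — presumably already available or provable via Theorem~\ref{thm:brooks} applied locally — should say that each ``brick'' of $G[L]$ is a $\bid K_{j}$, a directed cycle, or a symmetric odd cycle, with the relevant bricks here being symmetric complete digraphs because low vertices have in- and out-degree $k-1$ forcing digons. Second, using that the total excess is tiny ($\le k-4$), I would argue that $L$ is almost all of $V(G)$ and that $G[L]$ is connected and is essentially a single $\bid K_{k-1}$ or a small union of such, together with a bounded number of exceptional vertices. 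Third, a counting argument: a vertex $v \notin L$ adjacent to a component $\bid K_{k-1}$ of $G[L]$ must be adjacent (with digons, by dicriticality and the degree constraints) to enough of that component that its degree exceeds $2(k-1)$ by a controlled amount; summing these excesses over the few non-low vertices and the components they attach to forces the total excess to be at least $k-3$ unless the whole digraph collapses to $\bid K_k$. The contradiction with our assumption of excess $\le k-4$ finishes the proof.

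\medskip

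\noindent\textbf{Main obstacle.} The hard part will be the directed Gallai-type structural analysis of $G[L]$ and the case where $G[L]$ has several components or where the few high-degree vertices interact with $G[L]$ in complicated ways: unlike the undirected case, one has to track digons versus single arcs carefully, and dicriticality gives ``no $(k-1)$-dicolouring extends'' constraints that must be converted into adjacency statements. I expect to need a lemma (either cited or proved here) characterising when a $(k-1)$-dicolouring of $G - v$ fails to extend — analogous to the fact that in a $k$-critical graph, the neighbours of a vertex in any $(k-1)$-colouring of $G-v$ see all $k-1$ colours — and then to combine this with the degree bound $2(k-1)$ to pin down $G$ exactly when the excess is less than $k-3$. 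Handling the boundary case $\abs{V(G)} $ close to $k$ separately (small cases) may also be necessary, since the asymptotic counting is cleanest when $G$ is large.
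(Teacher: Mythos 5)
There is a genuine gap: what you have written is a plan whose central step is asserted rather than proved, and two of its concrete claims are wrong. First, the reduction is off by a factor of two: since $\sum_{v}(d(v)-2(k-1)) = 2\abs{A(G)}-2(k-1)\abs{V(G)}$, the desired bound $\abs{A(G)} \ge (k-1)\abs{V(G)}+k-3$ is equivalent to total excess at least $2(k-3)$, so the hypothesis to contradict is ``excess at most $2k-7$'', not ``at most $k-4$''; as set up you would only obtain roughly half of Dirac's bound. Second, your structural claim about $G[L]$ is false: a vertex of degree $2(k-1)$ has $d^+=d^-=k-1$ but nothing forces digons, and in the directed Gallai forest of Theorem~\ref{thm:directed_Gallai} blocks that are directed cycles (and symmetric odd cycles, and arcs) genuinely occur, so you cannot reduce to components that are symmetric complete digraphs. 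Third, the step that is supposed to finish the proof --- that every non-low vertex attached to a component of $G[L]$ accumulates a ``controlled amount'' of excess and that these excesses sum to the required quantity ``unless the whole digraph collapses to $\bid K_k$'' --- is precisely the statement of the theorem restated as a hope; no mechanism is given for turning the non-extendability of $(k-1)$-dicolourings into the needed adjacency/digon statements, and none of the hard configurations (several Gallai-tree components, cycle blocks, simple arcs at odd-degree high vertices) is addressed.

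For comparison, the paper's proof (Theorem~\ref{thm:dirac}) does not use the low-vertex Gallai forest at all for this statement. It takes a minimal counterexample, shows $G$ contains no $\bid K_k$ minus an arc via a colour-exchange argument, proves a contraction claim: if two non-adjacent vertices are identified and some $k$-dicritical subdigraph of the contraction misses a vertex of $G$, then that subdigraph is $\bid K_k$ (this uses $(k-1)$-arc-connectivity, Corollary~\ref{lem:arcconnection}, and the minimality of $G$); it then deduces the existence of a $\bid K_{k-1}$ and carries out a careful excess analysis around two $(k-1)$-cliques $W$ and $R$ interacting through two vertices $y$ and $z$, with an extremal choice of the triple $(W,x,y)$ and an equality-case analysis at the end. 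These are exactly the ingredients your sketch defers with ``presumably available'', ``I would argue'' and ``I expect to need a lemma'', so the proposal cannot be accepted as a proof in its current form.
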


Dirac later identified the graphs for which the bound is tight (whose set we denote $\m D_k$, see Section~\ref{subsec:Refined_Dirac} for a definition) and improved his bound. Recall that the Kronecker symbol $\delta_{i,j}$ is equal to $1$ if $i=j$ and $0$ otherwise. 

\begin{thm}[\cite{Dirac74}]\label{thm_nono:dirac2}
Let $k \ge 4$ and let $G$ a $k$-critical graph. If $G$ is neither $K_k$ nor in $\m D_k$, then \[2\abs{E(G)} \ge (k-1) \abs{V(G)} + (k-1-\delta_{k, 4}).\]
\end{thm}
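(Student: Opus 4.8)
The plan is to build on Theorem~\ref{thm_nono:dirac1}. That theorem already gives $2\abs{E(G)} \ge (k-1)\abs{V(G)} + k-3$ for every $k$-critical graph $G\neq K_k$; so it suffices to determine which $k$-critical graphs attain this bound (for $k=4$), respectively attain it or come within one of it (for $k\ge 5$), and to recognise them as exactly the members of $\m D_k$. Throughout put $D(G) := 2\abs{E(G)} - (k-1)\abs{V(G)} = \sum_v (\deg_G(v)-(k-1))$. Since every vertex of a $k$-critical graph has degree at least $k-1$, only the vertices of degree $\ge k$ contribute, so $D(G)\ge \abs H$, where $H$ denotes the set of these ``high'' vertices. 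If $H=V(G)$ then $D(G)\ge\abs{V(G)}>k$ (a $k$-critical graph on at most $k$ vertices being $K_k$) and we are done; so we may assume some vertex has degree exactly $k-1$, and we let $G_L := G-H$ be the subgraph induced by these ``low'' vertices.

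The engine of the argument is Gallai's structure theorem: $G_L$ is a \emph{Gallai forest}, i.e.\ every block of $G_L$ is a complete graph or an odd cycle. Feeding this into the standard criticality toolkit --- $G$ is $2$-connected and has no clique cutset, and for every edge $e=xy$ and every $(k-1)$-colouring of $G-e$ the ends $x,y$ receive the same colour and lie in a common bichromatic Kempe component --- one proves the key structural lemma: if $G\ne K_k$ is $k$-critical and $D(G)$ is small, then $H$ is tiny, $G_L$ is very rigid (a leaf block is a clique $K_p$, or an odd cycle only when $k=4$), a Kempe-chain argument dictates how the few high vertices attach to $G_L$ (for instance $G_L$ cannot be a single $K_{k-1}$ joined to one high vertex, as that would give $G=K_k$), and consequently $G$ decomposes along the recursive scheme of Section~\ref{subsec:Refined_Dirac} that defines $\m D_k$ --- at bottom, $G$ is a Hajós-type join $G_1*G_2$ of two strictly smaller $k$-critical graphs, for which $D$ behaves additively with a surplus of $k-3$, i.e.\ $D(G)=D(G_1)+D(G_2)+(k-3)$. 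Carrying out this case analysis --- ruling out every near-extremal shape of the pair $(G_L,H)$ except those assembled into $\m D_k$, and tracking where an odd-cycle leaf block of $G_L$ can occur --- is the main obstacle: it is long and delicate, and it is exactly where the family $\m D_k$, the exception $K_k$, and the exceptional behaviour at $k=4$ all originate.

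With the lemma in hand I would conclude as follows. Let $G\notin\m D_k\cup\{K_k\}$ be $k$-critical. For $k\ge 5$: if $D(G)\le k-2$ the lemma writes $G=G_1*G_2$ with $D(G_1)+D(G_2)=D(G)-(k-3)\le 1$; but each $D(G_i)$ is either $0$ or at least $k-3\ge 2$ (by Theorem~\ref{thm_nono:dirac1}, since $D(G_i)<k-3$ would force $G_i=K_k$ and hence $D(G_i)=0$), so both vanish, both $G_i=K_k$, hence $D(G)=k-3$ and $G\in\m D_k$, a contradiction. Thus $D(G)\ge k-1=k-1-\delta_{k,4}$, as claimed. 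For $k=4$: the decomposition above is available only when $D(G)\le k-3=1$ (and then forces $G\in\m D_4$), because the value $D(G)=2$ is genuinely attained by graphs that are \emph{not} Hajós-type joins --- e.g.\ $K_1\vee C_5$, whose low-vertex subgraph is the odd cycle $C_5$; so $G\notin\m D_4$ forces only $D(G)\ge 2 = k-1-\delta_{4,4}$. This is the source of the Kronecker correction: the extra unit one gains for $k\ge 5$ is precisely what the odd-cycle leaf block that a Gallai forest is allowed to carry takes away at $k=4$. (For $\abs{V(G)}$ large in terms of $k$ one can shortcut the argument by invoking the Kostochka--Yancey edge bound, which already exceeds the Dirac bound there; but the extremal graphs all have small order, where the Gallai-forest analysis is unavoidable.)
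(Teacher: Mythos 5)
This statement is quoted from Dirac's 1974 paper and is not proved anywhere in the present text, so there is no internal proof to compare with; judged on its own, your proposal is a plan rather than a proof. All of the actual difficulty is packed into your ``key structural lemma'' (every $k$-critical graph $G\ne K_k$ with small excess $D(G)$ decomposes as a join-type composition of two smaller $k$-critical graphs), which you state vaguely, acknowledge to be ``long and delicate'', and do not prove. That lemma is essentially equivalent to the theorem itself: once you know that every $G\ne K_k$ with $D(G)\le k-2$ arises from two $k$-critical graphs by such a composition, the rest is the two-line bookkeeping you give (the composition adds $k-3$ to the excess, each factor has excess $0$ or at least $k-3$, hence both factors are $K_k$ and $G\in\m D_k$). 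Listing the ingredients (Gallai's block structure of the low-degree subgraph, absence of clique cutsets, Kempe-chain constraints on how the high vertices attach) does not substitute for the case analysis that constitutes Dirac's argument, and none of it is carried out here.

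There is also a precision problem in the lemma as phrased. If ``Hajós-type join'' means the Hajós join, the lemma is false: the Hajós join of two $k$-critical graphs has exactly one vertex of raised degree, whereas the members of $\m D_k$ with both parts $\bid K_n$, $\bid K_{k-1-n}$ of size at least two (which exist for $k\ge 5$ and have excess exactly $k-3$) have two vertices of positive excess, so they are not Hajós joins of two $k$-critical graphs; one needs the more general Dirac/Ore (DHGO) composition, under which $\m D_k$ is exactly the set of compositions of two copies of $K_k$ and the edge/vertex counts you use still hold. Relatedly, Section~\ref{subsec:Refined_Dirac} does not define $\m D_k$ recursively by such joins but explicitly as $\bid C_5(\bid K_{k-2},\bid K_1,\bid K_n,\bid K_{k-1-n},\bid K_1)$, so the identification of the composition closure with $\m D_k$ is an additional (easy, but unstated) verification. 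Your peripheral remarks are sound --- $D$ behaves additively with surplus $k-3$ under the composition, $W_5=K_1\vee C_5$ shows the bound $D\ge 2$ is tight at $k=4$ and explains the $\delta_{k,4}$ correction, and the Kostochka--Yancey bound supersedes Dirac's for large order --- but they do not close the central gap: the threshold version of the decomposition lemma (at $k-2$ for $k\ge5$, at $1$ for $k=4$) is exactly what must be proved, and it is missing.
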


It turns out that our bound is also tight exactly for the digraphs in $\m D_k$ (via the identification between graphs and symmetric digraphs):

\begin{thm}\label{thm_intro:directed_dirac_2} 
Let $k \ge 4$ and $G$ be a $k$-dicritical digraph. If $G$ is neither $\bid K_k$ nor $\m D_k$, then:\[\abs{A(G)} \ge (k-1)\abs{V(G)} + (k-2).\]
\end{thm}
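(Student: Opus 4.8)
The plan is to reduce Theorem~\ref{thm_intro:directed_dirac_2} to a structural statement about the equality case of Theorem~\ref{thm_intro:directed_dirac_1}. Let $G$ be $k$-dicritical with $G \neq \bid K_k$ and $G \notin \m D_k$, and suppose toward a contradiction that $\abs{A(G)} \le (k-1)\abs{V(G)} + (k-3)$; by Theorem~\ref{thm_intro:directed_dirac_1} the reverse inequality holds as well, so $\abs{A(G)} = (k-1)\abs{V(G)} + (k-3)$. The only $k$-dicritical digraph on at most $k$ vertices is $\bid K_k$ (by Lemma~\ref{lem:ord} there are at least $k$ vertices; on exactly $k$ vertices, any pair joined by at most one arc induces an acyclic subdigraph and could be merged into one colour class, so $\dic(G)\le k-1$, impossible), hence $\abs{V(G)} \ge k+1$. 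The goal is then to prove that \emph{$G$ is the symmetric digraph $\bid H$ of some graph $H$}. Granting this, $H$ is $k$-critical, $H \neq K_k$ since $\abs{V(H)} = \abs{V(G)} \ge k+1$, and $2\abs{E(H)} = \abs{A(G)} = (k-1)\abs{V(H)} + (k-3) < (k-1)\abs{V(H)} + (k-1-\delta_{k,4})$, so Theorem~\ref{thm_nono:dirac2} forces $H \in \m D_k$ and hence $G \in \m D_k$, contradicting the choice of $G$.

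To prove that $G$ is symmetric I would refine the arguments that yield Theorem~\ref{thm_intro:directed_dirac_1}. For a vertex $v$ set $\mathrm{exc}(v) = d_G(v) - 2(k-1)$; by Lemma~\ref{lem:pouet}(\ref{lem:d}) each vertex has in- and out-degree at least $k-1$, so $\mathrm{exc}(v)\ge 0$. Call $v$ \emph{low} if $\mathrm{exc}(v)=0$, let $L$ be the set of low vertices and $G_L = G[L]$. Summing degrees, $\sum_{v}\mathrm{exc}(v) = 2\abs{A(G)} - 2(k-1)\abs{V(G)} = 2(k-3)$, so at most $2(k-3)$ vertices are non-low and $G_L$ contains all but boundedly many vertices and arcs of $G$. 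The structural engine is the directed analogue of Gallai's tree theorem — the result underlying the Stiebitz-type statement of the paper, obtained by applying the directed Brooks theorem (Theorem~\ref{thm:brooks}) blockwise: every block of $G_L$ is a directed cycle, a symmetric odd cycle, or a symmetric complete digraph $\bid K_m$ with $m \ge 4$. Indeed, if a block $B$ of $G_L$ were none of these, then $\dic(B) \le \Delta_{max}(B) \le k-1$ by Theorem~\ref{thm:brooks}; one then takes a $(k-1)$-dicolouring of $G$ minus the interior of an end-block $B$ (which exists by dicriticality) and extends it across $B$ by a Brooks-type precolouring-extension argument, contradicting $\dic(G)=k$.

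Now comes the accounting, which is the heart of the proof. The budget $2(k-3)$ is exactly what a single Hajós-type join of two copies of $\bid K_k$ costs — one shared vertex and one extra digon — and one must show nothing cheaper occurs. The key local facts are: any block of $G_L$ other than a $\bid K_k$, any low vertex lying in two blocks of $G_L$, and any low vertex incident to $V(G)\setminus L$ or to a single (non-digon) arc, each forces a strictly positive amount of excess onto nearby vertices; for otherwise a bounded subdigraph assembled from a few such blocks would already have dichromatic number strictly less than $k$ (again by the extension argument above, since each such block has small dichromatic number while dicriticality forbids this). A careful finite case analysis — the directed counterpart of Dirac's analysis over Gallai trees — then shows that $2(k-3)$ can pay for at most one such ``defect'', that this defect must be symmetric (a single arc at a low vertex would itself cost excess the budget cannot afford), and that deleting it splits $G$ into two vertex-disjoint copies of $\bid K_k$ sharing one vertex; thus $G = \bid H$ with $H$ $k$-critical, as needed. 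Conversely $\bid H$ is $k$-dicritical with exactly $(k-1)\abs{V(H)}+(k-3)$ arcs for every $H \in \m D_k$ (by Dirac's theorem and $\chi = \dic$ on symmetric digraphs), so the family $\m D_k$ cannot be reduced.

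The step I expect to be the main obstacle is this accounting: turning the global arc-deficit $2(k-3)$ into a complete local description of $G$. On top of the intricacy already present in Dirac's undirected argument, one must rule out the genuinely directed configurations with no undirected analogue — blocks of $G_L$ that are directed cycles, and low vertices joined to the rest of $G$ by a single arc rather than a digon — showing that each either over-spends the budget or admits a dicolouring contradicting dicriticality; and the Brooks-type precolouring-extension step must be set up carefully enough (e.g.\ via list- or DP-dicolouring) to absorb the interaction between the interior of an end-block and the high vertices adjacent to it. A secondary, more routine task is to verify that the structure extracted agrees with the definition of $\m D_k$ in Section~\ref{subsec:Refined_Dirac}.
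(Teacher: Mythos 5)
Your opening reduction is sound and is in fact the paper's first move: since every member of $\m D_k$ and $\bid K_k$ is symmetric, the case where $G$ is symmetric follows from Dirac's Theorem~\ref{thm_nono:dirac2} via the graph/symmetric-digraph correspondence, so the whole content of the theorem is the claim that a \emph{non-symmetric} $k$-dicritical digraph has excess at least $2(k-2)$. But that claim is exactly where your proposal stops being a proof. The ``accounting'' over the directed Gallai forest of low vertices is not carried out: the local facts you postulate (that every non-$\bid K_k$ block, every cut vertex of $G[S]$, every simple arc at a low vertex forces extra excess nearby) are stated without argument, and you yourself flag this step as the main obstacle. Worse, the asserted outcome of the case analysis --- that the tight configuration is two copies of $\bid K_k$ sharing a vertex --- cannot be what a correct analysis yields: every digraph in $\m D_k$ has excess exactly $2(k-3)$, and for $2 \le n \le k-2$ the digraph $\bid C_5(\bid K_{k-2}, \bid K_1, \bid K_n, \bid K_{k-1-n}, \bid K_1)$ is not a Haj\'os join of two $\bid K_k$'s (its excess is split over two non-adjacent vertices), so any complete local description must accommodate the whole family, and local facts strong enough to force your claimed structure would be false. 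A smaller inaccuracy pointing the same way: blocks of the directed Gallai forest may also be single arcs (and directed cycles of any length), and these genuinely occur; your block list silently drops them, yet excluding such configurations is a substantial part of the work.

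For comparison, the paper does not try to prove directly that $G$ is symmetric by discharging on $G[S]$. After the same reduction to the non-symmetric case it runs an induction on $k$ with a minimal counterexample: a separate ad hoc argument disposes of $k=4$; then it picks an acyclic set $R$ maximising $(\eps(R),\abs R)$, shows $\eps(R)\ge 2$, shows $\bid K_{k-1} \subseteq G-R$ by applying the induction hypothesis to a $(k-1)$-dicritical subdigraph of $G-R$ and exploiting the explicit structure of $\m D_{k-1}$, and then derives a contradiction through a sequence of recolouring claims (greedy extension through a $(k-1)$-clique, forcing monochromatic walks, counting digons to the clique). None of this machinery, nor any workable substitute for it, appears in your proposal, so the argument has a genuine gap at its core.
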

The perspicacious reader will notice that our bound is weaker than Dirac's when $k \geq 5$. Yet our bound is tight for some digraphs (which are thus not symmetric, see Section~\ref{subsec:Refined_Dirac}). 

It is well known that the only $3$-critical graphs are odd cycles, which is the reason why Dirac's two mentioned results deal with $k \geq 4$. 
However, $3$-dicritical digraphs are not as simple, as witnessed by the fact that deciding if a digraph is $2$-dicolourable is $NP$-complete~\cite{Bokal2004}. 
We prove the following, where $\mathcal D'_3$ is a class of $3$-dicritical digraphs defined in Section~\ref{subsec:dirack=3}: 

\begin{thm}\label{thm_intro:directed_dirac_k=3}
Let $G$ be a $3$-dicritical digraph. If $G$ is not a symmetric cycle of odd length, then 
\[\abs{A(G)} = 2\abs{V(G)} + 1\]
if and only if $G \in \mathcal D'_3$, and otherwise 
\[\abs{A(G)} \geq 2\abs{V(G)} + 2\]
\end{thm}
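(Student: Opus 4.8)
The plan is to follow the pattern of Dirac-type proofs: establish a lower bound of $2|V(G)|+1$ arcs for all $3$-dicritical digraphs that are not odd symmetric cycles, then analyze exactly when equality can hold. First I would invoke the degree bound from Lemma~\ref{lem:pouet}(\ref{lem:d}): every vertex of a $3$-dicritical digraph has degree at least $4$, so $|A(G)| \geq 2|V(G)|$, with equality iff $G$ is $4$-regular. By Theorem~\ref{thm:brooks}, a connected $4$-regular digraph with $\dic(G)=3$ must be a symmetric complete digraph $\bid K_4$ (the directed cycle and odd symmetric cycle have $\Delta_{max}=1$ or $2$, not $3$; note $\bid K_4$ is $4$-regular). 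But $\bid K_4$ has $\dic = 4$, contradiction — so in fact no $3$-dicritical digraph other than an odd symmetric cycle is $4$-regular, giving $|A(G)| \geq 2|V(G)|+1$ for all such $G$. One subtlety: I must handle connectivity, since dicritical digraphs are connected (a standard fact, presumably in Lemma~\ref{lem:pouet}), so Brooks applies directly.

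Next, for the characterisation, I would study the structure of a $3$-dicritical digraph $G$ with exactly $2|V(G)|+1$ arcs. Such a $G$ has exactly one vertex of degree $5$ (or one vertex of higher degree offset by... no: the degree excess over $4n$ is exactly $2$, so either one vertex of degree $6$ and the rest degree $4$, or two vertices of degree $5$ and the rest degree $4$). I would denote the ``high-degree'' part $H = \{v : \deg(v) > 4\}$ and the ``low part'' $L = \{v: \deg(v)=4\}$, and use the Gallai-type / Kempe-chain machinery for dicritical digraphs: the subdigraph induced by the low-degree vertices has restricted structure. The natural tool here is the analogue of Gallai's theorem on the low-vertex subgraph — in a $k$-dicritical digraph, the subdigraph induced on vertices of degree exactly $2(k-1)$ is a ``directed Gallai forest'' (a digraph each of whose blocks is a directed cycle, an odd symmetric cycle, or a symmetric complete digraph). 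This should be a lemma stated earlier or derivable; combined with the tiny total excess of $2$, it pins down $G$ up to the few configurations defining $\mathcal D'_3$. I would then verify that each digraph in $\mathcal D'_3$ indeed is $3$-dicritical with exactly $2|V(G)|+1$ arcs (the easy direction), and conversely that the structural constraints force membership in $\mathcal D'_3$.

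The main obstacle I expect is the converse/forward direction: showing that the structural constraints (low-vertex subdigraph is a directed Gallai forest, excess degree exactly $2$, dicriticality) genuinely force $G \in \mathcal D'_3$ and nothing else. This requires careful case analysis on how the few high-degree vertices attach to the Gallai forest, ruling out configurations that either fail to be $2$-dicolourable-critical or that have too many arcs, and this is where the directed setting diverges from the undirected one — digons versus single arcs multiply the cases, and one must track dicolourings (acyclic partitions) rather than proper colourings, so Kempe-chain arguments must be replaced by arguments about recolouring along directed structures. A secondary difficulty is simply that, unlike the undirected $3$-critical case (only odd cycles), here $2$-dicolourability is $NP$-complete, so there is no global classification to lean on; the argument must be entirely local, driven by the arc-count slack. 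Once the case analysis is organised by the block structure of the low-vertex subdigraph and the placement of the one or two excess vertices, the remaining verifications should be routine.
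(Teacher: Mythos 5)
Your outline retraces the same route as the paper's proof (the directed Brooks theorem for the lower bound $\eps(G)\ge 2$, Theorem~\ref{thm:directed_Gallai} for the Gallai-forest structure of the degree-$4$ vertices, then an analysis of how the one or two excess vertices attach), but it stops exactly where the real work begins: the forward implication that excess $2$ forces $G\in\mathcal D'_3$ is only asserted to be an organisable, ``routine'' case analysis, and it is not. Two concrete ingredients are missing. First, the digraphs in $\mathcal D'_3$ contain arbitrarily long odd symmetric paths, so no bounded local analysis around the high-degree vertices can terminate; the paper's argument is an induction on a minimal counterexample, with reduction steps that shrink symmetric subpaths (for instance the claim that the two interior vertices of a $\bid P_4$ cannot both have degree $4$, proved by deleting them, adding a digon, and producing a smaller $3$-dicritical digraph of the same excess which would again have to lie in $\mathcal D'_3$) --- nothing in your plan plays this role. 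Second, your analysis implicitly needs digons to get going (the degree-$5$ vertices, having odd degree, are incident to at least three simple arcs and at most one digon, etc.), and the digon-free case is not covered by anything you list: the paper disposes of oriented counterexamples by invoking the bound $\abs{A(G)}\ge\frac{7\abs{V(G)}+2}{3}$ for $3$-dicritical oriented graphs (Theorem~\ref{ABHR}), an external result that does not follow from Brooks or the Gallai-forest theorem, and this digon is used again at the very end of the paper's proof to reach the final contradiction. Since, as you note yourself, $2$-dicolourability is NP-complete and there is no classification to lean on, the several pages of recolouring and subcriticality arguments (excluding $\bid K_1$, $\bid K_2$ and $\vec P_2$ leaf blocks of $G[S]$, digons inside $S$ and between $S$ and the excess vertices, the degree-$6$ configuration via extended wheels, \dots) are the theorem; declaring them routine leaves it unproved.

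A small slip in the easy half: $\bid K_4$ is $6$-regular, not $4$-regular. The $4$-regular connected digraphs achieving equality in Theorem~\ref{thm:brooks} with $\dic=3$ are exactly the symmetric odd cycles (including $\bid C_3=\bid K_3$), which are excluded by hypothesis, so your conclusion $\abs{A(G)}\ge 2\abs{V(G)}+1$ still stands once you add the parity observation that $\eps(G)=2\abs{A(G)}-4\abs{V(G)}$ is even --- this is precisely how the paper starts.
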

\bigskip 

Gallai was the first~\cite{Gal63b} to find a lower bound with a better slope than $\frac{1}{2}(k-1)$. His result was improved by Krivelevich~\cite{Kriv97} using Gallai's method together with a result of Stiebitz~\cite{Stiebitz82} that we were able to generalise to digraphs: 

\begin{thm}\label{thm_intro:nbcompconnex} 
Let $k \ge 3$, $G$ a $k$-dicritical digraph such that $S = \{x \in G, d(x) \le 2(k-1)\}$. Then the number of connected components of $G-S$ is at most the number of connected components of $G[S]$. 

\end{thm}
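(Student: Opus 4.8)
The plan is to follow the classical Gallai/Stiebitz strategy adapted to the directed setting. Write $S = \{x \in V(G) : d(x) \le 2(k-1)\}$ (which by Lemma~\ref{lem:pouet}(\ref{lem:d}) is exactly the set of vertices of degree precisely $2(k-1)$, the ``low'' vertices), and let $B = V(G) \setminus S$ be the set of ``high'' vertices. Denote by $\mathcal{C}$ the set of connected components of $G[S]$ and by $\mathcal{D}$ the set of connected components of $G - S = G[B]$. I want an injection, or at least a surjection the other way, witnessing $|\mathcal{D}| \le |\mathcal{C}|$. The natural object to build is an auxiliary bipartite ``incidence'' structure between $\mathcal{C}$ and $\mathcal{D}$ recording which low-components are adjacent to which high-components, and then to show this bipartite graph has no ``deficiency'' on the $\mathcal{D}$ side via a Hall-type argument; the combinatorial core is that a union of high-components cannot be adjacent to too few low-components, because otherwise one could recolour.

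The key recolouring input is the low-vertex subgraph lemma of Gallai type: for any component $C \in \mathcal{C}$, the digraph $G[C]$ together with the constraint coming from the rest of $G$ behaves like a ``Gallai forest''/low-vertex block structure. Concretely, I would first establish the directed analogue of the statement that in $G[S]$ every block is a directed cycle, a symmetric odd cycle, or a symmetric complete digraph (the possible ``tight'' blocks from Theorem~\ref{thm:brooks}); this follows because any proper subdigraph of a $k$-dicritical digraph is $(k-1)$-dicolourable, so $G[S]$ (and each induced piece on low vertices, after removing a vertex from $B$'s neighbourhood) has bounded ``excess'' and the directed Brooks theorem pins down the block structure. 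The precise statement I need is: for every nonempty $T \subseteq S$, the induced subdigraph $G[T]$ has a proper $(k-1)$-dicolouring in which some prescribed vertex is given a prescribed colour, with enough flexibility — this is where one uses that each low vertex has $2(k-2)$ neighbours inside plus exactly $2$ ``missing'' directions, mirroring the undirected ``degree $k-1$, one missing colour'' bookkeeping.

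Next, the contraction/counting step. Suppose for contradiction $|\mathcal{D}| > |\mathcal{C}|$. By a Hall/defect argument there is a nonempty subfamily $\mathcal{D}' \subseteq \mathcal{D}$ whose ``low-neighbourhood'' $N_S(\mathcal{D}') := \{C \in \mathcal{C} : C \text{ has an arc to/from } \bigcup \mathcal{D}'\}$ satisfies $|N_S(\mathcal{D}')| < |\mathcal{D}'|$; pick $\mathcal{D}'$ minimal with this property, so that it is ``tight'' and in particular the bipartite incidence structure restricted to $\mathcal{D}' \cup N_S(\mathcal{D}')$ is connected and has the deficiency concentrated there. Let $H$ be the subdigraph induced by $\bigl(\bigcup \mathcal{D}'\bigr) \cup \bigl(\bigcup N_S(\mathcal{D}')\bigr)$. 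The idea is: $H$ is a proper subdigraph of $G$ (here we use that $G$ is connected and $S \ne \emptyset$ and $B \ne \emptyset$ when both families are nonempty, or handle the degenerate cases separately), hence $\dic(H) \le k-1$; fix a $(k-1)$-dicolouring $\phi$ of $H$. Now I extend $\phi$ to a $(k-1)$-dicolouring of all of $G$, contradicting $\dic(G) = k$. The extension colours the remaining low-components greedily/with the Gallai-forest lemma — each remaining $C \in \mathcal{C}$ lies outside $N_S(\mathcal{D}')$, so its only already-coloured neighbours are in $B \setminus \bigcup \mathcal{D}'$ — and colours the remaining high-components using that they have high degree surplus. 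The deficiency $|N_S(\mathcal{D}')| < |\mathcal{D}'|$ is exactly what makes a ``degree counting'' / discharging argument on $H$ show that some colour class can be rearranged to free up room; alternatively one shows directly that $H$ itself, being built from tight Gallai-type blocks on the $N_S(\mathcal{D}')$ side glued to too few ``anchors'', already forces $\dic(H) \le k-1$ with a prescribed boundary behaviour, so the whole construction closes up.

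The main obstacle I anticipate is the recolouring/extension step — specifically proving the directed Gallai-forest structural lemma for $G[S]$ and, more delicately, showing that a $(k-1)$-dicolouring of the inner part $H$ extends across the low-components not touched by $\mathcal{D}'$. In the undirected case one uses that a low vertex has exactly one ``free'' colour and that Gallai forests are $(k-1)$-list-colourable with one colour pre-used at a leaf; in the directed case ``acyclic'' is a global, not local, constraint, so one cannot naively extend colour by colour and must instead argue that each tight block (directed cycle / symmetric odd cycle / symmetric $K_t$) admits the needed list-dicolouring with a prescribed vertex's colour fixed — this is plausible from Theorem~\ref{thm:brooks} but requires care about which arc orientations create the forbidden monochromatic directed cycles. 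A secondary but routine obstacle is the bookkeeping of degenerate cases: $S = \emptyset$ (trivial, $|\mathcal{C}| = 0 = $ number of components of $G-S$ only if $G$ is empty, but then the statement reads $0 \le 0$ — actually one must check the claim is vacuously/trivially true when $S=\emptyset$ or $S = V(G)$), and the case where $G[S]$ meets every component of $G - S$. I would handle these first to reduce to the generic configuration, then deploy the Hall-deficiency plus recolouring argument above.
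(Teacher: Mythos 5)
Your plan has a genuine gap at its very centre: the Hall-deficiency reduction does not produce anything you can colour. Dicriticality only hands you $(k-1)$-dicolourings of $G$ minus a \emph{nonempty} set of vertices, but in the archetypal situation your digraph $H = G\bigl[\bigcup\mathcal D' \cup \bigcup N_S(\mathcal D')\bigr]$ is all of $G$: already when $\abs{\pi_0(G[S])}=1 < \abs{\pi_0(G-S)}=2$ and the unique low component is adjacent to both high components, the minimal deficient family is $\mathcal D' = \mathcal D$ and $N_S(\mathcal D') = \mathcal C$, so ``$H$ is a proper subdigraph, hence $(k-1)$-dicolourable'' is exactly the statement you are trying to refute. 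This is not a degenerate case to be handled separately; it is the main case. The correct starting point (and the paper's) is a $(k-1)$-dicolouring of $G-S_0$ for a component $S_0$ of $G[S]$ (or of $G-S$), and \emph{all} of the work is in modifying and extending that colouring onto the low vertices. Your extension step is also inverted and incomplete: the leftover high components cannot be coloured ``using their degree surplus'' (large degree makes greedy colouring with $k-1$ colours impossible — they must be coloured from the start, as part of the colouring of $G-S$), and a low component whose vertices all have degree exactly $2(k-1)$ cannot be finished greedily either: one can greedily colour all but one vertex of it (ordering by a BFS from that vertex, as in the paper's Remark~\ref{rq:greedyExtension}), and the last vertex is colourable only if it sees a repeated colour, which must be manufactured by permuting colours \emph{independently} inside two different components of $G-S$ adjacent to it. The heart of the theorem is making these permutations consistent when many low components attach to the same high components; your appeal to an unspecified ``degree counting / discharging argument on $H$'' is precisely this missing argument.

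For comparison, the paper proves a stronger contrapositive statement (Theorem~\ref{thm:nbcompconnex_contr}) by induction on $\abs X$: Lemma~\ref{lem:case1} handles a single low component via the permute-and-greedily-extend trick above, and the general case is organised through the bipartite incidence graph $B(G,X,P)$ between $\pi_0(G[X])$ and a partition $P$ of $\pi_0(G-X)$, using Stiebitz's $2$-forest lemmas (Lemmas~\ref{lem:sti6} and~\ref{lem:sti45}) rather than a Hall-deficient family: a $2$-forest assigns to each low component two distinct parts of $P$ in an acyclic pattern, which is exactly what makes the independent colour permutations compose without circular dependencies (Lemma~\ref{lem:2forest}, where a colour-shifting step and an added arc $u_0u_1$ control the would-be monochromatic cycle through the uncoloured vertex). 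Note also that the directed Gallai-forest structure of $G[S]$ (Theorem~\ref{thm:directed_Gallai}), which you propose to establish and lean on for a ``prescribed colour at a prescribed vertex'' extension property, is not needed anywhere in this proof — only the bound $d(u)\le 2(k-1)$ on low vertices and connectivity bookkeeping are used — and, as you yourself anticipate, it would not by itself deliver that extension property in the directed setting.
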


Gallai's method works on digraphs, but we obtained better bounds through other means.

In the undirected case, Kostochka and Yancey~\cite{KoYa14} obtained a closed form for the minimum number of edges of a $k$-critical graph on $n$ vertices in an infinite set of cases:
\begin{thm}[Theorem~4 in~\cite{KoYa14}] \label{thm:KY} 

\[\abs{E(G)} \ge \bigceil{\frac{(k+1)(k-2)\abs{V(G)}-k(k-3)}{2(k-1)}}.\] 
This bound is exact for $k = 4$ and $n \ge 6$ and for $k \ge 5$ and $n \equiv 1$ (mod $k-1$).
\end{thm}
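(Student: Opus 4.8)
\medskip
\noindent\textit{Proof proposal.} The plan is to run the \emph{potential method} of Kostochka and Yancey: recast the edge bound as an inequality for a linear potential, argue by a minimum counterexample, and derive a contradiction through reducibility lemmas and a final counting argument. Since $2(k-1)\abs{E(G)}$ and $(k+1)(k-2)\abs{V(G)}-k(k-3)$ are integers, the claimed inequality is equivalent to $\rho_k(G)\le k(k-3)$, where $\rho_k(H):=(k+1)(k-2)\abs{V(H)}-2(k-1)\abs{E(H)}$; a direct computation gives $\rho_k(K_k)=k(k-3)$, so $K_k$ is the model of tightness and it remains to prove $\rho_k(G)\le k(k-3)$ for every $k$-critical $G$. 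I would extend the potential to vertex subsets by $p_G(W):=\rho_k(G[W])$ for $\emptyset\ne W\subseteq V(G)$.

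Next, suppose the theorem fails and let $G$ be a $k$-critical graph with $\rho_k(G)\ge k(k-3)+1$ on as few vertices as possible. Then $G\ne K_k$, $G$ has minimum degree at least $k-1$, and (being $k$-critical and distinct from $K_k$) contains no $K_k$. The heart of the argument is a lemma controlling $p_G(W)$ for \emph{proper} subsets $W$: using that every proper subgraph of $G$ is $(k-1)$-colourable, together with a gluing argument across a separation $(W,V(G)\setminus W)$ (delete or identify part of the complement of $W$ and lift the colouring back), one shows that $p_G(W)$ is bounded below by the potential of $K_{k-1}$ whenever $2\le\abs{W}\le\abs{V(G)}-1$, and that the subsets attaining small potential must be near-cliques; this is the step where a handful of exceptional configurations have to be identified carefully.

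With this in hand, I would show that a minimum counterexample cannot contain any of a short list of \emph{reducible} configurations --- for instance a copy of $K_{k-1}$ attached to the rest of $G$ through few vertices, or a low vertex (of degree exactly $k-1$) in a forbidden local pattern: in each case one deletes or contracts the configuration, applies minimality to the smaller $k$-critical graph it produces, and lifts the resulting colouring, contradicting $\rho_k(G)\ge k(k-3)+1$. The absence of these configurations, combined with the subgraph-potential lemma, then feeds a discharging argument: assign each vertex a charge tied to its degree, move charge from high-degree vertices and dense subgraphs to the low vertices, and sum to contradict $\rho_k(G)>k(k-3)$. For the exactness claims one exhibits the extremal families --- for $k\ge5$ and $n\equiv1\pmod{k-1}$ the $k$-Ore graphs built from copies of $K_k$ by iterated Haj\'os joins, and an analogous family for $k=4$, $n\ge6$ --- and checks directly that $\rho_k$ equals $k(k-3)$ on them. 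The main obstacle is exactly this reducibility analysis: proving that proper subgraphs of small potential are near-cliques and that every dangerous local configuration reduces requires a delicate and lengthy study of how $(k-1)$-colourings propagate across small separators, whereas the reformulation and the concluding summation are comparatively routine.
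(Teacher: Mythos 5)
You should first note that the paper does not prove Theorem~\ref{thm:KY} at all: it is imported verbatim from Kostochka and Yancey~\cite{KoYa14} and used only as a black box (e.g.\ for the upper bound in Lemma~\ref{lem:fknasympt}), so there is no in-paper proof to compare your attempt against. That said, your outline is faithful to the original argument: the reformulation of the edge bound as $\rho_k(G)\le k(k-3)$ with $\rho_k(H)=(k+1)(k-2)\abs{V(H)}-2(k-1)\abs{E(H)}$ and $\rho_k(K_k)=k(k-3)$ is exactly the right normalisation, and the scheme you describe (minimum counterexample, lower bounds on the potential of proper vertex subsets obtained by colouring the subset and gluing/identifying colour classes, a list of reducible configurations, then discharging) is the Kostochka--Yancey potential method. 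It is also precisely the scheme this paper adapts to digraphs in Section~\ref{sec:kos} for Theorem~\ref{thm:master}, where the analogues of your subset-potential lemma appear as Claims~\ref{clm:_noSmallPot} and~\ref{clm:_bigPot} and the cluster analysis plus discharging close the argument.

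The genuine issue is that, as written, your text is a plan rather than a proof. The two statements that carry essentially all of the weight --- that every proper subset $W$ with $2\le\abs W\le\abs{V(G)}-1$ has potential bounded below by that of a $(k-1)$-clique unless $G[W]$ is (close to) such a clique, and that each dangerous configuration (a $K_{k-1}$ attached through few vertices, clusters of low vertices, etc.) is reducible in a minimum counterexample --- are named but not established, and you yourself flag them as the main obstacle. These are exactly the delicate parts: after identifying colour classes of a $(k-1)$-colouring of $G[W]$ one must track precisely how many edges are gained and lost so that the potential drops by the right amount, and the specific coefficients of $\rho_k$ (and an auxiliary edge-addition lemma in the spirit of Lemma~\ref{lem:KY}) are what make the final inequalities close; none of that is verified here. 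Similarly, the exactness claim requires checking that the $k$-Ore graphs (iterated Haj\'os joins of copies of $K_k$) have potential exactly $k(k-3)$ and exhibiting the $k=4$, $n\ge 6$ family, which you assert without computation. So the strategy is the correct and standard one, but the proof itself is not supplied; for this paper's purposes the statement should simply remain a citation to~\cite{KoYa14}.
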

Unfortunately we were not able to obtain a comparable result. 
Still, adapting their method, we were able to get the following, which is the best known lower bound on the minimum number of arcs in a $k$-dicritical digraphs when $k \geq 5$. 

\begin{thm}\label{thm_intro:KY}
Let $k \geq 5$ and $G$ a $k$-dicritical digraph. Then \[\abs{A(G)} \ge (k-\frac 1 2 - \frac 1 {k-1}) \abs{V(G)} - k(\frac 1 2 - \frac 1 {k-1})\]
\end{thm}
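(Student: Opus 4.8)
The plan is to adapt the Kostochka--Yancey potential method to the directed setting. For a $k$-dicritical digraph $G$ I would define, for every subdigraph $H \subseteq G$, a \emph{potential} of the form $\rho(H) = a\abs{V(H)} - b\abs{A(H)}$ for suitable constants $a, b$ depending only on $k$ (morally $a = k(k-2) - \text{something}$ and $b = k-1$, rescaled so that the target inequality reads $\rho(G) \le$ a small constant). The whole point of the constants is that the bound of Theorem~\ref{thm_intro:KY} is exactly the statement ``$\rho(G)$ is below a fixed threshold'', while for \emph{proper} subdigraphs $H$ one expects a strictly better inequality $\rho(H) \le c$ for a slightly larger constant $c$, because such $H$ has dichromatic number at most $k-1$ and hence (by the degeneracy/criticality bound, Lemma~\ref{lem:pouet}) cannot be too sparse. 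So Step~1 is to fix the constants and reformulate everything as a clean statement about $\rho$.

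Step~2 is the base case / small-cases analysis: verify the claimed bound directly when $\abs{V(G)}$ is small (in particular $\abs{V(G)} = k$, where $G = \bid K_k$ and the arithmetic can be checked by hand), and more generally set up an induction on $\abs{V(G)}$. Step~3 is the heart of the argument, the inductive step: assume $G$ is a minimal counterexample. One uses the standard dicritical tools — every vertex has in- and out-degree at least $k-1$ (so degree $\ge 2(k-1)$); for any proper subdigraph, a $(k-1)$-dicolouring exists. The classical Kostochka--Yancey move is: take a vertex $v$ (or a small set), consider $G - v$, which is $(k-1)$-dicolourable; the dicolouring of $G-v$ must fail to extend to $v$, which forces $v$ to see all $k-1$ colour classes in a cycle-creating way, and this is leveraged to build an auxiliary structure. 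In the undirected proof the key object is a ``Kostochka--Yancey set'' $R$: a minimal subset such that $G[R]$ has high potential; one shows $G[R]$ is $(k-1)$-colourable, extends the colouring cleverly, and derives a contradiction by comparing $\rho(G)$ with $\rho(G[R]) + \rho(G \setminus R$-contracted$)$. I would mirror this: identify a minimal vertex set $R$ with $\rho(G[R])$ large, dicolour $G[R]$ and $G/R$ (the digraph obtained by identifying $R$ to a single vertex, which is a proper subdigraph hence $(k-1)$-dicolourable by Lemma~\ref{lem:ord}-type arguments), and glue the two dicolourings along $R$, being careful that identifying a set of vertices can create new digons/cycles — this is exactly where directedness complicates matters.

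The main obstacle I anticipate is precisely the gluing/identification step. In the undirected Kostochka--Yancey argument, contracting an independent-ish set and recolouring works because colour classes are just independent sets, and a colouring of $G/R$ together with a colouring of $G[R]$ combine with only a bounded ``correction''. In the directed world, a colour class is an acyclic subdigraph, not an independent set, so (i) contracting $R$ to a point may destroy acyclicity information that was spread across $R$, and (ii) when recombining, a class that was acyclic in $G/R$ may acquire a directed cycle once $R$ is blown back up. I expect the fix is to use a more refined operation than plain contraction — e.g. contract $R$ but add a digon (or a suitable gadget) between the contracted vertex and each external vertex adjacent to $R$, so that acyclicity of the merged class is automatically certified — and to track how many arcs this gadget adds, which is where the exact constants $\frac12 + \frac1{k-1}$ in the statement come from. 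A secondary technical point is that, unlike in the symmetric case, one cannot assume digons everywhere, so degree counting must separately account for digon-arcs and single arcs; I would handle this by always working with the total arc count $\abs{A(G)}$ and the degree lower bound $d(v) \ge 2(k-1)$, and only invoke the finer digon structure inside the recolouring gadget. Once the potential inequality survives the gluing, the contradiction with minimality is immediate, completing the induction.
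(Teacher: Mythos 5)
Your plan goes in the same general direction as the paper's proof (a potential $\rho(H)=(k-1+\eps)\abs{V(H)}-\abs{A(H)}$, a minimal counterexample, and a Kostochka--Yancey-style collapse of colour classes), but as written it has two genuine gaps, one in the contraction step and one — more serious — in the endgame. On the contraction step: you propose identifying $R$ to a \emph{single} vertex and adding digons between the new vertex and its outside neighbours, and you justify the $(k-1)$-dicolourability of $G/R$ by calling it "a proper subdigraph", which it is not (a contraction is not a subdigraph, and $G[R]$ is in general not acyclic, so Lemma~\ref{lem:pouet}(\ref{lem:shrinkAcyclic}) does not apply to contracting all of $R$). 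The operation that actually works, and that the paper uses, is $Y(G,R,\phi)$: fix a $(k-1)$-dicolouring $\phi$ of $G[R]$ (which exists by criticality since $R\subsetneq V(G)$), contract each colour class \emph{separately} (each one is acyclic, so the dichromatic number does not drop), and add digons among the $k-1$ new vertices; this produces a digraph with fewer vertices and $\dic\ge k$, whose $k$-dicritical subdigraph is then compared to $G$ via the minimality hypothesis. Your "digons from the contracted vertex to all external neighbours" gadget neither preserves $\dic\ge k$ for the right reasons nor keeps the arc count under control.

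The larger gap is your claim that "once the potential inequality survives the gluing, the contradiction with minimality is immediate." It is not. The collapse argument only yields lower bounds on the potential of proper subsets (the paper's Claims~\ref{clm:_noSmallPot} and~\ref{clm:_bigPot}, the latter also needing the auxiliary Lemma~\ref{lem:KY} to add $\lfloor i/2\rfloor$ well-chosen digons inside $R$). Getting from those subset bounds to $\rho(G)\le\rho(\bid K_k)$ requires, in the paper, a substantial further structural analysis of the minimal counterexample — vertices of degree $2(k-1)$ and $2k-1$, the dichotomy $N^d(u)=\varnothing$ or $N^s(u)=\varnothing$ for low vertices, twins, clusters of twins inside $(k-1)$-cliques and bounds on their size and neighbourhood degrees — followed by a discharging argument; it is precisely in Claim~\ref{clm:_bigPot} and in the final discharging inequalities that the threshold $\eps<\frac12-\frac1{k-1}$, i.e.\ the slope $k-\frac12-\frac1{k-1}$ of the statement, is actually pinned down. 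Your sketch contains neither the structural claims nor the discharging, and leaves the constants to "how many arcs the gadget adds", so as it stands it is an outline of roughly the first half of the paper's argument with the decisive second half missing.
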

The way the proof works makes it easy to identify the two arguments that do not allow us to get a better result. 
It is to be noted that our proof works for $k=4$, but in this case a better bound is already known. 

\begin{thm}[Theorem~1 in~\cite{kostochka2020minimum}]
Let $G$ be a $4$-dicritical digraph with $\abs{V(G)} \geq 4$ and $\abs{V(G)} \ne 5$. Then 
\[ \abs{A(G)} \geq \bigceil{\frac{10\abs{V(G)} - 4}{3}}\]
This bound is tight when $n \equiv 1$ (mod $3$) or $n \equiv 2$ (mod $3$)
\end{thm}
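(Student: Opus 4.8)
The plan is to adapt the potential method of Kostochka and Yancey---the machinery behind Theorem~\ref{thm_intro:KY}---but specialised to $k=4$ and tuned so as to reach the slope $10/3$ rather than the $19/6$ coming from the general-$k$ argument. For $R\subseteq V(G)$ set
\[\rho_G(R)\;=\;10\,\abs{R}-3\,\abs{A(G[R])},\qquad \rho(G):=\rho_G(V(G)),\]
and write $n=\abs{V(G)}$. Because $\abs{A(G)}$ is an integer, $3\abs{A(G)}\equiv 0\pmod 3$ while $10n-4\equiv 2\pmod 3$, the inequality $\abs{A(G)}\ge\bigceil{(10n-4)/3}$ is \emph{equivalent} to $\rho(G)\le 4$ (the ceiling comes for free from integrality). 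Since $\rho(\bid K_4)=40-36=4$ and $\bid K_4$ is $4$-dicritical, this is simultaneously an equality instance and the natural base case. For the tightness half, when $n\equiv 1$ or $2\pmod 3$ one simply takes $G=\bid H$, the symmetric digraph of an undirected $4$-critical graph $H$ on $n$ vertices with exactly $\bigceil{(10n-4)/6}$ edges, which exists by Theorem~\ref{thm:KY}: then $G$ is $4$-dicritical (as $\dic(\bid H)=\chi(H)$ and subdigraphs of $\bid H$ correspond to subgraphs of $H$), and $\abs{A(G)}=2\abs{E(H)}=\bigceil{(10n-4)/3}$; for $n\equiv 0\pmod 3$ the same computation gives one arc too many, matching the fact that the bound is not tight in that residue. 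It thus remains to prove $\rho(G)\le 4$ for every $4$-dicritical digraph $G$ with $n\ge 4$, $n\ne 5$.

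The engine is a \emph{potential lemma}: if $G$ is $4$-dicritical and $\emptyset\ne R\subsetneq V(G)$, then $\rho_G(R)\ge 5$, and moreover $\rho_G(R)\ge 10$ whenever $\abs{R}\le 3$ (note that $\bid K_4$, the only digraph of order $\le 4$ with $\rho<5$, cannot occur as a proper subdigraph of a $4$-dicritical digraph, since it already has dichromatic number $4$). The proof is Gallai's gluing argument made directed: by dicriticality $G-R$ admits a $3$-dicolouring, and were $\rho_G(R)$ too small, one could combine it with a $3$-dicolouring of $G[R]$---after performing the directed analogue of Gallai's identification of the boundary of $R$---into a $3$-dicolouring of all of $G$, contradicting $\dic(G)=4$. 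Estimating the arc count through this identification is exactly where the directed setting departs from the undirected one: the identification may create digons, so the change in $\abs{A}$ must be controlled by a case analysis on the digons incident to the identified vertices.

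Granting the potential lemma, prove $\rho(G)\le 4$ by taking a counterexample $G$ of minimum order, so that $n\ge 6$ and $\rho(G)\ge 5$. Every vertex of $G$ has degree $\ge 2(k-1)=6$ (Lemma~\ref{lem:pouet}), hence $2\abs{A(G)}\ge 6n$ and $\rho(G)\le n$; combining $\rho(G)\ge 5$ with the surplus potential that the lemma grants on any dense subset, a counting argument forces $G$ to contain a reducible local configuration---a vertex of degree exactly $6$ with a prescribed pattern of digons and simple arcs in its neighbourhood, or a small separating set $R$ with $\rho_G(R)$ near its minimum. Each such configuration admits a reduction (delete a vertex together with a few incident or re-routed arcs, or contract a low-potential set) producing a digraph on fewer vertices that still contains a $4$-dicritical subdigraph; minimality bounds that subdigraph's arc count, the potential lemma bounds the potential lost by the reduction, and the two are incompatible with $\rho(G)\ge 5$. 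The finitely many small orders below the induction threshold ($n=4$, then $n=6,7,\dots$) are settled by direct inspection, and $n=5$ is a genuine exception that must be excluded.

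The main obstacle is the potential lemma together with the reduction analysis. In the undirected Kostochka--Yancey proof these are already the delicate part; in the directed world the creation and destruction of digons under vertex deletion and identification multiplies the list of local configurations that must be ruled out, and one must track \emph{exactly} how $\rho$ changes under each reduction---not merely that it does not increase---which is precisely what reaching the slope $10/3$ instead of the $19/6$ of Theorem~\ref{thm_intro:KY} requires. A secondary point needing care is that, unlike the undirected situation of Theorem~\ref{thm:KY}, there is no matching construction for $n\equiv 0\pmod 3$, so one should be content with the one-sided inequality in that residue.
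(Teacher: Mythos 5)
First, note that the paper does not prove this statement: it is quoted verbatim as Theorem~1 of~\cite{kostochka2020minimum}, so the only question is whether your outline would stand as an independent proof. Its correct parts are the reformulation and the tightness half: with $\rho(G)=10\abs{V(G)}-3\abs{A(G)}$, the bound is indeed equivalent to $\rho(G)\le 4$ simply because $\abs{A(G)}$ is an integer (your parenthetical ``$10n-4\equiv 2\pmod 3$'' is wrong in general, as $10n-4\equiv n-1\pmod 3$, but this is harmless), and taking the symmetric digraph $\bid H$ of an edge-minimal $4$-critical graph $H$ from Theorem~\ref{thm:KY} does give equality when $n\equiv 1,2\pmod 3$ --- though ``subdigraphs of $\bid H$ correspond to subgraphs of $H$'' is false as stated (deleting one arc of a digon is not deleting an edge), and you should instead argue directly that a $3$-colouring of $H-uv$ is a $3$-dicolouring of $\bid H$ minus the single arc $uv$.

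The lower bound, however, is not proved; it is only planned. Your ``potential lemma'' ($\rho_G(R)\ge 5$ for every nonempty proper $R$ in every $4$-dicritical $G$) is asserted with a one-line appeal to a directed Gallai gluing, but such a statement cannot be obtained by directly combining a $3$-dicolouring of $G[R]$ with one of $G-R$: in Kostochka--Yancey-type arguments (compare Section~\ref{sec:kos}, where the analogous claims are Claims~\ref{clm:_noSmallPot} and~\ref{clm:_bigPot}) the bound on $\rho_G(R)$ is available only for a minimum counterexample, and is proved by contracting the colour classes of a dicolouring of $G[R]$ via $Y(G,R,\phi)$, extracting a smaller $k$-dicritical subdigraph and invoking minimality --- so your structure, which ``grants'' the lemma for all $4$-dicritical digraphs and only afterwards picks a minimal counterexample, is circular as written. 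More seriously, the entire step ``a counting argument forces a reducible local configuration \dots\ each such configuration admits a reduction'' is exactly the content of the cited theorem: improving the slope from $k-\frac12-\frac1{k-1}=\frac{19}{6}$ of Theorem~\ref{thm:master} to $\frac{10}{3}$ is where all the difficulty lies (already in the undirected case the tight $k=4$ bound required a separate argument from the general Kostochka--Yancey theorem), and your outline specifies neither the configurations, nor the reductions, nor the discharging that would rule them out, nor the treatment of the small orders and of the excluded case $n=5$. As it stands this is a plausible roadmap that mirrors Kostochka--Stiebitz's method, not a proof.
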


Our last result, Theorem~\ref{thm:notLcolour}, has a slightly different flavor than the rest since it deals with list dicolouring. It necessitates a few more technical definitions to be introduced, so we postpone its description to Section~\ref{sec:list} so as not to make this section too heavy. 

\subsubsection*{Organisation of the paper}
Section~\ref{sec:not} is dedicated to notations and Section~\ref{sec:gen} to some basic results that will be needed all along the proofs. 
Section~\ref{sec:dirac} is dedicated to the proofs of Theorems~\ref{thm_intro:directed_dirac_1},~\ref{thm_intro:directed_dirac_2} and~\ref{thm_intro:directed_dirac_k=3}, see respectively subsections~\ref{subsec:dirac},~\ref{subsec:Refined_Dirac} and~\ref{subsec:dirack=3}. Section~\ref{sec:kos} is dedicated to the proof of Theorem~\ref{thm_intro:KY}, Section~\ref{sec:stieb} to the proof of Theorem~\ref{thm_intro:nbcompconnex}. 
Finally, Section~\ref{sec:list} is dedicated to our result on list dicolouring and Section~\ref{sec:furtherworks} to the conclusion.

\section{Notations}
\label{sec:not}

\subsection{Generalities}

We define $\NN = \{0, 1, ...\}$. For $n \in \NN$, we write $[n] = \{1, ..., n\}$ and $\mathfrak S_n$ the set of permutations of $[n]$. Set union will be denoted by $+$ and indexed set union with $\bigcup$. 
Set difference will be denoted by $-$. Excluding a bound of an interval will be denoted by a bracket facing outwards, e.g. $[0, 1[ = \{x \in \RR, 0 \le x < 1\}$.  For $E$ a set and $S \subseteq E$, we denote $\mathbb 1_S$ the indicator function of $S$.

\subsection{Digraphs}

A (\emph{simple}) \emph{digraph} $G$ is a pair $(V(G), A(G))$ where  $V(G)$ is the vertex set and is finite, and $A(G) \subseteq \{(u, v) \in V(G)^2, u \ne v\}$ is the set of arcs of $G$.
The \emph{order} of $G$ is $\abs{V(G)}$. 
We only ever need to consider digraphs up to isomorphism and hence write $G = G'$ whenever $G$ and $G'$ are isomorphic. 

\PA{move to connectivity section(?)} For $X, Y \subset V(G)$, we let $A_G(X, Y) = A(G) \cap (X \times Y)$ and $\bid A_G(X, Y) = A_G(X, Y) + A_G(Y, X)$. 
A \emph{subdigraph} of $G$ is a digraph $G'$ with $V(G') \subseteq V(G)$ and $A(G') \subseteq A(G)$. For $X \subset V(G)$, the subdigraph of $G$ \emph{induced} by $X$ is $G[X] = (X, A(G) \cap X^2)$. For $X \subset V(G)$, we let $G-X = G[V(G)-X]$. 
For $B \subset \{(u, v) \in V(G)^2, u \ne v\}$, we let $G \cup B = (V(G), A(G) + B)$ and $G \setminus B = (V(G), A(G) - B)$. 
For $X$ disjoint from $V(G)$, we let $G+X = (V(G) + X, A(G))$. 

If both $X$ and $V(G)$ are contained in $V(G')$ for some introduced digraph $G'$, we let $G+X = (V(G)+X, A(G)+\bid A_{G'}(V(G), X))$. We denote $\subseteq$ the subdigraph relation, i.e. $G \subseteq H$ whenever $V(G) \subseteq V(H)$ and $A(G) \subseteq A(H)$.

We say that $G$ is \emph{symmetric} when, for any $(u, v) \in A(G)$, $(v, u) \in A(G)$.

\subsection{Arcs,  walks, neighbours, blocks and connectivity}

Let $G$ be a digraph. 

A \emph{digon} of $G$ is a pair of arcs of the form $\{(u, v), (v, u)\}$.
We define
$A^s(G) = \{(u, v) \in A(G)\mid (v, u) \notin A(G)\}$ the set of \emph{simple arcs} of $G$.

A \emph{weak walk} in $G$ is an alternating sequence $P = (x_1, a_1, x_2, \dots, a_{n-1}, x_n)$ of vertices and arcs of $G$, such that, for $i \in [n-1], a_i \in \{(x_i, x_{i+1}), (x_{i+1}, x_i)\}$, we write $V(P) = \{x_1, ..., x_n\}$ and we say that it is a weak walk from $x_1$ to $x_n$. It is a \emph{walk} when, for $i \in [n-1], a_i = (x_i, x_{i+1})$. A \emph{(weak) cycle} is a (weak) walk from a vertex to itself. When $P = (x_1, a_1, x_2, ..., a_{n-1}, x_n)$ is a (weak) walk of $G$, we set $G \setminus P = G \setminus \{a_1, ..., a_{n-1}\}$.

For $X_1, ..., X_n \subseteq V(G)$, the word $X_1...X_n$ denotes $X_1 \times ... \times X_n$. In particular, noticing that giving a walk is the same as giving a sequence of vertices, we denote walks (and cycles) in $G$ as words over $V(G)$, e.g. for $u, v, w \in V(G)$, $uvw$ denotes the walk $(u, (u, v), v, (v, w), w)$. We also write $uv$ for an arc $(u, v)$.

For $X \subseteq V(G)$, we let $N^+(X) = \{u \in V(G)-X, A(X, u) \ne \varnothing\}$ the \emph{out-neighbourhood} of $X$, $N^-(X) = \{u \in V(G)-X, A(u, X) \ne \varnothing\}$ the \emph{in-neighbourhood} of $X$, $N(X) = N^+(X) + N^-(X)$ the \emph{neighbourhood} of $X$, $N^+[X] = N^+(X) + X$ the \emph{closed out-neighbourhood} of $X$, $N^-[X] = N^-(X) + X$ the \emph{closed in-neighbourhood} of $X$ and $N[X] = N(X) + X$ the \emph{closed neighbourhood} of $X$. We also define $N^d(X) = N^+(X) \cap N^-(X)$, $N^s(X) = N(X) \setminus N^d(X)$, $N^{s+}(X) = N^s(X) \cap N^+(X)$ and $N^{s-}(X) = N^s(X) \cap N^-(X)$.

For $x \in V(G)$, we let $d^+(x) = \abs{N^+(x)}$, $d^-(x) = \abs{N^-(x)}$, $d(x) = d^+(x)+d^-(x)$, $d_{min}(x) = \min(d^+(x), d^-(x))$ and $d_{max}(x) = \max(d^+(x), d^-(x))$, respectively the \emph{out-degree}, \emph{in-degree}, \emph{degree}, \emph{min-degree} and \emph{max-degree} of $x$ in $G$.


$G$ is \emph{connected} if, for any $x, y \in V(G)$, there is a weak walk from $x$ to $y$. A \emph{connected component} of $G$ is a maximal set of vertices $X$ such that $G[X]$ is connected. 
We denote $\pi_0(G)$ the set of connected components of $G$.
$G$ is \emph{strongly connected} if and there exists a walk from $u$ to $v$ for every distinct pair of vertices $u,v$. Note that we consider the empty set to be connected, which is not standard, but it simplifies the inductions in the proofs of Section~\ref{sec:stieb}. 

An \emph{arc-cut} of $G$ is a set $A \subseteq A(G)$ of arcs such that $G \setminus A$ is not strongly connected. We say that $G$ is \emph{$k$-arc-connected} when every arc-cut of $G$ has size at least $k$.

A digraph $G$ is \emph{non-separable} if it is connected and $G-v$ is connected for all $v\in V(G)$. Such a vertex is called a \emph{separating vertex} of $G$. 
A \emph{block}  of a digraph $G$ is a subdigraph which is non-separable and is maximal with respect to this property. A block $B$ is a \emph{leaf block} if at most one vertex of $B$ is a separating vertex of $G$, the other blocks are \emph{internal blocks}. 
Observe that if a digraph $G$ is non-separable, then $G$ itself is a leaf block.  Note also that any two distinct blocks of a digraph have at most one vertex in common, and such a common vertex is always a separating vertex of the digraph. 

A \emph{directed Gallai tree} is a digraph whose blocks are either an arc, or a cycle, or a symmetric odd cycle, or a symmetric complete digraph. 
A \emph{directed Gallai forest} is a digraph whose connected components are directed Gallai tree.

\subsection{Basic classes of digraphs and operations on digraphs}

We say that a digraph $G$ is \emph{complete} when $A(G) = \{uv \in V(G), u, v \in V(G)\}$ and we denote by $\bid K_n$ the complete digraph on $n$ vertices.
For $n \geq 2$, $\vec P_n = ([n], \{(i, i+1), i \in [n-1]\})$ is the \emph{path} with $n$ vertices, $n = \vec P_n \cup \{(i+1, i), i \in [n-1]\}$ is the \emph{symmetric path} with $n$ vertices, $\vec C_n = (\ZZ/n\ZZ, \{(i, i+1), i \in \ZZ/n\ZZ)$ is the \emph{cycle} on $n$ vertices and $\bid C_n = \vec C_n \cup \{(i+1, i),  \in \ZZ/n\ZZ\}$ is the \emph{symmetric cycle} on $n$ vertices. A \emph{clique} of a digraph $G$ is a set of vertices inducing a complete digraph.  

For $G$ a digraph and $X_1, ..., X_n$ pairwise disjoint non-empty subsets of $V(G)$, $G/(X_i, i \in [n])$ denotes the digraph obtained from $G$ by merging all vertices in $X_i$, for $i \in [n]$. Formally, let, for $u \in V(G)-\bigcupl_{i \in [n]}X_i, p(u) = u$ and, for $i \in [n]$ and $u \in X_i, p(u) = X_i$. Then $G/(X_i, i \in [n]) = (p(V(G)), \{(p(u), p(v)), (u, v) \in A(G)\})$. $p$ is called the \emph{canonical projection}. When $n = 1$, we write $G/X = G/(X)$. When $X = \{x, y\}$, we denote by $x\star y$ the new vertex resulting from the merging of $x$ and $y$.

If $G$ is a digraph and $G' = (G'_u)_{u \in V(G)}$ is a family of digraphs indexed by the vertices of $G$, the \emph{substitution} $G(G')$ of $G'$ in $G$ is the digraph obtained from $G$ by replacing every vertex by the corresponding digraph. Formally, considering the $V(G'_u), u \in V(G)$ pairwise disjoint, $G(G') = (\bigcupl_{u \in V(G)} V(G'_u), \bigcupl_{u \in V(G)} A(G'_u) + \bigcupl_{(u, v) \in A(G)} V(G'_u)V(G'_v))$. Considering an indexing $u: [n] \to V(G)$ of the vertices of $G$, we write $G(G') = G(G'_{u_1}, ..., G'_{u_n})$.

\subsection{Dicolouring and greedy dicolouring}


Given a digraph $G$ and $X \subseteq V(G)$, we say that $X$ is acylic (in $G$) when $G[X]$ is acylic. 

$\phi: V(G) \to \NN$ is a \emph{dicolouring} of $G$ if, for $n \in \NN, \phi^{-1}(n)$ is acyclic, i.e. has no cycle. A \emph{a $k$-dicolouring} is a dicolouring with colours in $[k]$. 
The \emph{dichromatic number} of $G$ is \[\dic(G) = \min\{n \in \NN, \exists \phi: V(G) \to [n]\text{ dicolouring of }G\}\]

We say that $G$ is \emph{dicritical} when for every proper subdigraph $H$ of $G$, $\dic(H) < \dic(G)$. For $k \in \NN$, we say that $G$ is $k$-dicritical if furthermore $\dic(G) = k$.

Let $G$ be a digraph, $X \subseteq V(G)$, $(u_1, ..., u_n)$ an ordering of the vertices in $G-X$ and $\phi: X \to \NN$ a dicolouring of $G[X]$. \emph{Extending greedily} $\phi$ to $G$ (with respect to the considered ordering) means colouring iteratively $u_1$, ..., $u_n$ so that, for $1 \le i \le n, \phi(u_i) = \min(\NN - \phi(N^-(u_i) \cap (X+u_1+...+u_{i-1})) \cap \phi(N^+(u_i) \cap (X+u_1+...+u_{i-1})))$, i.e. we colour a vertex with the smallest integer that does not appear both in its in-neighbourhood and its out-neighbourhood. When $X = \varnothing$, we say that we colour $G$ greedily.

\subsection{Directional duality}

Any universal statement about digraphs raises a dual statement by exchanging the $+$ and $-$ superscripts, both statements being simultaneously true. 
In particular, a digraph $G$ is $k$-dicritical if and only if the digraph obtained from $G$ by reversing the orientation of each arc is, making directional duality often useful in this context. 
It is out of our scope to give a formal meaning to this so we will use it as an ad hoc principle.

\section{Tools}
\label{sec:gen}

This section is dedicated to basic results that are used all along the proofs. 

\subsection{Basic properties of $k$-dicritical digraphs}

We start with a trivial lower bound on the minimum degree of a vertex in a dicritical digraph. This result will be used so often that we will not refer to it when using it.

\begin{lem}\label{lem:pouet} Let $G$ be a digraph.\begin{enumerate}
    \item\label{lem:d} Let $x \in V(G)$ such that $\dic(G-x) < \dic(G)$. Then, for any $\dic(G-x)$-dicolouring $\phi$ of $G-x$ and $c \in \phi(G-x)$, there is a walk from $N^+(x)$ to $N^-(x)$ in $\phi^{-1}(c)$. In particular, $d_{min}(x) \ge \dic(G)-1$.
    \item\label{lem:shrinkAcyclic} Let $S \subseteq G$ acyclic. Then $\dic(G/S) \ge \dic(G)$
\end{enumerate}
\end{lem}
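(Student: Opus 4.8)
The plan is to prove the two parts separately; both follow directly from the definition of a dicolouring.

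For the first part, I would fix a $\dic(G-x)$-dicolouring $\phi$ of $G-x$ and suppose, towards a contradiction, that for some colour $c\in\phi(G-x)$ there is no walk from $N^+(x)$ to $N^-(x)$ inside $\phi^{-1}(c)$, where a walk of length $0$ is allowed (it would witness a digon at $x$ whose other endpoint has colour $c$). Then I extend $\phi$ to $G$ by setting $\phi(x)=c$: the only colour class that changes is $\phi^{-1}(c)\cup\{x\}$, so if the extension is not a dicolouring there is a directed cycle $C'$ in $G[\phi^{-1}(c)\cup\{x\}]$. If $x\notin V(C')$ then $C'$ already lies in $G[\phi^{-1}(c)]\subseteq G-x$, contradicting that $\phi$ dicolours $G-x$. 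If $x\in V(C')$, write $C'=x\,u\cdots v\,x$; then $u\in N^+(x)$, $v\in N^-(x)$, and since $C'$ visits $x$ once the sub-walk $u\cdots v$ avoids $x$, so it is a walk from $N^+(x)$ to $N^-(x)$ in $\phi^{-1}(c)$ (the single vertex $u=v\in N^+(x)\cap N^-(x)$ when $C'$ is a digon) — contradicting the assumption. Hence $\phi$ extends to a dicolouring of $G$ using $\dic(G-x)<\dic(G)$ colours, impossible, so such a walk exists for every colour. For the ``in particular'' clause, the walk just obtained starts in $N^+(x)\cap\phi^{-1}(c)$ and ends in $N^-(x)\cap\phi^{-1}(c)$, so each of the $\dic(G-x)$ colour classes of $\phi$ meets both $N^+(x)$ and $N^-(x)$; thus $d^+(x),d^-(x)\ge\dic(G-x)$, and since any dicolouring of $G-x$ extends to $G$ by giving $x$ a fresh colour we always have $\dic(G-x)\ge\dic(G)-1$, whence $d_{min}(x)\ge\dic(G)-1$.

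For the second part, let $p\colon V(G)\to V(G/S)$ be the canonical projection and let $\psi$ be a $\dic(G/S)$-dicolouring of $G/S$; I claim that $\phi:=\psi\circ p$ is a dicolouring of $G$, which gives $\dic(G)\le\dic(G/S)$. If not, some class $\phi^{-1}(c)$ contains a directed cycle $C$ of $G$; since $G[S]$ is acyclic, $C$ is not contained in $G[S]$, so $C$ uses a vertex outside $S$. Applying $p$ to $C$ and suppressing consecutive repetitions yields a sequence $W$ whose consecutive terms are distinct and joined by an arc of $G/S$ (by definition of the quotient), hence a closed walk of $G/S$; it has positive length because $C$ meets $V(G)-S$ (in the extreme case where $C$ meets $V(G)-S$ in exactly one vertex, $W$ is a digon through $p(S)$), and every vertex of $W$ is the $p$-image of a vertex of $C$, hence lies in $\psi^{-1}(c)$. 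Since a closed walk of positive length contains a directed cycle, $(G/S)[\psi^{-1}(c)]$ would not be acyclic, contradicting that $\psi$ is a dicolouring.

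I do not expect a genuine obstacle here: the only points needing care are the degenerate length-$0$ walk in the first part (the digon case, where $N^+(x)\cap N^-(x)\cap\phi^{-1}(c)\ne\varnothing$) and, in the second part, checking that $W$ really is a nontrivial closed walk when $C$ enters $S$ (possibly several times); both are dispatched by the remarks above.
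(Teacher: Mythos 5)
Your proof is correct and follows essentially the same route as the paper: for part (1) you colour $x$ with $c$ and extract the required walk from a hypothetical monochromatic cycle through $x$, and for part (2) you pull a dicolouring of $G/S$ back along the canonical projection and project a hypothetical monochromatic cycle of $G$ (not contained in $S$) to a monochromatic closed walk, hence cycle, in $G/S$. The extra care you take with the length-$0$ (digon) case and with suppressing repetitions in the projected walk just fills in details the paper leaves implicit.
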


\begin{proof}\begin{enumerate}
    To prove~\ref{lem:d}, assume towards a contradiction and by directional duality that we have such a $\phi$ and $c$ such that there is no walk from $N^+(x)$ to $N^-(x)$ in $\phi^{-1}(c)$. Then we extend $\phi$ to $G$ by setting $\phi(x) = c$.
    
    We now prove~\ref{lem:shrinkAcyclic}. Let $p : V(G) \to V(G/S)$ be the canonical projection. Let $\phi: V(G/S) \to [\dic(G)-1]$. Let $\psi = \phi \circ p$. Since $\dic(G) \ge k$, $\psi$ is not a dicolouring of $G$. Hence we have a monochromatic cycle $C \subseteq G$. Since $S$ is acyclic, $C \not\subseteq S$. Then, projecting $C$ onto $G/S$ yields a monochromatic cycle in $G/S$. Hence $G/S$ is not $(\dic(G)-1)$-dicolourable.
\end{enumerate}
\end{proof}





\begin{lem}\label{lem:combine}
    Let $G$ be a $k$-dicritical digraph. 
    \begin{enumerate}
        \item Every arc is contained in an induced cycle. 
        \item\label{lem:noOnlyOut} For every $x \in V(G)$, $N^{s+}(x) = \varnothing \Leftrightarrow N^{s-}(x) = \emptyset$.
    \end{enumerate}
\end{lem}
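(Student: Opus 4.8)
The statement to prove is Lemma~\ref{lem:combine}: for a $k$-dicritical digraph $G$, (1) every arc lies in an induced cycle, and (2) for every vertex $x$, $N^{s+}(x) = \varnothing$ if and only if $N^{s-}(x) = \varnothing$. I would treat the two parts separately, using Lemma~\ref{lem:pouet}(\ref{lem:d}) as the main engine in both.

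\textbf{Part (1).} Let $(u,v) \in A(G)$. By dicriticality, $\dic(G - (u,v)) = k-1$, so fix a $(k-1)$-dicolouring $\phi$ of $G \setminus \{(u,v)\}$. This $\phi$ must fail to be a dicolouring of $G$ (since $\dic(G) = k$), so $G$ has a monochromatic cycle $C$, and every monochromatic cycle of $\phi$ must use the arc $(u,v)$ (as $\phi$ is a proper dicolouring after removing it); in particular $C$ passes through $(u,v)$. So $C$ is a cycle through $(u,v)$ all of whose vertices receive colour $\phi(u)$. Now take such a $C$ of minimum length. I claim $C$ is induced: if some chord $(a,b)$ existed between two vertices of $C$, then since all vertices of $C$ have the same colour, $(a,b)$ together with the appropriate arc of $C$ from $b$ back to $a$ would form a strictly shorter monochromatic cycle (it is a genuine cycle because $(a,b)$ is a chord, not an arc of $C$, and the sub-path of $C$ from $b$ to $a$ has length at least $1$). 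This shorter cycle is still monochromatic, contradicting minimality — unless the shorter cycle fails to use $(u,v)$, but a monochromatic cycle not using $(u,v)$ contradicts $\phi$ being a dicolouring of $G \setminus \{(u,v)\}$. Hence $C$ has no chord, i.e. $C$ is an induced cycle containing $(u,v)$. One subtlety to handle: a chord could be $(b,a)$ pointing "backwards", which with the arc $(a,b)$-free sub-path still yields a shorter monochromatic cycle in the other orientation — both cases work the same way. I would write this carefully but it is routine.

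\textbf{Part (2).} By directional duality it suffices to prove one implication, say: if $N^{s-}(x) = \varnothing$ then $N^{s+}(x) = \varnothing$. Suppose $N^{s-}(x) = \varnothing$; this means every in-neighbour of $x$ is also an out-neighbour, i.e. $N^-(x) \subseteq N^+(x)$, so $N^-(x) = N^d(x)$. Suppose for contradiction that $N^{s+}(x) \neq \varnothing$, so there is $y \in N^+(x) \setminus N^-(x)$; pick the arc $(x,y)$. Since $\dic(G - (x,y)) = k-1$, fix a $(k-1)$-dicolouring $\phi$ of $G \setminus \{(x,y)\}$. As in Part (1), $G$ has a monochromatic cycle, and every monochromatic cycle uses $(x,y)$; so there is a monochromatic cycle $C$ through $(x,y)$, with all vertices coloured $c := \phi(x)$. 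Following $C$ from $y$ back around to $x$, the predecessor of $x$ on $C$ is some in-neighbour $z$ of $x$ with $\phi(z) = c$. But $z \in N^-(x) = N^d(x) \subseteq N^+(x)$, so $(x,z) \in A(G)$ and $(x,z) \neq (x,y)$ (because $z \in N^-(x)$ while $y \notin N^-(x)$), hence $(x,z) \in A(G \setminus \{(x,y)\})$. Now replace the sub-path of $C$ from $x$ (going forward through $y$) to $z$ by the single arc $(x,z)$: this gives a closed walk, monochromatic in colour $c$, and it is not a single vertex nor empty (it contains $(x,z)$ and the incoming arc into $z$ from $C$ — these are distinct since one ends at $z$ and one starts at $x \ne z$). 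Extract a cycle from it; this is a monochromatic cycle of $\phi$ in $G \setminus \{(x,y)\}$, contradicting that $\phi$ is a dicolouring there. Hence $N^{s+}(x) = \varnothing$.

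\textbf{Expected main obstacle.} The arguments are short, so the only real care needed is the bookkeeping in the "shortcutting" steps: ensuring that after replacing a sub-path by a chord (Part 1) or by the arc $(x,z)$ (Part 2) one genuinely obtains a cycle (nonempty, no repeated-collapse to a point) and that this cycle avoids the deleted arc. In Part (2) the key point making it work is that the deleted arc $(x,y)$ has $y \notin N^-(x)$, which guarantees the shortcut arc $(x,z)$ is different from $(x,y)$; I would make sure to flag exactly where $N^{s-}(x) = \varnothing$ is used (namely to know that the $C$-predecessor $z$ of $x$ satisfies $(x,z) \in A(G)$). Everything else is a direct application of Lemma~\ref{lem:pouet}(\ref{lem:d}) and the definition of dicriticality.
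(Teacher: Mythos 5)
Your proof is correct. Part (1) is essentially the paper's argument, just written out in full: the paper states it in one line (if the arc lay in no induced cycle, a $(k-1)$-dicolouring of $G\setminus a$ would dicolour $G$), which implicitly contains exactly your shortest-monochromatic-cycle-through-$a$-is-induced shortcut argument.

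For part (2) you take a slightly different route. The paper deduces it directly from part (1): picking $y \in N^{s-}(x)$, the arc $yx$ lies in an induced cycle, which has length at least $3$ since $xy \notin A(G)$, and inducedness forces the successor of $x$ on that cycle to be a simple out-neighbour, so $N^{s+}(x) \ne \varnothing$. You instead rerun the criticality argument from scratch: delete $xy$ for $y \in N^{s+}(x)$, find a monochromatic cycle through $xy$, and use the hypothesis $N^-(x)=N^d(x)$ to shortcut via the digon between $x$ and the predecessor $z$ of $x$ on the cycle, obtaining a monochromatic digon in $G\setminus xy$, a contradiction. Your argument is sound (the key points — $z \ne y$ because $y \notin N^-(x)$, and the digon $xzx$ avoiding the deleted arc — are exactly the ones that need checking), but it duplicates the colouring work; the paper's derivation buys part (2) as a two-line structural corollary of part (1), which is the more economical way to organise the lemma.
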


\begin{proof}
    Let $a \in A(G)$. If $a$ is not contained in an induced cycle,  then a $(k-1)$-dicolouring of $G \setminus a$ is a $(k-1)$ dicolouring of $G$.   
    
   Assume one of $N^{s+}(x)$ or $N^{s-}(x)$  is not empty. By directional duality, we may consider $y \in N^{s-}(x)$. By the first point, $yx$ is contained in an induced cycle. The vertex following $x$ in this cycle is in $N^{s+}(x)$.
\end{proof}

\subsection{Basic constructions of $k$-dicritical digraphs}
We now give a simple construction of digraphs with high dichromatic number that will be useful shortly. 

The \emph{Dirac join} of two digraphs $G_1$ and $G_2$ is $\bid K_2(\bid G_1, G_2))$.  

\begin{thm}[\cite{bang2019haj}]\label{lem:addComplete} 
Given two digraph $G_1$ and $G_2$, $\dic(\bid K_2(G_1, G_2)) = \dic(G_1) + \dic(G_2)$, and $\dic(\bid K_2(G_1, G_2))$ is dicritical if and only $G_1$ and $G_2$ are dicritical. 
\end{thm}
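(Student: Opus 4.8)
The plan is to prove both claims by analysing dicolourings of $H := \bid K_2(G_1, G_2)$ directly. Write $V(H) = V_1 + V_2$ with $H[V_i] = G_i$ (after identifying $G_i$ with its copy) and all arcs between $V_1$ and $V_2$ present in both directions, so that every vertex of $V_1$ forms a digon with every vertex of $V_2$. The key observation is the following \emph{digon lemma}: in any dicolouring $\phi$ of $H$, no colour class can meet both $V_1$ and $V_2$, since a vertex $u \in V_1$ and a vertex $v \in V_2$ with $\phi(u) = \phi(v)$ would induce the $2$-cycle $uvu \subseteq H$, contradicting acyclicity. Consequently the colours used on $V_1$ and the colours used on $V_2$ are disjoint, and $\phi$ restricts to a dicolouring of $G_1$ on $V_1$ and of $G_2$ on $V_2$. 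This immediately gives $\dic(H) \ge \dic(G_1) + \dic(G_2)$. For the reverse inequality, take optimal dicolourings $\phi_i$ of $G_i$ using colour sets $[\dic(G_1)]$ and $[\dic(G_1)+1, \dic(G_1)+\dic(G_2)]$ respectively; their union is a dicolouring of $H$ since any monochromatic cycle would have to stay inside a single $V_i$ (it cannot use a cross arc and stay monochromatic, by the digon argument again, since the two endpoints of a cross arc lie in different parts with disjoint colour sets). Hence $\dic(H) = \dic(G_1) + \dic(G_2)$.

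For the dicriticality statement, first suppose $G_1$ and $G_2$ are both dicritical; I must show $H$ is dicritical. It suffices to show that deleting any single arc of $H$ drops the dichromatic number. An arc of $H$ is either internal to some $G_i$, or a cross arc. If $a \in A(G_1)$ (say), then by dicriticality of $G_1$, $G_1 \setminus a$ has a $(\dic(G_1)-1)$-dicolouring; combining it with an optimal dicolouring of $G_2$ on a disjoint colour set gives a $(\dic(H)-1)$-dicolouring of $H \setminus a$, using the same "no monochromatic cross-cycle" argument (which still applies: even with one cross arc missing, a monochromatic cycle through a remaining cross arc is impossible). If $a = (u,v)$ is a cross arc with $u \in V_1$, $v \in V_2$, then in $H \setminus a$ the pair $\{u,v\}$ no longer forms a digon, so we may recolour: take an optimal dicolouring of $G_1$ and of $G_2$ but on colour sets overlapping in exactly the single colour $\phi(u) = \phi(v)$ — this is possible because only the digon $uv$ enforced the separation between these two particular vertices, and there is no $2$-cycle on $\{u,v\}$ left; one checks that any monochromatic cycle in the resulting colouring of $H \setminus a$ would have to use a cross arc other than $a$ joining a vertex coloured $\phi(u)$ in $V_1$ to one coloured $\phi(v)=\phi(u)$ in $V_2$, but the only vertex of $V_1$ with colour $\phi(u)$ that could be so joined is $u$ itself (all other vertices of $V_1$ avoid that shared colour) and $u$'s only "dangerous" partner would be a $V_2$-vertex of the same colour reached by a digon, while the digon $u\!-\!v$ has been destroyed — so after a little care in allocating which vertices receive the shared colour, no monochromatic cycle survives. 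This shows $H \setminus a$ is $(\dic(H)-1)$-dicolourable.

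Conversely, if $H$ is dicritical, I show each $G_i$ is dicritical. Fix an arc $a \in A(G_1)$. Since $H$ is dicritical, $H \setminus a$ has a $(\dic(H)-1)$-dicolouring $\phi$. By the digon lemma, $\phi$ restricted to $V_1$ uses a colour set $C_1$ and restricted to $V_2$ a disjoint colour set $C_2$ with $|C_1| + |C_2| \le \dic(H) - 1 = \dic(G_1) + \dic(G_2) - 1$. Since $\phi|_{V_2}$ dicolours $G_2$, we have $|C_2| \ge \dic(G_2)$, hence $|C_1| \le \dic(G_1) - 1$, so $\phi|_{V_1}$ is a $(\dic(G_1)-1)$-dicolouring of $G_1 \setminus a$. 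As $a$ was arbitrary and $\dic(G_1) = \dic(H) - \dic(G_2)$ is unchanged by deletions (it equals $\dic(H)-\dic(G_2)$ by the formula applied to $H$, and $\dic(G_1 \setminus a) < \dic(G_1)$ follows), $G_1$ is dicritical; symmetrically for $G_2$.

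\textbf{Main obstacle.} The routine parts are the two inequalities for the dichromatic-number formula and the converse direction of dicriticality. The delicate point is the forward direction of dicriticality for a \emph{cross} arc $a = (u,v)$: one must exhibit a dicolouring of $H \setminus a$ with $\dic(H)-1$ colours, and the natural idea of letting $V_1$ and $V_2$ share exactly the colour of the broken digon requires care to ensure no other monochromatic cycle is created through the surviving cross arcs. I expect this bookkeeping — choosing which vertices of each side receive the shared colour so that the only potential bad $2$-cycle is precisely the destroyed one — to be the crux, though it is ultimately elementary.
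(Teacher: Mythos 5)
The paper gives no proof of this statement (it is quoted from Bang-Jensen, Bellitto, Schweser and Stiebitz), so your argument stands on its own. Most of it is correct: the digon observation gives $\dic(\bid K_2(G_1,G_2)) \ge \dic(G_1)+\dic(G_2)$, disjoint colour sets give the reverse inequality, the internal-arc case of the forward direction is fine, and the converse direction is fine (there you should also say a word about vertex-deleted subdigraphs, e.g.\ that an isolated vertex $u$ of $G_1$ would make $H-u$ a proper subdigraph of $H$ with the same dichromatic number; this is routine, as is your reduction of dicriticality of $H$ to arc deletions, which needs the remark that $H$ has no isolated vertex).

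The genuine gap is exactly where you place it: deletion of a cross arc $a=(u,v)$. As written, ``take optimal dicolourings of $G_1$ and $G_2$ whose colour sets overlap in the single colour $\phi(u)=\phi(v)$'' does not work in general, because an optimal dicolouring of $G_1$ need not have $u$ alone in its colour class, and if any vertex $w\in V_1-u$ (resp.\ $V_2-v$) receives the shared colour, then $w$ and $v$ (resp.\ $w$ and $u$) still form an intact digon, hence a monochromatic $2$-cycle; no amount of ``allocating'' the shared colour inside two fixed optimal colourings is guaranteed to avoid this. The missing idea is to invoke dicriticality with respect to \emph{vertex} deletion rather than arc deletion: since $G_1$ and $G_2$ are dicritical, $G_1-u$ admits a $(\dic(G_1)-1)$-dicolouring and $G_2-v$ a $(\dic(G_2)-1)$-dicolouring; place these on disjoint colour sets and give $u$ and $v$ one common fresh colour. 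This uses $\dic(G_1)+\dic(G_2)-1$ colours, the shared class is exactly $\{u,v\}$, which induces only the arc $(v,u)$ in $\bid K_2(G_1,G_2)\setminus a$ and is therefore acyclic, and every other class lies inside a single side. This closes the cross-arc case and, with it, your proof; in particular the ``bookkeeping'' you anticipated disappears once the shared class is forced to be $\{u,v\}$ in this way.
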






We also know how to construct easily dicritical digraphs of any reasonable order. 

\begin{lem}\label{lem:ord} Let $n \ge k \ge 2$. There exists a $k$-dicritical digraph with order $n$.
\end{lem}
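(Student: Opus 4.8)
\textbf{Proof plan for Lemma~\ref{lem:ord}.}

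The plan is to start from a single small $k$-dicritical digraph of order exactly $k$, and then inflate it one vertex at a time using the operation of substituting a digon for a vertex, controlling the order exactly. For the base case $n = k$, note that $\bid K_k$ is $k$-dicritical (every proper subdigraph is contained in $\bid K_{k-1}$ plus isolated vertices, which is $(k-1)$-dicolourable, while $\bid K_k$ itself needs $k$ colours since any acyclic induced subdigraph of $\bid K_k$ has at most one vertex), and it has order $k$. So it remains to produce, for each $n > k$, a $k$-dicritical digraph on $n$ vertices, which I will do by induction on $n$.

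For the inductive step I would take a $k$-dicritical digraph $G$ on $n-1 \ge k$ vertices and replace one vertex $v$ by a digon, i.e. form the substitution $H = G(H_u : u \in V(G))$ where $H_v = \bid K_2$ and $H_u = \bid K_1$ for $u \ne v$. Equivalently, $H$ is obtained from $G$ by adding a new vertex $v'$, making $\{v, v'\}$ a digon, and joining $v'$ to exactly the (in- and out-)neighbours $v$ had. Then $\abs{V(H)} = n$. The two things to check are: (i) $\dic(H) = k$, and (ii) $H$ is dicritical. For (i): $H$ has a $k$-dicolouring (extend a $(k-1)$-dicolouring of $G - v$ — which exists since $G$ is $k$-dicritical — by colouring $v$ and $v'$ with a new $k$-th colour, which is fine as $\{v,v'\}$ induces a digon but a single colour class of size $\le 2$ spanning a digon is... not acyclic, so instead colour $v$ with colour $k$ and $v'$ greedily; more carefully, take a $(k-1)$-dicolouring $\phi$ of $G-v$, give $v$ colour $k$, and give $v'$ colour $k$ only if that leaves $\{v,v'\}$ acyclic — it does not, so give $v'$ the colour $\phi$ would give it greedily in $G$, which uses at most $k-1$ colours on $N(v') = N_G(v)$, hence some colour in $[k-1]$ is available, so $\dic(H) \le k$); and $\dic(H) \ge k$ because $G$ is a subdigraph of $H/\{v,v'\}$ up to the identification $v \star v' = v$, so by Lemma~\ref{lem:pouet}(\ref{lem:shrinkAcyclic}) applied with the acyclic set $S = \{v, v'\}$... wait, $\{v,v'\}$ is a digon, not acyclic. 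Instead: $G = H - v'$, and $\dic(H - v') = \dic(G) = k$, so $\dic(H) \ge k$ directly. Hence $\dic(H) = k$.

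For (ii), dicriticality: I must show every proper subdigraph of $H$ is $(k-1)$-dicolourable, equivalently that $\dic(H - a) \le k-1$ for every arc $a$ and $\dic(H - w) \le k-1$ for every vertex $w$ (it suffices to handle removal of a single arc or a single vertex, since any proper subdigraph is contained in such). If $a$ is an arc not incident to $v'$ then $a \in A(G)$ and a $(k-1)$-dicolouring of $G \setminus a$ (exists by dicriticality of $G$) extends to $H$ by greedily colouring $v'$ — since after deleting $a$, the digraph $G \setminus a$ uses $k-1$ colours and $v'$ sees at most... hmm, this needs $v'$ to have an available colour, which holds because in any $(k-1)$-dicolouring of $G \setminus a$, giving $v'$ the same colour as $v$ works unless $\{v,v'\}$ becomes monochromatic (it would, a digon), so instead observe $d_{\min}(v') = d_{\min}(v) \le $ large but we only need: the colour of $v$ together with $\le k-2$ other colours might block $v'$; this is the delicate point. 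A cleaner route for (ii) is to invoke a known substitution/dicriticality principle: the Dirac join description shows $\bid K_2(G_1, G_2)$ is dicritical iff both factors are (Theorem~\ref{lem:addComplete}), and more generally substituting dicritical digraphs into a dicritical digraph yields a dicritical digraph — replacing $v$ by $\bid K_2 = \bid K_1(\bid K_1, \bid K_1)$-type gadget. \textbf{The main obstacle} I anticipate is exactly this verification that the one-vertex blow-up stays dicritical: one must show that deleting any arc or vertex of $H$ drops the dichromatic number, and the arcs inside or incident to the new digon $\{v,v'\}$ require a short case analysis using Lemma~\ref{lem:pouet}(\ref{lem:d}) (a $(k-1)$-dicolouring of $G-v$ and the structure of monochromatic walks between $N^+(v)$ and $N^-(v)$) rather than a one-line greedy argument. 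I would organise (ii) into the cases: $a$ incident to neither $v$ nor $v'$; $a$ one of the two digon arcs $vv', v'v$; $a$ incident to $v'$ but not $v$; $a$ incident to $v$ but not $v'$; and $w \in \{v, v'\}$; $w \notin \{v,v'\}$ — using the symmetry $v \leftrightarrow v'$ in $H$ to halve the work, and in each case exhibiting a $(k-1)$-dicolouring by starting from an appropriate $(k-1)$-dicolouring guaranteed by dicriticality of $G$ and extending greedily to the one or two vertices of $\{v,v'\}$ that remain.
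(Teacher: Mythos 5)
There is a genuine gap: the inductive step is not just delicate to verify, it is false. If $H$ is obtained from a $k$-dicritical $G$ by replacing a vertex $v$ with a digon $\{v,v'\}$ (a twin of $v$ joined to $v$ by a digon), then $H$ is never $k$-dicritical. First, the dichromatic number need not stay at $k$: for $G=\bid K_k$ (your base case, so this is the very first step of your induction) the blow-up is $\bid K_{k+1}$, which has dichromatic number $k+1$. Second, even when $\dic(H)=k$, $H$ fails dicriticality for a structural reason: deleting the single arc $vv'$ leaves a digraph that still contains $H-v'=G$ as a subdigraph, so $\dic(H\setminus vv') \ge \dic(G) = k$, and hence removing that arc does not lower the dichromatic number. (For example, blowing up a vertex of $\vec C_n$ gives a $2$-dichromatic digraph properly containing a digon, so it is not $2$-dicritical.) Your argument for point (i) is also unsound on its own terms: in any $(k-1)$-dicolouring of $G-v$, Lemma~\ref{lem:pouet}(\ref{lem:d}) guarantees that \emph{every} colour class contains a walk from $N^+(v)$ to $N^-(v)$, so every colour in $[k-1]$ is blocked for $v'$ once those walks persist; the ``at most $k-1$ colours appear on $N(v')$, hence one is free'' step is the undirected greedy bound and does not apply to dicolouring. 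Finally, the general ``substituting dicritical digraphs into a dicritical digraph yields a dicritical digraph'' principle you invoke as a fallback is not in the paper and is false in this form (Theorem~\ref{lem:addComplete} concerns only the Dirac join $\bid K_2(G_1,G_2)$, where the dichromatic numbers \emph{add}).

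The paper avoids all of this with a one-line construction: $\bid K_2(\bid K_{k-2}, \vec C_{n+2-k})$ is $k$-dicritical by Theorem~\ref{lem:addComplete}, since $\bid K_{k-2}$ is $(k-2)$-dicritical and the directed cycle $\vec C_{n+2-k}$ is $2$-dicritical for every length $n+2-k \ge 2$, and it has exactly $n$ vertices. The moral is that the ``inflatable'' part should be a directed cycle, whose order can be chosen freely without changing its dichromatic number, rather than a digon blow-up inside an arbitrary dicritical digraph. If you want to salvage an inductive flavour, you would need an operation known to preserve $k$-dicriticality while adding one vertex, such as the directed Haj\'os join (Lemma~\ref{lem:hajos}), not vertex duplication.
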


\begin{proof} $\bid K_2(\bid K_{k-2}, \vec C_{n+2-k})$ is $k$-dicritical and has $n$ vertices.
\end{proof}

In the symmetric case, it is known that there is no dicritical digraph $G$ with $\dic(G)+1$ vertices. This is not the case for digraphs.

\begin{lem}
Let $k \ge 2$. The only $k$-dicritical digraph with $k+1$ vertices is $\bid K_2(\bid K_{k-2}, \vec C_3)$.
\end{lem}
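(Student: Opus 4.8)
The plan is to let $G$ be a $k$-dicritical digraph on $k+1$ vertices and show $G = \bid K_2(\bid K_{k-2}, \vec C_3)$. First I would establish that this digraph really is $k$-dicritical on $k+1$ vertices: by Theorem~\ref{lem:addComplete} applied inductively (or directly, since $\bid K_{k-2} = \bid K_2(\bid K_{k-4}, \bid K_2) = \cdots$ and $\vec C_3$ is $2$-dicritical), $\bid K_2(\bid K_{k-2}, \vec C_3)$ has dichromatic number $(k-2) + 2 = k$ and is dicritical; it has $(k-2) + 3 = k+1$ vertices. Since the case $k = 2$ forces $G = \vec C_3 = \bid K_2(\bid K_0, \vec C_3)$, I would then assume $k \geq 3$.

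**The degree count.** The heart of the argument is a counting step. Every vertex $x$ of $G$ satisfies $d_{\min}(x) \geq k-1$ by Lemma~\ref{lem:pouet}(\ref{lem:d}), hence $d^+(x) \geq k-1$ and $d^-(x) \geq k-1$. But $G$ has only $k+1$ vertices, so $d^+(x) \leq k$ and $d^-(x) \leq k$ for every $x$. Thus each vertex has out-degree in $\{k-1, k\}$ and in-degree in $\{k-1, k\}$. I claim $G$ must contain a vertex of degree exactly $2(k-1)$, i.e.\ with $d^+(x) = d^-(x) = k-1$: indeed, a $k$-dicritical digraph on $n$ vertices has at least $(k-1)n$ arcs, and here we want to identify when low-degree vertices are forced. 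More useful: consider a vertex $v$ with $d^+(v) = k-1$, so there is a unique vertex $w \neq v$ with $(v,w) \notin A(G)$. Look at $G - v$: it has $k$ vertices and $\dic(G-v) = k-1$, so by Theorem~\ref{thm:brooks} (Brooks for digraphs) applied to each connected component, or rather since $\dic(G-v) = k-1 = |V(G-v)| - 1$, the digraph $G-v$ is ``almost complete''. I would argue that $G - v$ must contain $\bid K_{k-1}$ as a spanning subdigraph, or be very close to it, using that $\dic(\bid K_m) = m$ and that removing arcs from $\bid K_{k-1}$ drops the dichromatic number unless... — here one uses dicriticality of $G$ carefully. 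In fact the cleanest route: since $\dic(G) = k$ and $G$ has $k+1$ vertices, and $\dic(\bid K_{k+1}) = k+1 > k$, $G$ is a proper subdigraph of $\bid K_{k+1}$; count the missing arcs. Each vertex misses at most $2$ arcs (one in, one out, since degrees are $\geq 2(k-1)$), so $G$ is obtained from $\bid K_{k+1}$ by deleting a set $M$ of arcs with each vertex incident to at most two arcs of $M$ (at most one as tail, at most one as head).

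**Identifying the structure of the missing arcs.** Now I analyze $M$. Since $\dic(G) = k < k+1 = \dic(\bid K_{k+1})$, $M \neq \varnothing$. I would show $M$ cannot be a single arc or a set of arcs forming a ``matching-like'' structure that still leaves a $\bid K_k$ — because $\bid K_{k+1}$ minus a digon still contains $\bid K_k$ (on the other $k-1$ vertices plus... no): more precisely $\dic(\bid K_{k+1} \setminus \{(u,v),(v,u)\})$: the two vertices $u,v$ are now non-adjacent, so colour them alike, giving $\dic = k$; good, that's consistent. But for dicriticality one needs that deleting any further arc drops $\dic$ to $k-1$. I would use Lemma~\ref{lem:combine}(1): every arc lies in an induced cycle. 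An arc $(a,b)$ with $(b,a) \in A(G)$ lies in the induced digon only if... no, digons are induced cycles of length $2$, fine. The real constraint: in $G = \bid K_2(\bid K_{k-2}, \vec C_3)$ the missing arcs are exactly the three ``reverse'' arcs of the triangle $\vec C_3$, i.e.\ $M$ consists of three arcs $(w_1,w_3), (w_3,w_2), (w_2,w_1)$ on a $3$-set $\{w_1,w_2,w_3\}$, forming a directed triangle in the ``complement.'' I would pin this down by: (i) the support of $M$ (vertices incident to $M$) forms a set $S$ on which $G$ induces a sparse digraph; (ii) $G - S$ must be complete, i.e.\ a $\bid K_{k+1-|S|}$, and $G$ is the Dirac join of $\bid K_{k+1-|S|}$ with $G[S]$; (iii) by Theorem~\ref{lem:addComplete}, $G[S]$ is $2$-dicritical with $|S| = k+1-(k-2) = 3$ vertices, so $G[S] = \vec C_3$ (the only $2$-dicritical digraph on $3$ vertices — a directed triangle; a symmetric triangle has dichromatic number $3$). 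Hence $|S| = 3$ and $G = \bid K_2(\bid K_{k-2}, \vec C_3)$.

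**Main obstacle.** The step I expect to be most delicate is (ii): showing that the non-neighbours relation on $V(G)$ is an equivalence-free structure whose ``classes'' form a single clique complement — equivalently, that $G$ is exactly a Dirac join $\bid K_{k-2} \star H$ and not some other digraph on $k+1$ vertices with all min-degrees $\geq k-1$ and $\dic = k$ that fails to decompose as a join. Concretely I must rule out configurations where the missing arcs $M$ are spread so that no vertex is adjacent (via digons) to all others, yet $\dic$ is still $k$ and the digraph is dicritical. The tool here is that $\dic(G) = k$ forces, via Lemma~\ref{lem:pouet}(\ref{lem:shrinkAcyclic}) and dicriticality, a rigid interaction: if $u \not\to v$ and $u' \not\to v'$ with $\{u,v\} \cap \{u',v'\} = \varnothing$, then colouring $\{u,v\}$ one colour and $\{u',v'\}$ another (both acyclic, since in each pair at most one arc is present... need both non-arcs, i.e.\ need the pairs to be non-adjacent) would only use few colours on $4$ vertices and the remaining $k-3$ vertices need $k-3$ more colours but they induce $\bid K_{k-3}$, total $k-1 < k$, contradiction — so the non-adjacent pairs cannot be disjoint, forcing all non-adjacency to live on a single small set $S$ with $|S| \leq 3$, and then a short case check finishes it. I would organize the write-up around this disjointness-forbidding lemma as the key claim.
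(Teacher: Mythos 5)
Your proof is correct, and while it shares the paper's endgame (decompose $G$ as a Dirac join and invoke Theorem~\ref{lem:addComplete} together with the fact that a $2$-dicritical digraph is a directed cycle, hence $\vec C_3$ on three vertices), the way you localise the non-adjacencies is genuinely different. The paper fixes one pair $x,y$ with $xy \notin A(G)$, proves that $G-x-y = \bid K_{k-1}$ (essentially the same two-pairs colouring you use, phrased as $\dic(G) \le \dic(G-x-y)+1$), rules out the case $yx \notin A(G)$ by extending a $(k-1)$-dicolouring of $G-x$, and produces the third vertex of the triangle via Lemma~\ref{lem:combine}(\ref{lem:noOnlyOut}); you instead work globally with the set $M$ of missing arcs: since $d^+(x), d^-(x) \ge k-1$ and there are only $k$ other vertices, each vertex misses at most one out-arc and one in-arc, your disjointness-forbidding colouring (two disjoint deficient pairs as two colour classes, singletons elsewhere, $k-1$ colours in total) shows the deficient pairs pairwise intersect, and an intersecting family of $2$-sets in which no vertex lies in more than two pairs has support $S$ of size at most $3$; then $G = \bid K_2(\bid K_{k+1-\abs S}, G[S])$ and Theorem~\ref{lem:addComplete} finishes. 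This buys a cleaner finish: the mutually non-adjacent case and the appeal to Lemma~\ref{lem:combine} disappear, being absorbed into the case $\abs S = 2$, which you should spell out rather than leave to ``a short case check'' — it is immediate, since then $G-u$ for $u \in S$ would be $\bid K_k$ (equivalently, $G[S]$ would have to be $1$-dicritical on two vertices), contradicting dicriticality. Two small touch-ups: the parenthetical worry that both arcs of each pair must be missing is unnecessary — one missing arc already makes the pair acyclic, which is exactly what you have; and the Brooks-flavoured detour in your middle paragraph can be deleted, since the missing-arc count is the route you actually use.
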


\begin{proof}
Let $G$ be a $k$-dicritical digraph with $k+1$ vertices. Let $x, y \in V(G)$ such that $xy \notin G$. Let $H = G-x-y$. $G$ is $(\dic(H)+1)$-dicolourable (give the same colour to $x$ and $y$) and hence $\dic(H) \ge k-1$. Since $\abs{V(H)} = k-1$, we obtain $H = \bid K_{k-1}$. Now, since $G \ne \bid K_{k+1}$, we have $x, y \in V(G)$ such that $xy \notin G$. If $yx \notin G$, since $x$ and $y$ have in- and out-degree at least $k-1$, we obtain $G = \bid K_2(G-x-y, \{x, y\})$. Since $G$ is $k$-dicritical, we have a $(k-1)$-dicolouring $\phi$ of $G-x$. Set $\phi(y) = \phi(x)$ to obtain a $(k-1)$-dicolouring of $G$, a contradiction. Hence $y \in N^{s-}(x)$ and then by Lemma~\ref{lem:combine}(\ref{lem:noOnlyOut}) $N^{s+}(x) \ne \varnothing$. Let $z \in N^{s+}(x)$. We proved that $G-x-y = \bid K_{k-1} = G-x-z$. If $yzy \in G$, then $G-x = \bid K_k$ and $G$ is not $k$-dicritical. Hence $G-y-z = \bid K_{k-1}$. In other words, $G = \bid K_2(\bid K_{k-2}, G[\{x, y, z\}])$. By Theorem~\ref{lem:addComplete}, $G[\{x, y, z\}]$ is $2$-dicritical and hence a cycle, which concludes the proof.
\end{proof}

\subsection{Directed Gallai Theorem and directed Gallai forest}

The following theorem is used several times as a tool along the paper, and Section~\ref{sec:list} is dedicated to a slight generalisation of it.

\begin{thm}[Theorem~15 in~\cite{bang2019haj}]\label{thm:directed_Gallai}
If $G$ is a $k$-dicritical digraph, then the subdigraph induced by vertices of degree $2(k-1)$ is a directed Gallai forest. 
\end{thm}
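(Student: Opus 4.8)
The natural approach is to mimic Gallai's proof of the undirected statement, using the directed Brooks theorem (Theorem~\ref{thm:brooks}) as the base case and a degree--list--dicolouring (``directed Borodin--Erd\H{o}s--Rubin--Taylor'') lemma as the engine. Set $L=\{v\in V(G): d(v)=2(k-1)\}$; by Lemma~\ref{lem:pouet}(\ref{lem:d}) the minimum degree of $G$ is $2(k-1)$, so $L$ is exactly the set of minimum-degree vertices, and every $v\in L$ satisfies $d^+(v)=d^-(v)=k-1$. Assume for contradiction that $G[L]$ is not a directed Gallai forest. Since two distinct blocks of a digraph share at most one vertex, every block is an induced subdigraph; hence, taking a connected induced subdigraph $H\subseteq G[L]$ that is not a directed Gallai tree and is minimal with this property, $H$ has a single block (otherwise a non-allowed-type block of $H$ would be a strictly smaller such subdigraph), i.e. $H$ is non-separable, and $H$ is none of: an arc, a directed cycle, a symmetric odd cycle, a symmetric complete digraph. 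In particular $|V(H)|\ge 3$, and since in- and out-degrees in $H$ are bounded by those in $G$, $\Delta_{max}(H)\le k-1$; so by Theorem~\ref{thm:brooks}, $\dic(H)\le k-1$. If $V(H)=V(G)$ this already contradicts $\dic(G)=k$, so we may assume $\varnothing\ne V(H)\subsetneq V(G)$.

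Now I would use dicriticality: $G-V(H)$ is a proper subdigraph, hence admits a $(k-1)$-dicolouring $\phi_0$. The aim is to extend $\phi_0$ over $H$ to a $(k-1)$-dicolouring of $G$, contradicting $\dic(G)=k$. For $v\in V(H)$ put $p_v=|N^+(v)\setminus V(H)|$, $q_v=|N^-(v)\setminus V(H)|$, and let $L(v)\subseteq[k-1]$ be the set of colours $c$ such that $\phi_0$ does not already contain a monochromatic-$c$ directed path from $N^+(v)\setminus V(H)$ to $N^-(v)\setminus V(H)$ (length $0$ allowed, i.e. a single common neighbour coloured $c$). A colour excluded from $L(v)$ needs a $c$-coloured out-neighbour and a $c$-coloured in-neighbour of $v$ outside $H$, so $|[k-1]\setminus L(v)|\le\min(p_v,q_v)$; since $d^+_H(v)=(k-1)-p_v$ and $d^-_H(v)=(k-1)-q_v$, this gives $|L(v)|\ge (k-1)-\min(p_v,q_v)\ge d_{min}^{\,H}(v)$, and $L(v)\ne\varnothing$ unless $v$ is isolated in $H$ (a degenerate case treated separately). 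Then I would invoke the directed degree-choosability statement --- a connected digraph that is not a directed Gallai tree is dicolourable from every list assignment $L$ with non-empty lists and $|L(v)|\ge d_{min}(v)$ --- in the strengthened ``precoloured'' form that additionally certifies the extension creates no monochromatic directed cycle of $G$ meeting $V(H)$. Applied to $H$ with the lists $L(v)$, this yields the required extension; this list-dicolouring machinery is, up to specialisation, that of \cite{bang2019haj}, refined in Section~\ref{sec:list} (Theorem~\ref{thm:notLcolour}), while Theorem~\ref{thm:brooks} is its constant-list, no-precolouring shadow and supplies the bound $\Delta_{max}(H)\le k-1$.

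The main obstacle, and the feature absent from the undirected setting, is precisely this robustness: a monochromatic directed cycle of $G$ can leave $V(H)$ and re-enter it, so dicolouring $H$ from the lists $L(v)$ is not by itself sufficient. The honest way to handle it is to first contract, for each colour $c$, the acyclic class $\phi_0^{-1}(c)\setminus V(H)$ to an auxiliary transitive structure recording which ordered pairs of vertices of $H$ it connects by a $c$-monochromatic path, and then to dicolour the resulting auxiliary digraph on (essentially) $V(H)$ from correspondence-type lists; the degree-choosability lemma must therefore be proved in a form robust to such ``pre-linked'' arcs. Establishing this, together with the small and degenerate cases --- isolated low vertices, $H=\bid K_2$ or $\bid K_3$, and the parity condition isolating symmetric odd cycles among symmetric cycles --- is where essentially all the work lies; the reduction above then closes the argument.
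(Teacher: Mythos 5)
Your structural reduction is fine as far as it goes (a minimal non-Gallai connected induced subdigraph $H$ of $G[S]$ is a single block; by Lemma~\ref{lem:pouet}(\ref{lem:d}) every $v\in S$ has $d^+(v)=d^-(v)=k-1$; and your list count actually gives $|L(v)|\ge (k-1)-\min(p_v,q_v)=d^H_{\max}(v)$), but the argument stops exactly where the theorem lives. Two concrete problems. First, the engine you invoke --- ``a connected digraph that is not a directed Gallai tree is dicolourable from every list assignment with non-empty lists of size at least $d_{\min}(v)$'' --- is false as stated: take the digraph on $\{a,b,x,y,z\}$ with arcs $ax, xb, ay, yb, bz, za$ (a directed theta); it is a single block of none of the allowed types, every vertex has $d_{\min}=1$, and with all lists equal to $\{1\}$ the unique colouring contains the monochromatic cycle $axbza$. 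The correct statement (Harutyunyan--Mohar, quoted at the start of Section~\ref{sec:list}) requires $|L(v)|\ge d_{\max}(v)$; since your lists do satisfy this for $H$, the slip is reparable, but it must be fixed. Second, and decisively, even the correct list lemma applied to $H$ alone does not suffice, for the reason you yourself flag: a monochromatic cycle of $G$ can leave and re-enter $V(H)$, and your lists only exclude colours admitting an outside monochromatic connection from $N^+(v)$ back to $N^-(v)$ of the \emph{same} vertex $v$. The ``strengthened precoloured / correspondence form robust to pre-linked arcs'' that you then invoke is neither stated precisely nor proved, and it is exactly the content of the theorem: after contracting the outside colour classes, the number of pre-linked constraints at a vertex of $H$ is no longer bounded by its degree in $H$, so no off-the-shelf degree-choosability lemma applies to the auxiliary digraph. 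Deferring that lemma is deferring essentially all of the work, as you acknowledge.

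For comparison, the paper itself does not reprove the statement (it is quoted from~\cite{bang2019haj}) but proves a generalisation, Theorem~\ref{thm:notLcolour}, by a different mechanism that avoids your auxiliary contraction entirely: one keeps an $L$-dicolouring of $G-X$ and recolours inside $G$ itself, shifting colours around weak cycles of $G[X]$ (Proposition~\ref{prop:tek_list} and Lemma~\ref{lem:notLcolourCycle}), so monochromatic cycles through the precoloured part are handled directly rather than encoded into lists. If you want a complete proof within the paper's framework, the clean route is to apply Theorem~\ref{thm:notLcolour} to each connected component $X$ of the set of degree-$2(k-1)$ vertices with the constant lists $L\equiv[k-1]$: dicriticality gives that $G-X$ is $(k-1)$-dicolourable while $G$ is not, and $d_{\max}(x)=k-1=|L(x)|$ on $X$, so $G[X]$ is a directed Gallai forest --- no correspondence-type lemma is needed.
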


\subsection{Arc-connectivity}

Recall that an \emph{arc-cut} of $G$ is a set $A \subseteq A(G)$ of arcs such that $G \setminus A$ is not strongly connected. We say that $G$ is \emph{$k$-arc-connected} when every arc-cut of $G$ has size at least $k$. It is well known that a digraph $G$ with $|V(G)| \geq 2$ is $k$-arc-connected if and only if for every partition $(V_0, V_1)$ of $V(G)$, we have $|A_G(V_0, V_1)| \geq k$.

There are two technical results about arc-connectivity that will be useful later on: a lower bound on the size of an arc-cut of a $k$-dicritical digraph and a constraint on the dicolouring of digraphs with a small arc-cut.

The next lemma is a generalisation of a classic result on undirected graphs due to Gallai (unpublished) which was also generalised to hypergraphs in~\cite{SST19} (Theorem 12). We could not find any reference for the digraph case.

\begin{lem}\label{lem:lowarcconnection} Let $k \ge 2$, $G$ a $k$-dicritical digraph and $(V_0, V_1)$ a partition of $V(G)$ such that $\abs{A(V_0, V_1)} \le k-1$. Let $V_0^* = N^-(V_1)$ and $V_1^* = N^+(V_0)$. Then there is $i \in \{0, 1\}$ such that, for any $(k-1)$-dicolouring $\phi_i$ of $G[V_i]$, $\abs{\phi_i(V_i^*)} = 1$ and, for any $(k-1)$-dicolouring $\phi_{1-i}$ of $V_{1-i}$, $\abs{\phi_{1-i}(V_{1-i}^*)} = k-1$.
\end{lem}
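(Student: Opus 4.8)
The plan is to analyze how the at most $k-1$ arcs crossing between $V_0$ and $V_1$ constrain compatible $(k-1)$-dicolourings of the two sides. First I would set up the counting: since $G$ is $k$-dicritical, every proper subdigraph is $(k-1)$-dicolourable, so both $G[V_0]$ and $G[V_1]$ admit $(k-1)$-dicolourings, and moreover $G$ itself is not $(k-1)$-dicolourable. The key observation is that if $\phi_0$ is a $(k-1)$-dicolouring of $G[V_0]$ and $\phi_1$ one of $G[V_1]$, we can try to permute the colours of $\phi_1$ by some $\sigma \in \mathfrak{S}_{k-1}$ and glue; the glued map $\phi_0 \cup (\sigma \circ \phi_1)$ fails to be a dicolouring of $G$ only if it creates a monochromatic cycle, and such a cycle must use at least one arc of $A(V_0,V_1)$ and at least one of $A(V_1,V_0)$, hence at least one arc in each direction of the cut. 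The arcs of the cut then force certain pairs of colours (one on each side) to coincide.

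Next I would formalize this as a bipartite-type constraint. For each arc $a = (u,v)$ with $u \in V_0, v \in V_1$ (or vice versa), define, relative to fixed $\phi_0,\phi_1$, the "forbidden identification" as the pair $(\phi_0(u), \phi_1(v))$ — more precisely, a permutation $\sigma$ is "bad" if there is a monochromatic-under-$(\phi_0,\sigma\circ\phi_1)$ cycle through the cut. I would argue that every bad $\sigma$ must identify $\phi_1$-colours of $V_1^* = N^+(V_0)$ with $\phi_0$-colours of $V_0^* = N^-(V_1)$ along the cut arcs, and since $|A(V_0,V_1)| \le k-1$ while there are $(k-1)!$ permutations, a pigeonhole / counting argument on the set of bad permutations (each cut arc together with a return path kills a structured family of permutations) must cover all of $\mathfrak{S}_{k-1}$ — because otherwise a surviving $\sigma$ gives a $(k-1)$-dicolouring of $G$. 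Writing $a_0 = |\phi_0(V_0^*)|$ and $a_1 = |\phi_1(V_1^*)|$ (the number of distinct colours appearing on the boundary sets), the number of permutations that can be "blocked" is at most something like $(k-1 - a_1)! \cdot (\text{choices})$ times the number of cut arcs, and requiring this to be all of $(k-1)!$ forces, after arithmetic, that on one side the boundary uses exactly one colour and on the other exactly $k-1$ colours.

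Concretely, I expect the heart of the argument to run: suppose for contradiction that $|\phi_0(V_0^*)| \ge 2$ and $|\phi_1(V_1^*)| \ge 2$ for some choice of dicolourings; then I would exhibit a permutation $\sigma$ such that no colour class of $\phi_0 \cup (\sigma\circ\phi_1)$ contains a walk from $N^+(\text{one side})$ reaching back — i.e., such that each cut arc $(u,v)$ has $\phi_0(u) \ne \sigma(\phi_1(v))$ for arcs in one direction, or breaks any potential monochromatic cycle — contradicting non-$(k-1)$-dicolourability. The symmetry between the two sides (and directional duality) is what yields the clean dichotomy "$\exists i$" in the statement: one side gets the constant-on-boundary colouring, the other gets the rainbow-on-boundary colouring.

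The main obstacle, I anticipate, is the precise bookkeeping of which permutations are excluded by a single cut arc together with the paths inside $G[V_0]$ and $G[V_1]$ that could close a monochromatic cycle: a bad $\sigma$ is not simply "$\sigma$ identifies two specific colours" but "$\sigma$ identifies colours along arcs that lie on some common cycle," and handling the interaction of multiple cut arcs (several arcs forming a cycle with internal monochromatic paths) requires care. I would manage this by first reducing via Lemma~\ref{lem:pouet}(\ref{lem:d})-style reasoning to the structure of monochromatic reachability: in colour class $c$, a monochromatic cycle through the cut exists iff there is a $c$-monochromatic walk in $G[V_0]$ from some head of a $V_1 \to V_0$ arc to some tail of a $V_0 \to V_1$ arc, and symmetrically in $G[V_1]$. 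This turns the problem into a reachability/matching statement on a small bipartite gadget (at most $k-1$ arcs), where a counting argument over $\mathfrak{S}_{k-1}$ becomes tractable and forces the claimed extremal behaviour on $V_0^*$ and $V_1^*$.
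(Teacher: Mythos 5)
Your overall strategy -- glue $\phi_0$ with $\sigma\circ\phi_1$ for $\sigma\in\mathfrak S_{k-1}$ and show that the hypothesis forces every $\sigma$ to be blocked only in an extremal way -- is viable and genuinely different from the paper's proof (which builds an auxiliary graph $G^*$ on the boundary colours, uses perfection of complements of bipartite graphs to extract a $K_k$, and counts cross edges). But as written there are two concrete gaps. First, your bookkeeping ranges over arcs ``$(u,v)$ with $u\in V_0$, $v\in V_1$ (or vice versa)'': the hypothesis bounds only $\abs{A(V_0,V_1)}$, and there may be arbitrarily many arcs from $V_1$ to $V_0$, so any count indexed by cut arcs in both directions breaks down. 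The fix, which you need to make explicit, is that since $\phi_0$ and $\sigma\circ\phi_1$ are dicolourings of $G[V_0]$ and $G[V_1]$, every monochromatic cycle of the glued colouring must contain a monochromatic arc of $A(V_0,V_1)$; hence the set of bad $\sigma$ is contained in the union, over the at most $k-1$ distinct pairs $(c_0,c_1)=(\phi_0(u),\phi_1(v))$ with $uv\in A(V_0,V_1)$, of the sets $\{\sigma:\sigma(c_1)=c_0\}$. This inclusion also disposes of what you call the main obstacle: you never need to decide which $\sigma$ actually close a cycle through internal paths, only that every $\sigma$ lies in this union.

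Second, the decisive arithmetic is not the one you sketch (``$(k-1-a_1)!\cdot(\text{choices})$ times the number of cut arcs''): each set $\{\sigma:\sigma(c_1)=c_0\}$ has size $(k-2)!$, so covering $\mathfrak S_{k-1}$ forces $(k-1)!\le (\text{number of distinct pairs})\cdot(k-2)!\le(k-1)!$, hence exactly $k-1$ distinct forbidden pairs and pairwise disjoint sets. Two such sets intersect as soon as the pairs differ in both coordinates, so any two pairs must share a coordinate, and a short check (ruling out configurations like $(c_0,c_1),(c_0,c_1'),(c_0',c_1)$) shows all pairs share the same first coordinate or all share the same second coordinate; since the first (resp.\ second) coordinates run exactly over $\phi_0(V_0^*)$ (resp.\ $\phi_1(V_1^*)$), this yields $\{\abs{\phi_0(V_0^*)},\abs{\phi_1(V_1^*)}\}=\{1,k-1\}$ for the chosen pair of colourings. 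Note that your proposed contradiction hypothesis (``both boundary colour sets of size $\ge 2$'') only rules out $\min\ge 2$ and does not exclude, say, one side constant and the other using between $2$ and $k-2$ colours, so you need this equality version (or a separate easy gluing argument for that case). Finally, the upgrade to a single $i$ valid for all dicolourings is not a matter of symmetry or directional duality: $\abs{\phi_i(V_i^*)}$ depends only on $\phi_i$, so for $k\ge 3$ two colourings of the same side realising the values $1$ and $k-1$ would contradict the per-pair dichotomy when paired with one fixed colouring of the other side ($k=2$ being trivial). With these pieces filled in, your argument is correct and in fact avoids the Perfect Graph Theorem step used in the paper.
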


\begin{proof}
Let, for $i \in \{0, 1\}$, $\phi_i$ be a $(k-1)$-dicolouring of $G[V_i]$. Let $G^*$ be the graph on $\bigsqcupl_{i \in \{0, 1\}} \phi_i(V_i^*)$ such that, for $i \in \{0, 1\}$, $G^*[\phi_i(V_i^*)]$ is complete and, for $c_0 \in \phi_0(V_0^*)$ and $c_1 \in \phi_1(V_1^*)$, $c_0c_1 \in G^*$ if and only if there exists, for $i \in \{0, 1\}$, $x_i \in \phi_i^{-1}(\{c_i\})$ such that $x_0x_1 \in G$. Since $G$ is not $k$-dicolourable, $G^*$ is not $k$-colourable. Since $\overline{G^*}$ is bipartite, it is perfect and hence, by the perfect graph theorem, $G^*$ is perfect. Thus there is $X \subseteq G^*$ such that $G^*[X] = K_k$. Since, for $i \in \{0, 1\}, \abs{\phi_i(V_i^*)} \le k-1$, $X \cap \phi_i(V_i^*) \ne \varnothing$. Since $\abs{E_{G^*}(\phi_0(V_0^*), \phi_1(V_1^*))} \le \abs{A(V_0, V_1)} \le k-1$ and, for $i \in \{0, 1\}$ and $c \in \phi_i(V_i^*)$, $\phi_{1-i}(V_{1-i}^*) \cap N(c) \ne \varnothing$, $\{\abs{\phi_i(V_i^*)}, i \in \{0, 1\}\} = \{1, k-1\}$. This is true for any choice of $\phi_i, i \in \{0, 1\}$, so generalising independently in $\phi_0$ and $\phi_1$ yields the result.
\end{proof}

The above lemma implies the following, which was already proved by Neumann-Lara in~\cite{Neu82} (Theorem 5). 
\begin{corol}\label{lem:arcconnection} Let $k \ge 2$ and $G$ be a $k$-dicritical digraph. Then $G$ is $(k-1)$-arc-connected.
\end{corol}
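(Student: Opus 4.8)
\textbf{Proof plan for Corollary~\ref{lem:arcconnection}.}

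The plan is to derive the $(k-1)$-arc-connectivity of a $k$-dicritical digraph $G$ directly from Lemma~\ref{lem:lowarcconnection} by a short proof by contradiction. First I would dispose of the degenerate case: if $\abs{V(G)} = 1$ then $G$ has no arcs, but the only $k$-dicritical digraph on one vertex has $\dic(G)=1$, so $k=1$ and there is nothing to prove; for $k\ge 2$ we have $\abs{V(G)}\ge 2$ by Lemma~\ref{lem:ord} (or simply because a single vertex is $1$-dicritical). So assume $\abs{V(G)}\ge 2$ and suppose for contradiction that $G$ has an arc-cut of size at most $k-2$. Using the standard characterisation recalled just before the statement, this means there is a partition $(V_0,V_1)$ of $V(G)$ with $\abs{A_G(V_0,V_1)} \le k-2 \le k-1$, so Lemma~\ref{lem:lowarcconnection} applies.

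The key point is then that both $V_0$ and $V_1$ are \emph{proper} subdigraphs of $G$ — here I would need $V_0,V_1$ both nonempty, which holds since $(V_0,V_1)$ is a genuine partition of a set with at least two elements — and hence, by $k$-dicriticality, each of $G[V_0]$ and $G[V_1]$ admits a $(k-1)$-dicolouring. Lemma~\ref{lem:lowarcconnection} gives an index $i\in\{0,1\}$ such that, for \emph{every} $(k-1)$-dicolouring $\phi_i$ of $G[V_i]$, the set $\phi_i(V_i^*)$ has size exactly $1$; that is, $V_i^* = N^-(V_1)$ (if $i=0$) or $N^+(V_0)$ (if $i=1$) is monochromatic under every $(k-1)$-dicolouring of its side. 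I would then permute colours on the $i$-side and on the $(1-i)$-side independently so that the single colour used on $V_i^*$ does not appear on $V_{1-i}^*$ — this is possible because $\abs{\phi_{1-i}(V_{1-i}^*)} = k-1$ leaves, together with at most $k-1$ colours total, exactly... here is the subtlety: we must check there is a free colour. Actually the cleanest route is: recolour so that colour $c$ used on $V_i^*$ is distinct from \emph{all} colours appearing on the arc-endpoints of $V_{1-i}$ incident to the cut; since $\abs{A_G(V_0,V_1)} \le k-2$, these endpoints use at most $k-2$ colours, leaving a colour available for the whole monochromatic set $V_i^*$. Combining the two dicolourings then yields a $(k-1)$-dicolouring of $G$: any monochromatic cycle of the combined colouring would have to cross the cut (since each side is properly dicoloured), forcing it to use an arc from $V_i$-side to $V_{1-i}$-side at a vertex coloured $c$ on the $V_i^*$ end, but no $V_{1-i}$-endpoint of a cut arc is coloured $c$ — contradiction with $\dic(G)=k$.

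The main obstacle I anticipate is the bookkeeping in the recolouring step: making sure the colour forced on $V_i^*$ can be chosen to avoid the at-most-$(k-2)$ colours occurring on the $V_{1-i}$-endpoints of cut arcs, and verifying that no monochromatic directed cycle survives the merge. The rest is essentially a direct quotation of Lemma~\ref{lem:lowarcconnection}. (One should also note that Lemma~\ref{lem:lowarcconnection} is stated for cuts of size $\le k-1$, whereas for the corollary we assume a cut of size $\le k-2$, which gives exactly the extra room needed in the recolouring.) If the direct approach proves awkward, an alternative is to invoke Lemma~\ref{lem:lowarcconnection} with the cut of size $k-2$ and observe that the conclusion $\abs{\phi_{1-i}(V_{1-i}^*)} = k-1$ already contradicts $\abs{A_G(V_0,V_1)} \le k-2$ once one notes that in the graph $G^*$ from that lemma's proof, reaching a $K_k$ requires at least $\min(\abs{\phi_0(V_0^*)},\abs{\phi_1(V_1^*)}) = 1$ cross-edges per vertex of the small side, i.e. at least $k-1$ cross-edges in total when the small side is a single colour class feeding all $k-1$ colours of the other side — so $\abs{A_G(V_0,V_1)} \ge k-1$, contradicting our assumption.
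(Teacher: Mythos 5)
Your proposal is correct and follows the paper's route: the corollary is deduced directly from Lemma~\ref{lem:lowarcconnection} together with the partition characterisation of arc-connectivity recalled just before the statement. Note only that the recolouring-and-merge step is unnecessary work: since every vertex of $V_{1-i}^*$ is the endpoint of a cut arc, the lemma's conclusion $\abs{\phi_{1-i}(V_{1-i}^*)}=k-1$ already gives $k-1 \le \abs{V_{1-i}^*} \le \abs{A_G(V_0,V_1)} \le k-2$, the immediate contradiction that your alternative paragraph gestures at (without needing to re-enter the internals of the lemma's proof).
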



\section{Dirac-type bounds}\label{sec:dirac}

Let $G$ be a $k$-dicritical digraph.
Every vertex of $G$ has degree at least $2(k-1)$, yielding, by the handshake lemma, $\abs{A(G)} = \frac 1 2 \suml_{u \in V(G)} d(u) \ge (k-1)\abs{V(G)}$. This leads us to define the excess of $u$: $\eps_k(u) = d(u)-2(k-1)$, the excess of $X \subseteq V(G)$: $\eps_k(X) = \suml_{u \in X} \eps_k(u)$ and the excess of $G$: $\eps_k(G) = \eps_k(V(G)) = 2\abs{A(G)} - 2(k-1)\abs{V(G)}$. When it is clear from the context, we write $\eps$ instead of $\eps_k$. 

\subsection{Dirac's Theorem}\label{subsec:dirac}

We now prove Theorem~\ref{thm_intro:directed_dirac_1}, that we restate here for convenience. 
\begin{thm}\label{thm:dirac} Let $n > k \ge 4$ and $G$ an $n$-vertex $k$-dicritical digraph. Then \[\abs{A(G)} \ge (k-1) \abs{V(G)} + k-3.\]
In other words: $\eps(G) \geq 2(k-3)$. 
\end{thm}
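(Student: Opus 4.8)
The statement says: if $G$ is $k$-dicritical ($k \ge 4$) on $n > k$ vertices and $G \ne \bid K_k$, then $\eps(G) \ge 2(k-3)$. Since every vertex has degree $\ge 2(k-1)$, we have $\eps(G) \ge 0$ always, and $\eps(G)$ is even (it equals $2|A(G)| - 2(k-1)n$). So we must rule out $\eps(G) \in \{0, 2, 4, \dots, 2(k-4)\}$, i.e. $\eps(G) < 2(k-3)$. The natural approach is to study the set $L$ of \emph{low vertices}, i.e. those of degree exactly $2(k-1)$ (equivalently, excess $0$). If $\eps(G)$ is small, then $L$ is "most of" $G$: only few vertices carry positive excess, and the total excess is bounded. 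By Theorem~\ref{thm:directed_Gallai}, $G[L]$ is a directed Gallai forest, so it is very structured (blocks are arcs, directed cycles, symmetric odd cycles, or symmetric complete digraphs). The plan is to derive a contradiction from the interplay between this Gallai-forest structure on $L$ and the $(k-1)$-arc-connectivity of $G$ (Corollary~\ref{lem:arcconnection}) together with the low-arc-cut structure lemma (Lemma~\ref{lem:lowarcconnection}).

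First I would set up the counting: if $\eps(G) \le 2(k-3) - 2 = 2(k-4)$, then $|V(G) \setminus L| =: t$ satisfies $\sum_{u \notin L}\eps(u) \le 2(k-4)$, and since each such $u$ has $\eps(u) \ge 1$... actually $\eps(u) \ge 2$ is not automatic, but $\eps(u)\ge 1$ and each $u\notin L$ with $d(u)$ odd can be handled or one notes parity per vertex isn't forced; in any case $t \le 2(k-4)$, so $t$ is small compared to $n$ (using $n > k$, but we may even need $n$ large — if $n$ is close to $k$ we may need to invoke the classification of small dicritical digraphs, e.g. the lemma characterizing $k$-dicritical digraphs on $k+1$ vertices, and similar ad hoc arguments for $n$ slightly larger than $k$). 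The key structural point: consider a leaf block $B$ of the Gallai forest $G[L]$. The vertices of $B$ have all their "extra" arcs (beyond what $B$ provides) going to $N(V(G)\setminus L)$ plus the one separating vertex. One then argues that a leaf block $B$, being a directed cycle / symmetric odd cycle / symmetric complete digraph, has too few arcs leaving it to $V(G)\setminus B$: count $\bid A_G(V(B), V(G)\setminus V(B))$ from below using $(k-1)$-arc-connectivity (it must be $\ge k-1$ if $V(B) \ne V(G)$) and from above using that interior vertices of $B$ have degree exactly $2(k-1)$, all of which is "used up" inside $B$ except for the slack, which is bounded by the total excess $2(k-4)$ plus contributions at the (at most one) separating vertex. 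Making these two estimates collide — e.g. showing the out-cut is at most something like $2(k-4) + (\text{degree of one vertex})$ but the block is too large, or too small to absorb it — is the crux.

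Concretely, for each block type I would compute: a directed cycle $\vec C_m$ contributes $2$ to the degree of each of its vertices, a symmetric cycle contributes $4$, a symmetric $\bid K_m$ contributes $2(m-1)$; so a low vertex $v$ in $B$ has $2(k-1) - (\text{contribution of }B)$ half-edges available for other blocks and for arcs leaving $V(B)$. Summing over $V(B)$, the number of arcs from $V(B)$ to $V(G)\setminus V(B)$ is at most $\sum_{v\in V(B)}(2(k-1) - \deg_B(v)) + 2(\text{excess inside }B)$, minus what is consumed by the separating vertex's other blocks; balancing this against $\ge k-1$ (arc-connectivity) forces $|V(B)|$ and the global excess into an impossible regime — this should pin down that $G$ must be $\bid K_k$ (the unique tight case where $L = V(G)$, $G[L] = \bid K_k$ is a single block, and no arcs leave). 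I expect the main obstacle to be the bookkeeping at separating vertices of the Gallai forest and handling the several block types uniformly, plus carefully treating the boundary cases where $n$ is only slightly larger than $k$, where the asymptotic counting is too weak and one must fall back on explicit small-case analysis (the excerpt's lemmas on $(k+1)$-vertex dicritical digraphs and Theorem~\ref{lem:addComplete} on Dirac joins are presumably the tools for that). A secondary subtlety is the parity argument: ensuring $\eps(G) < 2(k-3)$ really does force $\eps(G) \le 2(k-4)$ and extracting enough low vertices; one has to be slightly careful since individual vertices can have odd degree.
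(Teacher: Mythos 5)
Your proposal is a plan rather than a proof, and the step you yourself flag as "the crux" is exactly where it breaks down. You propose to make two estimates on the cut around a leaf block $B$ of the Gallai forest $G[L]$ collide: a lower bound $\ge k-1$ from $(k-1)$-arc-connectivity and an upper bound coming from the degrees of the vertices of $B$. But there is no such collision: a non-separating vertex $v$ of $B$ has \emph{all} of its $2(k-1)-d_B(v)$ spare arc-endpoints leaving $V(B)$, and this quantity is at least $2$ for every admissible block type (and as large as $2k-4$ for cycle blocks), so the cut around a leaf block is typically far above $k-1$ -- the two bounds point the same way and produce no contradiction. What the degree/connectivity counting genuinely gives is the opposite-direction constraint: all arcs leaving $L$ land on the few high vertices, whose total degree is at most $2(k-1)\abs{V(G)-L}+\eps(G)$, which bounds the number of non-separating vertices of $G[L]$. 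That is useful, but it is nowhere near strong enough to prove the tight statement $\eps(G)\ge 2(k-3)$: it cannot distinguish excess $2(k-4)$ from $2(k-3)$, and near-extremal digraphs (those in $\m D_k$, or $\bid K_2(\bid K_{k-2},\vec C_3)$ with excess $2(k-2)$) have low-vertex structure locally indistinguishable from configurations your counting permits. Deciding that such configurations force more excess requires colouring arguments, not counting.

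That missing engine is what the paper's proof actually consists of, and it does not use Theorem~\ref{thm:directed_Gallai} at all. The paper takes a minimal counterexample and proceeds via: a recolouring/exchange argument showing $G$ contains no $\bid K_k$ minus an arc; contraction of a non-adjacent pair together with minimality and Corollary~\ref{lem:arcconnection} to produce a $\bid K_{k-1}$ subdigraph $W$; a careful choice of a triple $(W,x,y)$ and a vertex $z$, a second contraction $G/\{y,z\}$ yielding a second clique $R$ with $G^*=\bid K_k$; and then several claims (e.g.\ $\eps(z)\ge k-2-\abs{R_y}$, $\eps(\{y,z\})\ge 2\abs{W_y}-2$) obtained by deleting a single arc, taking a $(k-1)$-dicolouring of the resulting digraph, and tracking the forced monochromatic walk to charge extra degree to specific vertices, before a final convexity computation on the excesses. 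None of these recolouring, contraction, and minimal-counterexample mechanisms appear in your sketch, and without them (or the comparably heavy Kostochka--Stiebitz-style analysis of low-vertex components, which also goes well beyond cut counting) the Gallai-forest-plus-arc-connectivity framework you describe cannot close the gap; your own hedging ("should pin down", "I expect the main obstacle") concedes that the decisive argument is absent.
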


\begin{proof} 

Consider a digraph $G$ with $\abs{V(G)} > k$ minimal such that $\eps(G) < 2(k-3)$.

\begin{claim}\label{clm:Kkminus}
$G$ does not contain $\bid K_k$ minus one arc as a subdigraph. 
\end{claim}

\begin{proofclaim}
Assume we have $W \subseteq V(G)$ and $x, y \in W$ such that $G[W]+xy = \bid K_k$. 
Since $G$ is $k$-dicritical and $yx \in A(G)$, $G \setminus yx$ admits a $(k-1)$-dicolouring $\phi$.  
Since $G$ is not $(k-1)$-dicolourable, $\phi(x) = \phi(y)$ and there is a monochromatic walk in $G-yx$ from $x$ to $y$ of colour $\phi(x)$. 
Since $xy \notin A(G)$, this walk has length at least $2$. 
Now, for each $u \in W - \{x, y\}$, define $\psi_u$ from $\phi$ by exchanging the colour of $u$ and the colour of $x$ and $y$; formally: $\psi_u(u) = \phi(x)$, $\psi_u(x) = \psi_u(y) = \phi(u)$ and $\psi_u(v) = \phi(v)$ for every $v \in V(G) - \{x, y, u\}$. 
Since $\psi_u$ is not a dicolouring of $G$, either there is a cycle of colour $\psi_u(x) = \phi(u)$ going through $x$ or $y$ (or both) and we set $\delta_u = 1$, or there is a cycle of colour $\psi_u(u) = \phi(x)$ going through $u$ (which is disjoint from $W - u$) and we set $\delta_u = 0$. 

Observe that if $\delta_u = 0$, then $\eps(u) \geq 2$. Assume $\delta_u=0$ for $c$ vertices. Observe that: $$\eps(W - \{x,y\}) \geq 2c$$
and,
$$\eps(x) + \eps(y) \geq 2\sum_{u \in W - \{x,y\}} \delta_u = 2(k-2-c)$$
Hence, $\eps(G) \geq 2k-4$, a contradiction.
\end{proofclaim}
Note that $\bid K_k \not \subseteq G$, since $G \ne \bid K_k$ and $G$ is $k$-dicritical.

\begin{claim}\label{clm:shrink}
Let $x \ne y \in V(G)$ such that $xy \notin A(G)$ and $G/\{x, y\}$ is not $k$-dicritical. Let $G^* \subseteq G/\{x, y\}$ be $k$-dicritical and $U = V(G)-V(G^*)-x-y$. If $U \ne \varnothing$, then $G^* = \bid K_k$.
\end{claim}

\begin{proofclaim} 
Assume towards a contradiction that $G^* \neq \bid K_k$. 

By minimality of $G$, it suffices to show $\eps(G^*) \le \eps(G)$. 
We have $\varnothing \subsetneq U \subsetneq V(G)$. Hence, by Corollary~\ref{lem:arcconnection}, $G$ is $(k-1)$-arc-connected, so $\abs{A(U, V(G) - U)} \ge k-1$ and $\abs{A(V(G) - U, U)} \ge k-1$.
We have:\[\begin{array}{rcl}
  \eps(G)-\eps(G^*)&=&2(\abs{A(G)}-\abs{A(G^*)}) - (2k-2)(\abs{V(G)}-\abs{V(G^*)})\\
  &=&\suml_{u \in U} d_G(u) + \abs{A(U, V(G) - U)} + \abs{A(V(G) - U, U)}\\
  &&\qquad + 2(\abs{A(V(G) - U)}-\abs{A(G^*)}) - (2k-2)(\abs U + 1)\\
  & \geq & (2k-2)\abs{U} + (2k-2) - (2k-2)(\abs U + 1)\\
  &=&0
\end{array}\]
\end{proofclaim}

\begin{claim} $G$ contains $\bid K_{k-1}$ as a subdigraph. 
\end{claim}

\begin{proofclaim}
We have $x \in V(G)$ such that $d(x) \le 2k-1$ (otherwise $\eps(G) \ge 2\abs{V(G)} \ge 2(k+1)$). By directional duality, we may assume $d^+(x) = k-1$. 
If $N^+(x)$ is a clique, then $G[N^+(x)] = \bid K_{k-1}$ and we are done. 
Otherwise we have $y, z \in N^+(x)$ such that $yz \notin A(G)$. 
Since $d^+_{G/\{y, z\}}(x) < k-1$, $G/\{y, z\}$ is not $k$-dicritical and $x$ is not in any $k$-dicritical subdigraph of $G/\{y, z\}$, so claim~\ref{clm:shrink} yields a copy of $\bid K_{k-1}$ in $G$. 
\end{proofclaim}

Let $W \subseteq V(G)$ such that $G[W] = \bid K_{k-1}$. 
We have $x \in W$ such that $d(x) \le 2k-1$ (otherwise, $\eps(G) \ge \eps(W) \ge 2k-2$). 
Observe that $\abs{N^+(x) - W} = 1$ or $\abs{N^-(x) - W} = 1$. 
Let $y \in N(x) - W$ with $y \in N^+(x)$ whenever $\abs{N^+(x)-W} = 1$ and $y \in N^-(x)$ otherwise. 
We choose such a triplet $(W,\ x,\ y)$ so as to maximise the number of arcs between $x$ and $y$ (i.e. we choose $y \in N^d(x)$ when possible) and, subject to that, maximise the cardinality of $W_y = W \cap N^d(y)$. Let $z \in W - (N^d(y) + x)$ with minimum degree (such a $z$ exists by Claim~\ref{clm:Kkminus}).

By lemma~\ref{lem:pouet}(\ref{lem:shrinkAcyclic}), $\dic(G/\{y, z\}) \ge k$. 
Let $G^*$ be a $k$-dicritical subdigraph of $G/\{y, z\}$ and $U_W = W - (V(G^*) + z)$. 

\begin{claim}\label{clm:WdisjG*}
$U_W = W - z$ and $G^*=\bid K_{k}$. 
\end{claim}

\begin{proofclaim}
We first show $x \in U_W$. 
If $d(x) = 2k-2$ or $y \in N^d(x)$, since $z \in N^d(x)$, $d_{G/\{y, z\}}(x) \leq 2k-3$ and thus $x \notin V(G^*)$.
Otherwise we have $d(x) = 2k-1$ and $y \notin N^d(x)$. Observe that in this case $\abs{N^s(x)} = 3$. 

We may assume by directional duality that $\abs{N^+(x)-W} = 1$ and hence $y \in N^+(x)$. Then $N^{s+}_{G/\{y, z\}}(x) = \varnothing$. If $x \in G^*$, by lemma~\ref{lem:combine}(~\ref{lem:noOnlyOut}), $N^{s-}_{G^*}(x) = \varnothing$ and hence $d_{G^*}(x) \le d_G(x)-3 < 2(k-1)$, a contradiction. So $x \notin G^*$, i.e. $x \in U_W$ and, by Claim~\ref{clm:shrink}, $G^* = \bid K_k$.

Assume towards a contradiction $U_W \subsetneq W - z$. 
Then $1 \le \abs{U_W} \le k-3$.   
Moreover, observe that for every $u \in W-(U_W+z)$, $d_G(u) \geq 2\abs{G^*-u} + 2\abs{U_W} = 2k-2 + 2\abs{U_w}$.
Hence:
\[\begin{array}{rcl}
  \eps(G)&\ge&\eps(W-(U_W+z))\\
  &=&\suml_{u \in W - (U_W + z)} (d_G(u) - (2k-2))\\
  &\ge& \suml_{u \in W - (U_W + z)}(2\abs{U_W})\\
 
  &=&2\abs{W - (U_W + z)}\abs{U_W}\\
  &=&2(k-2-\abs{U_W})\abs{U_W}\\
  &\ge&2(k-3)\qquad (\text{by concavity of }x \mapsto (k-2-x)x),
\end{array}\]

a contradiction.
\end{proofclaim}

Let $R = V(G^*) - y \star z$. By Claim~\ref{clm:WdisjG*}, $G[R] = \bid K_{k-1}$. Let $R_y = R \cap N^d(y)$. The situation is depicted in Figure~\ref{fig:yzWS}.

\begin{figure}
\begin{center}
\begin{tikzpicture}
\draw (0, 0) rectangle (2, 1);
\draw (1, 0) -- (1, 1);

\draw (.5, .5) node {$W_y$};
\draw (1, 1) node[above] {$W-z = \bid K_{k-2}$};
\draw (2, -3) rectangle (3, -1);
\draw (2, -2) -- (3, -2);
\draw (2, -1.5) -- (3, -1.5);

\node (z) at (1, -.75) [vertex] {$z$};
\node (y) at (.25, -2.5) [vertex] {$y$};
\draw (2.5, -2.5) node {$R_y$};
\draw (3, -2) node[right] {$R = \bid K_{k-1}$};
\draw[->, >=latex] (y) -- (.25, 0);
\draw[->, >=latex] (.25, 0) -- (y);
\draw[->, >=latex] (y) -- (2, -2.5);
\draw[->, >=latex] (2, -2.5) -- (y);
\draw[->, >=latex] (z) -- (.5, 0);
\draw[->, >=latex] (.5, 0) -- (z);
\draw[->, >=latex] (z) -- (1.5, 0);
\draw[->, >=latex] (1.5, 0) -- (z);
\end{tikzpicture}
\end{center} 
\caption{This figure describes the situation at the end of claim~\ref{clm:WdisjG*}. $G/\{y,z\}[R \cup y \star z]= {\overset \leftrightarrow K}_{k-1}$}
\label{fig:yzWS}
\end{figure}
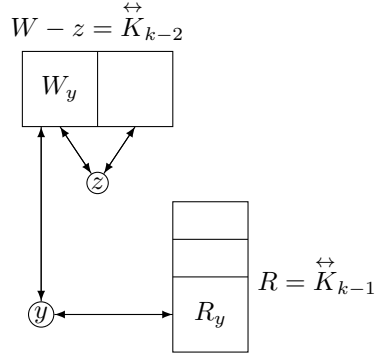

\begin{claim}\label{clm:epsz}
$\eps(z) \ge k-2-\abs{R_y}$. Moreover, if $\abs{R_y} \le k-3$, equality holds only if all the arcs between $z$ and $R-R_y$ have the same orientation.
\end{claim}

\begin{proofclaim}
We have $W-z \subseteq N^d(z)$ and since $G^* = \bid K_k$, we have $R-R_y \subseteq N(z)$. 
Let $s \in R-R_y$ (such an $s$ exists, otherwise $G[R+y] = \bid K_k$). 
We may assume without loss of generality that $s \in N^+(z)$. Since $G$ is $k$-dicritical, we have $\phi: G \setminus zs \to [k-1]$ a dicolouring. 
$\phi$ is not a dicolouring of $G$, so there is a monochromatic walk from $s$ to $z$. Since $W$ is a clique, $z$ is the only vertex in $W$ on the walk. Observe that $s$ is the only element of $R-R_y$ with colour $\phi(s)$, and thus the last but one vertex on the walk is either $s$ or not in $R-R_y$.

Observe moreover that, by Claim~\ref{clm:WdisjG*}, $W-z$ is disjoint from $R$. 
Altogether, we get that $d(z) \geq 2(\abs W - 1) + \abs{R-R_y} + 1 \geq 2(k-2) + (k-1) - \abs{R_y} + 1 = 3k-4-\abs{R_y}$, and thus:

\[\eps(z) = d(z)-(2k-2) \geq k-2-\abs{R_y} \]
Assume now we have $\eps(z) = k-2-\abs{R_y}$, $\abs{R_y} \le k-3$, and for a contradiction $s_+ \in N^+(z) \cap (R-R_y)$ and $s_- \in N^-(z) \cap (R-R_y)$. 
Since $\abs{R-R_y} \ge 2$, we may assume $s_+ \ne s_-$. 
As previously, $G \setminus zs_+$ admits a $(k-1)$ dicolouring, implying that either 
 $s_+ \in N^d(z)$ or $N^-(z)-W-R_y \ne \varnothing$. Similarly, either $s_- \in N^d(z)$ or $N^+(z)-W-R_y \ne \varnothing$, which yields $\eps(z) \ge k-1-\abs{R_y}$.
\end{proofclaim}

Recall that $W_y = N^d(y) \cap W$. 
\begin{claim}\label{clm:epsyz}
$\eps(\{y, z\}) \ge 2\abs{W_y}-2$ and equality holds only if $N^s(y) \setminus R = \emptyset$. 
\end{claim}

\begin{proofclaim}
Since $G^* = \bid K_k$, $\abs{\bid A(\{y, z\}, R)} \ge 2\abs R$. 
Hence:
\[\begin{array}{rcl}
\eps(\{y, z\})&\ge&2\abs R + 2\abs{W_y} + 2(\abs W - 1) - 4(k-1)\\
&=&2\abs{W_y}-2
\end{array}\]
If  $N^s(y) \setminus R \ne \emptyset$, one arc incident to $y$ is not accounted for in the previous minoration.
\end{proofclaim}

\begin{claim}
There is $x' \in R_y$ such that $d(x') \le 2k-1$.
\end{claim}

\begin{proofclaim}
Otherwise, $\eps(R_y) \ge 2\abs{R_y}$. Recall that $z$ has minimum degree among vertices of $W-W_y$. We distinguish two cases:
\begin{itemize}
  \item If $x \in W_y$ (with $w =\abs{W_y}, s = \abs{R_y} \in \intZ{0, k-2}$):
  \[\begin{array}{rcl}
  \eps(G)&\ge&\eps(\{y, z\})+\eps(W-W_y-z)+\eps(R_y)\\
  &\ge&2w - 2 + (k-2-w)(k-2-s)+2s\quad(\text{using Claims~\ref{clm:epsz} and~\ref{clm:epsyz}} )\\
  &=&ws-(k-4)(w+s)+(k-2)^2-2\\
  &=&\frac 1 4 ((w+s)^2 - (w-s)^2) - (k-4)(w+s) + (k-2)^2 - 2
  \end{array}\]
  Let $f(w, s)$ be this last expression. Since, for fixed $w+s$, $f(w, s)$ is decreasing in $\abs{w-s}$ and symmetric in $w$ and $s$, we consider $w', s' \in \intZ{0, k-2}$ such that $w'+s' = w+s$ and $w' \in \{0, k-2\}$ and have:
  \[\begin{array}{rcl}
    \eps(G)&\ge&f(w', s')\\
    &\ge&\min(-(k-4)s' + (k-2)^2-2,\\
    &&\qquad (k-2)s' - (k-4)(k-2+s') + (k-2)^2 - 2)\\
    &\ge&\min((k-2)^2 - (k-2)(k-4) - 2,\\
    &&\qquad (k-2)^2 - (k-2)(k-4) - 2)\\
  &=&2(k-3)\end{array}\]
  \item Otherwise, $x \notin W_y$, that is $y \notin N^d(x)$. 
  Recall that we chose $(W, x, y)$ so as to maximise the number of arcs between $x$ and $y$. 
  Let $u \in W_y \cup R_y$. If $d(u) \le 2k-1$, then either $(W, u, y)$ or $(R, u, y)$ contradicts the choice of $(W, x, y)$. Hence $d(u) \ge 2k$.

  We have $\abs{W_y}, \abs{R_y} \le k-4$ (otherwise $\eps(W_y) \ge 2(k-3)$ (resp. $\eps(R_y) \ge 2(k-3)$)). Then (with $w = \abs{W_y}, s = \abs{R_y} \in \intZ{0, k-4})$):
  \[\begin{array}{rcl}
    \eps(G)&=&\eps(W-W_y-x)+\eps(R_y)\\
    &\ge&(k-2-w)(k-2-s)+2s\qquad(\text{using Claim~\ref{clm:epsz}})\\
    &=&\frac 1 4 ((w+s)^2 - (w-s)^2) - (k-2)(w+s) + (k-2)^2 + 2s\end{array}\]
  This last expression is minimised when $s \le w$ (otherwise exchange $w$ and $s$) and when, for fixed $w+s$, $\abs{w-s}$ is maximised, hence when $s = 0$ or $w = k-4$. Thus we have:\[\begin{array}{rcl}
  \eps(G)&\ge&\min((k-2)(k-2-w),\ 2(k-2-s)+2s)\\
  &\ge&2(k-2)\end{array}\]
\end{itemize}
\end{proofclaim}

Let $x' \in R_y$ with $d(x') \le 2k-1$. 
Since $x' \in N^d(y)$ and we chose $(W, x, y)$ so as to maximise the number of arcs between $x$ and $y$, $x \in W_y$ (otherwise $(R,x',y)$ contradicts the choice of $(W,x,y)$). 

Since we chose $(W, x, y)$ so as to maximise $\abs{W_y}$, we have $\abs{R_y} \le \abs{W_y}$ (otherwise $(R,x',y)$ contradicts the choice of $(W,x,y)$). 
Also recall that $z$ has minimum degree in $W-W_y$. Then:
\[\begin{array}{rcl}
\eps(G)&\ge&\eps(\{y, z\})+\eps(W-W_y-z)\\
&\ge&2\abs{W_y} - 2 + \abs{W-W_y-z}\eps(z)\quad(\text{using Claims~\ref{clm:epsyz}})\\
&\ge&2\abs{W_y} - 2 + (k-2-\abs{W_y})(k-2-\abs{R_y})\quad(\text{using Claims~\ref{clm:epsz}})\\
&\ge&2\abs{W_y}-2+(k-2-\abs{W_y})^2\\
&=&(\abs{W_y}-(k-3))^2+2k-7
\end{array}\]

Since $\eps(G) < 2k-6$, each inequality above is an equality, so $\abs{W_y} = k-3$ and then $\abs{R_y} = k-3$ and equality condition in Claims~\ref{clm:epsz} and~\ref{clm:epsyz} hold. Without loss of generality, we may assume $R-R_y \subseteq N^{s+}(z)$. Since $G^* = \bid K_k$, we have $R-R_y \subseteq N^{s-}(y)$. But since $G$ is $k$-dicritical, by lemma~\ref{lem:combine}(~\ref{lem:noOnlyOut}), $N^{s+}(y) \ne \varnothing$ and hence $N^{s+}(y) \setminus R \ne \varnothing$. This contradicts the equality condition in Claim~\ref{clm:epsyz}.
\end{proof}

\subsection{Refined Dirac's bounds} \label{subsec:Refined_Dirac}

The goal of this section is to prove Theorem~\ref{thm_intro:directed_dirac_2}, that we restate below, together with, as promised in the introduction, the digraph witnessing that the bound is tight.

First, as announced in the introduction, we have to define the set of digraphs $\mathcal D_k$. 


\begin{Def} Let $\m D_3 = \{\bid C_{2n+1}, n \in \NN\}$ and, for $k \ge 4$, let $\m D_k = \{\bid C_5(\bid K_{k-2}, \bid K_1, \bid K_n, \bid K_{k-1-n}, \bid K_1), 1 \le n \le k-2\}$ (see Figure~\ref{fig:D_k}). 
 It is clear that, for every $k \ge 4$ and $G \in \mathcal D_k$, with $a, b \in V(G)$ defined as in Figure~\ref{fig:D_k}, $\eps(\{a, b\}) = 2(k-3)$ and the other vertices have excess $0$, thus $\eps(G) = 2(k-3)$. 
\end{Def}

Observe that all digraphs in $\mathcal D_k$, $k \ge 3$ are symmetric and $k$-dicritical. These digraphs are the same as the tight graphs characterised by Dirac in~\cite{Dirac74} (see Theorem~\ref{thm_nono:dirac2}). 

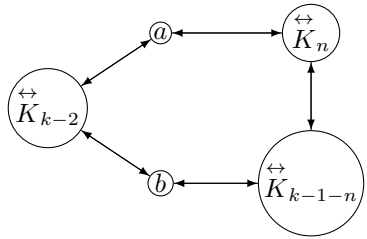
\begin{figure}[!hbtp]
\begin{center}
\begin{tikzpicture}
\node (a) at (0, 0) [vertex] {$\bid K_{k-2}$};
\node (b) at (1.5, 1) [vertex] {$a$};
\node (c) at (1.5, -1) [vertex] {$b$};
\node (d) at (3.5, 1) [vertex] {$\bid K_n$};
\node (e) at (3.5, -1) [vertex] {$\bid K_{k-1-n}$};
\draw[->, >=latex] (a) -- (b);
\draw[->, >=latex] (b) -- (a);
\draw[->, >=latex] (a) -- (c);
\draw[->, >=latex] (c) -- (a);
\draw[->, >=latex] (b) -- (d);
\draw[->, >=latex] (d) -- (b);
\draw[->, >=latex] (c) -- (e);
\draw[->, >=latex] (e) -- (c);
\draw[->, >=latex] (d) -- (e);
\draw[->, >=latex] (e) -- (d);
\end{tikzpicture}
\end{center} 
\caption{Digraphs in $\mathcal D_k$, $k \geq 4$.}
\label{fig:D_k}
\end{figure}

\begin{thm} \label{thm:dirac+} Let $k \ge 4$ and $G$ be a $k$-dicritical digraph such that $G \ne \bid K_k$ and $G \notin \m D_k$. Then:\[\abs{A(G)} \ge (k-1)\abs{V(G)} + (k-2).\]
Equivalently: $\eps(G) \geq 2(k-2)$.\\
Moreover, the bound is tight for $\bid K_2(\bid K_{k-2}, \vec C_3)$. 
\end{thm}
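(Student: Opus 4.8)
The plan is to mimic the proof of Theorem~\ref{thm:dirac} (Dirac's directed theorem) but push the excess bound up by $2$, while carefully tracking the equality cases so that the only digraphs avoiding $\eps(G) \ge 2(k-2)$ are exactly $\bid K_k$ and the members of $\m D_k$. So I would again argue by contradiction: take a counterexample $G$ with $\abs{V(G)}$ minimal such that $G \neq \bid K_k$, $G \notin \m D_k$, and $\eps(G) < 2(k-2)$, i.e. $\eps(G) \le 2(k-3)$ (since excess is even and $\eps(G) \ge 2(k-3)$ by Theorem~\ref{thm:dirac}, unless $\abs{V(G)} = k$ which would force $G = \bid K_k$). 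So in fact $\eps(G) = 2(k-3)$ exactly, and I want to derive a contradiction or show $G \in \m D_k$.

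First I would re-run the opening claims of the proof of Theorem~\ref{thm:dirac}: that $G$ contains no $\bid K_k$ minus an arc (Claim~\ref{clm:Kkminus}), that $\bid K_k \not\subseteq G$, and the shrinking claim (Claim~\ref{clm:shrink}) — here I'd need the slightly stronger statement that if $xy \notin A(G)$, $G/\{x,y\}$ is not $k$-dicritical, $G^* \subseteq G/\{x,y\}$ is $k$-dicritical with $U = V(G) - V(G^*) - x - y \neq \varnothing$, then $G^* = \bid K_k$ — this follows verbatim since the arc-count inequality is unchanged, using that $\eps(G^*) \le \eps(G) = 2(k-3) < 2(k-2)$ and minimality of $G$. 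Then I would reprove that $G$ contains a $\bid K_{k-1}$, pick $W$ with $G[W] = \bid K_{k-1}$, a low-degree vertex $x \in W$ with $d(x) \le 2k-1$, a neighbour $y \notin W$, and the vertex $z \in W - (N^d(y) + x)$ of minimum degree (exists by Claim~\ref{clm:Kkminus}); shrink $\{y,z\}$ to get $G^* = \bid K_k$ and the structure of Figure~\ref{fig:yzWS}, with $R = V(G^*) - y\star z$, $G[R] = \bid K_{k-1}$, $R_y = R \cap N^d(y)$, $W_y = W \cap N^d(y)$.

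The crucial new work is the endgame. In the proof of Theorem~\ref{thm:dirac}, the final chain of inequalities gives $\eps(G) \ge (\abs{W_y} - (k-3))^2 + 2k - 7$, and $\eps(G) < 2k - 6$ forces every inequality tight: $\abs{W_y} = \abs{R_y} = k-3$, the equality conditions of Claims~\ref{clm:epsz} and~\ref{clm:epsyz} hold, and then one reaches a contradiction with Lemma~\ref{lem:combine}(\ref{lem:noOnlyOut}). For the refined bound I now have $\eps(G) \le 2k - 6 = 2(k-3)$, so the inequality $\eps(G) \ge (\abs{W_y}-(k-3))^2 + 2k-7$ still forces $\abs{W_y} = \abs{R_y} = k-3$ and all the equality conditions. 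The contradiction via Lemma~\ref{lem:combine}(\ref{lem:noOnlyOut}) from the original proof still applies when $k \ge 4$ in general — but I need to recheck: in Theorem~\ref{thm:dirac} the argument "$R - R_y \subseteq N^{s+}(z)$, hence $R - R_y \subseteq N^{s-}(y)$, hence $N^{s+}(y) \setminus R \neq \varnothing$, contradicting Claim~\ref{clm:epsyz} equality" — but $R - R_y$ is a single vertex when $\abs{R_y} = k-3$, $\abs{R} = k-1$... wait, $\abs{R - R_y} = 2$. So the argument goes through identically. Actually the issue is that in the original proof this yields an outright contradiction, so here it would too — meaning the hypothesis $\abs{V(G)} > k$ in Theorem~\ref{thm:dirac} already essentially forced $\eps(G) \ge 2k - 6$ with no slack in the endgame. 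The subtlety must be that the earlier claims (the $\bid K_{k-1}$-existence claim, or an analogue of the case analysis with "$x' \in R_y$ of degree $\le 2k-1$") have equality cases that were previously strict-enough but now land exactly at $2(k-3)$: those are precisely where $\m D_k$ appears. Concretely, I expect $\m D_k$ to arise from the configuration $\bid C_5(\bid K_{k-2}, \bid K_1, \bid K_n, \bid K_{k-1-n}, \bid K_1)$, i.e. when $W = \bid K_{k-1}$ and a low-degree vertex has its excess distributed exactly as $\eps(\{a,b\}) = 2(k-3)$.

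The main obstacle, therefore, will be the bookkeeping in the endgame: I must redo the three inequality chains (the $x \in W_y$ case, the $x \notin W_y$ case, and the final chain after extracting $x' \in R_y$) keeping track of exactly when equality $\eps(G) = 2(k-3)$ is attainable, and show that in every such case either we reach a contradiction (as in Theorem~\ref{thm:dirac}) or the digraph is forced, block by block, to be a Dirac join $\bid K_2(\bid K_{k-2}, \vec C_3)$-type object or a member of $\m D_k$. In particular I expect to need: (i) a lemma that if $G$ has a vertex of excess $0$ whose closed neighbourhood is "almost complete", $G$ is a Dirac join, invoking Theorem~\ref{lem:addComplete}; and (ii) an argument that the $5$-cycle structure of Figure~\ref{fig:D_k} is the unique way to realize $\eps(G) = 2(k-3)$ outside $\bid K_k$, building the $\bid C_5$ from the two cliques $W$ and $R$ glued at the low-degree apex vertices. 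The tightness claim for $\bid K_2(\bid K_{k-2}, \vec C_3)$ is immediate: by Theorem~\ref{lem:addComplete} it is $k$-dicritical, it has $k+1$ vertices, the $\vec C_3$-vertices have degree $2(k-2) + 2 = 2k - 2$ (excess $0$) and the $\bid K_{k-2}$-vertices have degree $2(k-3) + 6 = 2k$ (excess $2$ each), giving $\eps(G) = 2(k-2)$; and it is neither $\bid K_k$ nor in $\m D_k$ since $\m D_k \subseteq$ symmetric digraphs while $\vec C_3$ is not symmetric.
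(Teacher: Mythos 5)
Your plan has a genuine gap, and it is not just bookkeeping. First, the contraction claims of the proof of Theorem~\ref{thm:dirac} do not carry over verbatim under your new minimality: in Claim~\ref{clm:shrink} (and again inside Claim~\ref{clm:WdisjG*}) the minimal-counterexample hypothesis now only excludes smaller digraphs of excess below $2(k-2)$ that are neither $\bid K_k$ nor in $\m D_k$, so the extracted $k$-dicritical digraph $G^*$ may well be a member of $\m D_k$ (these have excess exactly $2(k-3)$), not only $\bid K_k$; everything downstream of Claim~\ref{clm:WdisjG*} (the clique $R$, Claims~\ref{clm:epsz} and~\ref{clm:epsyz}, the final chains) is built on $G^* = \bid K_k$, and your proposal does not handle the alternative. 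Second, your endgame assertion is wrong: once $\eps(G) = 2(k-3)$ is permitted, the inequality $\eps(G) \ge (\abs{W_y}-(k-3))^2 + 2k-7$ only gives $(\abs{W_y}-(k-3))^2 \le 1$; it does not force $\abs{W_y} = \abs{R_y} = k-3$, nor the equality conditions of Claims~\ref{clm:epsz} and~\ref{clm:epsyz}, so the final contradiction via Lemma~\ref{lem:combine}(\ref{lem:noOnlyOut}) is unavailable. Indeed it cannot be available: if rerunning the old proof under the weaker hypothesis still produced an outright contradiction, it would show $\eps \ge 2(k-2)$ for every non-complete $k$-dicritical digraph, which is false precisely because of $\m D_k$. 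Several intermediate claims have the same defect (e.g.\ the claim producing $x' \in R_y$ with $d(x') \le 2k-1$ concludes in one case by contradicting $\eps(G) < 2(k-3)$, which is no contradiction at equality). The real work --- showing that each near-equality configuration is either contradictory or forces $G \in \m D_k$ --- is exactly what you postpone as ``bookkeeping in the endgame'', i.e.\ the entire content of the theorem.

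For the record, the paper does not refine the proof of Theorem~\ref{thm:dirac} at all. It argues by induction on $k$ (with a long separate analysis of $k=4$ using Theorem~\ref{thm:directed_Gallai} on the Gallai-forest structure of the degree-$2(k-1)$ vertices), picks an acyclic set $R$ maximizing $(\eps(R),\abs R)$, extracts a $(k-1)$-dicritical subdigraph of $G-R$ whose excess is controlled by the induction hypothesis (this is where $\m D_{k-1}$ enters), and then uses greedy-colouring claims together with Lemma~\ref{lem:lowarcconnection} to either contradict $\eps(G) = 2(k-3)$ or exhibit the $\bid C_5$-structure of $\m D_k$. Your tightness check for $\bid K_2(\bid K_{k-2}, \vec C_3)$ is correct, but the main argument as proposed is incomplete and its central steps fail as stated.
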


\begin{proof} Assume the theorem is false. Let $k$ be minimal such that the theorem does not hold for $k$. 

Let $G$ be a counterexample of minimal order. Since $\eps(G)$ is even, $\eps(G) \le 2(k-3)$, so by Theorem~\ref{thm:dirac}, $\eps(G) = 2(k-3)$. 
By Theorem~\ref{thm_intro:directed_dirac_1}, we may assume that $G$ is not symmetric.

Let $S = \{u \in V(G)| d(u) = 2(k-1)\} = \{u \in V(G)| \eps(u) = 0\}$. By Theorem~\ref{thm:directed_Gallai}, $S$ induces a directed Gallai-forest.

\begin{claim}\label{clm:dirac+4} $k \ge 5$.
\end{claim}
\begin{proofclaim} 
Assume $k = 4$. We have $\eps(G) = 2$. 
Observe that in this case a block of $S$ is either an arc, or a cycle, or a symmetric odd cycle. 
Moreover, all vertices of $G$ have degree $6$ (and thus are in $S$), except for exactly one vertex of degree $8$ or for exactly two vertices of degree $7$. 
\medskip 

First consider the case where there is a vertex of degree $8$, say $u$. 
Any non-separating vertex of $G[S]$ is in a symmetric odd cycle, since if it were in any other type of block, it would have more than two arcs incident with $u$. This implies that each non-separating vertex of $S$ is linked to $u$ via a digon and that each leaf block of $G[S]$ is a symmetric odd cycle, each of them containing at least $2$ non-separating vertices. 
If $G[S]$ has at most $3$ non-separating vertices, we have $G[S] = \bid K_3$ and then $G = \bid K_4$, a contradiction. 
Hence, there are $4$ of them and thus $N^s(u) = \varnothing$. 
If $G[S]$ has only one block, then this block is $\bid C_4$, a contradiction. Hence $G[S]$ has exactly two leaf blocks, which are $\bid K_3$. Since $G$ is not symmetric, $G[S]$ contains a cycle of length at least $3$, which either leads to another non separating vertex in $S$ or another leaf block, a contradiction.
\medskip 

Now consider the case where there are two vertices of degree $7$, say $u$ and $v$. In particular there are at most $14$ arcs between $\{u,v\}$ and $S$. 

Let $B$ be a block of $G[S]$ and let $x \in V(B)$ be a non-separating vertex of $G[S]$. 
If $B$ is an arc, then $d_G(x) \leq 5$, a contradiction. 
If $B$ is a cycle, then $x$ is linked to both $u$ and $v$ via a digon. 
Assume now that $B$ is a symmetric odd cycle. 
Then $x$ is incident with two arcs incident with $\{u,v\}$. 
Let us prove that there is a digon linking $x$ and $\{u,v\}$. 
Assume towards a contradiction and without loss of generality that $\{ux,xv\} \subseteq A(G)$. 
Let $H$ be obtained from $G$ by removing the arcs $ux$ and $xv$ and adding the arc $uv$. 
Since $x$ is incident with no other simple arc than $ux$ and $xv$, an induced cycle of $G$ that is not a cycle of $H$ contains $uxv$, and thus any dicolouring of $H$ is a dicolouring of $G$. 
Hence $\dic(H) \ge 4$ and $H$ contains a $4$-dicritical subdigraph $H^*$. 
Since each vertex of $H^*$ has degree at least $6$, $x \notin V(H^*)$ and consequently no vertex of $B$ is in $V(H^*)$ (by immediate induction). Since there are at least $4$ arcs between $V(B)$ and $\{u,v\}$, $d_{H^*}(u)+d_{H^*}(v) \le 14-4 = 10$. Hence $u$ or $v$ is not in $H^*$ and $H^* \subsetneq G$, a contradiction.

To summarize, we get that a leaf block of $G[S]$ is either a cycle, and each of its non-separating vertex is linked to both $u$ and $v$ via a digon, or is a symmetric odd cycle, and each of its (at least $2$) non-separating vertex is linked to one of $u$ or $v$ via a digon. Moreover, there is no simple arc between a given non-separating vertex of $G[S]$ and $\{u, v\}$.
In particular, there are at least two digons and no simple arc between the non-separating vertices of a given leaf block and $\{u,v\}$.

For $x \in \{u, v\}$, since $d_G(x)=7$ is odd, $N^s(x) \ne \varnothing$, then by Lemma~\ref{lem:combine}(~\ref{lem:noOnlyOut}), $\abs{N^s(x)} \ge 2$ and then since $d_G(x)$ is odd, $\abs{N^s(x)} \ge 3$, and thus $\abs{N^d(x)} \le 2$. 

This implies that $G[S]$ has at least one internal block. 
And since there are at least two digons between the non-separating vertices of a given leaf block and $\{u,v\}$, we get that $G[S]$ has exactly two leaves blocks $B_1$ and $B_2$, $N^s(u) = N^s(v) = 3$ and $N^d(u) = N^d(v) = 2$, $B_1$ and $B_2$ are either $\bid K_2$ or $\bid K_3$ and  the only digons between $\{u,v\}$ and $S$ are incident with the non-separating vertices of $G[S]$, which are all in $B_1$ or $B_2$.

Assume that $G[S]$ is a symmetric digraph. 
Then $G[S]$ consists in $B_1$ and $B_2$ and a symmetric path $P$ linking $B_1$ and $B_2$. 
Each interior vertex of $P$ is incident to both $u$ and $v$ via simple arcs. Let $H = G\setminus A^s(G) \cup uvu$. Every induced cycle of length at least $3$ in $G$ contains both $u$ and $v$, hence $\dic(H) \ge 4$. Let $H^*$ be a $4$-dicritical subdigraph of $H$. Since $\abs{N^s(u)} = 3$, we have a separating vertex $s$ of $G[S]$ incident to $u$ in $G$. Every vertex of $H^*$ has degree at least $6$, hence $s \notin V(H^*)$. Consequently, since $G[S]$ is connected, $V(H^*) \cap S = \varnothing$, i.e. $V(H^*) \subseteq \{u, v\}$, a contradiction.

So we may assume that one of the internal block is an arc, say $xy$. 
If one of $x$ or $y$, say $x$, is not incident with a $\bid K_3$, then $d_{G[S]}(x) \leq 3$, and since there is no digon between $x$ and $\{u,v\}$, $d_G(x) \leq 5$, a contradiction. 
So both $x$ and $y$ are incident with a $\bid K_3$, and thus $G[S]$ is made of two $\bid K_3$ linked by an arc, namely $xy$. But in this case there are at most $10$ arcs between $S$ and $\{u,v\}$ and thus
 $u$ and $v$ are linked by a digon, a contradiction. 

\end{proofclaim}

Let $$R \in \argmaxl{R \subseteq V(G)\text{ acyclic}} (\eps(R), \abs R)$$ 
Note that $\eps(R) \geq 1$. Note also that, by maximality of $\abs R$, every vertex in $V(G) - R$ has at least one in- and one out-neighbour in $R$. 

\begin{claim} \label{clm:dirac+R} $\eps(R) \ge 2$. \end{claim}

\begin{proofclaim}
Assume $\eps(R) = 1$. By definition of $R$, for every $u \in V(G)$, $\eps(u) \le 1$. $\eps^{-1}(1)$ is a clique, because otherwise we would find an acyclic induced subdigraph of $G$ with excess at least $2$. Furthermore, $\abs{\eps^{-1}(1)} = \eps(G) = 2(k-3)$. 
Then, since $\bid K_k \not\subseteq G$ we have $2(k-3) \leq k-1$ and thus $k = 5$ and $\eps^{-1}(1) = \bid K_4$. Let $u \in \eps^{-1}(1)$. Since $d(u)=2k-1 = 9$ is odd, $N^s(u) \ne \varnothing$ and hence by Lemma~\ref{lem:combine}(~\ref{lem:noOnlyOut}), $\abs{N^s(u)} \ge 2$. Since $d(u)$ is odd, $\abs{N^s(u)} \ge 3$ and thus $\abs{N^s(u)} = 3$. In particular, there is no digon between $\eps^{-1}(1)$ and $S$.

Since $k=5$, every block of $G[S]$ is an arc, a cycle, a symmetric odd cycle or a $\bid K_4$. 
Let $u \in S$ be a non-separating vertex of $G[S]$. 
Since $u$ has degree at most $6$ in $G[S]$, there are at least two simple arcs between $u$ and $\eps^{-1}(1)$. 

Besides, each arc between $u$ and $\eps^{-1}(1)$ is in an induced cycle (because $G$ is dicritical), and thus $u$ is incident with a simple arc in $G[S]$. Then, the block of $G[S]$ containing $u$ is an arc or a cycle and thus there are at least $6$ arcs between $u$ and $\eps^{-1}(1)$, which is impossible.
\end{proofclaim}

\begin{claim}\label{dirac+k-1} $\bid K_{k-1} \subseteq G-R$.
\end{claim}

\begin{proofclaim} Since $R$ is acyclic, $\dic(G-R) \ge k-1$. Let $G^* \subseteq G-R$ be $(k-1)$-dicritical. We may assume $G^* \ne \bid K_{k-1}$.

We have $2(k-3) = \eps_k(G) = \eps_k(V(G)-V(G^*)) + \eps_k(V(G^*))$. 
By claim~\ref{clm:dirac+R}, $\eps_k(V(G)-V(G^*)) \ge 2$. 
By maximality of $\abs R$, each vertex $u \in V(G) - R$ (and thus each vertex in $V(G^*)$) has at least an in- and an out-neighbour in $R$. 
Hence 
\[\eps_k(V(G^*)) \ge \eps_{k-1}(G[V(G^*)]) = \eps_{k-1}(G^*) + 2\abs{A(G[V(G^*)]) - A(G^*)}\]

By Theorem~\ref{thm:dirac}, $\eps_{k-1}(G^*) \ge 2(k-4)$. Altogether, we get: 

\[\begin{array}{rcl}
2(k-3)& = & \eps_k(G)\\
  & = & \eps_k(V(G)-V(G^*)) + \eps_k(V(G^*)) \\
  & \ge & 2 + \eps_{k-1}(G[V(G^*)])\\
  & \ge & 2 + \eps_{k-1}(G^*) + 2\abs{A(G[V(G^*)]) - A(G^*)} \\
  & \ge & 2 + 2(k-4) + 2\abs{A(G[V(G^*)]) - A(G^*)}\\
  & = & 2(k-3) + 2\abs{A(G[V(G^*)]) - A(G^*)}\\
  &\ge&2(k-3)
\end{array}\]

Every inequality is an equality, that is: 

\begin{itemize}
\item $\eps_k(V(G)-V(G^*))=2$, and thus $\eps_k(R) = 2$ by claim~\ref{clm:dirac+R}. 
 \item $\eps_k(V(G^*)) = \eps_{k-1}(G[V(G^*)])$, which implies that for every $x \in V(G^*)$, $\abs{\bid A(x, V(G)-V(G^*))} = \abs{\bid A(x,R)} = 2$, 
 \item $\abs{A(G[V(G^*)]) - A(G^*)} = 0$, that is $G^*$ is an induced subdigraph of $G$, and 
 \item $\eps_{k-1}(G^*) = 2(k-4)$, which implies, by minimality of $k$, that $G^* \in \m D_{k-1}$, 
 
\end{itemize}

Let $a$ and $b$ be the vertices of $G^*$ defined as in Figure~\ref{fig:D_k} (replacing $k$ by $k-1$). Since $\abs{\bid A(a,V(G) - V(G^*))} = \abs{\bid A(b,V(G) - V(G^*))} = 2$ and $a$ and $b$ are non-adjacent, by maximality of $R$ we have $\eps(R) \ge \eps_G(\{a, b\}) = \eps_{G^*, k-1}(\{a, b\}) = 2(k-4)$ and since $\eps(R) = 2$, we obtain $k \le 5$, and thus $k = 5$ by claim~\ref{clm:dirac+4}. 

Hence $G^* \in \mathcal D_4$. Observe that $\mathcal D_4$ contains a single digraph, depicted in Figure~\ref{fig:D_4}. Let $x \in V(G^*)$ as in Figure~\ref{fig:D_4}. 
Since $x$ has (exactly) one in- and one out-neighbour in $R$, $d_G(x) = 10$, and thus $\eps(x) = 2$.

\begin{figure}[!hbtp]
\begin{center}
\begin{tikzpicture}

\node (a) at (0,0) [vertex] {};
\node (b) at (1,0) [vertex] {};
\node (c) at (2,0) [vertex] {};

\node (d) at (3,0) [vertex] {};
\node (e) at (4,0) [vertex] {};
\node (f) at (5,0) [vertex] {};

\node (x) at (2.5, -1.5) [vertex] {x};

\draw[->, >=latex] (a) to (b);
\draw[->, >=latex] (b) to (a);
\draw[->, >=latex] (b) to (c);
\draw[->, >=latex] (c) to (b);
\draw[->, >=latex, bend left = 25] (a) to (c);
\draw[->, >=latex, bend right = 25] (c) to (a);

\draw[->, >=latex] (d) to (e);
\draw[->, >=latex] (e) to (d);
\draw[->, >=latex] (e) to (f);
\draw[->, >=latex] (f) to (e);
\draw[->, >=latex, bend left = 25] (d) to (f);
\draw[->, >=latex, bend right = 25] (f) to (d);

\draw[->, >=latex] (a) to (x);
\draw[->, >=latex] (x) to (a);
\draw[->, >=latex] (b) to (x);
\draw[->, >=latex] (x) to (b);

\draw[->, >=latex] (c) to (d);
\draw[->, >=latex] (d) to (c);

\draw[->, >=latex] (x) to (e);
\draw[->, >=latex] (e) to (x);
\draw[->, >=latex] (x) to (b);
\draw[->, >=latex] (f) to (x);
\draw[->, >=latex] (x) to (f);
\end{tikzpicture}
\end{center} 
\caption{{$G^* = \protect\bid C_5(\protect\bid K_{2}, \protect \bid K_1, \protect\bid K_1, \protect\bid K_{2}, \protect\bid K_{1})$}}
\label{fig:D_4}
\end{figure}
Since $\eps(R) = 2$, by maximality of $\eps(R)$, $x$ is linked by a digon to every vertex with non-zero excess. 
Moreover, since $\bid A(x,R) = 2$, there is only one vertex in $R$ with non-zero excess, say $y$, and thus $\eps(y) = 2$.

 Since $\eps(G) = 2(k-3) = 4 = \eps(\{x,y\})$, every vertex in $V(G) - \{x,y\}$ has excess $0$, i.e. $S = V(G) - \{x,y\}$. In particular, $G^*-x$ is an induced subdigraph of $G[S]$.

Observe that for each vertex $u$ in $S$, $d_G(u)=2(k-1) = 8$ and there are at most $4$ arcs between $u$ and $\{x,y\}$, so $d_{G[S]}(u) \geq 4$. 
This implies that leaf blocks of $G[S]$ are neither $\vec P_2$, nor $\bid K_2$, nor $\vec C_n$. 
Hence, each leaf block of $G[S]$ is either $\bid C_{2n+1}$ for some $n \geq 1$ or $\bid K_4$.

Since $d_G(x) = d_G(y) = 10$ and $x$ and $y$ are linked by a digon, there are $16$ arcs between $S$ and $\{x,y\}$, $8$ between $y$ and $S$, and $8$ between $x$ and $S$ that are already known (see Figure~\ref{fig:D_4}). 

Observe that the number of arcs between the non-separating vertices of a $\bid C_{2n+1}$ leaf block of $G[S]$ and $\{x, y\}$ is $8n$. Moreover, since $G^*-x$ is a subdigraph of $G[S]$, $G[S]$ is not $\bid C_5$. 
Finally, the number of arcs between the non-separating vertices of a $\bid K_4$ leaf block of $G[S]$ and $\{x, y\}$ is $6$. 
Hence, $G[S]$ has at most two leaf blocks and these blocks are either $\bid K_3$ or $\bid K_4$. 

Since $G$ is not symmetric and is dicritical, $G$ contains an induced cycle of length at least $3$. So $G[S]$ is not symmetric.
Hence one of the block of $G[S]$, say $B$, with vertices in $V(G^*)$ is not a leaf block. $B$ contains one of the $\bid K_3$ of $G^*$ and there are $4$ arcs between $V(B)$ and $x$. 

Hence the leaf blocks of $G[S]$ are $\bid K_4$ blocks, there is no arc between a separating vertex of $S$ and $\{x, y\}$ and the non-separating vertices of $G[S]$ are either in $B$ or in a leaf block of $G[S]$. 
If $G[S]$ contains a $\vec P_2$ block $uv$ then, since $G[S]$ has exactly two leaf blocks, $u$ and $v$ are in exactly two blocks of $G[S]$ and hence $d_{G[S]}(u) \le 7$. There is an arc between $u$ and $\{x, y\}$, a contradiction. If $G[S]$ has a $\vec C_n$ block with $n \ge 3$, then this block contains a non-separating vertex of $G[S]$, a contradiction. Hence $G[S]$ is symmetric, a contradiction.
\end{proofclaim}

Let $C = \{x_1, ..., x_{k-1}\} \subseteq V(G)-R$ such that $G[C] = \bid K_{k-1}$ and $d(x_1) \le ... \le d(x_{k-1})$. Let $S' = \{u \in V(G)| d(u) \le 2k-1\} = \{u \in V(G)| \eps(u) \leq 1\}$.
\begin{claim}\label{clm:dirac+epsC} 
For $x_i \in C$, $\eps(x_i) \le \frac{2(k-3)}{k-i+1}$. Thus, $x_1, x_2, x_3 \in S'$.
\end{claim}

\begin{proofclaim} Due to the ordering on the vertices in $C$, we have $\eps(G) \ge \eps(R) + \eps(C) \ge \eps(x_i) + (k-i)\eps(x_i)$. Hence $\eps(x_i) \le \frac{2(k-3)}{k-i+1}$.
\end{proofclaim}

Observe that, since every vertex has in- and out-degree at least $k-1$, each vertex in $C$ has at least one in- and one out-neighbour in $V(G)-C$. 

\begin{claim}\label{clm:dirac+Cy1} Let $y \in V(G)-C$ such that there is $x \in C \cap S'$ with $d^-(x) \le d^+(x)$ and $y \in N^-(x)$ or $d^+(x) \le d^-(x)$ and $y \in N^+(x)$. Then for any $(k-1)$-dicolouring $\phi$ of $G-C$ and $x' \in C$, there is a (possibly empty) monochromatic walk in $G-C$ from $N^+(x')-C$ to $N^-(x')-C$ with colour $\phi(y)$.
\end{claim}

\begin{proofclaim} Let $x \in C \cap S'$ satisfying the hypothesis of the claim. By directional duality, we may assume $d^-(x) \le d^+(x)$ and $y \in N^-(x)$.

We first show the claim in the case $x' \ne x$. Assume towards a contradiction that we have $\phi$ a $(k-1)$-dicolouring of $G-C$ such that there is no monochromatic walk in $G-C$ from $N^+(x')-C$ to $N^-(x')-C$ with colour $\phi(y)$. Set $\phi(x') = \phi(y)$. We want to colour greedily vertices in $C-\{x, x'\}$ from $x_{k-1}$ to $x_1$. To prove this uses only colours in $[k-1]$ we show that, when trying to colour a vertex, it has at most $k-2$ coloured in- or out-neighbours. Let $4 \le i \le k-1$. When colouring $x_i$, $\{x, x_1, ..., x_{i-1}\}-\{x'\}$ is uncoloured and contains at least $i-2$ vertices. Then:\[\begin{array}{rcll}
d_{min}(x_i) - (i-2)&\le&\frac{d(x_i)} 2 - (i-2)&\\
&=&k-1+\frac{\eps(x_i)} 2 - (i-2)&\\
&\le&k-1 + \frac{k-3}{k-i+1} - (i-2)&\text{by claim~\ref{clm:dirac+epsC}}\\
&=&k-1 + \frac 1 {k-i+1} (k-3 - (k-i+1)(i-2))&\\
&\le&k-1 + \frac 1 {k-i+1} (k-3 - 2(k-3))&\text{by convexity and }4 \le i \le k-1\\
&<&k-1&\end{array}\]
Hence we can dicolour greedily $\{x_4, ..., x_{k-1}\}-x$.
Now, for each $u \in \{x_1, x_2, x_3\}-x$, $d(u) \leq 2k-1$ by claim~\ref{clm:dirac+epsC}, and $u$ is connected to $x$ (that is uncoloured) by a digon. Hence we can greedily colour $u$. 
 It remains to colour $x$. We have $x \in S'$ and $d^-(x) \le d^+(x)$. Hence $d^-(x) \le k-1$. Since $y \in N^-(x)$, $x$ has two in-neighbours with the same colour (namely $y$ and $x'$), so we can colour $x$ with a colour from $[k-1]$. We obtain a $(k-1)$-dicolouring of $G$, a contradiction.

If $x' = x$, we apply the claim to $x'' \in \{x_1, x_2, x_3\} - x$ and $y' \in N(x'')-C$ with colour $\phi(y)$ (which exists by the claim applied to $x$, $y$ and $x''$) and $x$ to obtain the result.
\end{proofclaim}

\begin{claim}\label{clm:dirac+sameColour} Let $a \ne b \in V(G)-C$. There exists a $(k-1)$-dicolouring of $G-C$ that gives different colours to $a$ and $b$.
\end{claim}

\begin{proofclaim}
Assume not. Then $\dic(G-C\cup aba) \ge k$. Let $G^* \subseteq G-C \cup aba$ be $k$-dicritical. 

We have: 

\[\begin{array}{rcll}
2(k-3)&=&\eps(G)&\\
&\ge&\eps(V(G^*))& \\
&=&\eps(G^*)-2\abs{A(G^*) - A(G)}\\
&&+ \abs{\bid A(V(G^*), V(G)-V(G^*)} + 2\abs{A(G[V(G^*)])-A(G^*)}& \\
&\ge&2(k-1)-4& \text{by Corollary~\ref{lem:arcconnection} and }A(G^*)-A(G) \subseteq aba\\
&=&2(k-3)
\end{array}\]

Every inequality is an equality, in particular, $\eps(G^*) = 0$, i.e. $G^* = \bid K_k$ by Theorem~\ref{thm:dirac}, and $\abs{\bid A(V(G^*), V(G)-V(G^*))} = 2(k-1)$ by Corollary~\ref{lem:arcconnection}. 
Since, for $x \in \{a, b\}$, $d_{G[V(G^*)]}(x) = 2(k-1) - 2$, we have $a, b \in N(V(G)-V(G^*))$. Since any $(k-1)$-dicolouring of $G[V(G^*)]$ gives the same colour to $a$ and $b$, by Lemma~\ref{lem:lowarcconnection}, any $(k-1)$-dicolouring of $G[V(G^*)]$ gives the same colour to every vertex in $N(V(G)-V(G^*))$, and thus $N(V(G)-V(G^*)) = \{a, b\}$.

Let $H = G-(V(G^*)-a-b)$. Observe that since $G$ is not $(k-1)$-dicolourable, every $(k-1)$-dicolouring of $H$ gives different colours to $a$ and $b$. 
Hence $\dic(H/\{a, b\}) \ge k$, i.e. $H/\{a, b\}$ contains a $k$-dicritical digraph $H^*$. 
If $H^* \ne \bid K_k$, then using Theorem~\ref{thm:dirac}, 

\[\begin{array}{rcll}
\eps(G) & \ge& \eps(V(H^*)-a \star b +a+b)\\
  &\ge &\eps(H^*) + \abs{\bid A(\{a, b\}, V(G^*)-a-b)} - 2(k-1)\\
  &\ge& 2(k-3) + 4(k-2) - 2(k-1)\\
  &\ge& 2(k-2)
\end{array}\]

Hence $H^* = \bid K_k$.

Besides,
\[\begin{array}{rcll}
\eps(G) &\ge& \eps(a, b)\\
 &\ge& 4(k-2) - 4(k-1) + d_{H^*}(a \star b) + \abs{\bid A(a, V(H^*)) \cap \bid A(b, V(H^*)}\\
 &&+ \abs{\bid A(\{a, b\}, V(G)-V(G^*)-V(H^*)}
\end{array}\]

Since $d_{H^*}(a \star b) \ge 2(k-1)$ and $\eps(G) = 2(k-3)$, we obtain $\bid A(a, V(H^*)) \cap \bid A(b, V(H^*)) = \varnothing$ and $\bid A(\{a, b\}, V(G)-V(G^*)-V(H^*)) = \varnothing$. We conclude $G \in \m D_k$, a contradiction.
\end{proofclaim}

Let $y \in N(C \cap S')-C$ satisfying the hypothesis of claim~\ref{clm:dirac+Cy1} (which exists since every vertex in $C$ has an in- and an out-neighbour in $V(G)-C$ and $C \cap S' \ne \varnothing$). If $C \cap S \ne \varnothing$, we choose $y$ to be adjacent to a vertex in $C \cap S$. Up to re-indexing the element of $C$, we may assume that, among the elements of $C \cap S' - S$, the digonal neighbours of $y$ come first, then those that are not adjacent to $y$ and the simple neighbours of $y$ come last. Let $1 \le i' \le k-1$ be minimal such that $y \notin N^d(x_{i'})$ (such an $i'$ exists since $G \ne \bid K_k$).

\begin{claim}\label{clm:dirac+epsy} $\eps(y) \ge \abs{\bid A(y, C)}-\eps(x_{i'})-2$.
\end{claim}

\begin{proofclaim} By claim~\ref{clm:dirac+Cy1}, $G-C \cup (N^-(x_{i'})-C)y(N^+(x_{i'})-C)$ is not $(k-1)$-dicolourable and hence contains a $k$-dicritical digraph $G^*$. Since $G$ is $k$-dicritical, $y \in G^*$. Then:\[\begin{array}{rcl}
d(y)&=&\abs{\bid A(y, C)} + d_{G-C}(y)\\
&\ge&\abs{\bid A(y, C)} + d_{G^*}(y) - \abs{N^-(x_)-C}-\abs{N^+(x_{i'})-C}\\
&\ge&\abs{\bid A(y, C)} + 2(k-1) - (\eps(x_{i'})+2).\end{array}\]
\end{proofclaim}

\begin{claim}\label{clm:dirac+epsi} $\eps(x_{i'}) = 1$, $C \cap S \subseteq N^d(y)$ and $C \cap S' \subseteq N(y)$.
\end{claim}

\begin{proofclaim} By claim~\ref{clm:dirac+epsy} and definition of $i$, $\eps(y) \ge 2(i-2)-\eps(x_i)$. Now, assume $\eps(x_{i'}) \ge 2$. Since $x_{i'} \notin R$ and $\{x_{i'}, y\}$ is acyclic, there is $z \in G-C-y$ with $\eps(z) \ge 1$. We have:\[\begin{array}{rcll} 
\eps(G)&\ge&\eps(y) + \eps(C) + \eps(z)&\\
&\ge&2(i-2) - \eps(x_{i'}) + (k-i)\eps(x_{i'}) + 1&\\
&\ge&2(k-3)+1,\end{array}\]
a contradiction.

Let $x \in C \cap S$ and assume $y \notin N^d(x)$. Let $z \in N(x)-C-y$. By claim~\ref{clm:dirac+sameColour}, we have $\phi$ a $(k-1)$-dicolouring of $G-C$ such that $\phi(y) \ne \phi(z)$, which contradicts claim~\ref{clm:dirac+Cy1}.

As a consequence, by the definition of $i$, $\eps(x_{i'}) = 1$. Now, assume $y$ and $x_{i'}$ are not adjacent. Then by claims~\ref{clm:dirac+sameColour} and~\ref{clm:dirac+Cy1}, $x_{i'}$ has at least two in- and out-neighbours in $V(G)-C$, hence $\eps(x_{i'}) \ge 2$, a contradiction. Thus, by the choice of the ordering on the vertices in $C \cap S'$ and the definition of $i$, $y$ is adjacent to every vertex in $C \cap S'$.
\end{proofclaim}

By claims~\ref{clm:dirac+epsy} and~\ref{clm:dirac+epsi}, we have $\eps(y) \ge 2\abs{S \cap C} + \abs{(S'-S) \cap C} - 3$. Hence:\[\begin{array}{rcl}
\eps(G)&\ge&\eps(y) + \eps(C)\\
&\ge&2\abs{S \cap C} + \abs{(S'-S) \cap C} - 3 + \abs{(S'-S) \cap C} + 2\abs{C-S'}\\
&=&2\abs C - 3\\
&=&2(k-1) - 3\\
&>&2(k-3),\end{array}\]
a contradiction.

\end{proof}

\subsection{Refined Dirac-type bounds for $k=3$}\label{subsec:dirack=3}

The goal of this section is to prove Theorem~\ref{thm_intro:directed_dirac_k=3} that we restate below for convenience. 
We first need to define the set of digraphs $\mathcal D'_3$ mentioned in the introduction.

\begin{Def} 
An \emph{extended wheel} is a digraph made of a vertex $x$ and a triangle $abca$ together with three symmetric paths with lengths of same parity, linking $x$ with $a$, $b$ and $c$ respectively, and such that the three paths have only $x$ in common. One of the paths can be of length $0$, that is $x$ is equal to one of $a$, $b$, $c$, and the two other paths have even length.\\ 
Let $\m D'_3$ be the set of digraphs containing extended wheels and the all digraphs obtained from the digraph pictured in Figure~\ref{fig:D'_3} by replacing any digon by an odd symmetric path.
\end{Def}
It is easy to check that digraphs in $\mathcal D'_3$ are $3$-dicritical, and have excess $2$.

\begin{figure}[!hbtp]
\begin{center}
\begin{tikzpicture}
\node (a) at (0, 0) [vertex] {z};
\node (b) at (-1, -1) [vertex] {x};
\node (c) at (1, -1) [vertex] {y};
\node (d) at (-1, -2) [vertex] {u};
\node (e) at (1, -2) [vertex] {w};
\draw[->, >=latex] (b) -- (a);
\draw[->, >=latex] (c) -- (a);
\draw[->, >=latex] (a) -- (b);
\draw[->, >=latex] (a) -- (c);
\draw[->, >=latex] (c) -- (e);
\draw[->, >=latex] (c) -- (d);
\draw[->, >=latex] (e) -- (d);
\draw[->, >=latex] (e) -- (b);
\draw[->, >=latex] (d) -- (b);
\draw[->, >=latex] (b) -- (c);
\draw[->, >=latex] (d) -- (e); 
\end{tikzpicture}
\end{center} 
\caption{The digraph appearing in the definition of $\mathcal D'_3$}
\label{fig:D'_3}
\end{figure}
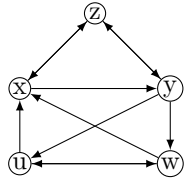

We will also use the following theorem. 
\begin{thm}\cite{ABHR}\label{ABHR}
If $G$ is a $3$-dicritical oriented graph, then $$\abs{A(G)} \geq \frac{7 \abs{V(G)} + 2}{3} $$ 
\end{thm}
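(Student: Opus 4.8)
The plan is to run the potential (``Kostochka--Yancey'') method, in the same spirit as the proof of Theorem~\ref{thm_intro:KY} but with coefficients tuned to the oriented setting. It is convenient to prove a statement about \emph{all} $3$-dicritical digraphs that specialises to the desired one. For a digraph $D$ let $q(D)$ denote its number of digons, and set $\rho(D) = 7\abs{V(D)} - 3\abs{A(D)} - 2\,q(D)$; for $R \subseteq V(G)$ write $\rho_G(R) = 7\abs R - 3\abs{A(G[R])} - 2\,q(G[R])$. I would aim to prove $\rho(D) \le -2$ for every $3$-dicritical digraph $D$: when $D$ is an \emph{oriented} graph this reads exactly $\abs{A(D)} \ge \frac{7\abs{V(D)}+2}{3}$, while for symmetric $D$ (an odd cycle, by Brooks) it is a much weaker true statement, so the generalisation loses nothing and, crucially, is stable under the contraction operation below. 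Argue by contradiction: let $G$ be a $3$-dicritical digraph with $\rho(G) \ge -1$ that is smallest, first in its number of vertices and then in its number of arcs. Since a dicritical digraph is non-separable, $G$ has no separating vertex, and by Corollary~\ref{lem:arcconnection} it is $2$-arc-connected; $3$-dicritical digraphs of small order serve as base cases and are checked directly.

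Two reduction moves are available. First, if $R \subseteq V(G)$ is acyclic then $\dic(G/R) \ge 3$ by Lemma~\ref{lem:pouet}(\ref{lem:shrinkAcyclic}), so $G/R$ contains a $3$-dicritical subdigraph $G^*$; since $G$ itself is $3$-dicritical, $G^*$ is not a subdigraph of $G-R$, so the contracted vertex lies in $V(G^*)$, and when $\abs R \ge 2$ one has $\abs{V(G^*)} < \abs{V(G)}$, hence $\rho(G^*) \le -2$ by induction. Comparing $\rho(G)$ with $\rho(G^*)$ requires accounting for the arcs inside $R$, the arcs incident to $V(G)\setminus V(G^*)$, the parallel arcs that collapse under the contraction, and the digons that the contraction may create --- and this last point is exactly why the $-2\,q$ term is carried: each new digon costs two arcs but is compensated by the penalty, so the bookkeeping stays linear and the induction applies even though $G/R$ need not be oriented. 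The second move is that minimality in the number of arcs forbids deleting an arc while keeping a $3$-dicritical subdigraph with $\rho \ge -1$. As in Kostochka--Yancey, the heart of the matter is to choose a suitable acyclic set $R$, extremal for $\rho_G$ with ties broken by cardinality, rule out $R = V(G)$ via the first move, and force a contradiction from the local structure around a vertex of minimum degree.

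The improvement of the slope from the trivial $2$ (which is all one gets from $d(v)\ge 2(k-1)=4$ for all $v$) up to $7/3$ comes precisely from the orientedness, which enters at three points in the final contradiction. Since an oriented graph has no digon, $N^+(v)\cap N^-(v)=\varnothing$, so a degree-$4$ vertex has four distinct neighbours, two in and two out; by Lemma~\ref{lem:combine} every arc lies in an \emph{induced} cycle, which in an oriented graph has length at least $3$; and, most importantly, Theorem~\ref{thm:directed_Gallai} shows that the subdigraph induced by the degree-$4$ vertices is a directed Gallai forest whose blocks, being digon-free, are only single arcs or directed cycles of length $\ge 3$. Hence every minimum-degree vertex has small degree inside that subdigraph and must be adjacent to vertices of degree $\ge 5$; quantifying this ``each minimum-degree vertex borrows from higher-degree neighbours'' effect and feeding it into the potential count (equivalently, a discharging argument with initial charge $d(v)$ and target average $\tfrac{14}{3}$) is what yields $\rho_G(\cdot)\le -2$.

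I expect the main obstacle to be exactly this irreducibility analysis: proving that a smallest counterexample admits one of the two moves. On the one hand this means bounding from above how many arcs between $R$ and $V(G)\setminus R$, and how many digons, are lost or created when $R$ is contracted --- an oriented graph keeps at most one arc per ordered pair, so one wants these losses controlled --- and, on the other hand, using $2$-arc-connectivity to bound the cross-arcs from below, all while keeping $7\abs V - 3\abs A - 2\,q$ on the correct side of $-2$ for every choice of $R$; this is the delicate balancing of the coefficients $7$, $3$, $2$ that the Kostochka--Yancey framework is built to handle. A secondary difficulty is the degree-$4$ vertex that is a separating vertex of the degree-$4$ subdigraph, which may have only one neighbour of degree $\ge 5$; I would resolve such a vertex by probing one step deeper into the Gallai forest or by a small ad hoc contraction or arc deletion.
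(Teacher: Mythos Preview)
The paper does not prove this theorem: it is quoted from~\cite{ABHR} and used as a black box inside the proof of Theorem~\ref{thm:dirac3+} (specifically in Claim~\ref{clm:dirac+3notoriented}, to dispose of the oriented case in one line). There is therefore nothing in the present paper to compare your proposal against.

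For what it is worth, your plan is broadly the right shape for this kind of result --- a potential function with a digon penalty, minimal counterexample, contraction of acyclic sets via Lemma~\ref{lem:pouet}(\ref{lem:shrinkAcyclic}), and structural information on the low-degree subdigraph from Theorem~\ref{thm:directed_Gallai} --- and indeed~\cite{ABHR} proceeds along these lines. But as written it is only a plan: the actual accounting when you contract $R$ (how many cross-arcs survive, how many digons are created, and why $\rho$ drops enough) and the endgame discharging/structural argument around degree-$4$ vertices are where all the content lives, and you have not carried them out. If you intend this as a self-contained proof you would need to fill those in; if you intend it as a citation, just cite~\cite{ABHR} as the paper does.
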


\begin{thm}\label{thm:dirac3+} Let $G$ be a $3$-dicritical digraph which is not a symmetric odd cycle. Then $\eps(G) = 2$ if and only if $G \in \mathcal D'_3$, and otherwise $\eps(G)\geq 4$.
\end{thm}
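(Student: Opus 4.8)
The strategy is to proceed by induction on $\abs{V(G)}$, assuming $\eps(G) = 2$ (the case $\eps(G) \ge 4$ being automatic), and to show $G \in \mathcal D'_3$. Note that by Theorem~\ref{ABHR}, if $G$ is an oriented graph (no digons) then $\eps(G) = 2\abs{A(G)} - 4\abs{V(G)} \ge \frac{2(7\abs{V(G)}+2)}{3} - 4\abs{V(G)} = \frac{2\abs{V(G)}+4}{3} > 2$ once $\abs{V(G)} \ge 2$, so $G$ must contain at least one digon; in fact one first wants to locate a digon and analyse what it can be a part of. Since every vertex has degree at least $4$ and $\eps(G)=2$, all but at most two vertices have degree exactly $4$ (with either one vertex of degree $6$ or two of degree $5$), and the set $S$ of degree-$4$ vertices induces a directed Gallai forest by Theorem~\ref{thm:directed_Gallai}; since $k=3$, each block of $G[S]$ is an arc, a directed cycle, or a symmetric odd cycle (a $\bid K_3$ would force degree $\ge 4$ inside $S$ alone, which is fine, actually $\bid K_3$ blocks are possible too — wait, $\bid K_2$ is $\bid C_2$; $\bid K_3 = \bid C_3$ as a symmetric digraph on $3$ vertices, which is the symmetric odd cycle of length $3$).

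\textbf{Key steps.} First I would establish, via Lemma~\ref{lem:combine} and parity, strong local constraints: a vertex of odd degree ($5$) has $\abs{N^s(\cdot)} \ge 3$, hence at most one digon, so its two neighbours of "digon type" are limited. Second, I would look for a reduction operation analogous to the ones used in the proofs of Theorems~\ref{thm:dirac} and~\ref{thm:dirac+}: identify two non-adjacent vertices $x,y$ and consider a $3$-dicritical subdigraph $G^*$ of $G/\{x,y\}$ (using Lemma~\ref{lem:pouet}(\ref{lem:shrinkAcyclic})), or contract a digon, or suppress a degree-$4$ vertex lying on a symmetric path and splice the path. The aim is to reduce to a smaller $3$-dicritical digraph $G'$ with $\eps(G') = 2$, apply induction to get $G' \in \mathcal D'_3$, and then reconstruct $G$ from $G'$, checking that the only ways to do so that keep $\eps = 2$ and $3$-dicriticality are the allowed operations (lengthening a symmetric path by two, or the wheel-type extensions). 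Third, handle the base cases: the smallest $3$-dicritical digraphs with excess $2$ — these should be the extended wheels with short paths (e.g. $x = a$, two paths of length $2$, giving essentially $\bid K_2(\bid K_1, \vec C_3)$-like or small wheels) and the specific digraph of Figure~\ref{fig:D'_3} with all paths of length $1$ — and verify directly that these are exactly the minimal members of $\mathcal D'_3$, and that any $3$-dicritical $G$ with $\eps(G)=2$ not admitting a reduction is one of them.

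\textbf{The main obstacle.} The hard part will be the reconstruction/case analysis: after contracting or suppressing, one must show that $G$ is obtained from $G'$ by one of the two permitted moves and not by something that produces a non-$\mathcal D'_3$ digraph with excess $2$. This requires understanding precisely how a degree-$4$ vertex (or a digon) sits relative to the rest: the Gallai-forest structure of $G[S]$ must be exploited to argue that the degree-$4$ vertices form symmetric paths hanging off the two high-degree vertices (or off a triangle), and that the non-separating leaf-block vertices force the triangle-plus-three-paths shape. Controlling the interplay between the at-most-two excess vertices and the endpoints of the Gallai-forest blocks — in particular ruling out directed (non-symmetric) cycle blocks except in the extended-wheel configuration, and pinning down the Figure~\ref{fig:D'_3} exception — is where the bulk of the work lies, and it is likely to need several sub-claims in the style of Claim~\ref{clm:dirac+4} in the proof of Theorem~\ref{thm:dirac+}.
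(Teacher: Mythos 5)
Your plan assembles the right ingredients and they are essentially the ones the paper uses: Theorem~\ref{ABHR} to force a digon, the Gallai-forest structure of the set $S$ of degree-$4$ vertices via Theorem~\ref{thm:directed_Gallai}, the parity argument showing a degree-$5$ vertex has at least three simple arcs (hence at most one digon), and a minimal-counterexample/inductive scheme in which local surgeries (replacing a symmetric path by a digon or an arc, contracting, rerouting arcs) produce a smaller $3$-dicritical digraph whose excess can be compared with that of $G$. However, the proposal stops exactly where the proof begins. The entire content of the theorem is the structural elimination that you defer to ``the reconstruction/case analysis'': one must rule out a vertex of degree $6$ (which alone requires analysing which leaf blocks of $G[S]$ can absorb the limited number of arcs to the high-degree vertex, and recognising the extended wheels and the Figure~\ref{fig:D'_3} digraph as the only survivors), then, in the two-vertices-of-degree-$5$ case, successively exclude $\bid K_1$ blocks, $\bid K_2$ and $\vec P_2$ leaf blocks, digons between $S$ and the two excess vertices, and digons inside $S$, before counting arcs to the leaf blocks yields the final contradiction. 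Each of these steps needs a purpose-built modification of $G$ (e.g.\ delete two arcs incident to a non-separating vertex and add one arc), followed by an argument locating a $3$-dicritical subdigraph $H^*$ of the modified digraph, bounding its degrees/excess, and deriving a contradiction with dicriticality or with Brooks' theorem; none of these arguments, nor even the precise reductions, are supplied, and ``contract a digon or suppress a degree-$4$ vertex and splice'' does not by itself guarantee a smaller $3$-dicritical digraph of excess $2$ on which induction applies. So as it stands this is a proof outline with the main body missing.

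One concrete point is also resolved incorrectly: you waver on whether $\bid K_3$ (equivalently, symmetric odd cycle) blocks can occur in $G[S]$ and conclude that they can. They cannot: a symmetric odd cycle is itself $3$-dicritical, hence has dichromatic number $3$, while every proper subdigraph of the $3$-dicritical digraph $G$ is $2$-dicolourable; since $S \subsetneq V(G)$, no block of $G[S]$ can be a symmetric odd cycle or a symmetric complete digraph. This restriction (blocks of $G[S]$ are single vertices, arcs, or directed cycles, digons included) is not a side remark --- it is what makes the subsequent counting of arcs between leaf blocks and the excess vertices work, so getting it wrong would derail the case analysis you plan to carry out.
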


\begin{proof} 
Assume we have a counterexample $G$ of minimal order. 

By Brooks' Theorem, $\eps(G) \ge 1$, and since $\eps(G)$ is even, $\eps(G) = 2$. Thus, either $G$ contains a vertex with excess $2$ or two vertices with excess $1$. 
As usual, let $S = \eps^{-1}(0)$. By Theorem~\ref{thm:directed_Gallai}, $G[S]$ is a directed Gallai forest. Note that, since odd symmetric cycles are $3$-dicritical, the blocks of $G[S]$ are either $\bid K_1$, $\vec P_2$, or cycles. This implies in particular that a non-separating vertex of $G[S]$ is incident with at least $2$ arcs incident with vertices in $V(G) - S$. These facts are constantly used during the proof. 

\begin{claim}\label{clm:dirac+3notoriented}
$G$ has at least one digon. 
\end{claim}

\begin{proofclaim}
Assume towards a contradiction that $G$ has no digon, i.e. $G$ is an oriented graph. 
By Theorem~\ref{ABHR}, $\abs{A(G)} \ge \frac{7 \abs{V(G)} + 2}{3} $. Moreover, since $\eps(G) = 2$, we have $\abs{A(G)} = 2\abs{V(G)} + 2$. We then have $\abs{V(G)} \leq 4$ which is clearly impossible. 
\end{proofclaim}

\begin{claim}\label{clm:noP3}

 Let $P$ be a $\bid P_4$ in $G$. Then the interior vertices of $P$ are not both in $S$.
\end{claim}

\begin{proofclaim}
We proceed by contradiction.
Assume for contradiction that $G$ contains a $\bid P_4$ on vertices $a,b,c,d$ such that $b$ and $c$ are its interior vertices and are in $S$. 
Let $H = G - \{b,c\} + ada$. 

Since $d_G(b) = d_G(c) = 4$, $d_H(a) = d_G(a)$ and $d_G(d) = d_H(d)$, we have $\eps(G) = \eps(H)$. 

Assume that we have a $2$-dicolouring $\phi$ of $H$. Then, by giving colour $\phi(a)$ to $c$, colour $\phi(d)$ to $b$, and colour $\phi(v)$ to every $v \in V(G) - \{b,c\}$, we obtain a $2$-dicolouring of $G$, a contradiction. So $\dic(H) = 3$. 

Let $e$ be an arc of $H$. If $e \notin \{ad,da\}$, then $e \in A(G)$, $G-e$ is $2$-dicolourable, and any $2$-dicolouring of $G-e$ gives distinct colours to $a$ and $d$, so $H-e$ is also $2$-dicolourable. If $e \in \{ad,da\}$, then a $2$-dicolouring of $G-\{b,c\}$ gives distinct colours to $a$ and $d$ (otherwise we can easily extend it to a $2$-dicolouring of $G$), and thus is a $2$-dicolouring of $H-e$. Since $H$ has no isolated vertex, $H$ is $3$-dicritical.

Finally, $H$ is not in $\mathcal D'_3$, for otherwise $G$ is too, contradicting the minimality of $G$. 
\end{proofclaim}

\begin{claim} 
$\forall x \in V(G), \eps(x) \le 1$.
\end{claim}
\begin{proofclaim} 
We assume towards a contradiction that there is $x \in V(G)$ such that $\eps(x) = 2$, i.e. $d(x)= 6$. 
Since $\eps(G) = 2$, we have $V(G)-x = S$. 

For every $s \in S$, $d_{G[S]}(s) \geq 2$ (because $s$ is incident with at most $2$ arcs incident with $x$, and has degree $4$ in $G$). This implies that no connected component of $G[S]$ is a $\bid K_1$ or a $\vec P_2$ and no leaf block of $G[S]$ is a $\vec P_2$. In particular the leaf blocks of $G[S]$ are cycles. 

If a connected component of $G[S]$ is a $\bid K_2$, then it forms a $\bid K_3$ with $x$, a contradiction. If a connected component of $G[S]$ is a $\vec C_3$, then it forms an extended wheel with $x$, a contradiction. If a connected component of $G[S]$ is a cycle of length at least $4$, then $x$ is linked by a digon to each of its vertices, implying that $d(x) \geq 8$, a contradiction. 
So the connected components of $G[S]$ have at least two leaf blocks. 

A leaf block $\vec C_n$, $n \ge 2$ has $n - 1$ non-separating vertices, each of them being connected to $x$ via a digon. 
Thus, $G[S]$ has at most $3$ non-separating vertices, and its leaf blocks are either $\bid K_2$ or $\vec C_3$. 
More precisely, $G[S]$ is connected and its leaf blocks are either three $\bid K_2$, or two $\bid K_2$, or one $\bid K_2$ and one $\vec C_3$. 

Assume first $G$ has two leaf blocks, one $\bid K_2$ and one $\vec C_3$. Since $d_G(x) \le 6$, all the arcs between $x$ and $S$ are incident to a non-separating vertex of one of the leaf blocks, and hence every internal block of $G[S]$ is a $\bid K_2$. 
Since $G$ is not an extended wheel, the path of digons between $x$ and the separating vertex of the $\vec C_3$ leaf block of $G[S]$ has even length, so $G$ is $2$-dicolourable, a contradiction.


Assume now that $G[S]$ has three $\bid K_2$ leaf blocks $\{a_1,b_1\}$, $\{a_2, b_2\}$ and $\{a_3, b_3\}$ such that, for $i=1,2,3$, $a_i$ is a separating vertex of $G[S]$ and $b_i$ is linked by a digon to $x$. 
Since $G$ is $3$-dicritical, for every $2$-dicolouring $\phi$ of $G-x$, we have $\{\phi(b_1), \phi(b_2), \phi(b_3)\} = \{1, 2\}$, and thus $\{\phi(a_1), \phi(a_2), \phi(a_3)\} = \{1, 2\}$, and no proper subdigraph of $G-x$ has this property. 
Hence, the digraph $H$ obtained from $G$ by deleting $b_1, b_2, b_3$ and adding digons between $x$ and $a_i$ for $i=1,2,3$ is $3$-dicritical. 
Moreover, since $\eps(\{b_1, b_2, b_3\}) = 0$ and, for $u \in V(G)-\{b_1, b_2, b_3\}, d_G(u) = d_H(u)$, we have $\eps(H) = \eps(G) = 2$, a contradiction to the minimality of $G$. 

Finally, assume that $G[S]$ has two $\bid K_2$ leaf blocks, say $\{a, b\}$ and $\{c, d\}$ where $b$ and $c$ are separating vertices of $G[S]$. Then $a$ and $d$ are linked to $x$ via a digon. By claim~\ref{clm:noP3}, $b$ is not linked to a vertex of $S-\{a\}$ by a digon, and similarly, $c$ is not linked to a vertex of $S-\{d\}$ by a digon. 
Hence, since $d(x) = 6$, we get that $b$ and $c$ are linked by an arc, as well as $b$ and $x$, and $c$ and $x$, and this gives us a full description of $G$ up to the orientation of the three simple arcs. If $G[\{b,c,x\}] = \vec C_3$, then $G$ is an extended wheel (in which one of the symmetric paths has length $0$), and otherwise $G$ is $2$-dicolourable. A contradiction in both cases. 

\end{proofclaim}

From the previous claim, we get that $G$ has two vertices, say $x$ and $y$, with excess $1$ (i.e. degree $5$), and the other vertices have excess $0$, that is $S = V(G)-\{x,y\}$. 
For $u \in \{x, y\}$, since $d(u) = 5$ is odd, $N^s(u) \ne \varnothing$ and hence, by Lemma~\ref{lem:combine}(~\ref{lem:noOnlyOut}), $\abs{N^s(u)} \ge 2$ and then, since $d(u)$ is odd, $\abs{N^s(u)} \ge 3$. 
In particular, $x$ and $y$ are incident with at most one digon.

\begin{claim}\label{clm:dirac+3nonsepK2} 
Let $u$ be a non-separating vertex of $G[S]$ in a $\bid K_2$ block. Then $N^s(u) = \varnothing$, and thus $u$ is linked to (exactly) one of $x$, $y$ by a digon.
\end{claim}

\begin{proofclaim} 
Assume not. Then $N^s(u) = \{x,y\}$. 
Since the arc between $u$ and $x$ is contained in an induced cycle, we may assume that $yu, ux \in A(G)$, and any induced cycle containing $yu$ or $ux$ contains both $yu$ and $ux$. 
 
This implies that $H = G \setminus \{yu, ux\} \cup yx$ is not $2$-dicolourable (for otherwise $G$ is too) and hence contains a $3$-dicritical digraph $H^*$. 
Observe that $u$ has degree $2$ in $H$, so $u \notin V(H^*)$ and by immediate induction, denoting $S_u$ the connected component of $G[S]$ containing $u$, we have that $S_u \cap V(H^*) = \varnothing$. 
Since $\abs{S_u} \ge 2$, $S_u$ contains at least one other non-separating vertex of $G[S]$, say $w$, and $w$ is incident with two arcs incident with $\{x,y\}$. This implies that $d_{H^*}(x)+d_{H^*}(y) \le 10 +2 - 4 = 8$. Since $x$ and $y$ are in $V(H^*)$ (for otherwise $H^*$ is a subdigraph of $G$), the inequality is an equality, which implies firstly that all vertices of $H^*$ have degree $4$ in $H^*$, and thus $H^*$ is a symmetric odd cycle by Theorem~\ref{thm:brooks}, and secondly that $G[S_u]$ has exactly two non-separating vertices, i.e. $G[S_u]$ is a symmetric path with extremities $u$ and $w$. 

\begin{figure}[!hbtp]
\begin{center}
\begin{tikzpicture}
\node (a) at (0, 0) [vertex] {z};
\node (b) at (-1, -1) [vertex] {x};
\node (c) at (1, -1) [vertex] {y};
\node (d) at (-1, -2) [vertex] {u};
\node (e) at (1, -2) [vertex] {w};
\draw[->, >=latex] (b) -- (a);
\draw[->, >=latex] (c) -- (a);
\draw[->, >=latex] (a) -- (b);
\draw[->, >=latex] (a) -- (c);

\draw[red] (e) -- (c);
\draw[->, >=latex] (c) -- (d);
\draw[->, >=latex] (e) -- (d);
\draw[red] (e) -- (b);
\draw[->, >=latex] (d) -- (b);
\draw[->, >=latex] (b) -- (c);
\draw[->, >=latex] (d) -- (e);

\end{tikzpicture}
\end{center} 
\caption{The digraph at the end of the proof of claim~\ref{clm:dirac+3nonsepK2}. We don't know the orientation of the two red arcs, and there might be a symmetric path of length $2$ linking $u$ and $w$ instead of a digon.}
\label{fig:wheelforclaim}
\end{figure}
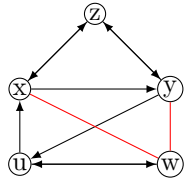

By claim~\ref{clm:noP3}, we have $H^* = \bid K_3$. Since $x$ and $y$ are incident to at most one digon, we have $N^s(w) = \{x, y\}$. Besides, $xy \in A(G)$ and hence, $V(G) = S_u \cup V(H^*)$. By claim~\ref{clm:noP3}, we have $\abs{S_u} \le 3$. If $\abs {S_u} = 3$, colouring $x, y$ and the vertex in $S_u-u-w$ with colour $1$ and the other vertices with colour $2$ yields a $2$-dicolouring of $G$, a contradiction. Hence $S_u = \{u, w\}$. If $ywx \subseteq G$, then $G \in \m D'_3$, a contradiction. Hence, by Lemma~\ref{lem:combine}(~\ref{lem:noOnlyOut}), $xwy \subseteq G$. Every induced cycle containing $xw$ contains $yux$ and hence has a chord (namely $xy$), a contradiction. Hence $G$ is not dicritical, a contradiction. 

\end{proofclaim}

\begin{claim}\label{clm:dirac+3K1} 
$G[S]$ has no $\bid K_1$-block.
\end{claim}
\begin{proofclaim}
Assume $G[S]$ contains a $\bid K_1$ block $\{u\}$. 
Then $u$ is connected to $x$ and $y$ by digons. 
So, there is no digon between $\{x, y\}$ and $S-u$. 
By claim~\ref{clm:dirac+3nonsepK2}, the leaf blocks of $G[S]$ are cycle of length at least $3$. 
Since there are at most $6$ arcs between $\{x, y\}$ and $S-u$, $G[S-u] = \vec C_3$. 
Since $x$ has degree $5$, it cannot be adjacent with $y$ and with the three vertices of the $\vec C_3$. 
So there exists $v \in S-u$ such that $x$ is not adjacent with either $y$ or $v$. 
Hence $\{x, y, v\}$ is acyclic. 
Now, colouring $\{x,y,v\}$ with colour $1$ and the other vertices with colour $2$ yields a $2$-dicolouring of $G$, a contradiction.
\end{proofclaim}

\begin{claim}\label{clm:dirac+3noK2}
$G[S]$ has no $\bid K_2$ leaf block.
\end{claim}

\begin{proofclaim}
Assume towards a contradiction that $G[S]$ contains a $\bid K_2$ leaf block, say $\{u, v\}$, with $u$ non-separating in $G[S]$. By claim~\ref{clm:dirac+3nonsepK2}, we may assume without loss of generality that there is a digon between $x$ and $u$. 

Assume there is $w \in N^d(v)-u$. Then $w \neq x$ because $G$ has no $\bid K_3$. Hence $\{x,u,v,w\}$ contradicts claim~\ref{clm:noP3}.  
So $\abs{N^d(v)} = 1$ and thus $\abs{N^s(v)} = 2$. By claim~\ref{clm:dirac+3nonsepK2} $v$ is separating in $G[S]$. 

Write $N^s(v) = \{a,b\}$, with $b \in S$. 
By Lemma~\ref{lem:combine}(~\ref{lem:noOnlyOut}) and directional duality, we may assume that $bv, va \in A(G)$, and we get that every induced cycle containing $bv$ or $va$ contains both $bv$ and $va$. 
This implies that $H = G\setminus bva \cup ba$ is not $2$-dicolourable, for otherwise so is $G$. So $\dic(H)=3$. Let $H^*$ be a $3$-dicritical subdigraph of $H$. 
Note that every vertex in $V(H^*)$ has degree at least $4$ in $H^*$.  
Hence $v \notin V(H^*)$, which implies $u \notin V(H^*)$. Since $\eps_G(x) = 1$ and $u \in N^d(x)$, if $x \in V(H^*)$, then $\eps_{H^*}(x) < 0$, which is impossible. Hence $x \notin V(H^*)$. As $d_H \le d_G$, we obtain $\eps_G{V(H^*)} \le \eps_G(y) \le 1$. Then, since $G$ is $2$-arc-connected, $\abs{\bid A_G(V(H^*), V(G)-V(H^*)} \ge 4$. As we added exactly one arc when constructing $H$, we obtain $\eps(H^*) \le \eps_G(V(H^*))+2-4 < 0$, a contradiction.

\end{proofclaim}

\begin{claim}\label{clm:dirac+3P1} 
$G[S]$ has no $\vec P_2$ leaf block.
\end{claim}
\begin{proofclaim} 
Assume there is a $\vec P_2$ leaf block in $G[S]$, say $\{u, v\}$ with $u$ non-separating in $G[S]$. 
We may assume without loss of generality that there is a digon between $u$ and $x$, and a simple arc between $u$ and $y$. 
Moreover, since the arc between $u$ and $v$ is in an induced cycle, we may assume that $vuy \subseteq G$ and we get that all induced cycle going through $vu$ goes through $uy$. 
This implies that 
 $H = G \setminus vu \cup yu$ is not $2$-dicolourable and hence contains a $3$-dicritical digraph $H^*$. 
Let $S_v$ be the connected component of $v$ in $G[S]$. Every vertex in $H^*$ has degree at least $4$ in $H^*$, and $v$ has degree $3$ in $H$, so $v \notin V(H^*)$ and an immediate induction shows that $V(S_v) \cap V(G^*) = \{u\}$. 
In $G$, $S_v-u$ contains a non-separating vertex of $G[S]$, which is incident with (at least) two arcs incident with $\{x,y\}$. So $d_{H^*}(x)+d_{H^*}(y) \le 10 - 2 + 1 = 9$. Hence $\eps(H^*) \le 1$. Since $\eps(H^*)$ is even, $\eps(H^*) = 0$ and thus $H^*$ is a symmetric odd cycle. Now, since $u \in V(H^*)$ (for otherwise $H^*$ is a subgraph of $G$), we get that $x \in V(H^*)$. So $x$ is incident with two digons in $H^*$ and thus in $G$, a contradiction.  

\end{proofclaim}

\begin{claim}\label{clm:dirac+32connect} 
$G[S]$ has exactly two leaf blocks, which are cycles of length at least $3$. Moreover, there are at least $8$ arcs between the non-separating vertices of $G[S]$ and $\{x,y\}$.
\end{claim}
\begin{proofclaim} By claims~\ref{clm:dirac+3K1},~\ref{clm:dirac+3noK2} and~\ref{clm:dirac+3P1}, every leaf block of $G[S]$ is a cycle of length at least $3$. For each such block $B$, we have $\abs{\bid A(B, \{x, y\})} \ge 4$ and since $d(x)+d(y) = 10$, there are at most two of them.

Assume towards a contradiction that $G[S]$ has only one leaf block. Then $G[S]$ has only one block which is a cycle of length at least $3$.

Assume first that there is no arc between $x$ and $y$. Then $G[S] = \vec C_5$. Since $\abs{N^d(x)} \le 1$ and $\abs{N^d(y)} \le 1$, we have $s \in S-N^d(x)-N^d(y)$. Then, colouring $x, y, s$ with colour $1$ and all other vertices with colour $2$ yields a $2$-dicolouring of $G$, a contradiction.

 Assume now that there is a simple arc between $x$ and $y$, say $xy \in A(G)$. Then $G[S] = \vec C_4$, say $G[S] = s_1s_2s_3s_4s_1$. By claim~\ref{clm:dirac+3notoriented}, $G$ contains a digon. Assume without loss of generality that there is a digon between $x$ and $s_1$. So $x$ is non-adjacent with one of the vertices $s_i$ of $S$, $i \ne 1$. If there is no digon between $y$ and $s_i$, then colouring $x,y,s_i$ with colour $1$, and the other vertices with colour $2$ yields a $2$-dicolouring of $G$, a contradiction. So there is a digon between $y$ and $s_i$. Hence $y$ is non-adjacent to some vertex in $S$. Let $s_j \in S$ with $j \ne 1$ and $j \ne i$ such that $y$ is non-adjacent to a vertex in $S-s_i-s_j$ (which exists since $\abs S = 4$). Then, colouring $x$, $s_i$ and $s_j$ with colour $1$ and the other vertices with colour $2$ yields a $2$-dicolouring of $G$, a contradiction.

Finally, assume that there is a digon between $x$ and $y$. Then $G[S]$ is a cycle of length $3$ and there is no digon between $S$ and $\{x,y\}$ (because $\abs{N^d(x)} \le 1$ and $\abs{N^d(y)} \le 1$)). By Lemma~\ref{lem:combine}(~\ref{lem:noOnlyOut}), $x$ has both an in- and an out-neighbour in $S$. By directional duality, we may assume $\abs{N^{s+}(x)} = 2$. Now, colouring $N^+[x]$ with colour $1$ and the rest of the vertices with colour $2$ yields a $2$-dicolouring of $G$, a contradiction.

Hence, $G[S]$ has exactly two leaf blocks, which are cycles of length at least $3$. Each of these leaf blocs have at least two non-separating vertices, and each of these vertices are incident with two arcs incident with $\{x,y\}$. So the second part of the statement holds. 
\end{proofclaim}

\begin{claim}\label{clm:dirac+3digonxyS} There is no digon between $S$ and $\{x, y\}$.
\end{claim}
\begin{proofclaim} Assume there is such a digon. 
Without loss of generality, assume there exists $u \in N^d(x) \cap S$. 

If $u$ is separating in $G[S]$, then $u$ is in two $\vec P_2$ blocks and hence, by claim~\ref{clm:dirac+3P1}, its neighbours in $S$ are separating in $G[S]$ and are each incident with at least one arc incident with $\{x, y\}$. Hence there are at most $6$ arcs between the non-separating vertices of $G[S]$ and $\{x, y\}$, which is impossible by claim~\ref{clm:dirac+32connect}.

Hence $u$ is non-separating in $G[S]$. 
Let $B$ be the block of $G[S]$ containing $u$. 
By claim~\ref{clm:dirac+3noK2}, $B$ is not a $\bid K_2$ block, so $B$ is a cycle of length at least $3$. 
Let $u^- \in N^{s-}(u)$ and $u^+ \in N^{s+}(u)$. 
Since the only induced cycle going through $uu^+$ or $u^-u$ is $B$, 
$H = G \setminus u u^+ \cup u u^-$ is not $2$-dicolourable and hence contains a $3$-dicritical digraph $H^*$. Since every vertex in $H^*$ has degree at least $4$, an immediate induction on the walk from $u^+$ to $u^-$ in $B$ shows that $V(H^*) \cap V(B) \subseteq \{u, u^-\}$. In particular, $d_{H^*}(u^-) \le 4$. 
Since $u^+ \notin V(H^*)$, there is a non-separating vertex of $G[S]$ that is not in $H^*$. Hence, if $x, y \in V(H^*)$, then $d_{H^*}(x)+d_{H^*}(y) \le 9$. In any case, $\eps(G^*) \le 1$. So $\eps(G^*) = 0$ and thus $H^*$ is a symmetric odd cycle. We have $u, u^- \in V(H^*)$, for otherwise $H^*$ is a subdigraph of $G$. Since $d_{H^*}(u) = 4$, $x \in V(H^*)$ and thus $x$ is incident with two digons, a contradiction.
\end{proofclaim}

\begin{claim}\label{clm:dirac+3digonS} There is no digon in $S$.
\end{claim}

\begin{proofclaim} 
Let $P$ be a maximal symmetric path in $G[S]$ and let $u$ and $v$ its extremities. Assume towards a contradiction that $P$ has length at least $1$, i.e. $u \ne v$. If both $u$ and $v$ are in $\vec P_2$ blocks, then the extremities of these two $\vec P_2$ are separating vertices by claim~\ref{clm:dirac+32connect}, and thus each of (the four of) them is adjacent to $x$ or $y$. Hence there are at most $6$ arcs between the non-separating vertices of $G[S]$ and $\{x, y\}$, contradicting claim~\ref{clm:dirac+32connect}.

Hence we may assume that $u$ is not in a $\vec P_2$ block. By maximality of $P$, it is not in a second $\bid K_2$ block. Hence it is in a cycle of length at least $3$. 
Let $u^- \in N^{s-}(u)$ and $u^+ \in N^{s+}(u)$. 
Since an induced cycle containing $uu^+$ or $u^-u$ contains both $uu^+$ and $u^-u$, $H = G \setminus u^-uu^+ \cup u^- u^+$ is not $2$-dicolourable. 
So $H$ contains a $3$-dicritical digraph $H^*$. 
Since every vertex in $H^*$ has degree at least $4$, an immediate induction on the component of $G[S] \setminus u^-uu^+$ containing $u$ finds a non-separating vertex of $G[S]$ which is not in $H^*$. 
So $\eps(H^*) \le 1$, and thus $\eps(H^*) = 0$ and thus $H^*$ is a symmetric odd cycle. If $V(H^*) \subset S$, then $G[S]$ contains a symmetric cycle minus one arc, which is impossible. Hence $V(H^*) \cap \{x, y\} \ne \varnothing$, which contradicts claim~\ref{clm:dirac+3digonxyS}.

\end{proofclaim}

By claim~\ref{clm:dirac+3notoriented}, $G$ contains a digon. By claims~\ref{clm:dirac+3digonxyS} and~\ref{clm:dirac+3digonS}, there is a digon between $x$ and $y$. 
Hence there are $6$ arcs between $S$ and $\{x, y\}$, a contradiction to claim~\ref{clm:dirac+32connect}.

\end{proof}

\section{Kostochka-Yancey-type bound}
\label{sec:kos}

The main goal of this paper is to obtain the best bounds on the minimum number of arcs in dicritical digraphs with fixed order and dichromatic number. One way of doing so, is to search for such bounds as linear functions of the order and search for the best slope. We give here a nice characterisation of this quantity.

The directed Haj\'os join describes a way to build $k$-critical digraphs from any two $k$-dicritical digraphs, with the following properties:

\begin{lem}[Theorem~2 in~\cite{bang2019haj}]\label{lem:hajos}
Let $k \ge 2$ and let $G_1$ and $G_2$ be $k$-dicritical digraphs. Then there exists a $k$-dicritical digraph $G$ with 
$\abs{A(G)} = \abs{A(G_1)} + \abs{A(G_2)} -1$ and 
$\abs{V(G)} = \abs{V(G_1)} + \abs{V(G_2)} -1$
\end{lem}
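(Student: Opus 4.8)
The plan is to exhibit such a $G$ as a \emph{directed Haj\'os join} of $G_1$ and $G_2$. Since $G_1,G_2$ are $k$-dicritical with $k\ge 2$, every vertex has degree at least $2(k-1)\ge 2$, so we may fix an arc $(b_1,a_1)\in A(G_1)$ and an arc $(a_2,b_2)\in A(G_2)$; assume $V(G_1)\cap V(G_2)=\varnothing$. Let $G$ be obtained from the disjoint union of $G_1\setminus(b_1,a_1)$ and $G_2\setminus(a_2,b_2)$ by identifying $a_1$ with $a_2$ into a single vertex $a$ and adding the arc $(b_1,b_2)$. This is again a simple digraph with $\abs{V(G)}=\abs{V(G_1)}+\abs{V(G_2)}-1$ and $\abs{A(G)}=(\abs{A(G_1)}-1)+(\abs{A(G_2)}-1)+1=\abs{A(G_1)}+\abs{A(G_2)}-1$, so it only remains to prove that $G$ is $k$-dicritical. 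Call $G^{(1)}$ and $G^{(2)}$ the two sides of $G$, i.e. the subdigraphs induced by $V(G_1)$ and by $V(G_2)$ (reading $a_1,a_2$ as $a$); they are copies of $G_1\setminus(b_1,a_1)$ and $G_2\setminus(a_2,b_2)$, and $a$ is their only common vertex. Hence any cycle of $G$, or of any subdigraph of $G$, either lies inside one side or uses the arc $(b_1,b_2)$, in which case it consists of that arc followed by a directed walk from $b_2$ to $a$ in $G^{(2)}$ followed by a directed walk from $a$ to $b_1$ in $G^{(1)}$.

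First I would show $\dic(G)\ge k$. Let $\phi$ be a $(k-1)$-dicolouring of $G$. Its restriction to $V(G_1)$ is a $(k-1)$-dicolouring of $G^{(1)}=G_1\setminus(b_1,a_1)$; as $\dic(G_1)=k$ it is not a dicolouring of $G_1$, so adding $(b_1,a_1)$ back creates a monochromatic cycle, which must use that arc and hence yields a monochromatic directed walk from $a$ to $b_1$ inside $G^{(1)}$, in particular $\phi(a)=\phi(b_1)$. Symmetrically, restricting to $V(G_2)$ and adding $(a_2,b_2)$ back yields a monochromatic directed walk from $b_2$ to $a$ inside $G^{(2)}$ and $\phi(a)=\phi(b_2)$. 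Concatenating the $a$-to-$b_1$ walk, the arc $(b_1,b_2)$ (monochromatic since $\phi(b_1)=\phi(a)=\phi(b_2)$) and the $b_2$-to-$a$ walk produces a non-trivial closed directed walk all of colour $\phi(a)$, hence a monochromatic cycle, a contradiction.

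Next I would show $G$ is dicritical by proving $\dic(G\setminus e)\le k-1$ for every arc $e\in A(G)$; this suffices because every proper subdigraph of $G$ lies in some $G\setminus e$, and applying it to a single arc also yields $\dic(G)\le k$, hence $\dic(G)=k$. If $e=(b_1,b_2)$: pick $(k-1)$-dicolourings of $G_1\setminus(b_1,a_1)$ and of $G_2\setminus(a_2,b_2)$, permute colours so that they agree at $a$, and glue; by the structural remark every cycle of $G\setminus e$ stays inside one side, so the glued map is a $(k-1)$-dicolouring. If $e$ is a side arc, say $e\in A(G_1)\setminus\{(b_1,a_1)\}$ (the other side is handled symmetrically, with $(a_2,b_2)$ playing the role of $(b_1,a_1)$): pick a $(k-1)$-dicolouring $\psi_1$ of $G_1\setminus e$, which exists since $G_1$ is $k$-dicritical. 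Since $(b_1,a_1)$ is still present in $G_1\setminus e$ and $\psi_1$ is a dicolouring of it, there is no monochromatic directed walk of colour $\psi_1(a_1)$ from $a_1$ to $b_1$ in $G_1\setminus e$ (such a walk together with $(b_1,a_1)$ would be a monochromatic cycle when $\psi_1(b_1)=\psi_1(a_1)$, and when $\psi_1(b_1)\ne\psi_1(a_1)$ no walk of colour $\psi_1(a_1)$ even reaches $b_1$); a fortiori none exists in the subdigraph $G^{(1)}\setminus e$. Now take a $(k-1)$-dicolouring of $G_2\setminus(a_2,b_2)$, permute colours so it agrees with $\psi_1$ at $a$, and glue it with the restriction of $\psi_1$ to $G^{(1)}\setminus e$. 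By the structural remark, a monochromatic cycle of $G\setminus e$ would have to use $(b_1,b_2)$ and hence contain a monochromatic directed walk of colour $\psi_1(a_1)$ from $a$ to $b_1$ inside $G^{(1)}\setminus e$, which we excluded; so $G\setminus e$ is $(k-1)$-dicolourable.

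The step that needs care is the criticality argument, specifically the side-arc cases: one cannot simply recolour each side independently and glue along $a$, because a colouring of a side is forced to assign $a$ and the relevant $b_i$ the same colour and may thereby create a long monochromatic walk closing up through the arc $(b_1,b_2)$. The resolution is to colour the \emph{modified} side $G_i\setminus e$ while the deleted Haj\'os arc ($(b_1,a_1)$ or $(a_2,b_2)$) is still in place, which automatically forbids exactly that walk. The remaining ingredients — the vertex and arc counts, and the fact that cycles respect the two sides — are routine.
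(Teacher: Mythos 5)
Your proof is correct, and it is exactly the approach the paper relies on: the lemma is quoted from Bang-Jensen, Bellitto, Schweser and Stiebitz as the directed Haj\'os join construction, and your argument is a complete, self-contained verification of that construction (delete $(b_1,a_1)$ and $(a_2,b_2)$, identify $a_1$ with $a_2$, add $(b_1,b_2)$), including the vertex/arc counts and the dicriticality of the result. The only difference is that the paper cites the reference instead of proving it, so your write-up supplies a proof where the paper gives none.
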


\begin{lem}\label{lem:fknasympt} Let $k \ge 2$ and, for $n \ge k$, $d_k(n)$ be the minimum number of arcs in a $k$-dicritical digraph of order $n$. Then: \[\frac 1 n d_k(n) \quad\tend{n \to +\infty}\quad \infl_{G\ k\text{-dicritical}} \frac{\abs{A(G)}-1}{\abs{V(G)}-1} \in [k-1, k-\frac 2 {k-1}]\]
\end{lem}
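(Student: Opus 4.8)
The plan is to prove that the sequence $\frac{1}{n}d_k(n)$ converges to the infimum $L := \inf_{G\ k\text{-dicritical}} \frac{|A(G)|-1}{|V(G)|-1}$ and that this infimum lies in $[k-1, k-\frac{2}{k-1}]$. The core mechanism is a Fekete-type subadditivity argument powered by the directed Haj\'os join from Lemma~\ref{lem:hajos}.

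First I would set up notation: for $n \ge k$, write $f(n) = d_k(n) - 1$, so that $f$ counts "arcs minus one" in an optimal $n$-vertex $k$-dicritical digraph. The key claim is that $f$ is subadditive in the shifted sense $f(m+n-1) \le f(m) + f(n)$ for $m, n \ge k$: given optimal $k$-dicritical digraphs $G_1$ on $m$ vertices and $G_2$ on $n$ vertices, Lemma~\ref{lem:hajos} produces a $k$-dicritical digraph $G$ with $|V(G)| = m+n-1$ and $|A(G)| = |A(G_1)| + |A(G_2)| - 1$, hence $d_k(m+n-1) \le d_k(m) + d_k(n) - 1$, i.e. $f(m+n-1) \le f(m) + f(n)$. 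Substituting $g(j) = f(j + k - 1)$ for $j \ge 1$ turns this into honest subadditivity $g(i+j) \le g(i) + g(j)$ (one checks $g(i+j) = f(i+j+k-1) = f((i+k-1) + (j+k-1) - (k-1))$... actually the clean substitution is to index by $n - 1$: set $h(t) = d_k(t+1) - 1$ for $t \ge k-1$, then $h(s+t) \le h(s) + h(t)$; I would pick whichever reindexing makes Fekete's lemma apply cleanly). By Fekete's subadditive lemma, $h(t)/t \to \inf_t h(t)/t$, and unwinding the reindexing gives $\frac{d_k(n)}{n} = \frac{d_k(n)-1}{n-1}\cdot\frac{n-1}{n} + \frac{1}{n} \to \inf_n \frac{d_k(n)-1}{n-1}$.

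Next I would identify $\inf_n \frac{d_k(n)-1}{n-1}$ with $L = \inf_{G} \frac{|A(G)|-1}{|V(G)|-1}$ over all $k$-dicritical $G$. One direction is immediate since for each $n$ there is an $n$-vertex $k$-dicritical digraph realizing $d_k(n)$, so $\frac{d_k(n)-1}{n-1} \ge L$; for the reverse, any $k$-dicritical $G$ has $|V(G)| \ge k$ by Lemma~\ref{lem:ord}/Lemma~\ref{lem:pouet}, so $\frac{|A(G)|-1}{|V(G)|-1} \ge \frac{d_k(|V(G)|)-1}{|V(G)|-1} \ge \inf_n \frac{d_k(n)-1}{n-1}$, giving $L \ge \inf_n\frac{d_k(n)-1}{n-1}$. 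Hence the two infima coincide and the limit in the statement equals $L$.

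Finally, for the numerical bounds on $L$: the lower bound $L \ge k-1$ follows because every vertex of a $k$-dicritical digraph has degree at least $2(k-1)$, so $|A(G)| \ge (k-1)|V(G)|$, whence $\frac{|A(G)|-1}{|V(G)|-1} \ge \frac{(k-1)|V(G)|-1}{|V(G)|-1} \ge k-1$. For the upper bound $L \le k - \frac{2}{k-1}$, I would exhibit an explicit family: take $G = \bid K_2(\bid K_{k-2}, \vec C_3)$, which is $k$-dicritical on $k+1$ vertices by Lemma~\ref{lem:ord}. Its arc count is $|A(\bid K_{k-2})| + |A(\vec C_3)| + 2(k-2)\cdot 3 = (k-2)(k-3) + 3 + 6(k-2) = (k-2)(k+3) + 3 = k^2 + k - 3$. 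Then $\frac{|A(G)|-1}{|V(G)|-1} = \frac{k^2+k-4}{k} = k + 1 - \frac{4}{k}$, which is unfortunately larger than $k - \frac{2}{k-1}$, so this single example is not tight enough. The main obstacle — and the place I expect to do real work — is producing a $k$-dicritical digraph (or infinite family) with ratio at most $k - \frac{2}{k-1}$; the natural candidate is iterating the Haj\'os join on copies of a good base digraph, or using a directed analogue of the constructions achieving Kostochka--Yancey-type ratios. Concretely one would look for a $k$-dicritical digraph on $2(k-1)$ vertices, e.g. built from two copies of $\bid K_{k-1}$ joined appropriately, whose excess $\eps_k(G)$ equals $2$ or a small constant; then $\frac{|A(G)|-1}{|V(G)|-1} = k-1 + \frac{\eps_k(G)/2 - 1}{|V(G)|-1}$ and choosing $\eps_k(G) = 2$ gives ratio exactly $k-1$, while the $+\frac{\text{const}}{|V|-1}$ terms must be bounded to land inside the interval. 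I would check the Haj\'os join of two copies of $\bid K_k$: it has $2k-1$ vertices and $2\binom{k}{2}\cdot 2 - 1 = 2k(k-1) - 1$... wait, $|A(\bid K_k)| = k(k-1)$, so $|A(G)| = 2k(k-1) - 1$, giving ratio $\frac{2k(k-1)-2}{2k-2} = \frac{2(k(k-1)-1)}{2(k-1)} = \frac{k(k-1)-1}{k-1} = k - \frac{1}{k-1}$, still just above the target; iterating $m$ copies yields $\frac{mk(k-1) - m}{m(k-1)} = k - \frac{1}{k-1}$ regardless. To reach $k - \frac{2}{k-1}$ one genuinely needs a base digraph better than $\bid K_k$, so I anticipate invoking a sparser $k$-dicritical building block — this existence/construction step is the crux, and everything else (Fekete, the degree bound, the infimum identification) is routine.
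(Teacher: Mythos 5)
Your subadditivity argument (Haj\'os join giving $d_k(a+b-1)\le d_k(a)+d_k(b)-1$, then Fekete after reindexing by $n-1$), the identification of the limit with $\infl_{G\ k\text{-dicritical}}\frac{\abs{A(G)}-1}{\abs{V(G)}-1}$, and the lower bound $k-1$ from minimum degree $2(k-1)$ are all correct and are essentially the paper's proof of the convergence statement. However, there is a genuine gap at exactly the point you flag as the crux: the upper bound $k-\frac{2}{k-1}$ is never established. Your candidate constructions cannot close it -- $\bid K_2(\bid K_{k-2},\vec C_3)$ gives about $k+1-\frac 4k$ and iterated Haj\'os joins of $\bid K_k$ plateau at $k-\frac{1}{k-1}$ -- and "invoke a sparser building block" is left as an unproved existence claim, so as written the conclusion that the limit lies in $[k-1,k-\frac 2{k-1}]$ does not follow.

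The missing idea requires no new construction at all: use the tightness part of Theorem~\ref{thm:KY}. For $k\ge 5$ and every $n\equiv 1 \pmod{k-1}$ (and for $k=4$, all $n \ge 6$) there is an undirected $k$-critical graph $G_n$ with $2\abs{E(G_n)} = \frac{(k+1)(k-2)n-k(k-3)}{k-1}$; viewed as a symmetric digraph, $G_n$ is $k$-dicritical (dicolourings of a symmetric digraph minus a vertex or minus one arc of a digon correspond to proper colourings of the underlying graph minus that vertex or edge), with $\abs{A(G_n)}=2\abs{E(G_n)}$. Hence along this subsequence $\frac 1n d_k(n) \le \frac{(k+1)(k-2)}{k-1} + O(\frac 1n) = k-\frac{2}{k-1}+O(\frac 1n)$, and since you have already shown the full sequence converges, its limit (equivalently the infimum) is at most $k-\frac 2{k-1}$. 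Note that each individual tight graph has $\frac{\abs{A(G_n)}-1}{\abs{V(G_n)}-1}$ slightly above $k-\frac 2{k-1}$, so the bound is obtained asymptotically through the limit, not by exhibiting a single digraph whose ratio meets it; your search for one fixed base digraph of ratio exactly $k-\frac 2{k-1}$ was therefore aimed at a stronger statement than needed.
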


\begin{proof} Given two integers $n$ and $m$, we write $n \% m$ for the rest of the euclidean division of $n$ by $m$.  First, by Lemma~\ref{lem:ord}, $d_k$ is well-defined. Then, by Lemma~\ref{lem:hajos}, we have, for $a, b \ge k$:\[d_k(a+b-1) \le d_k(a)+d_k(b)-1\]
and hence, for $a \ge b$, \[\begin{array}{rcl}
d_k(a)&=&d_k(b + (b-1) \bigfloor{\frac{a-b}{b-1}} + (a-b)\%(b-1))\\
&=&d_k(b + (a-b)\%(b-1)) + \suml_{i = 0}^{\bigfloor{\frac{a-b}{b-1}}-1} (d_k(b+(a-b)\%(b-1) + (i+1)(b-1)) \\
&&\phantom{d_k(b + (a-b)\%(b-1)) + \suml_{i = 0}^{\bigfloor{\frac{a-b}{b-1}}-1}}- d_k(b+(a-b)\%(b-1)+i(b-1)))\\
&\le&d_k(b+(a-b)\%(b-1)) + \bigfloor{\frac{a-b}{b-1}} (d_k(b)-1)\end{array}\]
i.e. \[\frac 1 a d_k(a) \le \frac{d_k(b)-1}{b-1} + O(\frac 1 a)\]
This yields $\limsupl_{n \to +\infty} \frac 1 n d_k(n) \le \infl_{G\ k\text{-dicritical}} \frac{\abs{A(G)}-1}{\abs{V(G)} - 1}$. But it is immediate that:\[\infl_{G\ k\text{-dicritical}} \frac{\abs{A(G)}-1}{\abs{V(G)} - 1} \le \liminfl_{n \to +\infty} \frac 1 n d_k(n)\]
and the result follows (the upper bound comes from Theorem~\ref{thm:KY}).)
\end{proof}

\subsection{Minimum number of arcs in a $k$-dicritical digraph}

The goal of this section is to prove Theorem~\ref{thm_intro:KY} that we restate below for convenience, see Theorem~\ref{thm:master}. 
\medskip

 Let $G$ be a digraph. Two distinct vertices $u, v \in V(G)$ are \emph{twins} in $G$ when $N^+[u] = N^+[v]$ and $N^-[u] = N^-[v]$. In particular a pair of twins are linked by a digon.

\begin{Def} Let $G$ be a digraph, $R \subseteq G$, and $\phi : R \to [k-1]$ be a dicolouring of $G[R]$. For $i \in [k-1]$, let $X_i = \phi^{-1}(i)$. We define $Y(G, R, \phi)$ as the digraph obtained from $G$ after contracting each $X_i$ into a single vertex $x_i$, and adding a digon between $x_i$ and $x_j$ for every $i \neq j$. 

\end{Def}

\begin{lem}\label{lem:_partShrinkDic} Let $G$ be a digraph, $R \subseteq V(G)$, and $\phi$ be a $(k-1)$-dicolouring of $G[R]$. If $\dic(G) \ge k$, then $\dic(Y(G, R, \phi)) \ge k$.
\end{lem}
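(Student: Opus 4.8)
The plan is to exhibit $Y(G,R,\phi)$ as a digraph containing, on a subset of its vertices and arcs, a digraph obtained from $G$ by contracting acyclic sets, so that Lemma~\ref{lem:pouet}(\ref{lem:shrinkAcyclic}) together with monotonicity of $\dic$ under the subdigraph relation does the work. Write $X_i=\phi^{-1}(i)$ for $i\in[k-1]$. The crucial observation is that every $X_i$ is acyclic in $G$: since $X_i\subseteq R$ we have $G[X_i]=G[R][X_i]$, which is acyclic because $X_i$ is a colour class of the dicolouring $\phi$ of $G[R]$. In particular $G[X_i]$ contains no digon, so contracting $X_i$ only creates loops inside $X_i$ (which are discarded) and, the $X_j$ being pairwise disjoint, does not affect the other classes.

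Next I would contract the non-empty classes one after another. Disjoint contractions commute, and each $X_j$ is still acyclic after the others have been contracted (its induced subdigraph is unchanged), so applying Lemma~\ref{lem:pouet}(\ref{lem:shrinkAcyclic}) once per non-empty class yields
\[ \dic\!\left(G/(X_i : X_i\neq\varnothing)\right)\ \ge\ \dic(G)\ \ge\ k. \]
Finally, $G/(X_i : X_i\neq\varnothing)$ is a subdigraph of $Y(G,R,\phi)$: the latter is obtained from the former by adding a fresh vertex $x_i$ for each empty class $X_i$ and then adding all digons among the $x_j$. Since $\dic$ is monotone under the subdigraph relation (restrict any dicolouring of the larger digraph to the smaller one), we get $\dic(Y(G,R,\phi))\ge\dic\!\left(G/(X_i : X_i\neq\varnothing)\right)\ge k$, which is the claim.

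No step is genuinely hard; the care needed is purely bookkeeping — that contraction discards the loops produced inside each (digon-free) $X_i$, that iterating Lemma~\ref{lem:pouet}(\ref{lem:shrinkAcyclic}) is legitimate because pairwise-disjoint contractions commute and preserve each other's acyclicity, and that empty colour classes contribute only harmless extra vertices. If one prefers to avoid the iteration, one can argue directly from a hypothetical $(k-1)$-dicolouring $\psi$ of $Y(G,R,\phi)$: the vertices $x_1,\dots,x_{k-1}$ induce $\bid K_{k-1}$ and so receive pairwise distinct $\psi$-colours; pulling $\psi$ back to $G$ via $\psi'(v)=\psi(v)$ for $v\notin R$ and $\psi'(v)=\psi(x_i)$ for $v\in X_i$, a monochromatic directed cycle $C$ of $G$ under $\psi'$ would either lie inside a single $X_i$, contradicting that $\phi$ dicolours $G[R]$, or else — having at least two vertices with distinct images under the contraction map — project, after deleting stalls, to a monochromatic closed walk of positive length in $Y(G,R,\phi)$, hence contain a monochromatic directed cycle of $Y(G,R,\phi)$, contradicting the choice of $\psi$.
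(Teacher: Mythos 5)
Your proposal is correct and is essentially the paper's own argument, which is a one-liner: apply Lemma~\ref{lem:pouet}(\ref{lem:shrinkAcyclic}) to the (acyclic) colour classes and then use that adding arcs (and isolated vertices) cannot decrease the dichromatic number. Your extra bookkeeping about commuting disjoint contractions and empty classes, and the alternative pull-back argument, are fine but not a different route.
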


\begin{proof} 
By lemma~\ref{lem:pouet}(\ref{lem:shrinkAcyclic}) and because adding arcs does not decrease the dichromatic number.
\end{proof}

We will also need the following technical lemma. 
\begin{lem}[Lemma~17 in~\cite{KoYa14}]\label{lem:KY}
Let $k \ge 3$, $R_* = \{u_1, \dots, u_s\}$ be a set, and $\omega: R_* \to \NN^*$ such that $\omega(u_1) + \dots + \omega(u_s) \ge k-1$. 
Then for each $1 \leq i \leq (k-1)/2$, there exists a graph $H$ with $V(H) = R_*$ and $\abs{E(H)} = i$ such that for every independent set $M$ in $H$ with $\abs M \geq 2$,
\[
\sum_{u \in R_*-M} \omega(u) \geq i
\]
\end{lem}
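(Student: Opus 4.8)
My plan is to restate the conclusion in terms of vertex covers and then split into two cases according to whether the heaviest vertex of $R_*$ is very heavy. Throughout, write $W=\omega(u_1)+\dots+\omega(u_s)$, so $W\ge k-1\ge 2i$, and relabel so that $\omega(u_1)=\max_j\omega(u_j)$. The first observation is that "$\sum_{u\in R_*-M}\omega(u)\ge i$ for every independent set $M$ with $|M|\ge 2$" says precisely that every vertex cover $T$ of $H$ whose complement has at least two vertices has weight $\omega(T)\ge i$; so it is certainly enough to build $H$ so that \emph{every} vertex cover has weight $\ge i$. A second observation is that enlarging the edge set of $H$ only shrinks the family of independent sets, so it suffices to construct such an $H$ with \emph{at most} $i$ edges and then pad it up to exactly $i$ edges with arbitrary (possibly parallel) edges.

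\textbf{Case 1: $\omega(u_1)\le W-i$.} Then $\sum_{j\ge 2}\omega(u_j)\ge i$, and I would check that consequently $\sum_{v\in R_*}\min(\omega(v),i)\ge 2i$ (trivial if $\omega(u_1)<i$, since the sum is then $W$; and if $\omega(u_1)\ge i$ the sum is at least $i+\min(\sum_{j\ge2}\omega(u_j),i)=2i$). Hence one can pick integers $0\le a_v\le\min(\omega(v),i)$ summing to $2i$; since each $a_v$ is at most half the total, the greedy "repeatedly join the two vertices of largest residual degree" procedure realises $(a_v)$ as the degree sequence of a loopless multigraph $H$ on $R_*$ with exactly $i$ edges. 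Now $\deg_H(v)=a_v\le\omega(v)$, so assigning weight $1$ to each edge is a feasible fractional matching of value $i$ for the vertex capacities $\omega(\cdot)$; LP duality between fractional weighted matching and fractional weighted vertex cover, together with the fact that an integral weighted vertex cover is at least the fractional optimum, gives that every vertex cover of $H$ has weight $\ge i$, as wanted.

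\textbf{Case 2: $\omega(u_1)> W-i$.} Then $\omega(u_1)\ge W-i+1\ge i+1$ and $\sum_{j\ge 2}\omega(u_j)\le i-1$, so in particular $s-1\le i-1$. I would take $H$ to consist of the $t:=\min(i,s-1)$ edges from $u_1$ to $u_2,\dots,u_{t+1}$, plus $i-t$ further edges placed arbitrarily among $u_2,\dots,u_s$ (repeating edges if necessary), giving $i$ edges. For an independent set $M$ with $|M|\ge2$: if $u_1\notin M$ then $\omega(R_*-M)\ge\omega(u_1)\ge i+1$; if $u_1\in M$ then $M$ avoids all $H$-neighbours of $u_1$, so when $t=s-1$ this forces $M=\{u_1\}$, a contradiction, and when $t=i<s-1$ it gives $\{u_2,\dots,u_{i+1}\}\subseteq R_*-M$, hence $\omega(R_*-M)\ge i$. (The degenerate case $s=1$ is vacuous and can be handled by $i$ loops, or excluded.)

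The step I expect to carry the weight is Case 1: the right idea is the slightly indirect one of first forcing $\deg_H(v)\le\omega(v)$ and only then reading off the vertex-cover bound from weighted LP duality — once that is seen, the realisation of the degree sequence and the rest are routine bookkeeping. A minor technical point worth flagging is that "graph" here should be allowed to have parallel edges: when $s<2i$ one cannot reach exactly $i$ edges with a simple graph on $s$ vertices, and indeed the lemma is applied to (multi)graphs obtained by contraction.
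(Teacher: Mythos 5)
The paper does not prove this lemma: it is quoted from Kostochka and Yancey \cite{KoYa14}, so there is no in-paper proof to compare against, and I judge your argument on its own merits. It is correct in substance. The reduction to ``every vertex cover has weight at least $i$'', the case split on whether $\omega(u_1)\le W-i$, the realisation of a capped degree sequence $(a_v)$ with $a_v\le\min(\omega(v),i)$ and $\sum_v a_v=2i$ as a loopless multigraph (max term at most half the sum), and the star construction in the heavy-vertex case all check out. One simplification: the LP-duality step is heavier than needed, since once $\deg_H(v)=a_v\le\omega(v)$ for every $v$, any vertex cover $T$ satisfies $i=\abs{E(H)}\le\sum_{v\in T}\deg_H(v)\le\sum_{v\in T}\omega(v)$ directly.

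Two caveats, both minor. First, you are right to insist that ``graph'' must allow parallel edges if one wants exactly $i$ edges: for $s=2$, $\omega(u_1)=\omega(u_2)=i$ and $k-1=2i$ the hypotheses hold but no simple graph on two vertices has $i\ge 2$ edges, so the intended reading is either a multigraph or ``at most $i$ edges''; either version is harmless for the only use of the lemma in this paper (Claim~\ref{clm:_bigPot}), where the edges are converted into digons and only the independence structure and an upper bound on the number of added arcs are used. Second, in your Case 2 the placement of the $i-t$ padding edges ``among $u_2,\dots,u_s$'' is impossible when $s=2$; place them parallel to the star edges instead, which your verification permits since it never uses where the padding goes ($u_1$ is adjacent to every other vertex, and $\omega(u_1)\ge i+1$ handles $u_1\notin M$). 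Also note that in Case 2 one always has $t=s-1$ (since $s-1\le i-1$), so your subcase $t=i<s-1$ is vacuous there; and the degenerate $s=1$ situation is vacuous for the covering condition, as you say.
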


Our aim is to show the following theorem:

\begin{thm}\label{thm:master} 
For every $k$-dicritical digraph $G$, 
\[\abs{A(G)} \ge (k-\frac 1 2 - \frac 1 {k-1}) \abs{V(G)} - k(\frac 1 2 - \frac 1 {k-1})\]
\end{thm}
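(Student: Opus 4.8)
The plan is to adapt the Kostochka–Yancey potential method to digraphs. I would proceed by contradiction: let $G$ be a vertex-minimal $k$-dicritical digraph violating the bound, i.e. with $\abs{A(G)} < (k-\tfrac12-\tfrac1{k-1})\abs{V(G)} - k(\tfrac12-\tfrac1{k-1})$. Rewriting, this says the ``potential'' $\rho(G) := (2k-2)\abs{V(G)} - 2\abs{A(G)}$ (which is exactly $-\eps_k(G)$ up to sign, but measured against the higher target slope) satisfies a reversed inequality; more precisely I would define for every $W \subseteq V(G)$ a potential $\rho_G(W) = (2k-1-\tfrac2{k-1})\abs{W} - 2\abs{A(G[W])}$ — the coefficient $2k-1-\tfrac2{k-1}$ being twice the target slope — and note that $G$ is a counterexample exactly when $\rho_G(V(G))$ is smaller than the constant $2k(\tfrac12-\tfrac1{k-1}) \cdot 2 = $ the appropriate threshold. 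The whole game is to show that some vertex subset $W$ has too small a potential, contradicting minimality via a cleverly chosen subgraph.

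The key steps, in order: (1) Establish that $G \neq \bid K_k$ (handled directly by a potential computation) and that $G$ is not small, so that the minimal counterexample has enough room. (2) The central lemma: for a suitable acyclic (or low-potential) set $R \subseteq V(G)$, consider a $(k-1)$-dicolouring $\phi$ of $G[R]$ and the digraph $Y(G,R,\phi)$ from the stated definition. Since $\dic(G) \geq k$, Lemma~\ref{lem:_partShrinkDic} gives $\dic(Y(G,R,\phi)) \geq k$, so $Y(G,R,\phi)$ contains a $k$-dicritical subdigraph $G'$. Now $G'$ has strictly fewer vertices (the $X_i$ collapsed), hence satisfies the bound. (3) Transfer this back: count arcs in $G$ via arcs in $G'$ plus arcs inside $R$ plus arcs between $R$ and the rest, using that every vertex outside $R$ keeps its degree and that the contraction only identifies arcs, so $\abs{A(G)} \geq$ (something in terms of $\abs{A(G')}$). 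Here Lemma~\ref{lem:KY} enters: it lets us choose, among the contracted color classes, an auxiliary graph $H$ on the $x_i$'s with exactly $i$ edges so that the new digons we would have to ``pay for'' in passing to $Y$ are charged against the weights $\omega(x_i) = \abs{X_i}$, yielding a clean inequality. (4) Combine the potential inequality for $G'$ with the accounting from step (3) to contradict the assumed violation by $G$. (5) Dispose of the boundary/degenerate configurations where no good $R$ exists — typically the case where $G$ has very few vertices of excess $0$ or where every low-degree vertex lies in a clique — by direct degree counting (every vertex has degree $\geq 2(k-1)$, twins force digons, etc.).

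The main obstacle I expect is step (3)–(4): getting the arc-count transfer tight enough. In the undirected Kostochka–Yancey argument the potential is engineered so that contracting an independent set and adding a clique on the quotient costs exactly the right amount; in the directed setting we replace ``clique'' by ``all digons'' and ``independent set'' by ``acyclic set'', and the asymmetry between in- and out-neighbourhoods (reflected in $d_{min}, d_{max}$, and the $N^{s+}/N^{s-}$ bookkeeping from Lemma~\ref{lem:combine}) means the naive count loses a constant factor. The authors themselves flag (``the two arguments that do not allow us to get a better result'') that two specific estimates are lossy — I would expect one of them to be exactly this transfer step, where one must settle for the slope $k-\tfrac12-\tfrac1{k-1}$ rather than Kostochka–Yancey's $\tfrac{(k+1)(k-2)}{2(k-1)} = \tfrac{k}{2} - \tfrac1{2} - \cdots$ (note $(k+1)(k-2)/(2(k-1)) \approx k/2$, roughly half our slope since undirected edges are ``half'' of symmetric digon pairs), and the other being the choice of the set $R$ (one cannot always find an acyclic $R$ of the ideal size, only a ``low-potential'' one). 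So concretely: after setting up the potential, the bulk of the work — and the place the proof will be least routine — is the inductive inequality
\[
\rho_G(V(G)) \;\geq\; \rho_{G'}(V(G')) \;-\; (\text{explicit loss term controlled by Lemma~\ref{lem:KY}}),
\]
and checking that the loss term never exceeds the slack the hypothesized counterexample provides.
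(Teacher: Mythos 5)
Your overall strategy is indeed the one the paper follows: a Kostochka--Yancey potential, a counterexample minimal in an appropriate order, the contraction $Y(G,R,\phi)$ together with Lemma~\ref{lem:_partShrinkDic}, and Lemma~\ref{lem:KY} to control the cost of the contraction for a low-potential set $R$. (Two small inaccuracies in that part: the paper works with $\rho(G)=(k-1+\eps)\abs{V(G)}-\abs{A(G)}$ for $\eps$ strictly below $\tfrac12-\tfrac1{k-1}$ and only passes to the boundary value at the very end, since several steps use the strict inequality; and in the application of Lemma~\ref{lem:KY} the auxiliary edges are digons placed on the vertices of $N(V(G)-R)$ with weights $\omega(x)=\abs{\bid A(x,V(G)-R)}$, not a graph on the contracted colour-class vertices weighted by $\abs{X_i}$ as you describe. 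The sets $R$ used are low-potential proper subsets, not acyclic sets.)

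The genuine gap is the endgame. Your steps (2)--(4) only yield what the paper proves in its first claims: every proper subset of size at least $2$ has potential larger than $\rho(\bid K_1)$, and potential at most $\rho(\bid K_{k-1})$ forces a $\bid K_{k-1}$. These statements concern proper subsets and do not by themselves contradict $\rho(G)>\rho(\bid K_k)$; the trivial degree bound $d(u)\ge 2(k-1)$ only gives $\rho(G)\le\eps\abs{V(G)}$, which is no contradiction at all. Your step (5), ``direct degree counting,'' is precisely where the bulk of the paper's proof lives and it is far from routine: one needs a detailed analysis of vertices of degree $2(k-1)$ and $2k-1$ (no mixing of simple arcs and digons at low-degree vertices, the in/out-degree dichotomy of Claim~\ref{clm:_smallInOut}, twins inside $(k-1)$-cliques, clusters of twins and bounds on their size and on the degrees of their neighbours), followed by a discharging argument with charge $\alpha=\eps/(k-2)$ and three rules whose verification involves the concavity estimates in the cluster size. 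Moreover, these structural claims require the counterexample to be minimal not merely in $\abs{V(G)}$ but in the lexicographic order on $(\abs{V(G)},\abs{A(G)},\abs{A^s(G)},-\#\text{twins of degree }2(k-1))$: the arguments repeatedly replace a vertex by a twin or convert simple arcs into digons to produce a strictly smaller digraph with the same vertex and arc counts, which vertex-minimality alone cannot exploit. Without this refined minimality and the discharging step, the plan as written does not close.
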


\begin{proof} 
Let $\eps \in ]0, \frac 1 2 - \frac 1 {k-1}[$. 
Define the potential of a digraph $G$ as follows: \[\rho(G) = (k-1+\eps)\abs{V(G)} - \abs{A(G)}\]
and for $R \subseteq V(G)$, the potential of $R$ in $G$ is $\rho_G(R) = \rho(G[R])$. 
\medskip

Let us first discuss the potential of cliques.

\begin{claim} For $i \ge 1$, $\rho(\bid K_i) = i(k-i+\eps)$. In particular:\begin{itemize}
 \item $\rho(\bid K_1) = k-1+\eps$
 \item $\rho(\bid K_{k-1}) = (k-1)(1+\eps)$
 \item $\rho(\bid K_k) = k\eps$.
\end{itemize}
Besides, $\rho(\bid K_k) < \rho(\bid K_1) < \rho(\bid K_{k-1}) < \minl_{2 \le i \le k-2} \rho(\bid K_i)$ (the last inequality can be seen easily using the concavity of $i \mapsto \rho(\bid K_i)$).
\end{claim}

Note that if $G$ is a digraph and $H$ is a spanning proper subdigraph of $G$, then $\rho(G) \leq \rho(H)$. In particular $\rho(\bid K_{\abs{V(G)}}) \leq \rho(G)$. These two easy facts are often used in the proof. 
\medskip 

We are going to show that, for any $k$-dicritical digraph $G$, $\rho(G) \le \rho(\bid K_k) = k\eps$. 
This indeed implies the theorem because we get that $\abs{A(G)} \ge 2(k-1+\eps)\abs{V(G)} - k\eps$. This being true for any $\eps \in ]0, \frac 1 2 - \frac 1 {k-1}[$, it also holds for $\eps = \frac 1 2 - \frac 1 {k-1}$, which gives $\abs{A(G)} \ge (k-\frac 1 2 - \frac 1 {k-1}) \abs{V(G)} - k(\frac 1 2 - \frac 1 {k-1})$ as wanted.

\medskip

We order the digraphs lexicographically on 
\[G \mapsto (\abs{V(G)}, \abs{A(G)}, \abs{A^s(G)}, -\abs{\{(u, v) \in V(G)^2, d(u) = d(v) = 2(k-1) \land u\text{ and }v\text{ are twins}\}})\] 
(denoting $\preceq$ the ordering) and consider a $\preceq$-minimal counter-example $G$. So $\rho(G) > \rho(\bid K_k)$ and we minimise the number of vertices, then the number of arcs, then the number of simple arcs, and finally we maximise the number of twins of degree $2(k-1)$. 

Let $S = \{u \in V(G), d(u) = 2(k-1)\}$.

We start the proof by a lower bound on the potential of a subset of $V(G)$.

\begin{claim}\label{clm:_noSmallPot} 
Let $R \subsetneq V(G)$. If $\abs R \ge 2$, then $\rho_G(R) > \rho(\bid K_1) = k-1-\eps$.
\end{claim}

\begin{proofclaim} 
Let $R \in \argminl{\substack{W \subsetneq V(G)\\\abs W \ge 2}}\rho_G(W)$. Towards a contradiction, we assume $\rho_G(R) \le \rho(\bid K_1)$.

Since $\rho(\bid K_{\abs R}) \le \rho_G(R) \le \rho(\bid K_1) < \minl_{2 \le i \le k-1} \rho(\bid K_i)$, we have $\abs R \ge k$.
Since $R \subsetneq V(G)$ and $G$ is $k$-dicritical, we have a dicolouring $\phi: G[R] \to [k-1]$. Let $Y = Y(G, R, \phi)$ and $X = V(Y)-V(G)$. Since $\dic(G) = k$, by lemma~\ref{lem:_partShrinkDic} we have $\dic(Y) \ge k$ and hence $Y$ contains a $k$-dicritical subdigraph $Y^*$.

Since $\abs R \ge k$, $\abs{V(Y^*)} \le \abs{V(Y)} = \abs{V(G)}-\abs R+(k-1) < \abs{V(G)}$, so $Y^* \prec G$ and hence $\rho(Y^*) \le \rho(\bid K_k)$. 

Since $G$ is $k$-dicritical, $Y^* \not\subseteq G$ and hence $X \cap V(Y^*) \ne \varnothing$. So, $\rho(\bid K_1) \leq \rho(\bid K_{\abs{V(Y^*) \cap X}}) \leq \rho_{Y^*}(V(Y^*) \cap X)$.

We have:\[\begin{array}{rcl}
\rho_G(V(Y^*) - X + R)&=&\rho_G(V(Y^*) - X) + \rho_G(R) - \abs{\bid A_G(V(Y^*)- X, R)}\\
&\le&\rho(Y^*) - \rho_{Y^*}(V(Y^*) \cap X) + \rho_G(R)\\
&&\qquad+ \abs{\bid A_{Y^*}(V(Y^*) - X, V(Y^*) \cap X)} - \abs{\bid A_G(V(Y^*) - X, R)}\\
&\le&\rho_G(R) + \rho(\bid K_k) - \rho(\bid K_1)\\
&<&\rho_G(R)\end{array}\]
Since $2 \le \abs R \le \abs{V(Y^*) - X + R}$, by minimality of $R$, $V(Y^*) - X + R = V(G)$ and thus:\[
\rho(G) \le \rho(\bid K_k) + \rho_G(R) - \rho(\bid K_1) \le \rho(\bid K_k),\]
a contradiction. 

\end{proofclaim}

We are now ready to obtain a much stronger lower bound.

\begin{claim}\label{clm:_bigPot} 
Let $R \subsetneq G$ such that $\abs R \ge 2$. If $\rho_G(R) \le \rho(\bid K_{k-1}) = (k-1)(1+\eps)$, then $G[R] = \bid K_{k-1}$.
\end{claim}

\begin{proofclaim} Let $R \in \argminl{\substack{W \subsetneq V(G)\\\abs W \ge 2 \land G[W] \ne \bid K_{k-1}}}\rho_G(W)$. Towards a contradiction, we assume $\rho_G(R) \le \rho(\bid K_{k-1})$. 
Since $\rho_G(R) \le \rho(\bid K_{k-1}) < \minl_{2 \le i \le k-2} \rho(\bid K_i)$ and $G[R] \ne \bid K_{k-1}$, we have $\abs R \ge k$.

Let $i = \bigceil{\rho_G(R)-\rho(\bid K_k)}-1$, so that $\rho(\bid K_k)+i < \rho_G(R) \le \rho(\bid K_k)+i+1$. 
By claim~\ref{clm:_noSmallPot}, we have $k-1 + \eps = \rho(\bid K_1) < \rho_G(R) \le \rho(\bid K_k)+i+1$ and hence since $\eps < \frac 1 2 - \frac 1 {k-1}$, we have $i > k-1+\eps - k\eps - 1 > \frac{k-1} 2$. 
In particular $i \ge 2$. 

Besides, we have $\rho(\bid K_k)+i < \rho_G(R) \le \rho(\bid K_{k-1})$ and hence $i < (k-1)(1+\eps)-k\eps = k-1-\eps$ which gives $i \le k-2$. 

Since by Corollary~\ref{lem:arcconnection} $\abs{\bid A(N(V(G)-R), G-R)} \ge 2(k-1)$, Lemma~\ref{lem:KY} with $\omega: x \in N(V(G)-R) \mapsto \abs{\bid A(x, G-R)}$ implies the existence of a set of digons $A$ with end vertices in $N(V(G)-R)$ of size $\bigfloor{\frac i 2}$ such that for every $I \subseteq N(V(G)-R)$ with $\abs I \ge 2$ and independent in the digraph $(N(V(G)-R), A)$, we have $\abs{\bid A(N(V(G)-R) - I, V(G)-R)} \ge \bigfloor{\frac i 2}$. 

We show that $G[R] \cup A$ is $(k-1)$-dicolourable. If it is not the case, we have $G^* \subseteq G[R] \cup A$ $k$-dicritical. Then, $\rho(G^*) \ge \rho_G(G^*)-2\bigfloor{\frac i 2} \ge \rho_G(R)-i > \rho(\bid K_k)$, which contradicts the minimality of $G$. 

Let $\phi: R \to [k-1]$ be a dicolouring of $G[R] \cup A$. Let $Y = Y(G, R, \phi)$ and $X = V(Y)-V(G)$. Since $\dic(G) = k$, by lemma~\ref{lem:_partShrinkDic} we have $\dic(Y) \ge k$ and hence $Y$ contains a $k$-dicritical subdigraph $Y^*$. Since $\abs R \ge k$, we have $\abs{V(Y^*)} < \abs{V(G)}$, that is $Y^* \prec G$. By minimality of $G$, $\rho(Y^*) \le \rho(\bid K_k)$. Since $G$ is $k$-dicritical, $Y^* \not\subseteq G$ and hence $X \cap V(Y^*) \ne \varnothing$. We have:\[\begin{array}{rcl}
 \rho_G(Y^*-X+R)&=&\rho_G(Y^*-X) + \rho_G(R) - \abs{\bid A(Y^*-X, R)}\\
 &=&\rho_Y(Y^*-X) + \rho_G(R) - \abs{\bid A(Y^*-X, R)}\\
 &=&\rho_Y(Y^*)-\rho_Y(Y^* \cap X) + \rho_G(R) + \abs{\bid A(Y^*-X, Y^* \cap X)} - \abs{\bid A(Y^*-X, R)}\\
 &\le&\rho(Y^*)-\rho_Y(Y^* \cap X) + \rho_G(R) + \abs{\bid A(Y^*-X, Y^* \cap X)} - \abs{\bid A(Y^*-X, R)}\\
\end{array}\]
If $\abs{Y^* \cap X} \ge 2$, we obtain: $\rho_G(Y^*-X+R) \le \rho(\bid K_k)-\rho(\bid K_{k-1})+\rho_G(R) \le \rho(\bid K_k) < \rho(\bid K_1)$, a contradiction. Hence $\abs{Y^* \cap X} = 1$. Then: $\rho_G(Y^*-X+R) \le \rho(\bid K_k) - \rho(\bid K_1) + \rho(\bid K_k) + i+1 - \bigfloor{\frac i 2}$. By claim~\ref{clm:_noSmallPot}, we have $\rho_G(Y^*-X+R) > \rho(\bid K_k)$. We obtain $k-1+\eps - k\eps < i+1-\bigfloor{\frac i 2} \le i+1-\frac{i-1} 2 = \frac{i+3} 2 \le \frac{k+1} 2$ and hence $\eps \ge \frac 1 2 - \frac 1 {k-1}$, a contradiction.
\end{proofclaim}

We are now going to show some strong structural properties of $G$.

\begin{claim}\label{clm:_arcAdd} 
Let $R \subsetneq V(G)$ and $A$ be a set of at most $k-2$ arcs with end vertices in $R$. Then $G[R] \cup A$ is $(k-1)$-dicolourable.
\end{claim}

\begin{proofclaim}
Otherwise, let $G^* \subseteq G[R] \cup A$ be $k$-dicritical. We have $\abs R \ge \abs{V(G^*)} \ge k$. In particular $G[R] \neq \bid K_{k-1}$, so $\rho_G(R) > \rho(\bid K_{k-1})$. Hence $\rho(G^*) \ge \rho_G(V(G^*)) - (k-2) > \rho(\bid K_{k-1}) - (k-2) = (k-1)(1+\eps) - (k-2) = k\eps +1-\eps \ge \rho(\bid K_k)$, and since $G^* \prec G$, we get a contradiction with the minimality of $G$.
\end{proofclaim}

\begin{claim}\label{clm:_neighSmall} 
Let $u \in G$ with $d(u) \le 2k-1$. Then $N^d(u) = \varnothing$ or $N^s(u) = \varnothing$. 

\end{claim}

\begin{proofclaim}
We proceed by contradiction. By directional duality, we may assume $\abs{N^+(u)} \ge \abs{N^-(u)}$. Let $N^{s+}(u) = \{x^+_i, 1 \le i \le t\}$ and $N^{s-}(u) = \{x^-_i, 1 \le i \le s\}$ (with $s \le t$). If $s = 0$, by lemma~\ref{lem:combine}(~\ref{lem:noOnlyOut}), $t = 0$ and $N^s(u) = \varnothing$. If $s = k-1$, then $N^d(u) = \varnothing$. 
So $1 \leq s \leq k-2$. 

Let $H = G \setminus \{u x^+_i, 1 \le i \le s\} \cup uN^{s-}(u)$. 
Any dicolouring of $H$ is a dicolouring of $G$, so $\dic(H) \ge k$. Let $H^* \subseteq H$ be $k$-dicritical. Since $s \leq k-2$, we have $\abs{uN^{s-}(u)} \leq k-2$ and thus, by claim~\ref{clm:_arcAdd}, $V(H^*) = V(G)$. Note that $H^* \prec G$ (because we moved the arcs so as to create digons). We have $\rho(H^*) \ge \rho(G) > \rho(\bid K_k)$, a contradiction to the minimality of $G$.
\end{proofclaim}

\begin{claim}\label{clm:_smallInOut} Let $x, y \in V(G)$ such that $xy \in A(G)$, $yx \notin A(G)$, $d^+(x) = k-1$ and $d(y) \le 2k-1$. Then $d^-(y) = k$. In particular, any pair of vertices in $S$ are either non adjacent, or linked by a digon.
\end{claim}

\begin{proofclaim}
Assume towards a contradiction that $d^-(y) = k-1$. By claim~\ref{clm:_neighSmall}, we have $z \in N^{s-}(y)-x$. Let $H = G-x-y \cup zN^{s+}(y)$.

We have $\chi(H) \ge k$. Otherwise, consider $\phi: H \to [k-1]$ a dicolouring. Since $d_{G-y}^+(x) < k-1$, we can extend $\phi$ into a $(k-1)$-dicolouring of $G-y$. Since $\phi$ cannot be extended into a $(k-1)$-dicolouring of $G$, we have $\phi(N^-(y)) = [k-1]$. Since $\abs{N^-(y)} = k-1$, $\phi$ is injective on $N^-(y)$. Set $\phi(y) = \phi(z)$. Let $C$ be a monochromatic cycle. We have $z' \in N^+(y)$ such that $zyz' \subseteq C$. Then $C \setminus zyz' \cup zz'$ is a monochromatic cycle in $G-y$, a contradiction. 

Let $H^* \subseteq H$ $k$-dicritical. Since $H^*$ is not a subdigraph of $G$, $z \in V(H^*)$ and at least one of the added arc is in $A(H^*)$. Then:\[\begin{array}{rcl}
\rho_G(V(H^*)+y)&=&\rho(H^*) + \rho(\bid K_1) - (\abs{A_G(V(H^*)+y)} - \abs{A(H^*)})\\
&\le&\rho(\bid K_k) + \rho(\bid K_1) - 1\\
&=&\rho(\bid K_{k-1})+2\eps - 1\\
&<&\rho(\bid K_{k-1}).\end{array}\]
Since $x \notin V(H^*)$, $V(H^*)+y \ne V(G)$ and we obtain a contradiction to Claim~\ref{clm:_bigPot}.
\end{proofclaim}

\begin{claim}\label{clm:_sameNeigh} Let $X = \bid K_{k-1} \subseteq G$ and $x, y \in X \cap S$. Then $x$ and $y$ are twins.
\end{claim}

\begin{proofclaim}
By claim~\ref{clm:_neighSmall}, $N(x) = N^d(x)$ and $N(y) = N^d(y)$. Let $u_x \in N(x)-X$ and $u_y \in N(y)-X$. Assume towards a contradiction that $u_x \ne u_y$. Let $H = G-x-y \cup u_xu_yu_x$. By claim~\ref{clm:_arcAdd}, we have $\phi: H \to [k-1]$ a dicolouring. We have $\phi(u_x) \ne \phi(u_y)$. If $\phi(u_y) \in \phi(X-x-y)$, we take $\phi(x) \in [k-1]-\phi(X-x-y)$, otherwise we set $\phi(x) = \phi(u_y)$. In both cases $y$ has two neighbours with the same colour and hence we can extend $\phi$ greedily to $G$, a contradiction. 
\end{proofclaim}

A set of vertices $C$ of $G$ is a \emph{cluster} if $C \subseteq S$, $C$ is a clique, each pair of vertices in $C$ are twins, and $C$ is maximal with these properties.

\begin{claim}\label{clm:_smallCluster} Let $C$ be a cluster of $G$. Then $\abs C \le k-3$. 
\end{claim}

\begin{proofclaim}
By claim~\ref{clm:_neighSmall}, a cluster of size at least $k-2$ would be at most $2$ arcs away from being a $\bid K_k$, contradicting claim~\ref{clm:_arcAdd}. 
\end{proofclaim}

\begin{claim}\label{clm:_digonCluster} Let $x, y \in S$ such that there is a digon between $x$ and $y$, $x$ (resp. $y$) is in a cluster of size $s$ (resp. $t$), $x$ and $y$ are not twins and $t \le s$. Then $x$ is in a $\bid K_{k-1}$ and $t = 1$.
\end{claim}

\begin{proofclaim} 
By claim~\ref{clm:_neighSmall}, $N^s(x) = \varnothing$. 
Let $G' = G-y+x'$ so that $N[x'] = N^d[x'] = N[x]$ (i.e. $x$ and $x'$ are twins and linked by a digon). We have $\abs{V(G')} = \abs{V(G)}$. Since $y \in N_G^d(x)$ and $x' \in N^d_{G'}(x')$, $d_G(y) = d_G(x) = d_{G'}(x) = d_{G'}(x')$ and hence $\abs{A(G')} = \abs{A(G)}$. Furthermore, $N_{G'}^s(x') = \varnothing$, so $\abs{A^s(G')} \le \abs{A^s(G')}$. Removing $y$ reduces the number of twins by $2(s-1)$. Subsequently, adding $x'$ increases the number of twins by $2t$. Since $t \le s$, we conclude $G' \prec G$.

Assume we have $\phi': G' \to [k-1]$ a dicolouring. Set, for $u \in V(G)-\{x,y\}$, $\phi(u) = \phi'(u)$, then $\phi(y) \in [k-1]-\phi'(N(y)-x)$ and finally $\phi(x) \in \{\phi'(x), \phi'(x')\}-\{\phi(y)\}$. It is easy to check that $\phi$ is a $(k-1)$-dicolouring of $G$, a contradiction. 

Hence $\dic(G') \ge k$. Let $G^* \subseteq G'$ be $k$-dicritical. We have $G^* \prec G$, so $\rho(G^*) \le \rho(\bid K_k)$. Besides, $G$ is $k$-dicritical and hence $x' \in V(G^*)$. We have $\rho_G(V(G^*)-x') \le \rho(\bid K_k) - \rho(\bid K_1) + 2(k-1) = \rho(\bid K_{k-1}$). Since $y \notin V(G^*)-x'$, by claim~\ref{clm:_bigPot}, $G^*-x' = \bid K_{k-1}$. Finally, since $x' \in V(G^*)$, $G^*$ is $k$-dicritical and $d(x') = 2(k-1)$, we have $x \in N[x'] = V(G^*)$. Hence $x$ is in a $(k-1)$-clique in $G$.

Now, $N[x]-y = \bid K_{k-1}$. If the cluster of $y$ contains a vertex $x' \in N[x]-y$, then $x'$ and $y$ are twins and thus $N[x]$ is $\bid K_k$, a contradiction. So the cluster of $y$ is disjoint from $N[x]-y$, but any vertex in the cluster of $y$ is a neighbour of $x$, so the cluster of $y$ is reduced to $y$, i.e. $t=1$. 
\end{proofclaim}

\begin{claim}\label{clm:_clusterNeighBig} Let $C$ be a cluster with $\abs C \ge 2$.\begin{enumerate}
 \item\label{clm:_cNB1} If $\bid K_{k-1} \not\subseteq G[N[C]]$, then $\forall u \in N(C), d(u) \ge 2(k-1+\abs C)$. 
 \item\label{clm:_cNB2} If there is $X \subseteq N[C]$ such that $G[X] = \bid K_{k-1}$, then $\forall u \in X-C, d(u) \ge 2(k-1+\abs C)$.
\end{enumerate}\end{claim}

\begin{proofclaim} Assume towards a contradiction that we have $u \in N(C)$ such that $d(u) < 2(k-1+\abs C)$ and, if there is $X \subseteq N[C]$ such that $G[X] = \bid K_{k-1}$, $u \in X-C$. 

Assume $u \in S$. For $c \in C \cap N(u)$, since $\abs C \ge 2$, by claim~\ref{clm:_neighSmall}, $u \in N^d(c)$. By claim~\ref{clm:_digonCluster}, since $\abs C \ne 1$, 
$G[C] \subseteq \bid K_{k-1}$. 
Then $\bid K_{k-1} \subseteq G[N[C]]$ and hence by definition of $u$, there is $X \subseteq N[C]$ such that $G[X] = \bid K_{k-1}$ and $u \in X-C$. 
Since $d(u) = 2(k-1)$, there is $c \in C \cap X$. By claim~\ref{clm:_sameNeigh}, $u$ and $c$ are twins, i.e. $u \in C$, a contradiction. 
So $d(u) \ge 2k-1$.

Let $c \in C$ and $G' = G-u+c'$ where $c'$ is a new vertex such that $N^+[c'] = N^+[c]$ and $N^-[c'] = N^-[c]$, i.e. $c$ and $c'$ are twins. 
Assume we have $\phi'$ a $(k-1)$-dicolouring of $G'$. 
Set, for $x \in G-C-u$, $\phi(x) = \phi'(x)$. 
Then take $\phi(u) \in [k-1]-(\phi(N^+(u)-C)\cap \phi(N^-(u)-C))$ (which is not empty since $d(u) < 2(k-1+\abs C)$) and then colour $C$ with colours in $\phi'(C+c') - \phi(u)$. This is a $(k-1)$-dicolouring of $G$, a contradiction. 
Hence $\dic(G') \ge k$ and $G'$ contains a $k$-dicritical digraph $G^*$. Since $d(u) \ge 2k-1 > 2(k-1) = d_{G'}(c')$, $\abs{A(G')} < \abs{A(G)}$ and hence $G' \prec G$. 
Hence $\rho(G^*) \le \rho(\bid K_k)$. Since $G^* \not \subseteq G$, we have $c' \in V(G^*)$. Since $d(c') = 2(k-1)$, we obtain $C \subseteq V(G^*)$.
We have: $\rho_G(G^*-c') \le \rho(G^*) - \rho(\bid K_1) + 2(k-1) \le \rho(\bid K_{k-1})$. Hence by claim~\ref{clm:_bigPot}, $G^*-c' = \bid K_{k-1}$. We have $N[C]-u = \bid K_{k-1}$. Hence, by the choice of $u$, there is $X \subseteq N[C]$ such that $G[X] = \bid K_{k-1}$ and $u \in X$. Let $v \in N[C]-X$. 
Then $N[C] \cup uvu = \bid K_k$, A contradiction to claim~\ref{clm:_arcAdd}.
\end{proofclaim}

We are now going to obtain a contradiction using the discharging method. Let $\alpha = \frac \eps {k-2}$. Each $u \in V(G)$ starts with charge $d(u)$. We apply the following rules (observe that any charge sent through an arc is at least $\alpha$):
\begin{itemize}
 \item Every vertex with degree at least $2k$ keeps $2(k-1+\eps)$ to himself and distributes the rest equally along its arcs: it sends charge $\frac{d(u)-2(k-1+\eps)}{d(u)}$ through each of its arcs. Note that this expression increases with $d(u)$ and hence is at least $\frac{1-\eps} k \ge \alpha$.
 \item Every vertex with degree $2k-1$ and $k$ out-neighbours (resp. $k$ in-neighbours) sends charge $\alpha$ to its out-neighbours (resp. in-neighbours).
 \item For every $u \in S$ such that $u$ is in a cluster of size at least $2$ which is in a $(k-1)$-clique $X$, $u$ sends $2\alpha$ to its unique neighbour that is not in $X$. 
\end{itemize}
The uniqueness of the neighbour of $u$ in the last bullet is due to claim~\ref{clm:_neighSmall}. Indeed, since $u$ is in a $(k-1)$-clique, $N^s(u) = \varnothing$ and hence $\abs{N(u)} = k-1$.

Let, for $u \in V(G)$, $w(u)$ be its resulting charge. We are going to prove that for every $u \in V(G)$, $w(u) \geq 2(k-1+\eps)$.

\begin{itemize}
 \item Let $u \in V(G)$ such that $d(u) \ge 2k$. Then by construction, $w(u) = 2(k-1+\eps)$.
 \item Let $u \in V(G)$ such that $d(u) = 2k-1$ and $d^-(u) = k-1$. 
 By claim~\ref{clm:_neighSmall}, $N^d(u) = \varnothing$. By claim~\ref{clm:_smallInOut}, for every $x \in N^-(u)$, $d^+(x) \geq k$. So $u$ receives charge (at least $\alpha)$ through $k-1$ arcs and sends $\alpha$ through $k$ arcs. Hence $w(u) \ge d(u)-\alpha \ge 2(k-1+\eps)$.
 \item Let $u \in S$ such that $u$ is in a cluster of size $1$. So $u$ does not send any charge. Claims~\ref{clm:_neighSmall} distinguishes two cases.\\
 Assume first $N^d(u) = \varnothing$. Then by claim~\ref{clm:_smallInOut}, for every $y \in N^-(u)$, either $d(y) \geq 2k$, or $d^+(y) \geq k$. In both cases $y$ sends at least $\alpha$ to $u$. The same holds for the out-neighbours of $u$. So $w(u) = d(u) + 2(k-1)\alpha \ge 2(k-1+\eps)$. 
 
 Assume now $N^s(u) = \varnothing$. 
 By claim~\ref{clm:_neighSmall}, no neighbour of $u$ has degree $2k-1$.
 If $u$ is in a $(k-1)$-clique of $G$, by claim~\ref{clm:_sameNeigh}, every neighbour of $u$ in this clique has degree at least $2k$, and hence sends charge to $u$. Hence $w(u) \ge d(u) + 2(k-2)\alpha \ge 2(k-1+\eps)$. Assume this is not the case. Let $v \in N(u)$. If $d(v) \ge 2k$, then $v$ sends $2\alpha$ to $u$. Otherwise, $v \in S$. Since $u$ is not in a $(k-1)$-clique of $G$, by claim~\ref{clm:_digonCluster}, $v$ is in a cluster of size at least $2$ and in a $(k-1)$-clique. Hence, by the third rule, $v$ sends $2\alpha$ to $u$. Thus, $w(u) = d(u) + 2(k-1)\alpha \ge 2(k-1+\eps)$.
 
 \item Let $u \in S$ such that $u$ is in a cluster $C$ of size $c \ge 2$. Note that by claim~\ref{clm:_neighSmall}, $N^s(u) = \varnothing$. \\
 If $\bid K_{k-1} \not\subseteq G[N[C]]$, then $u$ does not send any charge and, by claim~\ref{clm:_clusterNeighBig}~\ref{clm:_cNB1}, 

 it has $k-1+c$ neighbours of degree at least $2(k-1+c) \ge 2k$ and hence send charge towards $u$ by rule 1: 
 \[w(u) \ge d(u) + 2(k-c)\frac{2(k-1+c)-2(k-1+\eps)}{2(k-1+c)}\] 
 Otherwise, let $X \subset N[C]$ such that $G[X] = \bid K_{k-1}$ and $u \in X$. By claim~\ref{clm:_clusterNeighBig}~\ref{clm:_cNB2}, all vertices in $X-C$ have degree at least $2(k-1+c) \ge 2k$ and hence send charge towards $u$. Finally, $u$ sends charge to at most one vertex (its neighbour that is not in $X$): \[w(u) \ge d(u) + 2(k-1-c)\frac{2(k-1+c)-2(k-1+\eps)}{2(k-1+c)} - 2\alpha\] 
 In both cases, $w(u) \ge 2(k-1) + 2(c-\eps)\frac{k-1-c}{k-1+c} - 2\frac{\eps}{k-2}$.
 We have:
 \[\begin{array}{rcl}
 w(u) \ge 2(k-1+\eps)&\Leftrightarrow&(k-2)(c-\eps)(k-1-c) -\eps(k-1+c) -(k-2)(k-1+c)\eps \ge 0\\
 &\Leftrightarrow&(2(k-1)(k-2) + k-1+c) \eps \le (k-2)c(k-1-c)\end{array}\]
 The first expression is concave in $c$, so by claim~\ref{clm:_smallCluster}, we only have to check it for $c \in \{2, k-3\}$. 
 Since $\eps < \frac 1 2 - \frac 1 {k-1}$, we only need to check $(2(k-1)(k-2) + k-1+c) (\frac 1 2 - \frac 1 {k-1}) \le (k-2)c(k-1-c)$. 
 For $c = 2$, we obtain: $(k-3)(2k^2-7k+7) \ge 0$, which is true since the degree $2$ polynomial has discriminant $-7$ and hence is always positive. 
 For $c = k-3$, we obtain $(k-3)(2k^2-8k+7) \ge 0$, which is true since the largest root of the polynomial of degree 
 $2$ is $2+\frac 1 {\sqrt 2}$. 
\end{itemize}

Hence $\abs{A(G)} = \frac 1 2 \suml_{u \in G} d(u) = \frac 1 2 \suml_{u \in G} w(u) \ge (k-1+\eps)\abs{V(G)}$, i.e. $\rho(G) \le 0$, a contradiction.
\end{proof}

\section{Generalisation of a result of Stiebitz}\label{sec:stieb}
\label{sec:sti}

The goal of this section is to prove Theorem~\ref{thm_intro:nbcompconnex}.

Recall that $\pi_0(G)$ denotes the set of connected components of $G$. 
We are actually going to prove the following stronger statement:

\begin{thm}\label{thm:nbcompconnex_contr} Let $G$ be a connected digraph, $k \ge 3$ and $X \subseteq V(G)$ such that:\begin{itemize}
  \item $\forall u \in X, d(u) \le 2(k-1)$.
  \item $\forall S \in \pi_0(G[X]), \dic(G-S) \le k-1$
  \item $\abs{\pi_0(G-X)} > \abs{\pi_0(G[X])}$
\end{itemize}
Then $\dic(G) \le k-1$.
\end{thm}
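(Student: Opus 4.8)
The plan is to prove the statement by induction on $\abs{V(G)}$; take $G$ (together with $k$ and $X$) to be a counterexample of least order, so the three hypotheses hold while $\dic(G)\ge k$, and aim for a contradiction. Write $\mathcal S=\pi_0(G[X])$, $\mathcal T=\pi_0(G-X)$, so $\abs{\mathcal T}>\abs{\mathcal S}$; one may assume $\mathcal S\ne\varnothing$, since otherwise $X=\varnothing$ and, $G$ being connected, the third hypothesis fails. Distinct members of $\mathcal S$ are pairwise non-adjacent in $G$ (all their neighbours lie in $X$), and so are distinct members of $\mathcal T$; hence every arc of $G$ lies inside one member of $\mathcal S\cup\mathcal T$ or joins a member of $\mathcal S$ to a member of $\mathcal T$, so the bipartite \emph{incidence graph} $B$, with parts $\mathcal S,\mathcal T$ and an edge $ST$ whenever $\bid A_G(S,T)\ne\varnothing$, is connected because $G$ is. Two easy facts will be used repeatedly: (i) fixing $S\in\mathcal S$, $G-X=\bigsqcup_{T\in\mathcal T}T$ is a subdigraph of $G-S$, so $\dic(G-X)\le\dic(G-S)\le k-1$, i.e. $G-X$ is $(k-1)$-dicolourable; and (ii) deleting from $G$ a whole member of $\mathcal T$ produces a subdigraph still satisfying the first two hypotheses (degrees only drop and $(G-T)-S\subseteq G-S$), so minimality applies to it whenever the result is connected and still has strictly more components outside $X$ than inside it.

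The first phase is a sequence of reductions forced by minimality. Using (ii), whenever deleting a suitable member of $\mathcal T$ (or, symmetrically, a vertex of a component of $G[X]$ of order $\ge 2$, passing to $X$ minus that vertex --- note that $(G-v)-(S_0-v)=G-S_0$, so the colourability hypothesis survives for the shrunken component $S_0-v$) leaves a strictly smaller valid instance, induction provides a $(k-1)$-dicolouring of it, and the reductions are set up so that this colouring can be pieced back together with fact (i) to $(k-1)$-dicolour $G$. The upshot is a normalised configuration in which the component of $G[X]$ we shall work with is a single vertex $v_0$, adjacent to several components of $G-X$; a standard $M$-alternating-path / Hall argument in the connected bipartite graph $B$, driven by $\abs{\mathcal T}>\abs{\mathcal S}$, then pins down a convenient family of these components --- this is the step that really consumes the strict inequality.

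The heart of the proof is the following obstruction analysis. Fix a $(k-1)$-dicolouring $\psi$ of $G-v_0$ (fact (i) gives one on $G-X$ and the reductions let one extend it over the rest of $X$) and colour $v_0$ right after $G-X$ and before the remaining vertices of $X$. The colour of $v_0$ cannot be completed, so by the argument behind Lemma~\ref{lem:pouet}(\ref{lem:d}) we get $d(v_0)=2(k-1)$, $d^-(v_0)=d^+(v_0)=k-1$, the $k-1$ in-neighbours of $v_0$ carry pairwise distinct colours, likewise the $k-1$ out-neighbours, and for every colour $c$ there is a monochromatic-$c$ walk in $G-v_0$ from the unique out-neighbour $b_c$ of $v_0$ of colour $c$ to the unique in-neighbour $a_c$ of colour $c$. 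Because $\{v_0\}$ is a component of $G[X]$, every neighbour of $v_0$ lies in $G-X=\bigsqcup_{T}T$, and the walk lies in the coloured part, which at this stage is precisely $\bigsqcup_{T}T$, hence inside a \emph{single} component $T(c)\in\mathcal T$; so $a_c,b_c\in T(c)$. Sorting the $k-1$ colours by the component containing $a_c$, and then by the one containing $b_c$, and using both bijections $c\mapsto a_c$ and $c\mapsto b_c$, forces $\abs{N^-(v_0)\cap T}=\abs{N^+(v_0)\cap T}$ for every $T\in\mathcal T$ and makes the colours of $N(v_0)\cap T$ "saturate" each such $T$ by a monochromatic walk across it. Since this must hold for \emph{every} $(k-1)$-dicolouring of $G-v_0$, and each component $T\in\mathcal T$ is $(k-1)$-dicolourable, one exploits the freedom to recolour one such $T$ so as to destroy one of these walks, and reads the resulting rigid colour-to-component correspondence back into $B$ to obtain a matching/Hall-type obstruction incompatible with $\abs{\mathcal T}>\abs{\mathcal S}$ --- the desired contradiction.

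I expect the main obstacles to be twofold. First, the obstruction to completing a dicolouring is phrased in terms of monochromatic \emph{walks}, not just in terms of colour classes being acyclic, so one must show that recolouring a single component of $G-X$ --- which is only known to be $(k-1)$-dicolourable, not rigidly so --- can be done without manufacturing new monochromatic walks between neighbours of $v_0$. Second, the reductions must be made airtight: deleting a component (or a vertex) must never silently disconnect $G$ or collapse the component-count inequality to equality without that being caught, and the low-degree set must be tracked correctly under vertex deletion; this is exactly the bookkeeping that treating the empty digraph as connected is meant to smooth out.
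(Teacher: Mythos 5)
Your plan has genuine gaps, and they sit exactly at the two places you flag as ``obstacles''. First, the normalisation ``the component of $G[X]$ we shall work with is a single vertex $v_0$'' is not available by minimality. If you delete a vertex $v$ from a component $S_0$ of $G[X]$ with $\abs{S_0}\ge 2$, the new instance $(G-v,\,X-v)$ need not satisfy the hypotheses: a component $S'$ of $(G-v)[S_0-v]$ must now satisfy $\dic\bigl((G-v)-S'\bigr)\le k-1$, but $(G-v)-S'$ strictly contains $G-S_0$, so this is not inherited from $\dic(G-S_0)\le k-1$; if $v$ is a cut vertex of $G[S_0]$ then $\abs{\pi_0((G-v)[X-v])}$ grows and the strict inequality can collapse; and $G-v$ may be disconnected. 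The same caveats apply to deleting whole components of $G-X$ (the inequality $\abs{\pi_0(G-X)}>\abs{\pi_0(G[X])}$ is destroyed, not preserved, by such deletions). Even when a smaller valid instance exists, ``piecing the colouring back together'' is the entire difficulty, since the vertex being re-inserted may have degree $2(k-1)$ and be blocked in every colour; minimality alone gives you nothing here.

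Second, the crux of your obstruction analysis --- ``exploit the freedom to recolour one such $T$ so as to destroy one of these walks'' --- is not established and cannot be taken for granted: each $T\in\pi_0(G-X)$ is only known to be $(k-1)$-dicolourable, and no argument is given that some recolouring of a single $T$ kills a monochromatic walk from $N^+(v_0)\cap T$ to $N^-(v_0)\cap T$ without creating new ones (Lemma~\ref{lem:pouet}(\ref{lem:d}) in fact guarantees such walks for \emph{every} $(k-1)$-dicolouring of $G-v_0$ when $G$ is not $(k-1)$-dicolourable, and you have no per-component control). The paper never needs such freedom. It proves a \emph{stronger} inductive statement (Lemma~\ref{lem:case1} and Lemma~\ref{lem:2forest}): a given $(k-1)$-dicolouring of $G-X$ extends to all of $G$ after merely permuting colours inside each part of a suitable partition $P$ of $\pi_0(G-X)$. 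The induction is organised by $2$-forests in the bipartite graph $B(G,X,P)$, using Stiebitz's Lemmas~\ref{lem:sti6} and~\ref{lem:sti45}; this is where the inequality $\abs{\pi_0(G-X)}>\abs{\pi_0(G[X])}$ is actually consumed, replacing your unspecified Hall/alternating-path step. The colouring is then completed by arranging, via the allowed per-part permutations, that two neighbours of the last uncoloured vertex lying in \emph{different} components of $G-X$ share a colour, so that any monochromatic cycle through that vertex must cross $X$ and is ruled out by the greedy extension order. As written, your reduction and your recolouring step are placeholders for precisely this machinery, so the proposal does not constitute a proof.
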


We will need the following definition. 

\begin{Def} For $G$ a digraph, $X \subseteq V(G)$ and $P$ a partition of $\pi_0(G-X)$, we define the following (undirected) bipartite graph: 
\[B(G, X, P) = (\pi_0(G[X]) + P, \{ST | S \in \pi_0(G[X]), T \in P, \bid A(S, \bigcupl_{C \in T} C) \ne \varnothing\}).\]

Let $B$ be a bipartite graph with partite sets $U$ and $V$. A $2$-forest of $B$ with respect to $U$ is a spanning forest of $B$ in which every vertex in $U$ has degree $2$.
\end{Def}

The following remark describes a method to extend the dicolouring of a partially dicoloured digraph that will be used a lot duting the proof. 

\begin{Rq}\label{rq:greedyExtension} 
Let $G$ be a digraph, $H \subseteq G$ connected, $x \in V(H)$ and $\phi$ a $(k-1)$-dicolouring of $G-H$. Assume that, for every $u \in V(H), d_G(u) \le 2(k-1)$. 
Then, given the reverse ordering of a BFS of the underlying graph of $H$ starting in $x$, $\phi$ can be be greedily extended to $G-x$ (because, when colouring $u \in V(H)$, $u$ is incident with at most $2k-3$ arcs incident with an already coloured vertex).\\
Moreover, if $\phi(N^+(x)) \neq [1, k-1]$ or $\phi(N^-(x)) \neq [k-1]$, then $\phi$ can be extended to $G$.

\end{Rq}

The next Lemma is a strong version of Theorem~\ref{thm:nbcompconnex_contr} in the case where $\abs{\pi_0(G[X])}=1$. 

\begin{lem}\label{lem:case1} Let $G$ be a connected digraph and $X \subseteq V(G)$ such that:\begin{itemize}
  \item $\forall u \in X, d(u) \le 2(k-1)$
  \item $G[X]$ is connected
  \item $G-X$ is disconnected
\end{itemize}
Then, for any $(k-1)$-dicolouring $\phi$ of $G-X$, there is a $(k-1)$-dicolouring $\psi$ of $G$ so that $\forall C \in \pi_0(G-X), \exists \sigma \in \mathfrak S_{k-1}, \phi_{|C} = \sigma \circ \psi_{|C}$.\\

\end{lem}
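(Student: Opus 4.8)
The plan is to induct on $\abs{V(G)}$ (or equivalently on $\abs X$, since $G-X$ is fixed by the hypothesis $\phi$). Fix a $(k-1)$-dicolouring $\phi$ of $G-X$. First I would dispose of the base-type case: if $X$ is a single vertex $x$, then $G-X$ being disconnected gives us two components $C_1, C_2$; since $d(x)\le 2(k-1)$, one of the two ``sides'' $N^+(x)\cap C_i$ or $N^-(x)\cap C_i$ avoids a colour, or more simply $x$ has at most $k-1$ in-neighbours or at most $k-1$ out-neighbours, so we can recolour greedily. But the real content is the inductive step, and the key device is Remark~\ref{rq:greedyExtension}: if we can arrange, after possibly permuting colours \emph{independently on each component of $G-X$} (which is allowed since distinct components have no arcs between them, so permuting colours on one component preserves being a dicolouring), that some vertex $x\in X$ ends up with $\phi(N^+(x))\ne[k-1]$ or $\phi(N^-(x))\ne[k-1]$ once its neighbours outside are coloured, then we extend greedily along a reverse-BFS of $G[X]$ rooted at $x$ and we are done.

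The main step is therefore: choose a vertex $x \in X$ that has neighbours in at least two distinct components of $G-X$ — such an $x$ exists because $G$ is connected and $G[X]$ is connected while $G-X$ is not, so some vertex of $X$ must ``see'' more than one component (otherwise $\pi_0(G-X)$ could be merged via $X$). Actually more carefully: since $G$ is connected, every component $C$ of $G-X$ has a neighbour in $X$; pick $x\in X$ adjacent to $C_1$ and consider whether $x$ is adjacent to a second component. If \emph{no} vertex of $X$ meets two components, then I would instead argue that $G[X]$ together with the components glued in still forces connectivity in a way that lets us peel off a leaf: take a leaf $v$ of a spanning tree of $G[X]$, apply induction to $G-v$ with $X-v$ (checking $G-v$ is still connected — true since $v$ is a leaf — and that $(G-v)-(X-v)$ is still disconnected, which needs a short argument using that $v$'s neighbours outside $X$ lie in components that remain separated). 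This reduces to the case handled. In the case where $x$ does meet two components $C_1,C_2$: $x$ has $d(x)\le 2(k-1)$ arcs total, split among $C_1$, $C_2$, other components, and $X$; by independently permuting $\phi$ on $C_1$ we can try to make $\phi(N^+(x)\cap C_1)\cup\phi(N^-(x)\cap C_1)$ avoid some colour seen on $C_2$-side, and vice versa, so that altogether $x$ misses a colour in its in-neighbourhood or out-neighbourhood. A counting argument (the two ``halves'' each have fewer than $k-1$ relevant vertices on one side, or we can rotate to create a repeat) should yield a usable colour for $x$; then extend greedily by Remark~\ref{rq:greedyExtension} along a reverse-BFS of $G[X]$ rooted at $x$, and restrict the resulting $\psi$: on each component $C$, $\psi|_C=\phi|_C$ up to the permutation we applied, which is exactly the conclusion.

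The subtle point — and what I expect to be the main obstacle — is the book-keeping in the ``$x$ meets two components'' case: one must simultaneously (i) permute $\phi$ on each component so that the final extension succeeds, and (ii) ensure the extension along $G[X]$ never gets stuck at an interior vertex, which Remark~\ref{rq:greedyExtension} handles for all of $X$ except $x$ itself, so everything hinges on freeing up a colour at $x$. The cleanest way is probably to observe that among the $\le 2(k-1)$ arcs at $x$, if $x$ has $\le k-1$ out-arcs we are immediately done after any extension (the out-neighbourhood cannot use all $k-1$ colours if we first extend the rest and find a colour avoiding the common set), and symmetrically for in-arcs; the only hard sub-case is $d^+(x)=d^-(x)=k-1$ with $x$ having a digon-free structure, where one uses the freedom to permute colours on $C_2$ to destroy the ``all $k-1$ colours appear in $N^+(x)$ and in $N^-(x)$'' configuration — here the fact that $x$ reaches two \emph{different} components is essential, since permuting on $C_2$ does not disturb the colours contributed by $C_1$. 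I would then close by noting the permutations are recorded componentwise, giving the $\sigma\in\mathfrak S_{k-1}$ in the statement.
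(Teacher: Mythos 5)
Your overall scaffolding (induction, choosing a vertex $x$ of $X$ seeing two components of $G-X$, permuting $\phi$ componentwise, then greedily extending along $G[X]$ with $x$ last via Remark~\ref{rq:greedyExtension}) matches the paper's, but the proposal has a genuine gap exactly at the crux. You claim that, in the tight sub-case, one can ``use the freedom to permute colours on $C_2$ to destroy the configuration where all $k-1$ colours appear in $N^+(x)$ and in $N^-(x)$'', i.e.\ that one can always free up a colour at $x$. This cannot be guaranteed: the permutations must be chosen \emph{before} the greedy extension (permuting a component afterwards may create a monochromatic cycle through that component and the now-coloured $X$), and the greedy extension then colours the neighbours of $x$ inside $X$ in a way you do not control, so it can perfectly well restore the situation $\psi(N^+(x))=\psi(N^-(x))=[k-1]$. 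Your parenthetical ``if $x$ has $\le k-1$ out-arcs we are immediately done'' is also false (with exactly $k-1$ out-arcs the out-neighbourhood can use all colours), and since $d(x)\le 2(k-1)$ always forces $\min(d^+(x),d^-(x))\le k-1$, that claim would make the lemma trivial and would never use the disconnection of $G-X$; the same problem infects your base case ``so we can recolour greedily''.

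What the paper does instead, and what your plan is missing, is to \emph{accept} the bad configuration rather than avoid it: permute so that $x$ has neighbours of the same colour $1$ in two different components $S_0,S_1$ of $G-X$; if after the extension both $\psi(N^+(x))$ and $\psi(N^-(x))$ equal $[k-1]$, then $d(x)=2(k-1)$ and each colour appears exactly once in each neighbourhood, so the two colour-$1$ neighbours are one in-, one out-neighbour and no $X$-neighbour of $x$ has colour $1$; one then colours $x$ with $1$ \emph{even though $1$ appears on both sides}, and argues that a monochromatic cycle through $x$ would have to travel between $S_0$ and $S_1$, hence through $X-x$, and the last-coloured vertex of $X-x$ on that cycle would have both its cycle-neighbours already coloured $1$, contradicting the greedy rule. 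This cycle argument is the heart of the proof and is absent from your proposal. A secondary issue: in your ``no vertex of $X$ meets two components'' reduction, deleting a leaf $v$ of a spanning tree of $G[X]$ need not leave $G-v$ connected (a component of $G-X$ may attach to $X$ only through $v$), so the induction hypothesis does not apply as stated; the paper avoids this by splitting on whether $G-(X-x)$ is connected and, in the disconnected case, colouring $x$ directly (possible since $x$ has a neighbour in $X-x$, so its degree in $G-(X-x)$ is at most $2k-3$) and inducting on $X-x$.
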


\begin{proof}
We proceed by induction on $\abs X$. The result is trivial when $X = \varnothing$. 
Let $\phi$ be a $(k-1)$-dicolouring of $G-X$.
Let $x \in X$ such that $G[X-x]$ is connected (any leaf on a spanning tree of $G[X]$ suits). 

Assume first that $G-(X-x)$ is disconnected. Let $S \in \pi_0(G-(X-x))$
such that $x \in S$. Since $G$ is connected, $X-x \ne \varnothing$ and hence, since $G[X]$ is connected, $d_{G[S]}(x) < 2(k-1)$. So we can extend $\phi$ to $G-(X-x)$ and then apply induction on $X-x$.

Assume now that $G-(X-x)$ is connected. So, for all $S \in \pi_0(G-X), S \cap N(x) \ne \varnothing$. Let $S_0 \ne S_1 \in \pi_0(G-X)$. 
We can permute colours in $S_0$ and in $S_1$ so that $x$ has neighbours in both $S_0$ and $S_1$ with the same colour, say $1$. Call $\psi$ the obtained colouring. Now, greedily extend $\psi$ to $G-x$ as in Remark~\ref{rq:greedyExtension}. We may assume that $\psi(N^+(x)) = [k-1]$ or $\psi(N^-(x)) = [k-1]$. 
 Since $d(x) \le 2(k-1)$, we have $1 \notin \psi(N(x)\cap X)$.
Set $\psi(x) = 1$. We may assume that $\psi$ is not a dicolouring of $G$. So we have an induced cycle $C$ containing $x$. $x$ has exactly two neighbours with colour $1$, one in $S_0$, the other in $S_1$. Hence $V(C) \cap S_0 \ne \varnothing$ and $V(C) \cap S_1 \ne \varnothing$. Since $G-X$ is disconnected, $V(C) \cap (X-x) \ne \varnothing$. Let $y$ be the last vertex of $V(C) \cap (X-x)$ to be coloured. Since the neighbours of $x$ in $C$ are not in $X$, the neighbours of $y$ in $V(C)$ were coloured when colouring $y$. Since we extended $\psi$ greedily, $\psi(y) \ne 1$, a contradiction.

\end{proof}

We need the following technical lemma on (undirected) bipartite graphs.
\begin{lem}[Lemma~3.6 in\cite{Stiebitz82}]\label{lem:sti6} Let $B$ be a bipartite graph with partite sets $S$ and $T$, such that $\abs T = \abs S + 1$ and $B$ contains a $2$-forest with respect to $S$. There exists $s \in S$ such that for every $t, t' \in N(s)$, $B$ contains a $2$-forest with respect to $S$ containing $st$ and $st'$.
\end{lem}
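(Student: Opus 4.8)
The plan is to prove this by induction on $|S|$, after first normalising the situation. The key preliminary observation is a counting one: if $F$ is a $2$-forest of $B$ with respect to $S$, then (as $B$ is bipartite and every edge of $F$ meets $S$) $F$ has $\sum_{s\in S}\deg_F(s)=2|S|$ edges, while $B$ has $|S|+|T|=2|S|+1$ vertices; a forest with one fewer edge than vertices is a tree, so $F$ is in fact a \emph{spanning tree} of $B$. In particular $B$ is connected, every leaf of $F$ lies in $T$, and suppressing the degree-$2$ vertices of $S$ turns $F$ into a simple tree $F'$ on vertex set $T$ whose edges are in bijection with $S$; write $e_s=t_s^1t_s^2$ for the edge corresponding to $s$. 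Deleting $e_s$ splits $T$ into two parts, and the elementary move I will use repeatedly is: re-routing $s$ through any pair $\{t,t'\}\subseteq N_B(s)$ that is split by $e_s$ keeps $F'$ (hence $F$) a valid $2$-forest.

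For the base case $|S|\le 1$ the statement is immediate: $|T|=2$, the unique $s$ must be adjacent to both vertices of $T$, and its only pair is realised by $F$. For the inductive step I first dispose of two easy situations. If some $s\in S$ has $\deg_B(s)=2$, then its only pair is $\{t_s^1,t_s^2\}$, realised by $F$, so $s$ is good. Otherwise, if some leaf $\ell$ of $F'$ has $\deg_B(\ell)=1$, let $s^*$ be the unique vertex of $S$ adjacent to $\ell$ and $e_{s^*}=\ell t_0$. Then $B_2=B-\ell-s^*$ still satisfies the hypotheses: its parts have sizes $|S|-1$ and $|T|-1$, and $F-\ell-s^*$ is a $2$-forest of $B_2$ with respect to $S\setminus s^*$. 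By induction $B_2$ has a good vertex $\bar s$; since $\deg_B(\ell)=1$, no pair at $\bar s$ in $B$ involves $\ell$, so every pair at $\bar s$ is a pair of $B_2$, and the $B_2$-witness extends to $B$ by adding back the edges $s^*\ell$ and $s^*t_0$ (no cycle is created: $\ell$ re-enters as a leaf and $t_0$ is already present). Hence $\bar s$ is good in $B$.

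The remaining case is when every $s\in S$ has $\deg_B(s)\ge 3$ and every leaf of $F'$ has $B$-degree $\ge 2$; this is where the real difficulty lies. Choosing a leaf $\ell$ of $F'$ with attached vertex $s^*$, the same deletion $B_2=B-\ell-s^*$ and inductively-found good vertex $\bar s$ settle all pairs at $\bar s$ that do not contain $\ell$, exactly as above. The obstruction is a pair $\{\ell,t'\}\subseteq N_B(\bar s)$, which the reduction simply forgets, because deleting $\ell$ destroys the $B$-edges at $\ell$. I expect this to be the main obstacle, to be handled by one of two routes: (i) a direct exchange argument on trees, reworking $F$ by routing $\bar s$ onto $\{\ell,t'\}$ (so $\ell$ becomes covered by $\bar s$, which frees $s^*$ to be re-routed through $t_0$ and one further neighbour) and checking that connectivity is preserved; or (ii) strengthening the induction hypothesis so that it also asserts goodness relative to a prescribed identification of two chosen $T$-vertices, allowing the pair $\{\ell,t'\}$ to be tracked through the \emph{contraction} of $\ell$ into $t_0$ along $s^*$ rather than through its deletion. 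One must also treat the degenerate readings of the statement (the case $t=t'$, and $S=\varnothing$), but these are trivial.
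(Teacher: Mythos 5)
The paper itself gives no proof of this lemma --- it is imported verbatim from Stiebitz~\cite{Stiebitz82} --- so your argument has to stand on its own, and as written it does not: it is a correct set-up plus an unresolved core. Your normalisation is right (a $2$-forest with respect to $S$ has $2\abs S$ edges on $2\abs S+1$ vertices, hence is a spanning tree, so $B$ is connected and suppressing the degree-$2$ vertices of $S$ gives a tree $F'$ on $T$ whose edges correspond bijectively to $S$), and so are your two easy reductions: a vertex $s$ with $d_B(s)=2$ is good, and a leaf $\ell$ of $F'$ with $d_B(\ell)=1$ can be deleted together with its label $s^*$, the good vertex of $B-\ell-s^*$ obtained by induction remaining good in $B$ after re-attaching $s^*\ell$ and $s^*t_0$. (The degenerate readings $t=t'$ and $S=\varnothing$ are indeed immaterial.)

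The remaining case, however, is exactly where the lemma lives, and you leave it open: you name two possible routes without carrying either out, and neither works as described. The induction hypothesis gives you no control over which good vertex $\bar s$ of $B_2=B-\ell-s^*$ you receive, and when $\ell\in N_B(\bar s)$ you still have to realise a pair $\{\ell,t'\}$; both of your proposed repairs then reduce to re-inserting $s^*$ so that its two chosen neighbours lie in two prescribed components of a modified tree, while all you know is that $\ell,t_0\in N_B(s^*)$ plus at least one further neighbour, which may lie on the wrong side. Concretely, in route (i), after replacing $e_{\bar s}$ by $\ell t'$ the tree falls into two pieces; if $t_0$ lies on the same side as $t'$ you need a third neighbour of $s^*$ on the other side, which need not exist, so a single local exchange does not suffice and some global re-routing (or a cleverer choice of $\bar s$, of the leaf, or of the pair handed to the induction) is required. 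In route (ii), contracting $\ell$ into $t_0$ loses precisely the pair $\{\ell,t_0\}\subseteq N_B(\bar s)$, which collapses to a single vertex of the contracted graph and is not tracked by the induction, and realising it in $B$ runs into the same reconnection problem for $s^*$. So the main case still needs a genuinely new idea --- this is the actual content of Stiebitz's proof --- and the proposal as it stands has a real gap.
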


The next lemma is again a strong version of Theorem~\ref{thm:nbcompconnex_contr} in a particular case.

\begin{lem}\label{lem:2forest} 
Let $G$ be a connected digraph, $X \subseteq V(G)$, $n = \abs{\pi_0(G[X])}$ and $P = (P_0, ..., P_n)$ a partition of $\pi_0(G-X)$ such that:\begin{itemize}
  \item $\forall u \in X, d(u) \le 2(k-1)$.
  \item $B(G, X, P)$ contains a $2$-forest with respect to $\pi_0(G[X])$.
\end{itemize}
Then, for any $(k-1)$-dicolouring $\phi$ of $G-X$, there is a $(k-1)$-dicolouring $\psi$ of $G$ so that $\forall 0 \le i \le n, \exists \sigma \in \mathfrak S_{k-1}, \phi_{|\bigcup_{C \in P_i} C} = \sigma \circ \psi_{|\bigcup_{C \in P_i} C}$.\\

\end{lem}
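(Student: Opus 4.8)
The plan is to proceed by induction on $\abs X$, following the template of the previous two lemmas but using the $2$-forest structure to guide the choice of which component of $G[X]$ to ``peel off''. The base case $X = \varnothing$ is trivial (then $n = 0$, $P = (P_0)$, and $\psi = \phi$ works). For the inductive step, fix a $(k-1)$-dicolouring $\phi$ of $G-X$ and a $2$-forest $F$ of $B(G,X,P)$ with respect to $\pi_0(G[X])$. Apply Lemma~\ref{lem:sti6} with $S = \pi_0(G[X])$ and $T = P$ (which have the right sizes since $\abs P = n+1$): this yields a connected component $S_0 \in \pi_0(G[X])$ such that for every two parts $T, T' \in P$ that $S_0$ sees, $B(G,X,P)$ has a $2$-forest with respect to $\pi_0(G[X])$ through the edges $S_0T$ and $S_0T'$. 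Let $T, T'$ be the two neighbours of $S_0$ in $F$ (so $S_0$ has an arc to/from some component in $\bigcup_{C\in T}C$ and likewise for $T'$).

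Next I would reduce inside $S_0$. If $S_0$ is a single vertex $x$, I proceed directly; otherwise pick $x \in S_0$ a leaf of a spanning tree of $G[S_0]$, so $G[S_0 - x]$ is connected. Two cases as in Lemma~\ref{lem:case1}: if removing $S_0 - x$ from $G$ leaves $x$ in a component $S$ with $d_{G[S]}(x) < 2(k-1)$, I can extend $\phi$ to colour $x$ and the other components it may have merged, then recurse on $X - (S_0 - x)$ — but here I must be careful that the hypotheses are restored, so really the cleaner route is the same as in Lemma~\ref{lem:case1}: handle the whole of $S_0$ at once. Concretely, using the reverse-BFS greedy extension of Remark~\ref{rq:greedyExtension} on the connected subgraph $G[S_0]$ rooted at $x$, I first permute the colours on $\bigcup_{C\in T}C$ and on $\bigcup_{C\in T'}C$ so that $x$ has an in- or out-neighbour of colour $1$ in each of the two parts, then greedily extend $\phi$ to $G[S_0] - x$ (legal since every vertex of $S_0$ has degree $\le 2(k-1)$), and finally set $\psi(x) = 1$. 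If this fails there is an induced monochromatic cycle $C$ through $x$; since $x$'s only colour-$1$ neighbours lie in $\bigcup_{C\in T}C$ and $\bigcup_{C\in T'}C$ respectively, $C$ must leave through both, hence must pass through $X - S_0$, and taking the last-coloured vertex of $C$ in $X - S_0$ gives the usual contradiction with greediness.

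The subtle point — and the main obstacle — is the bookkeeping of the recursion: after colouring $S_0$ I want to delete $S_0$ from $X$ and apply the induction hypothesis to $G' = G - S_0$ with $X' = X - S_0$. I need: (i) $G'$ is connected — this is where the $2$-forest edges $S_0T, S_0T'$ matter, they certify $S_0$ is not a ``cut-component'', and more precisely I should choose $x$ and the greedy extension so that the coloured vertices of $\bigcup_{C\in T}C$, $\bigcup_{C \in T'}C$ together with the rest glue $G'$ into one piece, or equivalently argue that the relevant structure survives; (ii) the new partition $P' = (P_0, \dots, P_n)$ with the two parts $T, T'$ merged into one (and $S_0$'s other incident parts redistributed consistently) still has $\abs{P'} = \abs{\pi_0(G'[X'])} + 1 = n$, which holds since we removed one component from $\pi_0(G[X])$ and merged two parts; (iii) $B(G', X', P')$ still has a $2$-forest with respect to $\pi_0(G'[X'])$ — this is exactly the $2$-forest through $S_0T$ and $S_0T'$ handed to us by Lemma~\ref{lem:sti6}, with the vertex $S_0$ and its two edges contracted, the two endpoints $T,T'$ becoming the single merged part. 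Getting the merging of $P$ and the contraction of $F$ to match up cleanly, and making sure the already-used colour permutations on $T$ and $T'$ do not conflict with what induction does on $G'$ (they don't, because induction only asserts equality up to a per-part permutation, and we have merged $T,T'$ into one part on which our permutation is fixed), is the delicate part. Once the three inductive hypotheses are verified, induction gives $\psi'$ on $G'$ agreeing with $\phi$ up to permutation on each part of $P'$; combined with the colours already assigned on $S_0$, this yields the desired $\psi$ on $G$, agreeing with $\phi$ up to permutation on each $P_i$.
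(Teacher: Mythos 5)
Your overall scaffolding (induction on $\abs X$, invoking Lemma~\ref{lem:sti6} to pick a good component $S_0$, merging the two parts $T,T'$ and contracting the $2$-forest for the recursive call) matches the paper, but there are three genuine gaps in the middle of the argument. First, you implicitly assume that a single vertex $x\in S_0$ has neighbours in both $\bigcup_{C\in T}C$ and $\bigcup_{C\in T'}C$; the edges $S_0T$, $S_0T'$ of $B(G,X,P)$ only guarantee this for the component $S_0$ as a whole, and your additional requirement that $x$ be a leaf of a spanning tree of $G[S_0]$ makes the assumption even less tenable. The paper's proof takes a non-separating vertex $s$ of $G[S_0]$ and splits into two cases: if $s$ meets at most one part of $P$, it simply colours $s$ greedily (possible because $\abs{S_0}\ge 2$, so $s$ has an uncoloured neighbour in $S_0$), recurses on $X-s$ with a mildly adjusted partition, and observes that the $2$-forest survives; only when $s$ meets two parts does the main argument run. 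Your plan has no substitute for this case distinction.

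Second, your order of operations breaks the final contradiction. You colour $S_0$ first and recurse on $G-S_0$ afterwards, so when you say ``taking the last-coloured vertex of $C$ in $X-S_0$ gives the usual contradiction with greediness'', that step is unavailable: the vertices of $X-S_0$ are coloured by the inductive call (about which you only know agreement with $\phi$ up to per-part permutations, not any greediness), and the inductive call may moreover re-permute the colours on $G-X$ after you have already fixed the colouring of $S_0$ against the old colours, so monochromatic cycles through $S_0$ and the rest of $G$ can appear and nothing in your argument excludes them. The paper does it in the opposite order: recurse first, then extend greedily to $S_0-s$ and colour $s$ last. Third, and most importantly, the key device of the paper is missing: before recursing it adds an arc between the chosen neighbours $u_0\in\bigcup_{C\in T}C$ and $u_1\in\bigcup_{C\in T'}C$ of $s$ and deletes $S_0$, so that the inductively obtained dicolouring of $G-S_0$ admits no monochromatic walk closing up between $u_1$ and $u_0$ outside $S_0$. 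That is precisely what forces any monochromatic cycle through $s$ (whose only two neighbours of the relevant colour are $u_0$ and $u_1$) to enter $S_0-s$, where the greedy extension gives the contradiction. Without this added arc, a monochromatic connection between $u_0$ and $u_1$ through other components or through $X-S_0$ may exist in the colouring produced by induction, and then giving $s$ their common colour creates a monochromatic cycle avoiding $S_0-s$ that your argument cannot refute.
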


\begin{proof}
We show the claim by induction on $\abs X$.

By Lemma~\ref{lem:case1}, we may assume $G[X]$ disconnected. 
Set $B = B(G,X,P)$. 
Let $\phi$ be a $(k-1)$-dicolouring of $G-X$. By Lemma~\ref{lem:sti6}, we have $S \in \pi_0(G[X])$ such that, for any $0 \le i \ne j \le n$ such that $SP_i, SP_j \in E(B)$, $B$ contains a $2$-forest with respect to $\pi_0(G[X])$ containing $SP_i$ and $SP_j$. Let $s$ be a non-separating vertex of $G[S]$. We distinguish two cases:

Assume first that $\abs{\{0 \le i \le n, \bid A(s, \bigcup_{C \in P_i} C) \ne \varnothing\} } \le 1$. 
Since $B$ contains a $2$-forest with respect to $\pi_0(G[X])$, $d_B(S) \geq 2$ and thus there is a vertex in $S \setminus s$ that has a neighbour in $X$. In particular, $\abs S \ge 2$. Since $S$ is connected, $d_{G-S+s}(s) < 2(k-1)$, so we can extend greedily $\phi$ to $G-(X-s)$.
Since $s$ is non-separating in $G[S]$, $\abs{\pi_0(G[X-s])} = \abs{\pi_0(G[X])}$. 
If $N(s) \subseteq S$, then $\{s\}$ is a connected component of $G-X + s$, and we set $P' = (P_0 + \{s\}, P_1, ..., P_n)$. 
Otherwise, let $C \in \pi_0(G-(X-s))$ such that $s \in V(C)$. Up to reindexing $P$, we may assume that $C-s \subset \bigcupl_{C' \in P_0} C'$ 
and set $P' = (\{C' \in P_0, C' \cap C = \varnothing\} + C, P_1, ..., P_n)$. Now, $B(G, X, P)$ is isomorphic to a spanning subdigraph of $B(G, X-s, P')$ and hence $B(G, X-s, P')$ contains a $2$-forest. We conclude by induction.

Assume now that $\abs{\{0 \le i \le n, \bid A(s, \bigcup_{C \in P_i} C) \ne \varnothing\}} \ge 2$.

Up to reindexing $P$, we may assume $\bid A(s, \bigcupl_{C \in P_0} C) \ne \varnothing$ and $\bid A(s, \bigcupl_{C \in P_1} C) \ne \varnothing$. 
Let $u_0 \in N(s) \cap \bigcupl_{C \in P_0} C$ and $u_1 \in N(s) \cap \bigcupl_{C \in P_1} C$ and $C_0, C_1 \in \pi_0(G-X)$ containing $u_0$ and $u_1$ respectively. 
By directional duality, we may assume $u_0 \in N^+(s)$.

 Up to permuting colours in $C_0$ and $C_1$, we may assume $\phi(u_0) = \phi(u_1)=1$. 
 Let $G' = G \cup u_0u_1 - S$ and $X' = X-S$. Note that $C_0+C_1$ is a connected component of $G'$. 
 We set $P' = (P_0 -C_0 + P_1 - C_1 + (C_0+C_1) , P_2, \dots, P_n)$. 
 Note that $\phi$ is a dicolouring of $G'-X'$ and $P'$ is a partition of $\pi_0(G'-X')$. 
 As $B(G', X', P') = B(G,X,P)-S/\{P_0, P_1\}$, the $2$-forest in $B(G, X, P)$ containing $SP_0$ and $SP_1$ yields a $2$-forest in $B(G', X', P')$. 
 Hence, by induction hypothesis, we may turn $\phi$ into a dicolouring $\psi$
 of $G'$ with the properties of the output of the theorem.
 
Note that $\psi$ is a dicolouring of $G-S$. 
We extend $\psi$ to $G-s$ as in remark~\ref{rq:greedyExtension}, and we may assume that $\psi(N^-(s)) = [k-1]$ and $\psi(N^+(s)) = [k-1]$.

 Set $\psi(s) = 1 = \psi(u_0) = \psi(u_1)$. 
 Since $\psi(N^+(s)) = [k-1]$ and $u_0 \in N^+(s)$, we have that $u_1 \in N^-(s)$.

 We may assume that there is a monochromatic induced cycle $R$ containing $s$ (otherwise we are done). 
 Observe that $s$ has exactly two neighbours with colour $1$, namely $u_0$ and $u_1$, so $R$ contains $u_1su_0$.
  Since $\psi$ is also a dicolouring of $G'$ and $u_0u_1 \in A(G')$, there is no monochromatic walk from $u_1$ to $u_0$ in $G-S$. So there is a vertex $y \in V(R) \cap (V(S)-s)$. Assume $y$ is the last vertex in $V(R) \cap (V(S)-s)$ to be coloured. Since the neighbours of $s$ in $R$ are not in $S$, the neighbours of $y$ in $R$ were coloured when colouring $y$. Since we extended $\psi$ greedily, $\psi(y) \ne 1$, a contradiction.

\end{proof}

We need a second technical lemma on (undirected) bipartite graphs before concluding.

\begin{lem}[Lemmas~3.4 and~3.5 in~\cite{Stiebitz82}]\label{lem:sti45} Let $B$ be a bipartite graph with partite sets $S$ and $T$ such that $\abs T \ge \abs S + 1$ and, for any $S' \in \m P(S)-\{\varnothing, S\}$, $\abs{\pi_0(B-S')} \le \abs{S'}$. Let $s \in S$ and $t \ne t' \in T$. Then $B$ contains a $2$-forest with respect to $S$ which contains $st$ and $st'$.
\end{lem}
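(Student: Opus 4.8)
The plan is to recast the statement as a matroid–transversal problem and apply Rado's theorem on transversals in a matroid. First, the statement only makes sense, and is only true, when $st,st'\in E(B)$, i.e. $t,t'\in N_B(s)$ (a non-edge cannot belong to any subgraph); I will assume this throughout, as is the case in every application of the lemma (compare the conclusion of Lemma~\ref{lem:sti6}). The case $\abs S\le 1$ is immediate: if $S=\varnothing$ there is nothing to prove, and if $S=\{s\}$ then the subgraph $F$ consisting of the two edges $st,st'$ together with every vertex of $T\setminus\{t,t'\}$ as an isolated vertex is a spanning forest with $d_F(s)=2$. So assume $\abs S\ge 2$. Taking $S'=S\setminus\{v\}$ in the hypothesis gives $\abs{\pi_0(B-(S\setminus\{v\}))}\le \abs S-1$, which forces $d_B(v)\ge \abs T-\abs S+2\ge 3$ for every $v\in S$; in particular $B$ is connected and no vertex of $S$ is isolated in $B$.

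Next I record two routine translations. (1) A $2$-forest of $B$ with respect to $S$ is the same thing as a choice, for each $v\in S$, of a $2$-element set $e_v\subseteq N_B(v)$ — regarded as an edge on the vertex set $T$ — such that the (multi)graph $(T,\{e_v:v\in S\})$ is a forest; and this $2$-forest contains $st$ and $st'$ precisely when $e_s=\{t,t'\}$. This is the standard bijection obtained by suppressing each degree-$2$ vertex of $S$: a cycle $v_1t_1v_2t_2\cdots v_mt_mv_1$ in the putative forest corresponds exactly to a cycle $e_{v_1},\dots,e_{v_m}$ in $(T,\{e_v\})$, and conversely. (2) For $S'\subseteq S$, let $H_{S'}$ be the graph on vertex set $T$ with edge set $\bigcup_{v\in S'}\binom{N_B(v)}{2}$, the union of cliques on the neighbourhoods. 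Since no vertex of $S$ is isolated in $B$, two vertices of $T$ lie in the same component of $B-(S\setminus S')=B[S'\cup T]$ iff they are joined by an alternating path iff they lie in the same component of $H_{S'}$; hence, writing $c(H_{S'})$ for the number of components of $H_{S'}$ on $T$ (isolated vertices counted),
\[ c(H_{S'})=\abs{\pi_0(B-(S\setminus S'))}. \]

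Now I would finish with Rado's theorem. Work in the cycle matroid $M(K_T)$ of the complete graph on $T$ and its contraction $M^{\ast}=M(K_T)/\{tt'\}$, and set $\mathcal E_v=\binom{N_B(v)}{2}\subseteq E(K_T)$ for $v\in S$. By translation (1), what must be produced is a transversal $(e_v)_{v\in S\setminus\{s\}}$ with $e_v\in\mathcal E_v$ that is independent in $M^{\ast}$: then $\{e_v:v\ne s\}\cup\{tt'\}$ is independent in $M(K_T)$, i.e. a forest, and adding $e_s:=\{t,t'\}$ gives the required $2$-forest. By Rado's theorem such a transversal exists provided $r_{M^{\ast}}\!\big(\bigcup_{v\in S'}\mathcal E_v\big)\ge\abs{S'}$ for every $S'\subseteq S\setminus\{s\}$. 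Here $\bigcup_{v\in S'}\mathcal E_v$ is exactly the edge set of $H_{S'}$, and a short computation of the rank of a graphic matroid together with the contraction formula gives
\[ r_{M^{\ast}}(H_{S'})=\abs T-1-c(H_{S'})+\mathbb 1\big[t,t'\text{ lie in distinct components of }H_{S'}\big]. \]
Using translation (2), the desired inequality becomes $\abs{\pi_0(B-(S\setminus S'))}\le \abs T-1-\abs{S'}+\mathbb 1[\cdots]$, which I would check by a two-case split. If $t,t'$ lie in distinct components of $H_{S'}$, the bound reduces to $\abs{\pi_0(B-(S\setminus S'))}\le\abs T-\abs{S'}$: this is trivially true when $S'=\varnothing$, and when $S'\ne\varnothing$ the set $S\setminus S'$ is proper and nonempty, so the hypothesis gives $\le\abs{S\setminus S'}=\abs S-\abs{S'}\le\abs T-\abs{S'}$. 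If $t,t'$ lie in the same component of $H_{S'}$, then $S'\ne\varnothing$ (since $t\ne t'$ and $H_{\varnothing}$ is edgeless), the bound reads $\abs{\pi_0(B-(S\setminus S'))}\le\abs T-1-\abs{S'}$, and the hypothesis gives $\le\abs S-\abs{S'}\le\abs T-1-\abs{S'}$ using $\abs T\ge\abs S+1$. This verifies Rado's condition and hence proves the lemma.

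The main obstacle is getting translation (2) and the rank formula exactly right — in particular tracking isolated vertices of $T$ consistently on both sides of the identity, and, in the case analysis, correctly identifying which sets $S\setminus S'$ are proper and nonempty so that the hypothesis applies (the single genuine use of the slack $\abs T\ge\abs S+1$, rather than merely $\abs T\ge\abs S$, occurs exactly in the "same component'' subcase). A purely elementary alternative is an induction on $\abs S$ in the spirit of Stiebitz, deleting $s$ and identifying $t$ with $t'$; but the component hypothesis degrades by one under this operation, so one must carry a strengthened induction hypothesis with a slack parameter, which is precisely what the matroid argument packages cleanly.
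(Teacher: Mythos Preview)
Your proof is correct. The paper does not give its own proof of this lemma: it simply cites Lemmas~3.4 and~3.5 of Stiebitz~\cite{Stiebitz82} and uses the statement as a black box. So there is no argument in the paper to compare against.

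For what it is worth, Stiebitz's original proof is the elementary inductive argument you allude to at the end, whereas your route through Rado's theorem in the cycle matroid $M(K_T)/tt'$ is genuinely different. Your translations are sound: suppressing the degree-$2$ vertices of $S$ is exactly the standard bijection between $2$-forests of $B$ and forests on $T$, and your component identity $c(H_{S'})=\abs{\pi_0(B-(S\setminus S'))}$ holds because you first established $d_B(v)\ge 3$ for every $v\in S$, so no $S'$-vertex is isolated in $B[S'\cup T]$. The case split is clean, and you correctly identify that the strict inequality $\abs T\ge\abs S+1$ is used only in the ``same component'' subcase. What the matroid packaging buys you is precisely not having to carry a slack parameter through an induction; what it costs is the dependence on Rado's theorem, which is heavier machinery than anything else in Section~\ref{sec:stieb}.
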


\begin{proof}[Proof of Theorem~\ref{thm:nbcompconnex_contr}.]
We prove the result by induction on $\abs X$. 
By claim~\ref{lem:case1}, we may assume $G[X]$ disconnected. By induction hypothesis, we may assume that, for any $P \in \mathcal P(\pi_0(G[X]) - \{\varnothing, \pi_0(G[X])\}$, we have $\abs{\pi_0(G - \bigcupl_{C \in P} C)} \le \abs{P}$, for otherwise we can apply induction on $\bigcupl_{C \in P} V(C)$.
Let $P = (P_0, ..., P_{\abs{\pi_0(G[X])}})$ be a partition of $\pi_0(G-X)$. By Lemma~\ref{lem:sti45}, $B(G, X, P)$ has a $2$-forest. Since $\dic(G-X) \le k-1$, by Lemma~\ref{lem:2forest}, $\dic(G) \le k-1$.
\end{proof}

\section{List-dicolouring}\label{sec:list}

Let $G$ be a digraph.
A \emph{list assignment} of $G$ is a mapping $L: V(G) \to \mathcal{P}(C)$, where $C$ is a set of colours.
An \emph{$L$-dicolouring} of $G$ is a dicolouring $\phi$ of $G$ such that $\phi(v) \in L(v)$ for all $v\in V(G)$.
If $G$ admits an $L$-dicolouring, then it is \emph{$L$-dicolourable}. 
If $H$ is a subgraph of $G$, we abuse notations and write $L$ for the restriction of $L$ to $H$. 
Recall that, given a vertex $x$ of a digraph, $d_{max}(x) = \max(d^+(x), d^-(x))$ and $d_{min}(x) = \min(d^+(x), d^-(x))$. 

In~\cite{harutyunyan_gallais_2011}, Mohar and Harutyunyan proved the following, generalising a fundamental result of Gallai~\cite{Gal63a}. 

\begin{thm}[Theorem~2.1 in~\cite{harutyunyan_gallais_2011}]
Let $G$ be a connected digraph, and $L$ a list-assignment for $G$ such that $\abs{L(v)} \geq d_{max}(v)$ for every $v \in V(G)$. If $D$ is not $L$-dicolourable, then $d^+(v) = d^-(v)$ for every $v \in V(G)$ and every block of $G$ is a cycle, a symmetric odd cycle, or a complete digraph. 
\end{thm}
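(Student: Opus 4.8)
The statement is a list-colouring analogue of the directed Gallai theorem (Theorem~\ref{thm:directed_Gallai} and, more precisely, Theorem~\ref{thm:brooks} applied block by block). The plan is to mimic the classical Gallai/Borodin--Erd\H os--Kostochka proof in the directed setting, using $L$-degeneracy. First I would set up the standard reduction: let $G$ be a vertex-minimal counterexample, so $G$ is connected, $\abs{L(v)}\ge d_{max}(v)$ for all $v$, $G$ is not $L$-dicolourable, but every proper subdigraph is $L$-dicolourable. The immediate consequence of minimality is that $\abs{L(v)}=d_{max}(v)$ for every $v$ (otherwise delete a vertex $v$ with $\abs{L(v)}>d_{max}(v)$, $L$-dicolour $G-v$, and extend greedily to $v$ since it has at most $d_{max}(v)-1$ neighbours in each direction when one colour is forced to be usable — more carefully, a colour $c\in L(v)$ can be added unless there is both a monochromatic-in-$c$ out-walk and in-walk out of $v$, and an $L$-list of size $>d_{max}(v)$ always leaves a free colour). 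I would phrase this via the greedy-extension principle already used in Remark~\ref{rq:greedyExtension} and Lemma~\ref{lem:pouet}(\ref{lem:d}).

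The core of the argument is to prove $G$ is $2$-connected (as an underlying graph — i.e. non-separable in the sense of Section~\ref{sec:not}), and then that $G$ is ``$L$-critical with equality'' forces $d^+(v)=d^-(v)$ everywhere and forces each block to be of the listed type. For the non-separability step: if $v$ is a separating vertex, decompose $G$ into $G_1,G_2$ sharing only $v$; each $G_i$ still satisfies $\abs{L}\ge d_{max}$ with the degree now taken in $G_i$, except the bound at $v$ may fail in one of them — here one uses the classical trick of deleting the appropriate colour from $L(v)$ in whichever part, $L$-dicolouring each part, and permuting colours of one part so the colours at $v$ agree, obtaining an $L$-dicolouring of $G$. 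This shows a minimal counterexample is non-separable, hence is a single block. Then I must show: a non-separable digraph $G$ with $\abs{L(v)}=d_{max}(v)$ for all $v$ which is not $L$-dicolourable satisfies $d^+(v)=d^-(v)$ for all $v$ and $G$ is a directed cycle, a symmetric odd cycle, or a symmetric complete digraph. For this last step I would first handle the case $\abs{L(v)}=1$ for some $v$ (then $d_{max}(v)=1$, so $v$ is a source or sink of degree $1$, but $G$ non-separable with $\ge 2$ vertices cannot have such a vertex unless $G=\vec P_2$, which is $L$-dicolourable) — so $\abs{L(v)}\ge 2$ everywhere, i.e. $d_{max}(v)\ge 2$. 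Then use a Kempe-chain / connectivity argument: order $V(G)$ so that repeatedly a vertex $u$ with a coloured neighbourhood missing a colour of $L(u)$ can be coloured; if the greedy procedure ever fails, then at the failing vertex $u$ every colour of $L(u)$ is blocked in both directions by already-coloured neighbours, forcing $d^+(u)=d^-(u)=\abs{L(u)}$ and a rigid structure. Working backwards through this, and using non-separability to propagate, one concludes $d^+=d^-$ everywhere and that $G$ is a ``Gallai block'': apply the directed Brooks theorem (Theorem~\ref{thm:brooks}) to the underlying regularity $d^+(v)=d^-(v)=\abs{L(v)}-1=:\Delta$, after reducing the list version to the ordinary-colouring version by the standard observation that a non-$L$-dicolourable connected digraph with all $\abs{L(v)}=d_{max}(v)$ and $G$ non-separable must in fact have all lists equal (a short argument: if two adjacent vertices have lists $L(u)\ne L(v)$, delete a colour from the right list to free a move — a Kempe/swapping argument — contradicting criticality), so it becomes genuine $\Delta$-dicolouring of a $\Delta$-``diregular'' digraph, and Theorem~\ref{thm:brooks} gives exactly the three block types.

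The main obstacle I anticipate is the last step — proving that in a non-separable non-$L$-dicolourable digraph with tight lists, either all lists coincide (reducing to Brooks) or one can directly extract the structural conclusion. In the undirected setting this is the heart of the Borodin/Erd\H os--Rubin--Taylor argument (``Gallai trees'' and the analysis of blocks being complete or odd cycles), and transporting it to digraphs requires being careful that ``independent set'' becomes ``acyclic set'' and that the relevant obstruction to extending a colour is the existence of both a monochromatic out-walk and a monochromatic in-walk from the vertex, as in Lemma~\ref{lem:pouet}(\ref{lem:d}). I would therefore spend most of the write-up making the Kempe-swapping step rigorous in the directed case: given a partial $L$-dicolouring and a vertex $u$ that cannot be coloured, analyse the ``bichromatic'' subdigraphs on colour pairs $\{c,c'\}$ around $u$ and show a swap either colours $u$ or forces $d^+(u)=d^-(u)$ together with the block containing $u$ being complete or a symmetric odd cycle; an easy induction on blocks (leaf blocks first, using non-separability) then finishes. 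The rest — the minimality reduction, the separating-vertex reduction, the $\abs{L(v)}=1$ base case, and the appeal to Theorems~\ref{thm:brooks} and~\ref{thm:directed_Gallai} — is routine.
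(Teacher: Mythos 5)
Your plan has two genuine gaps, both at load-bearing points. First, the cut-vertex reduction is not sound: you propose to $L$-dicolour the two parts $G_1,G_2$ meeting at the separating vertex $v$ and then ``permute colours of one part so the colours at $v$ agree'' --- but colour permutation is exactly what list-colouring forbids, since a permuted colouring need not respect the lists. The correct move is to compare, for each part, the set $A_i\subseteq L(v)$ of colours achievable at $v$ (bounded below by $\abs{L(v)}-d_{\min,G_i}(v)$, using the directed blocking condition of Lemma~\ref{lem:pouet}(\ref{lem:d})), and even with that fix you do not get that a minimal counterexample is non-separable: if one part is $L$-dicolourable, induction says nothing about its blocks, yet the theorem's conclusion constrains them too. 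Indeed the conclusion explicitly allows separable non-$L$-dicolourable digraphs (that is the whole point of a block statement), so the structure has to be established block by block rather than reduced away by $2$-connectivity. Relatedly, ``every proper subdigraph of a minimal counterexample is $L$-dicolourable'' does not follow from minimality, since a smaller non-colourable subdigraph may simply satisfy the structural conclusion and hence not be a counterexample; the true statement you need is Proposition~\ref{prop:tek_list}(\ref{prop1})--(\ref{prop2}), which requires its own inductive proof.

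Second, the heart of your argument --- that in the non-separable case all lists must coincide, so that directed Brooks (Theorem~\ref{thm:brooks}) finishes --- is asserted via ``a short Kempe/swapping argument'' that is never given, and that claim is essentially equivalent to the theorem itself: for $2$-connected digraphs not of the three exceptional types there is \emph{no} non-colourable tight list assignment at all, so proving ``non-colourable $\Rightarrow$ constant lists'' presupposes the structural dichotomy you are trying to establish. This is precisely where the paper (in its generalisation, Theorem~\ref{thm:notLcolour}) does the real work, by a different and concrete mechanism: the colour-shifting analysis around weak cycles (Proposition~\ref{prop:tek_list}(\ref{prop3})--(\ref{prop4}) and Lemma~\ref{lem:notLcolourCycle}) shows that every weak cycle that is not a directed cycle spans a clique or induces a symmetric odd cycle, and the block statement is then assembled from three internally disjoint weak walks inside a block. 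Until your ``bichromatic subdigraph/Kempe swap'' step is written out and shown to survive the directed obstruction (a colour is blocked only by a monochromatic out-walk \emph{and} in-walk), the proposal is a plausible outline of the classical Borodin--Erd\H{o}s--Rubin--Taylor strategy rather than a proof.
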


Observe that, in the above theorem, the blocks can not be arcs, so the output is a particular type of directed Gallai forest. Later on, Bang-Jensen et al. generalised the result of Mohar and Harutyunyan by proving Theorem~\ref{thm:directed_Gallai} that we restate here for convenience.

\begin{thm}[Bang-Jensen, Bellitto, Schweser and Stiebitz~\cite{bang2019haj}]\label{thm:bang2019haj}
If $G$ is a $k$-dicritical digraph, then the subdigraph induced by vertices of degree $2(k-1)$ is a directed Gallai forest. 
\end{thm}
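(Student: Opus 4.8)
The final statement is a restatement of Theorem~\ref{thm:directed_Gallai}, which is cited from~\cite{bang2019haj} as Theorem~15. So strictly speaking nothing needs to be proved here; but let me describe how one would derive it, since the excerpt also quotes the list-colouring precursor of Mohar and Harutyunyan, and the list version subsumes the ordinary one.

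The plan is to deduce the statement from a list-dicolouring analogue of Gallai's theorem. Let $G$ be $k$-dicritical and let $H$ be the subdigraph induced by the set $S=\{v : d_G(v)=2(k-1)\}$ of vertices of degree exactly $2(k-1)$. I want to show each connected component of $H$ is a directed Gallai tree, i.e. each of its blocks is an arc, a directed cycle, a symmetric odd cycle, or a symmetric complete digraph. First I would fix a connected component $C$ of $H$. Since $G$ is $k$-dicritical, for any vertex $x\in V(C)$ the digraph $G-x$ has a $(k-1)$-dicolouring $\phi$; more usefully, I would delete $V(C)$ entirely and use that $G-V(C)$ has a $(k-1)$-dicolouring $\phi$, which by criticality cannot be extended to $G$. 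Now define on $C$ the list assignment $L(v) = [k-1]\setminus\bigl(\phi(N^+(v)\setminus V(C))\cap\phi(N^-(v)\setminus V(C))\bigr)$: a colour $c$ is available for $v$ unless $c$ already appears both as an out-neighbour colour and an in-neighbour colour outside $C$. An $L$-dicolouring of $C$ would combine with $\phi$ to give a $(k-1)$-dicolouring of $G$ (a monochromatic cycle would have to be entirely inside $C$ or entirely outside, and neither happens), contradicting $\dic(G)=k$. Hence $C$ is not $L$-dicolourable.

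Next I would bound the list sizes. Every vertex $v\in V(C)$ has $d_G(v)=2(k-1)$, so $d^+_G(v)+d^-_G(v)=2(k-1)$, hence $d_{max,G}(v)\le k-1$. The number of "forbidden" colours for $v$ is at most $\min\bigl(|N^+(v)\setminus V(C)|,|N^-(v)\setminus V(C)|\bigr) = d_{min,G}(v) - d_{min,C}(v)$ roughly; a short computation gives $|L(v)| \ge (k-1) - (d_{min,G}(v)-d_{min,C}(v))$. Combined with $d_{max,C}(v)\le d_{max,G}(v) - (d_{max,G}(v)-d_{max,C}(v))$ and the degree balance at degree exactly $2(k-1)$, one gets $|L(v)|\ge d_{max,C}(v)$ for every $v\in V(C)$. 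Now apply the list-Gallai theorem of Mohar and Harutyunyan (Theorem~2.1 in~\cite{harutyunyan_gallais_2011}, quoted above) to the connected digraph $C$ with this list assignment: since $C$ is not $L$-dicolourable and $|L(v)|\ge d_{max,C}(v)$ for all $v$, the theorem forces $d^+_C(v)=d^-_C(v)$ for all $v$ and every block of $C$ to be a directed cycle, a symmetric odd cycle, or a complete digraph. Arcs are permitted as blocks in the definition of a directed Gallai tree, so $C$ is a directed Gallai tree, and $H$ is a directed Gallai forest.

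The main obstacle is the inequality $|L(v)|\ge d_{max,C}(v)$: one must track carefully how the $2(k-1)$ arcs at $v$ split between inside $C$ and outside $C$, and in which direction, to see that the colours forbidden by the outside neighbourhood never eat into the budget needed to colour $C$ itself. The degree-balance part of the Mohar–Harutyunyan conclusion ($d^+=d^-$) is exactly what guarantees the split is symmetric enough for the counting to close; in fact this is where the "degree exactly $2(k-1)$" hypothesis is essential, since at such a vertex $d^+_G(v)=d^-_G(v)=k-1$ need not hold a priori, but any asymmetry outside $C$ is compensated by asymmetry inside $C$, and the bound still goes through. An alternative, and the route actually taken in~\cite{bang2019haj}, is to prove a self-contained Gallai-type theorem for dicritical digraphs directly, by induction on the blocks of $H$ using ear-type decompositions; either way the structural content is the same.
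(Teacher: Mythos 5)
Your reduction has a genuine gap at the step ``an $L$-dicolouring of $C$ would combine with $\phi$ to give a $(k-1)$-dicolouring of $G$, since a monochromatic cycle would have to be entirely inside $C$ or entirely outside''. That dichotomy is false: a monochromatic cycle can enter $C$ at one vertex and leave it at a different vertex (possibly several times). Your list $L(v) = [k-1]\setminus\bigl(\phi(N^+(v)\setminus V(C))\cap\phi(N^-(v)\setminus V(C))\bigr)$ only blocks, at each single vertex $v$, cycles whose intersection with $C$ is exactly $\{v\}$; it does nothing against a cycle that uses a path of length at least $2$ inside $C$, entering through an in-neighbour (coloured $c$) of the first vertex and exiting through an out-neighbour (coloured $c$) of the last one. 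So ``$C$ is not $L$-dicolourable'' does not follow, and the reduction to the Harutyunyan--Mohar theorem collapses. This is not a presentational issue but a structural one: unlike proper colouring, acyclicity is not a per-vertex constraint, so the extension problem on $C$ cannot be encoded as a pure list-dicolouring problem on $G[C]$ with shrunken lists. A sanity check confirms this: your list-size bound $|L(v)|\ge d_{max,C}(v)$ is in fact correct (since every $v$ of degree $2(k-1)$ in a $k$-dicritical digraph has $d^+(v)=d^-(v)=k-1$ by Lemma~\ref{lem:pouet}), so if the non-colourability claim were also correct, Harutyunyan--Mohar would force $d^+_C(v)=d^-_C(v)$ for all $v$ and exclude arc blocks in $G[S]$ --- a conclusion strictly stronger than the theorem and false in general (the paper's own case analyses, e.g.\ Claim~\ref{clm:dirac+4} and Section~\ref{subsec:dirack=3}, explicitly allow $\vec P_2$ blocks in $G[S]$). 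Hence in the problematic cases it is precisely your ``$C$ is not $L$-dicolourable'' claim that fails.

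This is exactly why both \cite{bang2019haj} and this paper keep the host digraph in the hypotheses rather than passing to $G[X]$ alone: Theorem~\ref{thm:notLcolour} assumes ``$G-X$ is $L$-dicolourable and $G$ is not'', and its proof proceeds by shifting colours around weak cycles of $G[X]$ inside $G$ (Proposition~\ref{prop:tek_list}, Lemma~\ref{lem:notLcolourCycle}), not by an ear/block induction on $G[X]$ with modified lists. The correct derivation of the present statement inside this paper is immediate from that theorem: take $X$ to be a connected component of the subdigraph induced by the degree-$2(k-1)$ vertices and the constant list assignment $L(v)=[k-1]$ on all of $G$; then $G-X$ is $L$-dicolourable by dicriticality, $G$ is not since $\dic(G)=k$, and $|L(x)|=k-1=d_{\max}(x)$ for $x\in X$ because $d^+(x)=d^-(x)=k-1$, so $G[X]$ is a directed Gallai forest, and taking the union over components finishes the proof.
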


Interestingly, contrary to the directed case, the undirected analogues of the two previous results both output an (undirected) Gallai forest, that is a graph whose blocks are odd (undirected) cycles or complete graphs. 

The goal of this section is to generalise the result of Bang-Jensen et al. by generalising a theorem proved by Thomassen~\cite{THOMASSEN199767} in the undirected case. 

\begin{thm}\label{thm:notLcolour}
Let $G$ be a connected digraph, $X \subseteq V(G)$ connected and $L$ a list-assignment of $G$ such that $G-X$ is $L$-dicolourable, $G$ is not $L$-dicolourable and $\forall x \in X, \abs{L(x)} \ge d_{\max}(x)$. 
Then $G[X]$ is a directed Gallai forest.
\end{thm}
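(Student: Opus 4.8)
The plan is to induct on $\abs{X}$, mimicking the undirected proof of Thomassen and the structure of Lemma~\ref{lem:case1} and Lemma~\ref{lem:2forest}. The base case $\abs X \le 1$ is trivial (a single vertex is a block that is an arc-less digraph, which is a degenerate Gallai tree). For the inductive step, first observe that every $x \in X$ satisfies $d_G(x) \le 2(k-1)$ where $k-1$ is interpreted locally: since $\abs{L(x)} \ge d_{\max}(x)$ and, by a standard greedy/Gallai argument (exactly as in Remark~\ref{rq:greedyExtension}), a non-separating vertex $x$ of $G[X]$ with $d_{G}(x) < d^+(x) + d^-(x)$ strictly, or with a ``spare'' colour, would let us extend any $L$-dicolouring of $G - X$ to $G - x$ and then to $G$. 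So each $x \in X$ must be ``saturated'': $\abs{L(x)} = d_{\max}(x) = d_{\min}(x)$, i.e. $d^+(x) = d^-(x)$ and $\abs{L(x)} = d^+(x)$, and moreover every colour of $L(x)$ appears among both its in- and out-neighbours already coloured in any greedy extension. This is the list-analogue of ``every vertex of $X$ has degree exactly $2(k-1)$'' and it is what forces the Gallai structure.

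Next I would handle the connectivity reduction. Pick a non-separating vertex $x$ of $G[X]$ (so $G[X-x]$ is connected). The key dichotomy, as in Lemma~\ref{lem:case1}: either $G - (X - x)$ is disconnected, or it is connected. If $G-(X-x)$ is disconnected, then $x$ lies in a component $S$ of $G-(X-x)$ with $X - x \ne \varnothing$, and since $G[X]$ is connected $x$ has a neighbour in $X-x$, so $d_{G[S]}(x) < d_G(x)$, giving a spare colour; we extend the $L$-dicolouring of $G-X$ to $G - (X-x)$ and... wait, that would dicolour $G$, contradiction. Hence in fact $G - (X-x)$ is always disconnected only in a controlled way; more precisely, in the non-dicolourable situation, for \emph{every} non-separating vertex $x$ of $G[X]$, the ``local'' picture at $x$ is rigid: $x$ sends exactly the right number of arcs into each component of $G-X$ adjacent to it. I would then run the bipartite-graph machinery: form $B(G, X, P)$ for partitions $P$ of $\pi_0(G-X)$, and argue via Lemmas~\ref{lem:sti6} and~\ref{lem:sti45} that if $G[X]$ had a block that is \emph{not} a cycle, symmetric odd cycle, or complete digraph, then some partition $P$ admits a $2$-forest, and Lemma~\ref{lem:2forest}'s proof technique (extend $\phi$ of $G-X$, merge two components across a non-separating vertex, induct, then greedily close up) produces an $L$-dicolouring of $G$ — a contradiction. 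The block structure is exactly what obstructs the existence of such partitions: in a complete digraph or (symmetric odd) cycle block, the saturation constraints are tight with no slack, whereas any other block (in particular one containing a simple arc, or a larger 2-connected piece) leaves a vertex with a spare colour or a favourable $2$-forest.

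The adaptation from the degree version to the list version is mostly bookkeeping — every place the earlier proofs wrote $2(k-1)$ or $k-1$, we substitute $d^+(x)+d^-(x)$ and $\abs{L(x)}$, using the saturation identities $d^+(x)=d^-(x)=\abs{L(x)}$; the greedy extension arguments of Remark~\ref{rq:greedyExtension} go through verbatim because they only ever use that a vertex being coloured has at most $\abs{L(x)} - 1$ already-coloured in-neighbours or fewer than $\abs{L(x)}$ already-coloured out-neighbours. \textbf{The main obstacle} I anticipate is the block-by-block analysis: showing that a block of $G[X]$ which is $2$-connected but not complete and not an (odd symmetric) cycle necessarily violates the saturation constraints or yields an escaping $2$-forest. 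In the undirected Gallai/Thomassen proof this is the heart of the matter (it is essentially a Brooks-type argument inside the block, using that a $2$-connected non-complete non-odd-cycle graph is ``degree-choosable''), and the directed version needs the digraph Brooks theorem (Theorem~\ref{thm:brooks}) together with care about digons versus simple arcs — a block containing a simple arc is the easy case (the arc's endpoint that is a sink/source of that arc within the block gets slack), and the all-digon case reduces to the undirected statement. I would isolate this as a lemma: \emph{a $d_{\max}$-list-assignment on a $2$-connected digraph that is neither a cycle, nor a symmetric odd cycle, nor a complete digraph is always satisfiable}, prove it by the standard Kernel/greedy argument or by reduction to the undirected degree-choosability theorem on the digon graph plus a separate treatment of simple arcs, and then feed it into the induction above exactly as Stiebitz's bipartite lemmas are used in Lemma~\ref{lem:2forest}.
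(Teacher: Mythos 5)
There is a genuine gap. Your opening reductions (the saturation identities $d^+(x)=d^-(x)=\abs{L(x)}$, the $L$-dicolourability of $G-x$, and the fact that every colour of $L(x)$ must appear in both $N^+(x)$ and $N^-(x)$ under any such dicolouring) match the paper's Proposition~\ref{prop:tek_list} and are fine. But the crux of the theorem --- showing that every block of $G[X]$ is an arc, a cycle, a symmetric odd cycle or a complete digraph --- is exactly the part you defer to an unproven lemma (``a $d_{\max}$-list-assignment on a $2$-connected digraph that is none of these is always satisfiable''), and the proof routes you sketch for it would not go through as stated. The claim that ``a block containing a simple arc is the easy case'' because an endpoint ``gets slack'' is false: directed cycles consist entirely of simple arcs and are genuine obstructions, and the delicate configurations are precisely weak cycles that are not directed cycles, where no vertex has any slack and one must argue by shifting colours around the weak cycle and exploiting parity. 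Likewise, reducing the all-digon case to undirected degree-choosability says nothing about blocks mixing digons and simple arcs, which is where all the work lies. The paper's proof handles this via Lemma~\ref{lem:notLcolourCycle}: an induction on the length of a weak cycle $C$ of $G[X]$ that is not a cycle, using clockwise/counter-clockwise colour shifts (Proposition~\ref{prop:tek_list}) to force either a clique or a symmetric odd cycle on $V(C)$, followed by a block analysis with three internally disjoint weak walks and a maximal-clique argument. Nothing in your sketch supplies a substitute for this analysis.

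Your appeal to the bipartite $2$-forest machinery (Lemmas~\ref{lem:sti6}, \ref{lem:sti45} and the technique of Lemma~\ref{lem:2forest}) is also off-target: that machinery compares the connected components of $G[X]$ with (groupings of) the components of $G-X$ and is designed for Theorem~\ref{thm:nbcompconnex_contr}; it is insensitive to the internal block structure of $G[X]$, and here $X$ is connected, so $\pi_0(G[X])$ is a single vertex of $B(G,X,P)$ and no partition $P$ can encode which blocks of $G[X]$ are ``bad''. There is no route from ``$G[X]$ has a block that is not of Gallai type'' to ``some $B(G,X,P)$ has a $2$-forest'' --- these statements concern different structures. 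Finally, your connectivity dichotomy paragraph is internally inconsistent (you note yourself mid-argument that the disconnected case as you set it up would already dicolour $G$); in the paper the analogous case analysis appears in Lemma~\ref{lem:case1} for the component-counting theorem, not in the proof of this theorem, which instead works entirely inside $G[X]$ via weak cycles.
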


The proof of Theorem~\ref{thm:notLcolour} is almost the same as the proof of Theorem~\ref{thm:bang2019haj}. 

The next proposition states some easy yet important facts that will be often used during the proof. 

\begin{prop}\label{prop:tek_list}
Let $G$ be a connected digraph, $X \subseteq V(G)$ connected and $L$ a list-assignment of $G$ such that $G-X$ is $L$-dicolourable, $G$ is not $L$-dicolourable and $\forall x \in X, \abs{L(x)} \ge d_{\max}(x)$. 

Then, for every $x \in X$, the following statements hold:
\begin{enumerate}
  \item\label{prop1} $\abs{L(x)} = d^+(x) = d^-(x)$,
  \item\label{prop2} $G-x$ is $L$-dicolourable. 
  \item\label{prop3} For every $L$-dicolouring of $G-x$, every colour of $L(x)$ appears in both $N^+(x)$ and $N^-(x)$. 
  \item\label{prop4} Given an $L$-dicolouring $\phi$ of $G-x$ and $y \in X \cap N(x)$, uncolouring $y$ and colouring $x$ with the colour of $y$ yields an $L$-dicolouring of $G-y$. 
\end{enumerate}
\end{prop}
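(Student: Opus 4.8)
The plan is to prove \ref{prop2} first (it is the only positive assertion), then obtain \ref{prop1} and \ref{prop3} together from a short counting argument applied to an arbitrary $L$-dicolouring of $G-x$, and finally deduce \ref{prop4} from an injectivity fact extracted along the way. For \ref{prop2}, I would fix an $L$-dicolouring $\phi_0$ of $G-X$ (which exists by hypothesis) and a BFS ordering of the connected graph $G[X]$ rooted at $x$, and extend $\phi_0$ by colouring the vertices of $X-x$ in reverse BFS order, each time greedily choosing the smallest colour of $L(u)$ not occurring simultaneously on an already-coloured in-neighbour and on an already-coloured out-neighbour of $u$. Such a colour exists: when $u\in X-x$ is processed, its BFS-parent is a neighbour of $u$ in $G$ that is still uncoloured (it is either $x$, or a vertex of $X-x$ coloured later), so at most $d^-(u)-1$ in-neighbours or at most $d^+(u)-1$ out-neighbours of $u$ are already coloured, and since $\max(d^+(u),d^-(u))=d_{\max}(u)\le|L(u)|$ this means strictly fewer than $|L(u)|$ colours are forbidden. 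The resulting map is an $L$-dicolouring of $G-x$: a monochromatic cycle would either lie inside $G-X$ (contradicting that $\phi_0$ is a dicolouring) or meet $X-x$, and then the last vertex of the cycle to be coloured (which lies in $X-x$) would already see its own colour on both its predecessor and its successor along the cycle, contradicting the greedy rule. This is the list analogue of Remark~\ref{rq:greedyExtension}.

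For \ref{prop1} and \ref{prop3}, I would take any $L$-dicolouring $\phi$ of $G-x$ (it exists by \ref{prop2}). Since $G$ is not $L$-dicolourable, for every $c\in L(x)$ the map extending $\phi$ with $\phi(x)=c$ has a monochromatic cycle, which necessarily passes through $x$; therefore $c$ occurs on an out-neighbour and on an in-neighbour of $x$. This is exactly \ref{prop3}, and moreover it gives $L(x)\subseteq\phi(N^+(x))$, so that
\[
|L(x)|\le|\phi(N^+(x))|\le|N^+(x)|=d^+(x)\le d_{\max}(x)\le|L(x)|.
\]
All these inequalities are therefore equalities, so $|L(x)|=d^+(x)=d_{\max}(x)$, $\phi(N^+(x))=L(x)$, and $\phi$ is injective on $N^+(x)$. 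The symmetric computation with $N^-(x)$ gives $|L(x)|=d^-(x)$ (which finishes \ref{prop1}), $\phi(N^-(x))=L(x)$, and $\phi$ injective on $N^-(x)$.

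For \ref{prop4}, given an $L$-dicolouring $\phi$ of $G-x$ and $y\in X\cap N(x)$, I would set $\psi(x)=\phi(y)$ and $\psi=\phi$ on $V(G)-\{x,y\}$. As $y\in N^+(x)\cup N^-(x)$ and $\phi(N^+(x))=\phi(N^-(x))=L(x)$ by the previous step, $\psi(x)=\phi(y)\in L(x)$, so $\psi$ respects $L$ on $V(G)-y$. A monochromatic cycle $C$ of $\psi$ in $G-y$ cannot avoid $x$ (it would be monochromatic for $\phi$ in $G-x$), so $C$ runs through $x$ with colour $\phi(y)$; its predecessor $x^-$ and successor $x^+$ on $C$ then both carry colour $\phi(y)$ and differ from $y$, so the injectivity of $\phi$ on whichever of $N^-(x),N^+(x)$ contains $y$ forces $y\in\{x^-,x^+\}\subseteq V(C)$, a contradiction. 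Hence $\psi$ is an $L$-dicolouring of $G-y$. The whole argument is routine; the only delicate points are the ``last-coloured vertex'' verification that the greedy extension in \ref{prop2} is genuinely cycle-free, and the bookkeeping of the injectivity of $\phi$ on $N^+(x)$ and on $N^-(x)$, which is precisely what \ref{prop4} rests on.
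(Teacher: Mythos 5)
Your proof is correct, but it is organised differently from the paper's. The paper proves \ref{prop1} first, by induction on $\abs{V(G)}$: if $\abs{L(x)} > d_{min}(x)$, any $L$-dicolouring of $G-x$ would extend to $G$, so $G-x$ is not $L$-dicolourable; one then passes to a non-$L$-dicolourable component $C'$ of $G-x$, picks a neighbour $y \in C' \cap X \cap N(x)$ of $x$, and the induction hypothesis applied to $G[C']$ gives $\abs{L(y)} = d^+_{G[C']}(y) = d^-_{G[C']}(y)$, contradicting $\abs{L(y)} \ge d_{max}(y)$ because of the arc between $y$ and $x$; statement \ref{prop2} is then deduced from \ref{prop1} by a second induction, adding the components of $G[C \cap X]$ one at a time. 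You invert this order: you prove \ref{prop2} directly by a greedy extension along a reverse BFS ordering of $G[X]$ rooted at $x$ (the list analogue of Remark~\ref{rq:greedyExtension}; your count of forbidden colours via the uncoloured BFS-parent, and the ``last coloured vertex of a monochromatic cycle'' verification, are both sound, including the digon case), and then \ref{prop1} and \ref{prop3} fall out of a single counting argument, with \ref{prop4} handled essentially as in the paper. Your route avoids both inductions and has the side benefit of making explicit two facts that the paper's proof of \ref{prop4} uses only tacitly, namely that every $L$-dicolouring $\phi$ of $G-x$ is injective on $N^+(x)$ and on $N^-(x)$ with $\phi(N^+(x)) = \phi(N^-(x)) = L(x)$ (which is also what guarantees $\phi(y) \in L(x)$ when you shift the colour of $y$ to $x$); the paper's inductive proof of \ref{prop1}, in exchange, is purely local and needs no global ordering of $X$. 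Both arguments establish the proposition.
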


\begin{proof}
Let $x \in X$. 

To prove~\ref{prop1}, it suffices to show that $\abs{L(x)} \le d_{\min}(x)$. We prove it for any $G$, $X$, $L$ and $x$ by induction on $\abs{V(G)}$. If $\abs{V(G)} \le 2$, the result is clear, so assume $\abs{V(G)} \ge 3$
Assume towards a contradiction that $\abs{L(x)} > d_{\min}(x)$.
Let $G' = G-x$. 
We can greedily extend any $L$-dicolouring of $G'$ to an $L$-dicolouring of $G$, so $G'$ is not $L$-dicolourable. 
Hence $G'$ has a connected component $C'$ that is not $L$-dicolourable. 
Since $G-X$ is $L$-dicolourable, $C' \cap X \ne \varnothing$. Furthermore, since $X$ is connected, we have $y \in C' \cap X \cap N(x)$. 
By the induction hypothesis applied to $G[C']$, $C' \cap X$ and $L$, we have $\abs{L(y)} = d_{G[C']}^+(y) = d_{G[C']}^-(y)$. 
By directional duality, we may assume $x \in N^+(y)$. Then: $d_{G[C']}^+(y) = \abs{L(y)} \ge d^+(y) \ge d_{G[C']}^+(y)+1$, 
a contradiction. This proves the first statement.

We now prove~\ref{prop2}. It suffices to prove that every connected component of $G-x$ is $L$-dicolourable. Let $C \in \pi_0(G-x)$. Let $D_1, ..., D_n$ be the connected components of $G[C \cap X]$. We prove by induction on $i \in \intZ{0, n}$ that $G[C-X+D_1+\dots+D_i]$ is $L$-dicolourable. Since $G-X$ is $L$-dicolourable, $G[C-X]$ is too. Now, let $i \in \intZ{0, n-1}$ and assume $G[C-X+D_1+\dots+D_i]$ $L$-dicolourable. Since $X$ is connected, we have $y \in D_{i+1} \cap N(x)$.
We have $\abs{L(y)} = d^+(y) = d^-(y) > d_{\min, G[C]}(y)$, so the first statement applied to $G[C-X+D_1+\dots+D_i]$, $D_{i+1}$, $L$ and $y$ yields that $G[C-X+D_1+\dots+D_i]$ is $L$-dicolourable, which concludes the proof.

Statement~\ref{prop3} follows easily from the fact that $G$ is not $L$-dicolourable. 
 
For the proof of~\ref{prop4}, assume (by symmetry) that $xy \in A(G)$. 
It follows from the third statement that, after uncolouring $y$, $x$ has no out-neighbour coloured $\phi(y)$, and thus giving colour $\phi(y)$ to $x$ does not create a monochromatic cycle. 
\end{proof}

In the rest of the proof, we will call the procedure that is described in Proposition~\ref{prop:tek_list}~\ref{prop4} \emph{shifting} the colour from $y$ to $x$, and sometimes write briefly $y \rightarrow x$. 
Moreover, given $G$, $X$ and $L$ as in the statement of Proposition~\ref{prop:tek_list}, a weak cycle $C=(v_1, a_1, v_2, \dots v_k, a_k, v_1)$ in $G[X]$ and an $L$-colouring of $G-v_1$, we can shift each vertex of $C$ one after another, starting with $v_k \rightarrow v_1 $ and get a new $L$-dicolouring of $G-v$. We say that we \emph{clockwise shift colours around $C$}, see Figure~\ref{fig_shift-example}. Starting with $v_2 \rightarrow v_1$, we say that we \emph{counter-clockwise shift colours around $C$}

 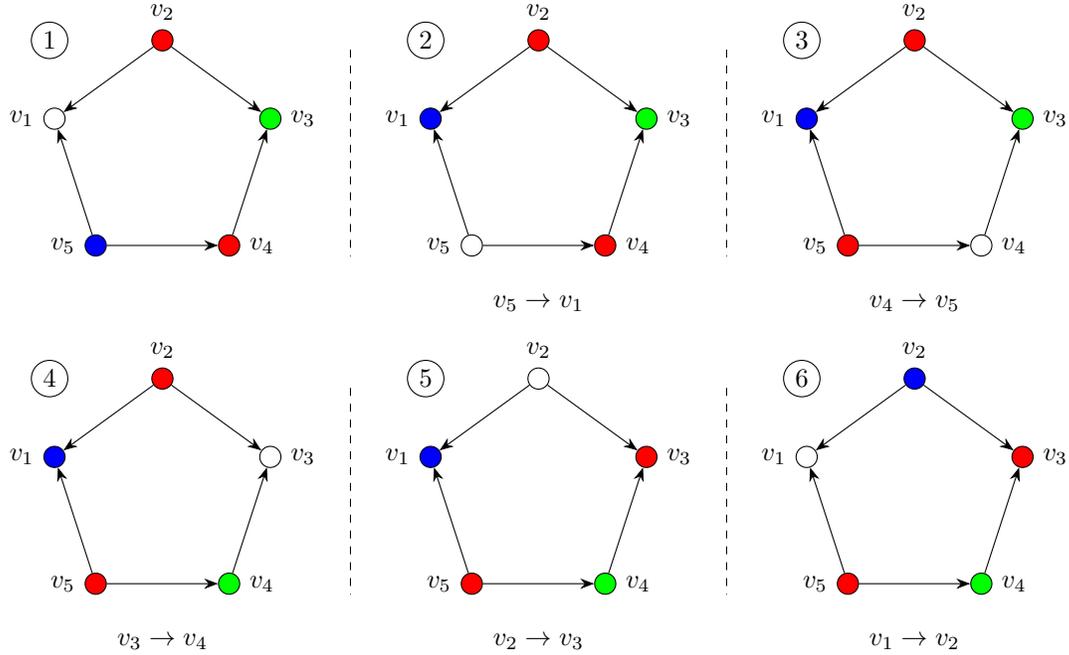
\begin{figure}[htbp]
\centering

\begin{tikzpicture}[>={[scale=1.1]Stealth}]
\node[draw=none,minimum size=3cm,regular polygon,regular polygon sides=5] (a) {};

\node[vertex, fill=red, label={above:$v_2$}] (a1) at (a.corner 1) {};
\node[vertex, label={left:$v_1$}] (a2) at (a.corner 2){};
\node[vertex, fill=blue, label={left:$v_5$}] (a3) at (a.corner 3){};
\node[vertex, fill=red, label={right:$v_4$}] (a4) at (a.corner 4){};
\node[vertex, fill=green, label={right:$v_3$}] (a5) at (a.corner 5){};
\node[circle, inner sep=2pt, draw=black] at (a1) [xshift=-1.5cm]{$1$};

\path[-]
(a1) edge [->] (a2)
(a2) edge [<-] (a3)
(a3) edge [->] (a4)
(a4) edge [->] (a5)
(a1) edge [->] (a5);

\node (h1) at (a1) [xshift=2.5cm]{};
\node (h2) at (h1) [yshift=-3cm]{};
\draw[dashed] (h1) -- (h2);

\begin{scope}[xshift=5cm]{};
\node[draw=none,minimum size=3cm,regular polygon,regular polygon sides=5] (a) {};

\node[vertex, fill=red, label={above:$v_2$}] (a1) at (a.corner 1) {};
\node[vertex, fill= blue, label={left:$v_1$}] (a2) at (a.corner 2){};
\node[vertex, label={left:$v_5$}] (a3) at (a.corner 3){};
\node[vertex, fill=red, label={right:$v_4$}] (a4) at (a.corner 4){};
\node[vertex, fill=green, label={right:$v_3$}] (a5) at (a.corner 5){};

\path[-]
(a1) edge [->] (a2)
(a2) edge [<-] (a3)
(a3) edge [->] (a4)
(a4) edge [->] (a5)
(a1) edge [->] (a5);

\node (h1) at (a1) [xshift=2.5cm]{};
\node (h2) at (h1) [yshift=-3cm]{};
\draw[dashed] (h1) -- (h2);
\node (l1) at (a1) [yshift=-3.5cm] {$v_5 \to v_1$};
\node[circle, inner sep=2pt, draw=black] at (a1) [xshift=-1.5cm]{$2$};
\end{scope}

\begin{scope}[xshift=10cm]{};
\node[draw=none,minimum size=3cm,regular polygon,regular polygon sides=5] (a) {};

\node[vertex, fill=red, label={above:$v_2$}] (a1) at (a.corner 1) {};
\node[vertex, fill= blue, label={left:$v_1$}] (a2) at (a.corner 2){};
\node[vertex, fill=red, label={left:$v_5$}] (a3) at (a.corner 3){};
\node[vertex, label={right:$v_4$}] (a4) at (a.corner 4){};
\node[vertex, fill=green, label={right:$v_3$}] (a5) at (a.corner 5){};

\path[-]
(a1) edge [->] (a2)
(a2) edge [<-] (a3)
(a3) edge [->] (a4)
(a4) edge [->] (a5)
(a1) edge [->] (a5);

\node (h1) at (a1) [xshift=2.5cm]{};
\node (h2) at (h1) [yshift=-3cm]{};

\node (l1) at (a1) [yshift=-3.5cm] {$v_4 \to v_5$};
\node[circle, inner sep=2pt, draw=black] at (a1) [xshift=-1.5cm]{$3$};
\end{scope}

\begin{scope}[yshift=-4.5cm]{};
\node[draw=none,minimum size=3cm,regular polygon,regular polygon sides=5] (a) {};

\node[vertex, fill=red, label={above:$v_2$}] (a1) at (a.corner 1) {};
\node[vertex, fill= blue, label={left:$v_1$}] (a2) at (a.corner 2){};
\node[vertex, fill=red, label={left:$v_5$}] (a3) at (a.corner 3){};
\node[vertex, fill=green, label={right:$v_4$}] (a4) at (a.corner 4){};
\node[vertex, label={right:$v_3$}] (a5) at (a.corner 5){};

\path[-]
(a1) edge [->] (a2)
(a2) edge [<-] (a3)
(a3) edge [->] (a4)
(a4) edge [->] (a5)
(a1) edge [->] (a5);

\node (h1) at (a1) [xshift=2.5cm]{};
\node (h2) at (h1) [yshift=-3cm]{};
\draw[dashed] (h1) -- (h2);
\node (l1) at (a1) [yshift=-3.5cm] {$v_3 \to v_4$};
\node[circle, inner sep=2pt, draw=black] at (a1) [xshift=-1.5cm]{$4$};
\end{scope}

\begin{scope}[yshift=-4.5cm, xshift=5cm]{};
\node[draw=none,minimum size=3cm,regular polygon,regular polygon sides=5] (a) {};

\node[vertex, label={above:$v_2$}] (a1) at (a.corner 1) {};
\node[vertex, fill= blue, label={left:$v_1$}] (a2) at (a.corner 2){};
\node[vertex, fill=red, label={left:$v_5$}] (a3) at (a.corner 3){};
\node[vertex, fill=green, label={right:$v_4$}] (a4) at (a.corner 4){};
\node[vertex, fill=red, label={right:$v_3$}] (a5) at (a.corner 5){};

\path[-]
(a1) edge [->] (a2)
(a2) edge [<-] (a3)
(a3) edge [->] (a4)
(a4) edge [->] (a5)
(a1) edge [->] (a5);

\node (h1) at (a1) [xshift=2.5cm]{};
\node (h2) at (h1) [yshift=-3cm]{};
\draw[dashed] (h1) -- (h2);
\node (l1) at (a1) [yshift=-3.5cm] {$v_2 \to v_3$};
\node[circle, inner sep=2pt, draw=black] at (a1) [xshift=-1.5cm]{$5$};
\end{scope}

\begin{scope}[yshift=-4.5cm, xshift=10cm]{};
\node[draw=none,minimum size=3cm,regular polygon,regular polygon sides=5] (a) {};

\node[vertex, fill=blue, label={above:$v_2$}] (a1) at (a.corner 1) {};
\node[vertex, label={left:$v_1$}] (a2) at (a.corner 2){};
\node[vertex, fill=red, label={left:$v_5$}] (a3) at (a.corner 3){};
\node[vertex, fill=green, label={right:$v_4$}] (a4) at (a.corner 4){};
\node[vertex, fill=red, label={right:$v_3$}] (a5) at (a.corner 5){};

\path[-]
(a1) edge [->] (a2)
(a2) edge [<-] (a3)
(a3) edge [->] (a4)
(a4) edge [->] (a5)
(a1) edge [->] (a5);

\node (h1) at (a1) [xshift=2.5cm]{};
\node (h2) at (h1) [yshift=-3cm]{};

\node (l1) at (a1) [yshift=-3.5cm] {$v_1 \to v_2$};
\node[circle, inner sep=2pt, draw=black] at (a1) [xshift=-1.5cm]{$6$};
\end{scope}

\end{tikzpicture}

\caption{The white vertex denotes the uncoloured vertex during the clockwise shifting around the weak cycle.}
\label{fig_shift-example}
\end{figure}

\begin{lem}\label{lem:notLcolourCycle} Let $G$ be a connected digraph, $X \subseteq V(G)$ connected and $L$ a list-assignment for $G$ such that $G-X$ is $L$-dicolourable, $G$ is not $L$-dicolourable and $\forall x \in X, \abs{L(x)} \ge d_{\max}(x)$. 
Let $C$ be a weak cycle in $G[X]$ of length $k \ge 3$ that is not a cycle. Then $V(C)$ is either a clique or induces an odd symmetric cycle.
\end{lem}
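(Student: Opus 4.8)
The plan is to adapt Gallai's classical argument (as generalised by Bang-Jensen, Bellitto, Schweser and Stiebitz for Theorem~\ref{thm:directed_Gallai}) to the list setting, relying heavily on Proposition~\ref{prop:tek_list} and the two shifting operations (clockwise and counter-clockwise) around a weak cycle. Let $C = (v_1,a_1,v_2,\dots,v_k,a_k,v_1)$ be a weak cycle of $G[X]$ of length $k\ge 3$ that is not a directed cycle; since it is not a directed cycle, $C$ contains at least one vertex that is a ``source'' of $C$ (both incident arcs of $C$ leave it) and at least one that is a ``sink'' of $C$. I first fix an $L$-dicolouring $\phi$ of $G-v_1$ (which exists by Proposition~\ref{prop:tek_list}\ref{prop2}). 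The core idea: if $V(C)$ is neither a clique nor an odd symmetric cycle, then I can perform a sequence of shifts around $C$ (and, when needed, local recolourings) to produce an $L$-dicolouring of $G$, contradicting the hypothesis that $G$ is not $L$-dicolourable.

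First I would treat the easy structural reductions. If two vertices $v_i, v_j$ of $C$ that are non-consecutive on $C$ are non-adjacent in $G$, I want to derive a contradiction; and if every pair of vertices of $C$ is adjacent but not every pair is joined by a digon, I want to force the ``odd symmetric cycle'' conclusion. The key mechanism is: starting from $\phi$ (with $v_1$ uncoloured), clockwise shifting colours around $C$ returns a (generally different) $L$-dicolouring of $G-v_1$; doing it $k$ times returns to $\phi$. During one full clockwise pass, each $v_i$ successively receives the colour originally on its clockwise neighbour. Because $\phi(v_k)$ — the colour eventually placed on $v_1$ — cannot be ``accepted'' by $v_1$ (otherwise we would extend to $G$), there must be a monochromatic obstruction; since $v_1$ has, after shifting, exactly the colour $\phi(v_k)$ and all of $N^+(v_1), N^-(v_1)$ use all colours of $L(v_1)$ (Proposition~\ref{prop:tek_list}\ref{prop3}), the obstruction is an induced monochromatic weak structure through $v_1$ inside $G[V(C)]$. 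Analysing which vertices of $C$ carry colour $\phi(v_k)$ after the shift pins down adjacencies: roughly, consecutive vertices of $C$ must be joined by a digon unless $C$ already behaves like a directed segment, and non-consecutive vertices must be adjacent, which is exactly the clique case; the parity of $k$ then enters because a symmetric cycle is $L$-colourable from lists of size $2$ if and only if it is even, forcing oddness in the non-clique case.

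The heart of the argument — and the main obstacle — is the careful bookkeeping of \emph{which} shifting sequence to use and how to read off adjacency/digon information from the failure to extend $\phi$ after shifting. Concretely, I expect to need a lemma of the form: ``if $v_i$ and $v_{i+1}$ are consecutive on $C$ but the arc $a_i$ is simple (not a digon), then there is a shifting sequence around $C$ (clockwise up to $v_{i+1}$, then counter-clockwise the rest of the way, say) yielding an $L$-dicolouring of $G$'' — the direction of $a_i$ determines whether colouring $v_1$ with the relocated colour creates a monochromatic cycle, and this is where Proposition~\ref{prop:tek_list}\ref{prop3} and \ref{prop4} are used repeatedly. Handling the interaction between several simple arcs on $C$ and making sure the two half-shifts compose into a genuine $L$-dicolouring (no monochromatic cycle anywhere, not just at $v_1$) is the delicate part; I would isolate the case $k=3$ (where $C$ not a cycle means at least one digon, and the analysis is short) and then induct or argue directly on general $k$ by contracting a digon of $C$ when one exists, reducing to a shorter weak cycle, and invoking minimality. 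The final write-up will mirror the structure of the proof of Theorem~\ref{thm:bang2019haj} in~\cite{bang2019haj}, with the ordinary-colouring arguments replaced by their list analogues justified via Proposition~\ref{prop:tek_list}.
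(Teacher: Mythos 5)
You have correctly assembled the toolbox (Proposition~\ref{prop:tek_list}, the clockwise and counter-clockwise shifts, and the observation that since $C$ is not a cycle it has a vertex whose two cycle-arcs point the same way), and the paper's proof is indeed, as you anticipate, a list version of the Gallai/Bang-Jensen--Bellitto--Schweser--Stiebitz argument. But your proposal stops where the real work begins, and the two concrete mechanisms you offer for that work would not go through. The reduction you suggest --- contracting a digon of $C$ to obtain a shorter weak cycle and ``invoking minimality'' --- is not available: contraction does not preserve the hypotheses of the lemma (the merged vertex has no natural list satisfying $\abs{L(x)} \ge d_{\max}(x)$, and non-$L$-dicolourability of the contracted digraph is not implied), and nothing in the statement is being minimised over digraphs. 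The paper's induction is on the length $k$ of the weak cycle itself, and the inductive step goes through \emph{chords}: when an arc between $v_1$ and a non-consecutive $v_i$ exists (or is forced to exist by a shifting argument), it splits $C$ into two shorter weak cycles, at least one of which can be arranged not to be a cycle, and applying the induction hypothesis to these pieces and to further rerouted weak cycles through the chord is what yields adjacency (in fact digons) between all non-consecutive pairs, i.e.\ the clique conclusion. Your proposed sub-lemma about a single simple arc $a_i$, even if one could prove it by the half-shift you describe, only controls consecutive pairs and cannot produce the clique structure.

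Your treatment of parity is also not sound: the vertices of $C$ have neighbours outside $C$ and lists that may be much larger than $2$, so ``an even symmetric cycle is list-colourable from lists of size $2$'' proves nothing in this setting. In the paper, parity enters differently: a preliminary claim (proved by shifting until a vertex with both cycle-arcs on the same side is the uncoloured one, then using Proposition~\ref{prop:tek_list}~\ref{prop3}) shows that in any $L$-dicolouring of $G-v_i$ no two consecutive vertices of $C$, nor $v_{i-1}$ and $v_{i+1}$, share a colour; in the even case this forces at least three colours on $C$, which after a suitable shift forces a chord at $v_1$, and then a case analysis on the parity of the chord's position is needed (including a separate shifting contradiction, for $k \ge 6$, when both pieces would be symmetric odd cycles, and a special treatment of $k=4$) to conclude that $V(C)$ is a clique. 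None of this bookkeeping appears in your write-up --- you explicitly defer it --- so as it stands the proposal is a plausible plan rather than a proof.
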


\begin{proof} Write $C = (v_1, a_1, v_2 \dots, v_k, a_k, v_1)$.
We prove the result by induction on $k$. 
All along the proof, subscripts are taken modulo $k$. In particular, $v_k$ and $v_1$ are considered to be consecutive vertices of $C$. 

\begin{claim}\label{clm:consec}
For every $i \in [k]$ and any $L$-dicolouring $\phi$ of $G-v_i$, no two consecutive vertices of $C$ receive the same colour. Moreover, $\phi(v_{i-1}) \ne \phi(v_{i+1})$. 
\end{claim}

\begin{proofclaim}
Let $i \in [k]$ and let $L$ be an $L$-dicolouring of $G-v_i$. 
Assume towards a contradiction that two consecutive vertices in $C$ have the same colour. 
Since $C$ is not a cycle of $G$, there exists $j \in [k]$ such that $v_{j-1}$ and $v_{j+1}$ are both in-neighbours of $v_j$ or both out-neighbours of $v_j$. 
We may shift colours around $C$ until $v_j$ is left uncoloured and $v_{j-1}$ and $v_{j+1}$ have the same colour, a contradiction to Proposition~\ref{prop:tek_list}~\ref{prop3}. 
Now, if $\phi(v_{i-1}) = \phi(v_{i+1})$, we can simply shift the colour from $v_{i-1}$ to $v_i$ and get a contradiction with the first fact. 
\end{proofclaim}

By Proposition~\ref{prop:tek_list}~\ref{prop2}, we have an $L$-dicolouring $\phi$ of $G-v_1$. \medskip

First suppose that $k$ is odd. Up to shifting colours and renaming the vertices, we may assume that $a_k = v_kv_1$ and $a_1 = v_1v_2$. We consider two cases.

Assume first that there is an arc $a \in A(G)$ between $v_1$ and $v_i$ for some $2 < i < k$. Let $C_0 = (v_1, a_1, v_2, \dots, v_i, a, v_1)$ and $C_1 = (v_1, a, v_i, a_i, ,v_{i+1},\dots v_k, a_k,v_1)$. 
One of $C_0$ and $C_1$ is not a cycle and hence, by induction, $v_1v_iv_1 \subseteq A(G)$. 
By symmetry, we may assume that $C_0$ is even and $C_1$ odd. 
Choosing the appropriate arc between $v_1$ and $v_i$ makes $C_0$ acyclic and hence, by induction, $V(C_0)$ is a clique. 
Similarly, $V(C_1)$ induces a symmetric cycle or is a clique. 
For $j \in \intZ{2, i-1}$, let $C_j = (v_1, v_jv_1, v_j, v_iv_j, v_i, a_i, v_{i+1}, \dots, v_k, a_k, v_1)$. 
Since $C_1$ is odd, $C_j$ is even, so by induction, $V(C_j)$ is a clique.
Hence $V(C)$ is a clique.

Now, suppose there is no arc between $v_1$ and $v_i$ for $i \in \intZ{3, k-1}$. 
By claim~\ref{clm:consec}, $\phi(v_k) \ne \phi(v_2)$. 

If the (unique) out-neighbour of $v_1$ with colour $\phi(v_k)$ is not $v_k$, then we shift colours clockwise around $C$ and get two out-neighbours of $v_1$ with the same colour, a contradiction to Proposition~\ref{prop:tek_list}~\ref{prop3}. 

Thus, $v_1v_k \in A(G)$.
Similarly, $v_2v_1 \in A(G)$. 
Hence, we have either $v_1v_2v_3 \subseteq A(G)$ or $v_3v_2v_1 \subseteq A(G)$, so we can repeat the argument and get a digon between $v_2$ and $v_3$. This way, we get that there is a digon between each pair of consecutive vertices of $C$ and thus $G[C]$ is a symmetric odd cycle.

\medskip 

Suppose now that $k$ is even.
Up to shifting colours and renaming the vertices, we may assume that $a_k = v_kv_1$ and $a_1 = v_2v_1$. 
By claim~\ref{clm:consec}, $\phi(v_k) \ne \phi(v_2)$ and $\abs{\{\phi(v_i), 2 \le i \le k\}} \ge 3$.

Let $3 \le j \le k-1$ such that $\phi(v_j) \notin \{\phi(v_2), \phi(v_{k-1})\}$. 
We shift colours around $C$ until $v_2$ is coloured $\phi(v_j)$. 
By Proposition~\ref{prop:tek_list}~\ref{prop3}, $\phi(v_2)$ and $\phi(v_k)$ still appear in the in-neighbourhood of $v_1$ and thus we have $3 \le i \le k-1$ such that $v_iv_1 \in A(G)$.

Assume first that $i$ is even. Then both $(v_1, a_1, v_2, \dots, v_i, v_iv_1, v_1)$ and $(v_1, v_iv_1, v_i, a_i, v_{i+1}, \dots, v_k, a_k, v_1)$ are even and are not a cycle, so by induction, $\{v_1,v_2 \dots, v_i\}$ and $\{v_i, v_{i+1}, \dots, v_k\}$ are cliques.
Hence 
$(v_1, v_1v_3, v_3, \dots, v_k, a_k, v_1)$ is odd, and $\{v_1, v_3, v_4 \dots, v_k\}$ does not induce a symmetric odd cycle (because $v_3$ and $v_k$ are adjacent). So, by induction, $\{v_1, v_3, v_4, \dots, v_k\}$ is a clique.  
The same holds for\\ $(v_1, a_1, v_2, \dots, v_{i-2}, v_{i-2}v_i, v_i, a_i, v_{i+1}, \dots, v_k, a_k, v_1)$, so $V(C)$ is a clique.

Assume now that $i$ is odd. So, $(v_1, a_1, v_2, \dots, v_i, v_iv_1, v_1)$ and $(v_1, v_iv_1, v_i, a_i, v_{i+1},\dots, v_k, a_k, v_1)$ are odd cycles and thus, by induction, each pair of consecutive vertices of $C$ induces a digon and $v_1v_iv_1 \subseteq G$. If $k=4$, then the argument of the paragraph following the assumption that $k$ is even finds a digon between $v_2$ and $v_4$. So we may assume $k \ge 6$.

Assume that both $\{v_1, v_2, \dots, v_i\}$ and $\{v_1, v_i, v_{i+1}, \dots, v_k\}$ induce a symmetric cycle. Since $k \geq 6$, one of $(v_1, a_1, v_2, \dots, v_i, v_iv_1, v_1)$ and $(v_1, v_iv_1, v_i, a_i, v_{i+1}, \dots, v_k, a_k, v_1)$ has length at least $5$. Assume without loss of generality that it is $(v_1, a_1, v_2, \dots, v_i, v_iv_1, v_1)$ (so $i \geq 5$). 
Counter-clockwise shifting colours around $(v_1, a_1, v_2, \dots, v_i, v_iv_1, v_1)$, and noticing that in the new $L$-dicolouring of $G-v_1$, the in-neighbours of $v_1$ have the same colours as in the previous one, we get that $\phi(v_{3}) = \phi(v_i)$. 
Now, counter-clockwise shifting colours (of $\phi$) around $(v_1, v_iv_1, v_i, a_i, v_{i+1}, \dots, v_k, a_k, v_1)$, the same argument yields $\phi(v_3) = \phi(v_{i+1})$. So $\phi(v_i) = \phi(v_{i+1})$, a contradiction to claim~\ref{clm:consec}.

Hence, we may assume without loss of generality that $\{v_1, v_2, \dots, v_i\}$ does not induce a symmetric odd cycle. 
In particular $i \geq 5$. 
By induction, $\{v_1, v_2, \dots, v_i\}$ induces a clique. 
By applying induction to $(v_1, v_3v_1, v_3, a_3, v_4, \dots, v_k, a_k, v_1)$, we get that $V(C)-v_2$ is a clique. 
Finally, by applying induction on $(v_1, a_1, v_2, v_4v_2, v_4, \dots, v_k, a_k, v_1)$, we get that $V(C)-v_3$ is a clique and thus that $V(C)$ is a clique.
\end{proof}

\begin{proof}[Proof of Theorem~\ref{thm:notLcolour}]
Let $B$ be a block of $G[X]$. 
If $\abs B \le 3$, $B$ is either an simple arc, a digon, a $\vec C_3$, or a $\bid K_3$ by Lemma~\ref{lem:notLcolourCycle}. 
So we assume $\abs{V(B)} \ge 4$. 
By~Lemma~\ref{lem:notLcolourCycle}, we may assume that $B$ is not a cycle. 
So there are two vertices in $V(B)$ 
linked by three internally vertex-disjoint weak walks. 
Call $P_0, P_1, P_2$ these three weak walks. Two of these walks form a weak cycle that is not a cycle. Hence by Lemma~\ref{lem:notLcolourCycle}, they form a symmetric cycle.
Two of $P_0$, $P_1$ and $P_2$, say $P_0$ and $P_1$, form a weak cycle $C$ of even length. One of $P_0$ and $P_1$ is symmetric, so up to choosing the arcs in it, $C$ is not a cycle.
By Lemma~\ref{lem:notLcolourCycle}, $V(C)$ is a clique and observe that $\abs{V(C)} \geq 4$. 
Let $R$ be a maximal clique containing $V(C)$. 
We may assume $R \ne V(B)$. 
Let $v \in V(B)-R$. 
Since $B$ is a block, there are two weak walks $P$ and $Q$ from $v$ to $R$ whose only common vertex is $v$. Let $p$ and $q$ their respective end-vertices in $R$.
Let $w \in R$. We have $z \in R-p-q-w$. One of $vPpwqQv$ or 
$vPpwzqQv$ is odd, and both can be chosen undirected and none of them is an induced cycle (because $p$ and $q$ are adjacent). Hence, by Lemma~\ref{lem:notLcolourCycle}, the vertices of one of them induce a clique, and thus $w$ is linked by digon to $V(P) \cup V(Q)$. So $R \cup V(P) \cup V(Q)$ is a clique, a contradiction to the maximality of $R$. 
\end{proof}

\section{Conclusion}\label{sec:furtherworks}

For $k \ge 4$,  let $\m F_k = \{\bid C_5(\bid K_1, \bid K_{a_1}, \bid K_{a_2}, \bid K_{b_2},\bid K_{b_1}) \mid a_1 + a_2 = b_1 + b_2= k-1, a_2 + b_2 = k-1\}$. Recall that we identify (undirected) graphs with symmetric digraphs. 
Kostochka and Stiebitz~\cite{KS99} proved that, for $k \geq 4$, $k$-critical graphs have excess at least $2(k-3)$ except for $K_k$ and graphs in $\m F_k$. 
It is natural to wonder if such a characterisation  exists for digraphs. Anyway, $k$-dicritical digraphs being more complicated that their undirected counter part, we doubt it.

Our bound on the minimal number of arcs in a $k$-dicritical digraph on $n$ vertices (Theorem~\ref{thm_intro:KY}) is clearly not tight. Kostochka and Stiebitz conjectured\cite{kostochka2020minimum} that $k$-dicritical digraphs on at least $k+1$ vertices with minimum density are symmetric, i.e. are the same as in the case of undirected graphs. It is to be noted that in an other breakthrough result, 
Kostochka and Yancey characterised the $k$-critical graphs that are tight for the bound of Theorem~\ref{thm:KY}.

\subsubsection*{Acknowledgement}
We are tankful to Clément Rambaud for fruitful discussions, and to Bang-Jensen, Bellito, Stiebitz and Schweser for Figure~\ref{fig_shift-example}. We are also thankful to two anonymous reviewers for their suggestions that simplified one of the proofs. 

This research was partially supported by ANR project DAGDigDec (JCJC)   ANR-21-CE48-0012 and by the group Casino/ENS Chair on Algorithmics and Machine Learning.

\bibliographystyle{alpha}

\newcommand{\etalchar}[1]{$^{#1}$}

\end{document}